\tikzset{
>=stealth',
  punktchain/.style={
    rectangle,
    rounded corners,
    draw=black, thick,
    minimum height=3em,
    text centered,
    on chain},
  line/.style={draw, thick, <-},
  eLement/.style={
    tape,
    top color=white,
    bottom color=blue!50!black!60!,
    minimum width=8em,
    draw=blue!40!black!90, very thick,
    text width=10em,
    minimum height=3.5em,
    text centered,
    on chain},
  every join/.style={->, thick,shorten >=1pt},
  decoration={brace},
  tuborg/.style={decorate},
  tubnode/.style={midway, right=2pt},
}
\setlist[enumerate,1]{label={\upshape(\arabic*)}}
\setlist[enumerate,2]{label={\upshape(\alph*)},ref=\theenumi\upshape(\alph*)}
\setlist[enumerate,3]{label={\upshape(\roman*)},ref=\theenumi\theenumii\upshape(\roman*)}
\crefname{Prop}{Proposition}{Propositions}
\crefname{Thm}{Theorem}{Theorems}
\crefname{Lem}{Lemma}{Lemmas}
\crefname{enumi}{Case}{Cases}
\def\C{\ensuremath{\mathbb{C}}}
\def\H{\ensuremath{\mathbb{H}}}
\def\N{\ensuremath{\mathbb{N}}}
\def\P{\ensuremath{\mathbb{P}}}
\def\Q{\ensuremath{\mathbb{Q}}}
\def\R{\ensuremath{\mathbb{R}}}
\def\Z{\ensuremath{\mathbb{Z}}}
\def\alg{\mathrm{alg}}
\def\Amp{\mathrm{Amp}}
\def\ch{\mathop{\mathrm{ch}}\nolimits}
\def\Coh{\mathop{\mathrm{Coh}}\nolimits}
\def\dim{\mathop{\mathrm{dim}}\nolimits}
\def\Ext{\mathop{\mathrm{Ext}}\nolimits}
\def\ext{\mathop{\mathrm{ext}}\nolimits}
\def\lExt{\mathop{\mathcal Ext}\nolimits} 
\def\GL{\mathop{\mathrm{GL}}\nolimits}
\def\Hal{H^*_{\alg}}
\def\Hilb{\mathop{\mathrm{Hilb}}\nolimits}
\def\Hom{\mathop{\mathrm{Hom}}\nolimits}
\def\id{\mathop{\mathrm{id}}\nolimits}
\def\Ker{\mathop{\mathrm{Ker}}\nolimits}
\def\min{\mathop{\mathrm{min}}\nolimits}
\def\NS{\mathop{\mathrm{NS}}\nolimits}
\def\Pic{\mathop{\mathrm{Pic}}\nolimits}
\def\rk{\mathop{\mathrm{rk}}}
\def\td{\mathop{\mathrm{td}}\nolimits}
\def\Cone{\mathop{\mathrm{Cone}}\nolimits}
\newenvironment{Prf}{\textit{Proof.}\/}{\hfill$\Box$}
\def\MG13{\ensuremath{{\mathcal M}_{\Gamma_1(3)}}}
\def\tildeMG13{\ensuremath{\widetilde{\mathcal M}_{\Gamma_1(3)}}}
\def\Stab{\mathop{\mathrm{Stab}}\nolimits}
\def\Stabd{\mathop{\Stab^{\dagger}}\nolimits}
\def\into{\ensuremath{\hookrightarrow}}
\def\onto{\ensuremath{\twoheadrightarrow}}
\def\blank{\underline{\hphantom{A}}}
\def\Db{\mathrm{D}^{\mathrm{b}}}
\newcommand\TFILTB[3]{%
\xymatrix@=1pc{
{0 = {#1}_0} \ar[rr]&&
{{#1}_1} \ar[rr]\ar[ld] &&
{{#1}_2} \ar[r]\ar[ld] &
{\cdots} \ar[r] & { {#1}_{#3-1}} \ar[rr] &&
{{#1}_{#3} = {#1}} \ar[ld]
\\
& *{{#2}_1} \ar@{.>}[ul] &&
{{#2}_2} \ar@{.>}[ul] & &&&
{{#2}_{{#3}}} \ar@{.>}[ul]
}}
\def\abs#1{\left\lvert#1\right\rvert}
\newtheorem*{rep@theorem}{\rep@title}
\newcommand{\newreptheorem}[2]{%
\newenvironment{rep#1}[1]{%
 \def\rep@title{#2 \ref{##1}}%
 \begin{rep@theorem}}%
 {\end{rep@theorem}}}
\newtheorem{Thm}{Theorem}[section]
\newtheorem{Prop}[Thm]{Proposition}
\newtheorem{Lem}[Thm]{Lemma}
\newtheorem{Cor}[Thm]{Corollary}
\newtheorem*{theorem*}{Theorem}
\newtheorem*{lemma*}{Lemma}
\newtheorem*{proposition*}{Proposition}
\newtheorem*{conjecture*}{Conjecture}
\newtheorem*{corollary*}{Corollary}
\newtheorem*{problem*}{Problem}
\newtheorem{Thm-int}{Theorem}
\theoremstyle{definition}
\newtheorem{Def-s}[Thm]{Definition}
\newtheorem{Def}[Thm]{Definition}
\newtheorem{Rem}[Thm]{Remark}
\newtheorem{Prob}[Thm]{Problem}
\newtheorem{Ex}[Thm]{Example}
\newenvironment{NB}{
\color{red}{\bf NB}. \footnotesize 
}{}
\def\C{\ensuremath{\mathbb{C}}}
\def\H{\ensuremath{\mathbb{H}}}
\def\N{\ensuremath{\mathbb{N}}}
\def\P{\ensuremath{\mathbb{P}}}
\def\Q{\ensuremath{\mathbb{Q}}}
\def\R{\ensuremath{\mathbb{R}}}
\def\Z{\ensuremath{\mathbb{Z}}}
\def\AA{\ensuremath{\mathcal A}}
\def\CC{\ensuremath{\mathcal C}}
\def\EE{\ensuremath{\mathcal E}}
\def\FF{\ensuremath{\mathcal F}}
\def\GG{\ensuremath{\mathcal G}}
\def\HH{\ensuremath{\mathcal H}}
\def\MM{\ensuremath{\mathcal M}}
\def\OO{\ensuremath{\mathcal O}}
\def\TT{\ensuremath{\mathcal T}}
\def\WW{\ensuremath{\mathcal W}}
\newcommand{\mor}[1][]{\xrightarrow{#1}}
\newcommand{\isomor}{\mor[\sim]}
\def\u{\mathbf{u}}
\def\v{\mathbf{v}}
\def\w{\mathbf{w}}
\def\ii{\mathfrak{i}}
\def\HHH{\mathfrak H}
\def\cal{\mathcal}
\def\Bbb{\mathbb}
\newcommand{\ignore}[1]{}
\begin{document}

\title{The cohomology of the general stable sheaf  on a K3 surface}
\author{Izzet Coskun}
\author{Howard Nuer}
\author{K\={o}ta Yoshioka}

\thanks{During the preparation of this article, I.C. was partially supported by NSF grant DMS 1500031 and
NSF FRG grant DMS 1664296, H.N. was partially supported an NSF Postdoctoral Fellowship DMS 1606283 and the NSF RTG grant 1246844 and K.Y. was partially supported by Grant in Aid for Scientific Research No. 18H01113, 17H06127, 26287007 JSPS}

\begin{abstract}
Let $X$ be a K3 surface with  Picard group $\Pic(X)\cong\Z H$ such that $H^2=2n$. Let $M_{H}(\v)$ be the moduli space of Gieseker semistable sheaves on $X$ with Mukai vector $\v$. We say that $\v$ satisfies weak Brill-Noether if  the general sheaf in  $M_{H}(\v)$ has at most one nonzero cohomology group. We show that given any rank $r \geq 2$, there are only finitely many Mukai vectors of rank $r$ on K3 surfaces of Picard rank one where weak Brill-Noether fails. We give an algorithm for finding the potential counterexamples and classify all such counterexamples up to rank 20 explicitly. Moreover, in each of these cases we calculate the cohomology of the general sheaf.  Given $r$, we give sharp bounds on $n$, $d$, and $a$ that guarantee that $\v$ satisfies weak Brill-Noether.  As a corollary, we obtain another proof of the classification of Ulrich bundles on K3 surfaces of Picard rank one. In addition, we discuss the question of when the general sheaf in $M_H(\v)$ is globally generated. Our techniques make crucial use of Bridgeland stability conditions.    
    
\end{abstract}

\maketitle
\section{Introduction}

Brill-Noether theory for line bundles on curves has played a central role in developing algebraic geometry since the 19th century (see \cite{ACGH}). However much less is known about Brill-Noether theory for higher rank vector bundles on curves. For vector bundles on higher dimensional varieties, even the first step of Brill-theory, computing the cohomology of the generic vector bundle in a moduli space, is extremely challenging.  Indeed, this step has been carried out in full for very few surfaces, such as minimal rational surfaces and certain del Pezzo surfaces \cite{ CoskunHuizenga:WBN, CoskunHuizenga:GloballyGenerated, GottscheHirschowitz:WBN, LS19}. 

The problem of computing the generic cohomology of stable sheaves underlies many of the fundamental problems in the field, ranging from the construction of theta divisors and Ulrich bundles to classifying Chern characters of stable bundles and understanding the birational geometry of the moduli space of sheaves (see \cite{CoskunHuizenga:ICM, CHW14,OG97}). Moreover, vanishing of higher cohomology and global generation play key roles in the S-duality conjecture (see \cite{MarianOprea:StrangeDuality}). In this paper we undertake the problem of computing the  cohomology of a generic sheaf in a moduli space of stable sheaves on a K3 surface of Picard rank one.  The Brill-Noether theory for K3 surfaces has been investigated by numerous authors (see for example \cite{Leyenson:BN,Markman:BN,Yos99b}).

\subsection*{Weak Brill-Noether} Let $X$ be a K3 surface such that $\Pic(X) \cong \Z H$ with $H^2 = 2n$. Let $\v$ be a Mukai vector with $\v^2 \geq -2$ and let $M_{H}(\v)$ denote the moduli space parameterizing $S$-equivalence classes of Gieseker semistable sheaves on $X$ with Mukai vector $\v$. 

We say that  $\v$ satisfies {\em weak Brill-Noether} if the general sheaf $E \in M_{H}(\v)$ has at most one nonzero cohomology group. If $\v$ satisfies weak Brill-Noether, then the Euler characteristic and the slope completely determine the cohomology of the general sheaf in $M_{H}(\v)$. In this paper, we study the problem of characterizing the Mukai vectors $\v$ that satisfy weak Brill-Noether.  Our main qualitative result is the following.

\begin{Thm}\label{thm-intro}
Let $X$ be a K3 surface such that $\Pic(X)\cong \mathbb{Z}H$ with $H^2=2n$. Let $\v=(r,dH,a)$ be a Mukai vector with $\v^2\geq -2$, $r\geq 2$ and $d>0$.
\begin{enumerate}
\item For each $r\geq 2$, there exists a finite set of tuples $(n,r,d,a)$ for which $\v$ fails to satisfy weak Brill-Noether (\cref{thm:finiteness}). 
\item If $n\geq r$, then $\v$ satisfies weak Brill-Noether (\cref{thm:bound n r}).
\item If $a \leq 1$, then $\v$ satisfies weak Brill-Noether (\cref{Prop:a<=0}, \cref{lem:NoTSSWalls a<=1}).
    \item  If $d \geq r \left\lfloor \frac{r}{n}\right\rfloor +2$, then $\v$ satisfies weak Brill-Noether (\cref{thm:WBN sharp}).
    
    \end{enumerate}
\end{Thm}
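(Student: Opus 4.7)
The plan is to use Bridgeland stability conditions on $\Db(X)$ to control the cohomology of the generic sheaf. As a preliminary reduction, since $d > 0$ implies $\mu(E) = dH/r > 0 = \mu(\OO_X)$ and a generic $E \in M_H(\v)$ is slope-semistable, the image of any nonzero map $E \to \OO_X$ would be a rank-$\le 1$ torsion-free subsheaf of $\OO_X$ of slope $\ge \mu(E) > 0$, an impossibility. Hence $\Hom(E,\OO_X) = 0$, and Serre duality together with $\omega_X \cong \OO_X$ yields $H^2(E) = 0$. Weak Brill-Noether then reduces to showing that for generic $E$, at most one of $\Hom(\OO_X, E)$ and $\Ext^1(\OO_X, E)$ is nonzero. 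The strategy is to produce a Bridgeland stability condition $\sigma_{\alpha,\beta}$ and a tilted heart in which $\OO_X$ (or a shift) and $E$ are both $\sigma$-stable, with their phases separated widely enough that the relevant $\Hom$-space vanishes for phase reasons. This vanishing fails precisely when $\OO_X$ lies on a destabilizing wall for $\v$, so the core task is to rule out such walls under hypotheses (2), (3), and (4).

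For part (3), when $a \le 1$, one argues that $\OO_X$ cannot be a subobject of a generic $E$ in the appropriate tilted heart: any such inclusion would force the quotient to violate a Bogomolov-type discriminant inequality unless $a$ exceeds a positive threshold, contradicting $a \le 1$. For part (2), when $n \ge r$, the polarization is large enough that the lattice of numerical Mukai vectors compatible with a destabilizing wall is severely restricted: a hypothetical subobject $F$ of rank $r' < r$ with slope matching $d/r$ violates either the Bogomolov inequality for $F$ or the integrality of $d'$. For part (4), the bound $d \ge r\lfloor r/n\rfloor + 2$ should come from running through all possible destabilizing ranks $r' < r$ and solving a sharp discriminant inequality; the threshold emerges as the smallest $d$ for which no such $r'$ produces a genuine wall.

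Part (1), the finiteness assertion, follows as a corollary of (2), (3), and (4). Fixing $r \ge 2$, hypothesis (2) forces $n \le r-1$, hypothesis (3) forces $a \ge 2$, and hypothesis (4) forces $d \le r\lfloor r/n\rfloor + 1$. Together with the lattice inequality $\v^2 = 2nd^2 - 2ra \ge -2$, which bounds $a$ from above by $(nd^2+1)/r$ once $n, r, d$ are fixed, these constraints leave only finitely many tuples $(n, r, d, a)$.

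The main obstacle will be the wall-and-chamber analysis required for parts (2) and (4). Identifying the precise list of numerical walls in the Bridgeland stability manifold that could support $\OO_X$ as a sub- or quotient-object of a generic $E$, and verifying that under the stated hypotheses none of these walls is actually realized, requires a careful case analysis over potential ranks $r' < r$ of the destabilizer. The sharpness in part (4) in particular suggests the bound should be traced to a specific extremal Mukai vector, whose optimization over $(r',d',a')$ is the crux of the argument; the fact that the threshold involves $\lfloor r/n \rfloor$ strongly hints that the worst-case destabilizer has slope very close to $\mu(\OO_X) = 0$, i.e., small $d'$ relative to $r'$.
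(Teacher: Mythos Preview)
Your high-level plan and your derivation of (1) from (2)--(4) match the paper. The gap is in where you locate the obstruction to the vanishing of $H^1$.

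You write that the phase-separation argument ``fails precisely when $\OO_X$ lies on a destabilizing wall for $\v$,'' and your sketches for (2)--(4) are organized around ruling out $\OO_X$ (or a Bogomolov-violating subobject of matching slope) as the destabilizer. This is not the mechanism. In the relevant region $U_+ \subset \Stab(X)$ the object $\OO_X[1]$ is always $\sigma$-stable, and since $\chi(\v) = r+a \geq 0$ it never induces a totally semistable wall. The real obstruction is that the \emph{generic} Gieseker-stable $E$ may fail to remain $\sigma$-stable as you deform $\sigma$ toward the wall where $\phi_\sigma(E) = \phi_\sigma(\OO_X[1])$: you may first hit a \emph{totally semistable wall} induced by some other spherical class $\v_1 = (r_1, d_1 H, a_1)$ with $\v_1^2 = -2$ and $\langle \v, \v_1\rangle < 0$. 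Along such a wall \emph{every} object in $M_H(\v)$ acquires the unique stable spherical bundle of class $\v_1$ as a subobject, so on the far side no Gieseker-stable $E$ is $\sigma$-stable and your phase argument collapses.

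The paper packages this into a finite set $D_\v^{BN}$ of spherical $\v_1$ satisfying the inequalities $0 < d_1 \leq d$, $r_1 d - r d_1 > 0$, $\langle \v,\v_1\rangle < 0$, and $0 < r_1 d - r d_1 \leq a_1 d - a d_1$; the reduction (\cref{thm:summing up inequalities}) is that $D_\v^{BN} = \varnothing$ implies weak Brill-Noether. Parts (2)--(4) are then arithmetic. For instance, one shows that any $\v_1 \in D_\v^{BN}$ satisfies $d_1 < 2r/(mn)$ with $m = r_1 d - r d_1 \geq 1$; under (2) this forces $m = d_1 = 1$, and a short computation finishes. Part (4) is a longer case analysis on $m \in \{1,2,3\}$ and $r_1 \in \{1,2\}$. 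Your heuristics---a destabilizer with ``slope matching $d/r$'' in (2), or a discriminant violation for a subobject in (3)---do not reflect these constraints: the destabilizing $\v_1$ has its own slope, and what is bounded is $d_1$ in terms of $r$ and $n$, not any comparison of Mumford slopes.
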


\begin{Rem}
Let $E$ be a stable locally free sheaf on a K3 surface of Picard rank one. By Serre duality, to compute the cohomology of $E$, we may assume that $d \geq 0$. Moreover, if $d=0$ and $E$ has a section, then $E \cong \OO_X$. Hence, it suffices to consider Mukai vectors with $d>0$. In this case, stability implies that $H^2(X, E)=0$, so we need to compute $H^0(X,E)$ and $H^1(X,E)$. 
\end{Rem}

\begin{Ex}\label{Ex:PullbackTangentBundle}
The linear system $|H|$ defines a morphism $f: X \to \mathbb{P}^{n+1}$. The pullback of the tangent bundle $f^* T \mathbb{P}^{n+1}$ is a spherical stable bundle on $X$ with Mukai vector $$\v_0= (n+1, (n+2)H, n^2+3n+1).$$ The pullback of the Euler sequence $$0\to \OO_X \to \OO_X(H)^{\oplus(n+2)} \to  f^* T \mathbb{P}^{n+1} \to 0$$ shows that $h^0(f^*T \mathbb{P}^{n+1}) = n^2 + 4n +3$ and $h^1(f^* T \mathbb{P}^{n+1})=1$. Since $f^* T \mathbb{P}^{n+1}$ is the unique point in its moduli space, $\v_0$ fails weak Brill-Noether. Consequently, parts (2) and (4) of \cref{thm-intro} are sharp. 
\end{Ex}

In \cref{sec-initialclassification}, we classify the boundary cases in \cref{thm-intro}. In Theorems \ref{Thm:n<r<=2n} and \ref{Thm:2n<r<=3n}, we classify the Mukai vectors $\v=(r, dH, a)$ with $n < r \leq 3n$ such that $\v$ fails weak Brill-Noether and we compute the cohomology of the general sheaf for these $\v$. Our main result in this direction is the following.

\begin{Thm}\label{Thm:intro:n<r<=3n}[Theorems \ref{Thm:n<r<=2n} and \ref{Thm:2n<r<=3n}]
Let $X$ be a K3 surface such that  $\Pic(X) = \mathbb{Z}H$ with $H^2=2n$.
Let $\v=(r,dH,a)$ be a Mukai vector such that $n<r\leq3n$ and $d>0$.  Then  $\v$ fails to satisfy weak Brill-Noether if and only if $\v$ belongs to one of the following three cases:
\begin{enumerate}
\item $\v=(n+r_1^2,((\frac{n+1}{r_1})+r_1)H,(\frac{n+1}{r_1})^2+n)$, where $r_1\mid n+1$ and $1 \leq r_1 \leq\sqrt{2n}$;
    \item $\v=(r,(r+1)H,nr+2n)$ with $2n<r\leq 3n$;
     \item
    $\v=(3n,(3n+2)H,3n^2+4n+1)$ with $n>1$.
\end{enumerate}
\end{Thm}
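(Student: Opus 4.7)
The plan is to specialize the finiteness algorithm of \cref{thm:finiteness} to the two subranges $n < r \leq 2n$ and $2n < r \leq 3n$ and enumerate the resulting wall data explicitly.

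First, I would reformulate failure of weak Brill-Noether at $\v$ as a wall-crossing statement. Since $d > 0$ and $E$ semistable imply $h^2(X, E) = 0$, failure means $h^0(E) > 0$ and $h^1(E) > 0$ for the general $E \in M_H(\v)$. The existence of a section on the general $E$ is controlled, via Bridgeland stability, by a totally semistable wall for $\v$ separating it from the spherical class $\v(\OO_X) = (1, 0, 1)$; such a wall produces a short exact sequence $0 \to F \to E \to Q \to 0$ with $\Hom(\OO_X, F) \neq 0$ and $F$, $Q$ Bridgeland-semistable at the wall. Writing $\v(F) = (r_1, d_1 H, a_1)$, the numerical conditions $\v(F)^2 \geq -2$, $\v(Q)^2 \geq -2$, and $\mu_H(\v(F)) = \mu_H(\v(Q))$, combined with the rank bound $r \leq 3n$, cut the candidate destabilizers down to a finite list.

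Second, I would analyze the two subranges separately. For $n < r \leq 2n$, the inequalities force $F$ to be a spherical bundle with Mukai vector of the form $(r_1, sH, t)$; the condition $\mu_H(F) = \mu_H(E)$ combined with $\v(F)^2 = -2$ and $\Hom(\OO_X, F) \neq 0$ leads to $r_1 s = n+1$, so $r_1 \mid n+1$, while the rank bound forces $r_1 \leq \sqrt{2n}$. Matching $\v = \v(F) + \v(Q)$ for an appropriate twist $Q$ of $\OO_X$ then reproduces family (1). For $2n < r \leq 3n$, the only destabilizer generically compatible with all constraints is the line bundle $\OO_X(H)$ with Mukai vector $(1, H, n)$, yielding family (2). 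At the boundary $r = 3n$, a second sporadic configuration appears, tied to the spherical bundle $f^* T\mathbb{P}^{n+1}$ from \cref{Ex:PullbackTangentBundle}; this yields family (3), and it degenerates into the previous cases when $n = 1$, explaining the condition $n > 1$ in the statement.

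Third, for each $\v$ in the three families I would verify failure directly by exhibiting an explicit extension realizing the general point of $M_H(\v)$ and reading off $h^0$ and $h^1$; conversely, for every other $\v$ in the range $n < r \leq 3n$, the wall enumeration above shows that no admissible destabilizer exists, so \v satisfies weak Brill-Noether. The main obstacle will be the enumeration in the strip $2n < r \leq 3n$: one must distinguish families (2) and (3), rule out spurious Diophantine solutions that pass the numerical tests without corresponding to genuine destabilizations on a generic sheaf, and handle the degeneration at $n = 1$. This requires careful bookkeeping of the Bogomolov-type inequalities $\v(F)^2, \v(Q)^2 \geq -2$ together with the multiplicities of destabilizers in the Harder--Narasimhan filtration at the wall, and is where the precise bound $r \leq 3n$ plays its key role.
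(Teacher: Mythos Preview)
Your proposal has genuine gaps, both technical and structural.

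First, the reformulation is incorrect. At a Bridgeland wall the destabilizing sub- and quotient objects have equal \emph{Bridgeland} phase, not equal $\mu_H$-slope; writing ``$\mu_H(\v(F)) = \mu_H(\v(Q))$'' is simply false and would not produce the correct Diophantine constraints. Likewise $\v(\OO_X(H)) = (1,H,n+1)$, not $(1,H,n)$. Your identification of family~(3) with $f^* T\mathbb{P}^{n+1}$ is also wrong: the pullback of the tangent bundle has Mukai vector $(n+1,(n+2)H,n^2+3n+1)$, which is family~(1) with $r_1=1$, not family~(3).

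Second, and more seriously, the proposal skips the entire mechanism that makes the classification work. The paper does not enumerate walls by imposing $\v(F)^2\geq -2$ and $\v(Q)^2\geq -2$; rather it defines a set $D_\v^{BN}$ of spherical classes $\v_1$ satisfying the inequalities of \cref{rem-numerics}, in particular the comparison $r_1d - rd_1 \leq a_1d - ad_1$ with the wall for $\OO_X[1]$ (\cref{Lem:O_X[1] largest wall}). It then uses \cref{prop:minimal-a} to reduce to maximal $a$, and \cref{lem:useful consequences} to bound $d_1 < 2r/(mn)$, which under $r\leq 3n$ forces $md_1\leq 5$. The classification is then a careful case analysis on $(m,d_1)$, computing $\v^2$ and the spherical reflection $R_{\v_1}(\v)$ via \cref{obs-v_0^2}, eliminating most cases by \cref{Lem:O_X[1] largest wall} or by induction on rank, and for the survivors exhibiting the Harder--Narasimhan filtration and computing cohomology directly. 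Your sketch contains none of this: the claim ``for $2n<r\leq 3n$ only $\OO_X(H)$ works'' is not true without the bounds on $m$ and $d_1$, and the derivation of ``$r_1 s = n+1$'' in your first case is unexplained (the actual equation is $r_1 a_1 = n d_1^2 + 1$, which becomes $r_1 a_1 = n+1$ only after one proves $d_1=1$). Finally, you must also check that the next-smaller-$a$ Mukai vector in each family \emph{does} satisfy weak Brill--Noether, which the paper does explicitly and your plan does not address.
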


In  \cref{Lem:NoTSSWalls a=2}, we classify all the Mukai vectors with $a=2$ for which  $\v$ fails weak Brill-Noether. We find that there is a unique pair $(n,\v)$, with $n=1$ and $\v=(5,3H,2)$, which fails to satisfy weak Brill-Noether. Hence, part (3) of \cref{thm-intro} is also sharp.

More importantly, given a rank $r$, we provide an easy-to-implement,  purely-numerical algorithm for enumerating a finite set of Mukai vectors that contain all the Mukai vectors $\v$ of rank $r$ that do not satisfy weak Brill-Noether. We do this by means of \cref{thm:summing up inequalities}, which asserts that if $\v$ fails weak Brill-Noether, then there must exist a spherical character $\v_1$ satisfying certain inequalities with respect to $\v$.  For each rank, a computer can easily list the Mukai vectors $\v$ for which such a $\v_1$ exists. Similarly, for a  given Mukai vector $\v$, it is easy to verify whether the conditions of \cref{thm:summing up inequalities} are satisfied. 

Our algorithm then provides a canonical resolution of the general sheaf in $M_H(\v)$ for each  Mukai vector that fails weak Brill-Noether. In the hundreds of examples we have studied, this resolution allows one to compute the cohomology of the general sheaf in $M_H(\v)$. In  \cref{sec-rank20}, we list all the pairs $(n,\v)$ where $\v$ is a Mukai vector of rank at most 20 failing weak Brill-Noether on Picard rank one K3 surface of degree $2n$, and in each case, we compute the cohomology of the generic sheaf in the corresponding moduli spaces.

\subsection*{Applications to Ulrich bundles} An immediate consequence of \cref{thm-intro} (3) is a classification of stable Ulrich bundles on K3 surfaces of Picard rank 1. The problem of constructing and classifying Ulrich bundles has received a lot of attention in recent years. Aprodu, Farkas and Ortega have constructed Ulrich bundles on K3 surfaces of Picard rank 1  \cite{AFO12}. More generally,  Faenzi \cite{Fae19} has constructed Ulrich bundles on arbitrary K3 surfaces. 

\begin{proposition*}[\ref{prop:UlrichBundles}]
Let $X$ be a K3 surface with $\Pic(X)=\Z H$.  There exists an Ulrich bundle of rank $r$ with respect to $mH$ if and only if $2\mid rm$.  Moreover, when an Ulrich bundle of rank $r$ exists, it has Mukai vector $\v=\left(r,\left(\frac{3rm}{2}\right)H,r(2m^2n-1)\right)$.  In particular, there exists an Ulrich bundle of any rank $r\geq 2$ with respect to $2H$.
\end{proposition*}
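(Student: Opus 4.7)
The approach is to combine a direct Riemann--Roch computation with the weak Brill--Noether results of \cref{thm-intro}.

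For necessity, suppose $E$ is Ulrich of rank $r$ with respect to $mH$ with Mukai vector $(r,dH,a)$. By definition $\chi(E(-mH)) = \chi(E(-2mH)) = 0$, and Riemann--Roch on the K3 surface gives
\[ \chi(E(-kmH)) = r + a - 2kmdn + rk^2 m^2 n. \]
Setting this to zero for $k=1$ and $k=2$ yields a linear system in $d$ and $a$ with unique solution $d = \tfrac{3rm}{2}$, $a = r(2m^2n - 1)$. In particular $rm$ must be even.

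For sufficiency, assume $rm$ is even and set $\v = \left(r,\tfrac{3rm}{2}H,\,r(2m^2n - 1)\right)$. A direct calculation yields $\v^2 = r^2\left(\tfrac{m^2n}{2} + 2\right) > 0$, so $M_H(\v)$ is nonempty and its general member is locally free. I claim the general $E \in M_H(\v)$ is Ulrich. A Mukai vector computation shows that both $E(-mH)$ and $E^*(2mH)$ have Mukai vector
\[ \v' = \left(r,\tfrac{rm}{2}H,\,-r\right). \]
Since $r \geq 2$, the coefficient $rm/2$ of $H$ is positive, and the third component $-r$ is $\leq 1$, part (3) of \cref{thm-intro} applies and shows that $\v'$ satisfies weak Brill--Noether. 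Because its Euler characteristic vanishes, the generic sheaf in $M_H(\v')$ has vanishing cohomology in every degree. Applied to the generic $E(-mH)$ this yields the Ulrich vanishing for $k=1$, and Serre duality $H^i(X,E(-2mH)) \cong H^{2-i}(X,E^*(2mH))^*$ combined with the same vanishing applied to $E^*(2mH)$ yields the vanishing for $k=2$. The final assertion follows because $2r$ is always even.

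The main subtlety I anticipate is ensuring simultaneous genericity: for a single generic $E \in M_H(\v)$, both $E(-mH)$ and $E^*(2mH)$ must be generic in $M_H(\v')$ so that weak Brill--Noether can be invoked for each. This follows because tensoring by a fixed line bundle and dualization both induce isomorphisms $M_H(\v) \xrightarrow{\sim} M_H(\v')$ that preserve the open condition of having generic cohomology, so the intersection of the two dense open preimages in $M_H(\v)$ remains dense.
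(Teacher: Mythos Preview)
Your proof is correct and follows essentially the same approach as the paper: both arguments compute the Mukai vector from the two Euler-characteristic constraints, observe that $E(-mH)$ and $(E(-2mH))^\vee$ share the Mukai vector $\v'=(r,\tfrac{rm}{2}H,-r)$ with nonpositive third entry, and then invoke the $a\le 0$ case of weak Brill--Noether (your \cref{thm-intro}(3) is exactly the paper's \cref{Prop:a<=0}). The only cosmetic difference is that the paper starts from a generic $E'\in M_H(\v')$ and sets $E=E'(mH)$, whereas you start from $E\in M_H(\v)$ and pull back the good locus under the two isomorphisms $M_H(\v)\to M_H(\v')$; these are equivalent genericity arguments.
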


\subsection*{Global generation} When $\v$ satisfies weak Brill-Noether, we also study when the general sheaf in $M_H(\v)$ is globally generated. Our qualitative result can be summarized as follows.

\begin{Thm}\label{Thm-introgg}
Let $X$ be a K3 surface such that $\Pic(X)\cong \mathbb{Z}H$ with $H^2=2n$. Let $\v=(r,dH,a)$ be a Mukai vector with $\v^2\geq -2$, $r\geq 2$, $d>0$ and $a\geq 2$.
\begin{enumerate}
\item If $n \geq 2r$, then the generic sheaf in $M_H(\v)$ is globally generated (\cref{prop:uniform gg n}).
\item If $n>1$ and $d \geq r \left\lfloor \frac{2r}{n} \right\rfloor + 2$, then the generic sheaf in $M_H(\v)$ is globally generated (\cref{thm:sufficient condition on d}). 
\end{enumerate}
\end{Thm}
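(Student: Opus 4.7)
The plan is to reduce to a wall-and-chamber analysis for the kernel of the evaluation map, closely mirroring the weak Brill--Noether strategy used earlier in the paper.  First, both numerical hypotheses already imply the hypotheses of an earlier part of \cref{thm-intro}: $n\geq 2r$ trivially implies $n\geq r$, and $d\geq r\lfloor 2r/n\rfloor+2$ implies $d\geq r\lfloor r/n\rfloor+2$ since $\lfloor 2r/n\rfloor\geq \lfloor r/n\rfloor$.  Combined with $a\geq 2$, weak Brill--Noether gives $h^0(E)=r+a$ and $h^{>0}(E)=0$ for the generic $E\in M_H(\v)$, so the evaluation map $\ev\colon \OO_X^{\,r+a}\to E$ is well defined on such an $E$.

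Next I would form the cone $C$ of $\ev$ in $\Db(X)$, whose Mukai vector is $(-a,dH,-r)$.  Global generation of $E$ is equivalent to $C\cong K[1]$ for a single coherent sheaf $K$ with Mukai vector $\v^\vee:=(a,-dH,r)$; equivalently, the cokernel $H^0(C)$ of $\ev$ must vanish.  Since $(\v^\vee)^2=\v^2\geq -2$, the moduli space $M_H(\v^\vee)$ is nonempty of the expected dimension.  The task is then to show that, for generic $E$, the shifted cone $C[-1]$ is a $\sigma$-semistable sheaf for a Bridgeland stability condition $\sigma$ close to the large-volume limit of $\Stabd(X)$.  One achieves this by rerunning the wall analysis of \cref{thm:summing up inequalities} against $\v^\vee$ in place of $\v$: potential walls correspond to spherical classes $\v_1$ satisfying the analogous numerical inequalities with $\v^\vee$, and ruling them out forces $C$ to be the shift of an honest sheaf.

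The strengthening from the weak Brill--Noether bounds ($n\geq r$, $d\geq r\lfloor r/n\rfloor+2$) to the global-generation bounds ($n\geq 2r$, $d\geq r\lfloor 2r/n\rfloor+2$) reflects two simultaneous vanishings that must be imposed on the cone $C$, namely both $H^0(C)=0$ and the freeness of $H^{-1}(C)$, roughly doubling the effective slope that enters the wall inequalities.  The main obstacle is the finite but delicate combinatorial verification that no spherical class obstructs purity of $C$ under these sharper hypotheses, parallel to the enumeration carried out in \cref{sec-initialclassification}; sharpness of the bounds should be visible from the same enumeration.  Once this is checked, $\ev$ is generically surjective for the generic $E$, and openness of the globally generated locus in $M_H(\v)$ delivers the theorem.
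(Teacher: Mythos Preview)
Your cone $C[-1]$ is exactly $\Phi_{X\to X}^{I_\Delta}(E)$, so the object you want to control is the right one; but the proposed mechanism for controlling it does not work as stated and misses the paper's key step.  The wall analysis of Theorem~\ref{thm:summing up inequalities} is set up for Mukai vectors with $d>0$ and, more importantly, it tells you that a \emph{generic element of a moduli space} remains semistable across a wall.  It does not tell you that a specific object built from $E$---your cone---is a sheaf; that is precisely what you are trying to prove.  Running the analysis ``against $\v^\vee=(a,-dH,r)$'' is circular: near the large-volume limit, $\sigma$-semistable objects of class $\v^\vee$ are Gieseker semistable sheaves, so you would already need to know $C[-1]$ is such a sheaf to start.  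The paper avoids this via Proposition~\ref{prop:isom}: one shows $D_\v=\varnothing$ for the \emph{original} $\v$, so the generic $E$ stays $\sigma$-semistable all the way into the chamber~$\CC$; there the Fourier--Mukai functor $E\mapsto\Phi_{X\to X}^{I_\Delta}(E)^\vee$ is an isomorphism $M_\sigma(\v)\cong M_H(a,dH,r)$, which is what forces the dual of your cone to be an honest torsion-free sheaf.  One must then separately check, using Proposition~\ref{Prop:OnlyNonLocallyFreeSheaves}, that the generic member of $M_H(a,dH,r)$ is locally free (Lemma~\ref{Lem:FM-vanishing} only gives global generation in codimension~$1$ from torsion-freeness); your outline omits this step.

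Your account of why the bounds double is also off.  It is not that two conditions are imposed on~$C$.  Weak Brill--Noether only needs $D_\v^{BN}=\varnothing$ because totally semistable walls below the $\OO_X[1]$ wall are harmless for cohomology (Lemma~\ref{Lem:O_X[1] largest wall}); global generation needs the full $D_\v=\varnothing$ so that $E$ reaches the chamber~$\CC$.  Numerically this is the difference between parts (1) and (2) of Lemma~\ref{lem:useful consequences}: for $\v_1\in D_\v^{BN}$ one has $d_1<\tfrac{2r}{mn}$ (using $m\leq k$), whereas for arbitrary $\v_1\in D_\v$ one only gets $d_1<\tfrac{2r}{n}$.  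Thus $n\geq 2r$ (rather than $n\geq r$) is what makes $d_1<1$ impossible in the proof of Proposition~\ref{prop:uniform gg n}, and the bound $d_1\leq\lfloor 2r/n\rfloor$ (rather than $\lfloor r/n\rfloor$) drives the case analysis in Theorem~\ref{thm:sufficient condition on d}.
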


 When $n=1$, $H$ is not very ample, but defines a two-to-one map onto $\mathbb{P}^2$. As a consequence, the twists of the ideal sheaf of a point $I_p(H)$ and $I_p(2H)$ are not globally generated. This complicates the answer when $n=1$. See \cref{thm:sufficient condition on d} for a precise statement in that case.

\begin{Rem}
Being globally generated is not an open condition. However, it is an open condition among sheaves with vanishing higher cohomology. Let $E$ be a sheaf with vanishing higher cohomology. If $a \leq 0$, then $h^0(X,E) \leq r$ and $E \not\cong \OO_X^r$, hence $E$ cannot be globally generated. It is easy to classify $E$ with $a=1$ which are globally generated (see 
\cref{rem-gg}). Hence, we may restrict our attention to Mukai vectors with $a \geq 2$.
\end{Rem}

In parallel to our approach to the weak Brill-Noether property, we give an easy-to-implement, numerical algorithm for checking that the general sheaf in $M_H(\v)$ is globally generated. \cref{thm:summing up inequalities} provides a set of inequalities such that if $\v$ does not satisfy these inequalities and the general sheaf in $M_X(a, dH, r)$ is locally free, then the general sheaf in $M_X(\v)$ is globally generated. These conditions are easy to verify for any given Mukai vector.

\subsection*{Ample bundles} If $E$ is a globally generated vector bundle, then $E(H)$ is globally generated and ample. Therefore, \cref{Thm-introgg} also gives a certificate for the ampleness of bundles on a K3 surface of Picard rank one.  The following is an immediate consequence of \cref{Thm-introgg}.

\begin{Cor}
Let $X$ be a K3 surface such that $\Pic(X)\cong \mathbb{Z}H$ with $H^2=2n$. Let $\v=(r,dH,a)$ be a Mukai vector with $\v^2\geq -2$, $r\geq 2$, $d>0$ and $a\geq 2$. Let $\v_H = (r, (d+r)H, a+(2d+r)n)$.
\begin{enumerate}
\item If $n \geq 2r$, then the generic sheaf in $M_H(\v_H)$ is ample.  
\item If $n>1$ and $d \geq r \left\lfloor \frac{2r}{n} \right\rfloor + 2$, then the generic sheaf in $M_H(\v_H)$ is ample. 
\end{enumerate}
\end{Cor}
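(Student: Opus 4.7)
The plan is to transfer global generation along the isomorphism $M_H(\v)\xrightarrow{\sim} M_H(\v_H)$ given by tensoring with $\OO_X(H)$, and then invoke the standard fact that an ample-line-bundle twist of a globally generated sheaf is ample. Since the numerical hypotheses in parts (1) and (2) of the corollary are literally those of \cref{Thm-introgg} applied to $\v$ itself, no new estimates are required; the argument is purely formal.

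First I would verify by a direct Chern-character calculation that tensoring by $\OO_X(H)$ sends the Mukai vector $(r,dH,a)$ to $(r,(d+r)H,a+(2d+r)n)=\v_H$, using $\sqrt{\td_X}=(1,0,[\mathrm{pt}])$ on the K3 surface $X$. Because $-\otimes\OO_X(H)$ is an equivalence of $\Coh(X)$ that preserves Gieseker $H$-(semi)stability, it induces an isomorphism $M_H(\v)\xrightarrow{\sim} M_H(\v_H)$ taking generic point to generic point. Under either hypothesis (1) or (2), \cref{Thm-introgg} guarantees that the generic $E\in M_H(\v)$ is globally generated, and twisting a surjection $\OO_X^{\oplus N}\twoheadrightarrow E$ by $\OO_X(H)$ produces a surjection $\OO_X(H)^{\oplus N}\twoheadrightarrow E(H)$ onto the corresponding generic sheaf in $M_H(\v_H)$.

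To conclude, I would recall that $\OO_X(H)^{\oplus N}$ is an ample vector bundle: under the identification $\P(\OO_X(H)^{\oplus N})\cong \P^{N-1}\times X$ the tautological quotient bundle is $\OO_{\P^{N-1}}(1)\boxtimes \OO_X(H)$, which is ample as an exterior tensor of ample line bundles on a product. Since any quotient of an ample vector bundle is ample (a standard positivity result), $E(H)$ is ample, which completes the proof. The only subtlety that I would want to double-check is that the generic $E$ is actually locally free, so that $E(H)$ is genuinely a vector bundle rather than just a coherent sheaf; under the numerical hypotheses of \cref{Thm-introgg} this is standard for moduli of $H$-stable sheaves on K3 surfaces of Picard rank one, and even without it the same quotient argument yields ampleness in the coherent-sheaf sense. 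This is the only place where any care is needed, and it is why the authors record the result as an immediate corollary rather than a theorem.
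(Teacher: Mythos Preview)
Your proposal is correct and follows essentially the same route as the paper: apply \cref{Thm-introgg} to $\v$ to get global generation of the generic $E\in M_H(\v)$, then observe that $E(H)$, which is the corresponding generic sheaf in $M_H(\v_H)$, is ample as a quotient of the ample bundle $\OO_X(H)^{\oplus N}$. The paper records this as an immediate consequence without spelling out the quotient argument, but your unpacking of it (and your side remark about local freeness) matches the intended reasoning exactly.
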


\subsection*{The strategy} Let us take a moment to briefly outline our approach, in which Bridgeland stability plays a central role. Let $I_{\Delta}$ denote the ideal sheaf of the diagonal in $X \times X$. We show in \cref{Lem:FM-vanishing} that the dual of the Fourier-Mukai transform $F:= \Phi_{X \to X}^{I_{\Delta}}(E)^{\vee}$ controls the vanishing of cohomology and global generation of $E$. More precisely, we show that if $F$ is a coherent sheaf, then the higher cohomology of $E$ vanishes, and if, furthermore, $F$ is locally free, then $E$ is globally generated. This reduces understanding the higher cohomology and global generation of $E$ to understanding properties of $F$.

Minamide, Yanagida and Yoshioka in \cite{MYY14} exhibit a chamber $\CC$ in the Bridgeland stability manifold, such that for a Bridgeland stability condition $\sigma\in \CC$ in this chamber, the Bridgeland moduli space $M_{\sigma}(r, dH,a)$ is isomorphic to the Gieseker moduli space $M_H(a, dH, r)$ via the correspondence sending $E$ to $\Phi_{X \to X}^{I_{\Delta}}(E)^{\vee}$. Hence, if the generic sheaf $E$ in the moduli space $M_H(r, dH, a)$ is Bridgeland $\sigma$-semistable, then the higher cohomology of $E$ vanishes. Furthermore, if the generic member of $M_H(a, dH, r)$ is locally free, then $E$ is globally generated. The third author has classified the moduli spaces of sheaves on K3 surfaces whose generic member is not locally free (see \cite{Yos99a} and \cref{Prop:OnlyNonLocallyFreeSheaves}). 

This translates the question of weak Brill-Noether into the problem of determining when the generic sheaf $E$ in $M_H(\v)$ is still $\sigma$-semistable for $\sigma\in\CC$.  If not, then there must be a totally semistable Bridgeland wall between the Gieseker chamber and $\CC$. Using the classification of totally semistable walls in \cite{BM14b}, we obtain a numerical algorithm for deciding when the generic sheaf is $\sigma$-semistable. This provides a finite set of Mukai vectors which fail weak Brill-Noether.

Moreover, if the generic $E$ fails to be $\sigma$-semistable, the largest strictly semistable wall provides a canonical resolution of $E$. In practice, this allows one to compute the cohomology of the generic sheaf even when it does not vanish. In \cref{sec-computations}, we develop general techniques for computing the cohomology.

\subsection*{Further Directions} Our investigations here point the way towards a new approach to the (weak) Brill-Noether problem in general, at least for surfaces.   Without further theoretical developments, our techniques here can be applied immediately to the following question.

\begin{Prob}
Classify the Mukai vectors $\v$ that satisfy weak Brill-Noether on K3 surfaces of higher Picard rank.
\end{Prob}
\cref{Lem:FM-vanishing,prop:isom} are applicable when the K3 surface has higher Picard rank; however, the classification of totally semistable walls from \cite{BM14b} becomes much more intricate as the rank of the Picard group increases. 

The necessary ingredients to apply our techniques to other surfaces, at least those of Kodaira dimension zero, largely exist or are easily obtained.  \cref{Lem:FM-vanishing} generalizes to arbitrary surfaces, and the classification of (totally semistable) Bridgeland walls has been carried out for abelian surfaces in \cite{Yos12}, for Enriques surfaces in \cite{NY19}, and for bielliptic surfaces in forthcoming work of the second author.  The final ingredient in our technique is \cref{prop:isom} which applies already to abelian surfaces \cite{MYY14}.  Generalizing each of these results to arbitrary surfaces and applying our technique would solve the following problem.

\begin{Prob}
On an arbitrary surface, classify the Mukai vectors of stable sheaves  that satisfy the weak Brill-Noether property.
\end{Prob}

Even for Picard rank one K3 surfaces, our work  addresses only the first step towards a higher rank Brill-Noether theory.  Now that we have a clearer picture of the generic cohomology, the next step is the study  of the cohomology jumping loci and their geometry.

\begin{Prob}
Describe the cohomology jumping loci in $M_H(\v)$, e.g. their non-emptiness, number of components, dimension, and singularities.
\end{Prob}

The general structure of cohomology jumping loci for moduli spaces of sheaves on arbitrary varieties has been studied in \cite{CMR:BN} and on K3 surfaces specifically in \cite{Leyenson:BN2, Leyenson:BN} under various assumptions.  By studying the possible Harder-Narasimhan filtrations along Bridgeland walls that are not  necessarily totally semistable, using the dimension estimates from \cite{NY19}, it should be possible further our technique here to systematically study these jumping loci, also known as Brill-Noether loci.

A related topic in the study of vector bundles and their moduli is the question of ampleness.  Recently, Huizenga and Kopper have classified moduli spaces whose general member is globally generated and ample on minimal rational surfaces \cite{HK21}. It would be interesting to carry out their program on K3 surfaces. 

\begin{Prob}
Classify the Mukai vectors $\v$ for which the general sheaf in $M_H(\v)$ is ample.
\end{Prob}

Another problem coming out of our classification of Mukai vectors that satisfy weak Brill-Noether is the following.

\begin{Prob}
Compute the cohomology of the tensor product of two general stable sheaves on a K3 surface.
\end{Prob}

 This problem is central to the study of the S-duality conjecture. To the best of our knowledge, the solution of this problem on surfaces is known in full generality only for $\mathbb{P}^2$ \cite{CoskunHuizengaKopper:Tensor}.

\subsection*{Organization of the paper} In \cref{sec-prelim}, we introduce basic facts concerning moduli spaces of sheaves on K3 surfaces and Bridgeland stability.  In \cref{sec-Strategy}, we explain our main strategy in more detail and introduce the totally semistable Bridgeland walls that will play a crucial role in our analysis.  

In \cref{sec-ReductionSSW}, we prove numerical restrictions on the totally semistable Bridgeland walls that arise. This basic analysis suffices to classify Ulrich bundles on K3 surfaces of Picard rank one in \cref{prop:UlrichBundles}. In \cref{sec-HNFiltration}, following \cite{BM14b}, we describe for a generic sheaf the  Harder-Narasimhan filtration along the totally semistable Bridgeland walls that arise.

In \cref{sec-OX[1]}, we derive the final set of inequalities that govern our study of the weak Brill-Noether and global generation problems. The main result is \cref{thm:summing up inequalities}.  In \cref{sec-Minimala}, we show that if a  Mukai vector $(r, dH, a)$ satisfies weak Brill-Noether,  then the Mukai vector $(r, dH, a')$ also satisfies weak Brill-Noether for any $a \leq a'$ . This reduces our initial search for Mukai vectors that fail weak Brill-Noether to those with maximal $a$. 

In \cref{sec-mainconsequences}, we prove our main qualitative theorems.
The main results are Theorems \ref{thm:bound n r}, \ref{thm:WBN sharp} and \ref{thm:finiteness}.
In \cref{sec-initialclassification}, we classify the boundary cases of Mukai vectors that fail weak Brill-Noether. 

In \cref{sec-computations}, we introduce general techniques for computing the cohomology of the general sheaf in cases when $\v$ does not satisfy weak Brill-Noether. The main tool is the canonical resolution coming from the Harder-Narasimhan filtration with respect to Bridgeland stability along a wall.

Finally, in \cref{sec-rank20}, we classify all Mukai vectors with rank at most 20 that fail to satisfy weak Brill-Noether and compute the cohomology of the general sheaf. 

\subsection*{Acknowledgments} We would like to thank Arend Bayer, Aaron Bertram, Jack Huizenga, John Kopper and Emanuele Macr\`{i} for many valuable conversations.

\section{Background results}\label{sec-prelim}
In this section, we review the necessary background concerning  moduli spaces of sheaves on K3 surfaces and Bridgeland stability conditions.  Some excellent references for the material on classical stability are \cite{Fri98,HL10}.

\subsection{The Mukai lattice}
Let $X$ denote a K3 surface and let $\mathrm{NS}(X)$ denote the N\'{e}ron-Severi space of $X$. The algebraic cohomology $H^*_{\alg}(X,\Z)$  of $X$ decomposes as
$$H^*_\alg(X,\Z) = H^0(X,\Z) \oplus \mathrm{NS}(X) \oplus H^4(X,\Z).$$
Let $\Db(X)$ denote the bounded derived category of coherent sheaves on $X$  and let $K(X)$ denote $K$-group of $X$. Define the  \emph{Mukai vector} $\v:K(X)\onto\Hal(X,\Z)$  by 
$$\v(E):=\ch(E)\sqrt{\td(X)} = (r(E),c_1(E),r(E)+\ch_2(E)) \in \Hal(X, \Z),$$
where $\ch(E)$ is the Chern character of $E$ and $\td(X)$ is the Todd class of $X$.
Given two Mukai vectors $\v =(r,c,a)$ and $\v'=(r', c', a')$, the  \emph{Mukai pairing} is defined by  $$\langle(r,c,a),(r',c',a')\rangle:=c\cdot c'-ra'-r'a\in\Z,$$ where $-\cdot-$ is the intersection pairing on $H^2(X,\Z)$.  The Mukai pairing has signature $(2,\rho(X))$ and satisfies $$\langle\v(E),\v(F)\rangle=-\chi(E,F)=-\sum_i(-1)^i\ext^i(E,F)$$ for all $E,F\in\Db(X)$, where $\ext^i(E,F)= \dim (\Ext^i(E,F))$.
The pair $(\Hal(X,\Z),\langle\blank,\blank\rangle)$ is called the \emph{algebraic Mukai lattice}. Given a Mukai vector $\v\in\Hal(X,\Z)$, we denote
its orthogonal complement by
\[
\v^\perp:=\Set{\w\in\Hal(X,\Z)\ | \ \langle\v,\w\rangle=0 }.
\]
Sometimes we will need to take the \emph{dual Mukai vector} $\v^\vee$ of a given Mukai vector $\v=(r,c,a)$ defined by $\v^\vee:=(r,-c,a)$.
A Mukai vector $\v$ is \emph{primitive} if it is not divisible in $\Hal(X,\Z)$.  
A Mukai vector $\v$ is \emph{spherical} if $\v^2=-2$ and \emph{isotropic} if $\v^2=0$.  We say a primitive Mukai vector $\v=(r,c,a)$ is \emph{positive} if $\v^2\geq -2$, and either 
\begin{enumerate}
    \item $r>0$; or
    \item $r=0$, $c$ is effective, and $a\ne 0$; or
    \item $r=c=0$ and $a>0$.
\end{enumerate} 
We will see that positive Mukai vectors and their multiples are the Mukai vectors of semistable sheaves.

\subsection{Gieseker and slope semistability} Let $H$ be an ample divisor on $X$. All sheaves in this paper will be coherent and pure dimensional.  Let $E$ be a pure $d$-dimensional coherent sheaf on $X$. Then the {\em Hilbert} and {\em reduced Hilbert} polynomials of $E$ are defined by
$$P_{E,H}(m) = \chi(E(mH)) = a_d \frac{m^d}{d!} + \mathrm{l.o.t}, \quad  p_{E,H}(m) = \frac{P_{E,H}(m) }{a_d},$$ respectively. A sheaf $E$ is {\em $H$-Gieseker semistable} if for every proper subsheaf $F \subset E$, $p_{F,H}(m) \leq p_{E,H}(m)$ for $m \gg 0$. The sheaf $E$ is {\em $H$-Gieseker stable} if the inequality is strict for every proper subsheaf $F$. 

Given a torsion-free sheaf $E$, define the $H$-slope $\mu_H(E)$ of $E$ by $$\mu_H(E) = \frac{\ch_1(E) \cdot H}{\ch_0(E) H^2}.$$ 
A sheaf $E$ is {\em slope} or {\em $\mu_H$-semistable} if for every proper subsheaf $F \subset E$, $\mu_H(F) \leq \mu_H(E)$. 
The sheaf is {\em slope} or {\em $\mu_H$-stable} if the inequality is strict for every proper subsheaf $F$.  
Using Hierzebruch-Riemann-Roch to write out $p_{E,H}(m)$ for a torsion-free sheaf $E$, one sees that the first term of $p_{E,H}(m)$ is $\mu_H(E)$, so we have implications
$$\mu_H\mbox{-stable}\Longrightarrow\mbox{stable}\Longrightarrow\mbox{semistable}\Longrightarrow\mu_H\mbox{-semistable}.$$

Every torsion-free sheaf $E$ admits a unique {\em Harder-Narasimhan} filtration
$$0=E_0 \subset E_1 \subset \cdots \subset E_n =E$$ such that the successive quotients $F_i = E_i/E_{i-1}$ are semistable with $p_{F_i, H} (m)> p_{F_{i-1},H}(m)$ for all $i$ and $m \gg 0$. Furthermore, a semistable sheaf admits a {\em Jordan-H\"{o}lder} filtration into stable sheaves. While the Jordan-H\"{o}lder filtration need not be unique, the associated graded object is unique. Two semistable sheaves with the same associated graded object  are called {\em S-equivalent}.  There exists a projective moduli spaces $M_{X, H}(\v)$ parameterizing S-equivalence classes of $H$-Gieseker semistable sheaves \cite{Gie77, Mar77, Mar78}. When the surface $X$ or the ample $H$ is implicit, we will denote the moduli space simply by $M_{H}(\v)$ or $M(\v)$.

When $X$ is a K3 surface, the basic properties of the moduli spaces are well-understood. We summarize the key facts that will play a crucial role in our analysis.

\begin{Thm}\label{Thm:ClassicalModSp}
Let $X$ be a K3 surface over an algebraically closed field $k$, and let $\v=m\v_0\in\Hal(X,\Z)$, where $\v_0$ is  a primitive positive Mukai vector and $m>0$.  Then $M_{X,H}(\v)$ is non-empty for any ample divisor $H$.  If $H$ is in-fact \emph{generic}\footnote{We refer to \cite{OGr:ModuliVB} for the definition of generic.  It always exists when $\v_0$ is positive.} with respect to $\v$, then we also have the following claims.
\begin{enumerate}
    \item The  moduli space $M_{X,H}(\v)$ is non-empty if and only if $\v_0^2\geq-2$.
    \item If $m=1$ or $\v_0^2 >0$, then  $\dim M_{X,H}(\v)=\v^2+2$.
    \item When $v_0^2=-2$, then  $M_{X,H}(\v)$ is a single point parameterizing the direct sum of $m$ copies of a spherical bundle. When $v_0^2=0$, then $\dim M_{X,H}(\v) = 2m$.
    \item When $\v_0^2>0$, $M_\sigma(\v)$ is a normal irreducible projective variety with $\Q$-factorial singularities.  
\end{enumerate}
\end{Thm}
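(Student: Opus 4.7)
The plan is to assemble this theorem from a sequence of known results, each piece being a reformulation of a classical theorem due to Mukai, G\"ottsche--Huybrechts, O'Grady, Yoshioka, and others. The high-level strategy is to reduce all four claims to statements about the primitive locus $M_{X,H}(\v_0)$ using the Jordan--H\"older filtration, and then invoke Mukai's deformation theory on the K3 surface to control smoothness, dimension, and obstructions.

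For claim (1), non-emptiness in the $\v_0^2 \geq -2$ case I would handle by cases of $\v_0^2$. For $\v_0^2 = -2$, Mukai's construction produces a unique spherical bundle with class $\v_0$ (one starts from a rigid sheaf and deforms using the positivity of $\v_0$). For $\v_0^2 = 0$, existence follows from Mukai's isotropic construction, where $M_{X,H}(\v_0)$ is itself a K3 surface. For $\v_0^2 > 0$, one invokes Yoshioka's existence theorem, which builds a stable sheaf by deforming along a twistor-type family from an easier component. Conversely, if $M_{X,H}(\v)$ is nonempty and $E$ is (semi)stable, Serre duality combined with stability gives $\ext^2(E,E) = \hom(E,E) = 1$ on the stable locus, so that by Riemann--Roch $\v_0^2+2 = \ext^1(E,E) \geq 0$; the semistable case follows by considering Jordan--H\"older factors.

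For claims (2) and (3), the key input is Mukai's theorem that at a stable point $E$, the trace-free part of $\Ext^2(E,E)$ vanishes, so $M_{X,H}(\v_0)$ is smooth of dimension $\v_0^2+2$ along its stable locus. When $m=1$ the moduli space equals its stable locus (up to strict semistables whose loci have the right dimension by induction), giving the dimension formula. When $\v_0^2 = -2$, uniqueness of the spherical bundle $E_0$ forces every polystable representative of a class in $M_{X,H}(m\v_0)$ to be $E_0^{\oplus m}$, so the moduli space is a point. When $\v_0^2 = 0$, the choice of generic $H$ guarantees $M_{X,H}(\v_0)$ is a smooth surface, and Jordan--H\"older associates to each semistable class an unordered $m$-tuple, yielding $\dim M_{X,H}(m\v_0) = 2m$ via the symmetric product $\Sym^m M_{X,H}(\v_0)$.

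For claim (4), when $\v_0^2 > 0$ I would appeal to O'Grady's and Yoshioka's work: irreducibility is proven by degenerating to an easily-understood Mukai vector and using connectedness of the universal moduli in families; normality follows from the smoothness of the stable locus together with a codimension estimate for the strictly semistable locus (dimension drops by at least $2$), which combined with Serre's criterion $S_2 + R_1$ gives normality; $\Q$-factoriality is the deepest input, coming from O'Grady's description of the local structure of the singularities as symplectic quotient singularities, for which $\Q$-factoriality is classical. The main obstacle in this plan is that claim (4) is genuinely hard to prove from scratch and really depends on O'Grady's and Kaledin--Lehn--Sorger's analyses of the singularity types; from the vantage point of this paper it is cleanest to quote these results directly and note that the foundational setup (generic polarization) ensures the hypotheses are met.
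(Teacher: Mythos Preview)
Your proposal is correct and takes essentially the same approach as the paper: both treat this theorem as a compilation of results from the literature (Mukai, Yoshioka, O'Grady, Kaledin--Lehn--Sorger, Perego--Rapagnetta, Bayer--Macr\`i) rather than something to be proved from scratch. The paper is in fact even more terse than you are---it simply lists the references for each item (Yoshioka for non-emptiness, Bayer--Macr\`i for (3), KLS/O'Grady/PR for (4), and the Bridgeland-stability generalization Theorem~\ref{Thm:nNnemptinessModuliK3} for (1) and (2))---whereas you sketch the underlying ideas, which is fine and arguably more informative.
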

We have collected in \cref{Thm:ClassicalModSp} the cumulative and combined work of many mathematicians.  In the form presented here, the first claim is proven in \cite[Theorems 0.1 and 8.1]{Yos01}.  When $H$ is generic, (1) and (2) follow from the more general result \cref{Thm:nNnemptinessModuliK3} below, while (3) is \cite[Lemmas 7.1 and 7.2]{BM14a}.  Finally, (4) is \cite{KLS06,OG:DesingularK3,PR:FactorialityK3}.

The next two theorems record when $M_{X,H}(\v)$ contains $\mu$-stable sheaves or locally free sheaves.  

\begin{Prop}[{\cite[Remarks 2.2 and 3.3]{Yos99a}}]\label{Prop:ExistenceOfSlopeStable} Let $X$ be a K3 surface with $\Pic(X)=\Z H$ and $H^2=2n$. Let $\v=(lr,ldH,a)\in\Hal(X,\Z)$ be a Mukai vector with $\gcd(r,d)=1$ and $M_{X, H}(\v)\neq\varnothing$.  Then $M_{X,H}(\v)^{\mu s}=\varnothing$ if and only if 
\begin{enumerate}
    \item $r\nmid nd^2+1$,  $\v^2=0$, and $\v$ is not primitive; or
    \item $r\mid nd^2+1$ and $\v^2< 2l^2$.
\end{enumerate}
\end{Prop}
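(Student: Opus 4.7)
The plan is to characterize $\mu_H$-stable sheaves of Mukai vector $\v$ through their $\mu_H$-Jordan--H\"older decompositions. Since $\Pic(X)=\Z H$ and $\gcd(r,d)=1$, the slope $d/r$ is already in lowest terms, so every $\mu_H$-stable sheaf of this slope has Mukai vector of the form $\v_k=(kr,kdH,b_k)$ for integers $k\geq 1$ and $b_k\in\Z$. In particular the minimal-rank stable building blocks live in the primitive class $\v_1=(r,dH,a_1)$, which satisfies $\v_1^2=2(nd^2-ra_1)$. By \cref{Thm:ClassicalModSp}(1), a stable sheaf with primitive Mukai vector $\v_1$ exists if and only if $\v_1^2\geq -2$, equivalently $ra_1\leq nd^2+1$. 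Hence the extremal case $\v_1^2=-2$ is realized, by a unique spherical $F$, precisely when $r\mid nd^2+1$; otherwise the maximal $a_1$ yields $\v_1^2\geq 0$ and a positive-dimensional moduli space $M_H(\v_1)$.

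For sufficiency, suppose $\v$ lies outside cases~(1) and~(2); then I would produce a $\mu_H$-stable sheaf of class $\v$ in two steps. When $\v$ is primitive, existence follows from the non-emptiness and irreducibility results underlying \cref{Thm:ClassicalModSp}, since the strictly $\mu_H$-semistable locus is then a proper subvariety. When $\v=t\v_0$ is imprimitive, I would construct a $\mu_H$-stable representative by deforming a polystable object $\bigoplus_i F_i$ where the $F_i$ are distinct $\mu_H$-stable sheaves of rank $r$ with Mukai vectors $(r,dH,a_i)$ summing to $\v$; the base of this deformation has dimension controlled by $\sum_{i\neq j}\ext^1(F_i,F_j)$, and a Mukai-pairing calculation shows this strictly exceeds the dimension of the polystable locus precisely when $\v$ avoids the two exceptional ranges.

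For necessity, the two excluded regimes must consist entirely of strictly $\mu_H$-semistable sheaves. In case~(1), the hypotheses force $\v=l\v_0$ with $\v_0$ primitive isotropic ($\v_0^2=0$), so $M_H(\v_0)$ is a K3 surface and every $E\in M_H(\v)$ is S-equivalent to a direct sum $\bigoplus_{i=1}^l F_i$ of stable members of $M_H(\v_0)$, obstructing $\mu_H$-stability. In case~(2), write $\v=l\v_1-m\pt$ with $\pt=(0,0,1)$ and $\v_1$ the spherical primitive; the identity $\v^2=-2l^2+2lrm$ translates $\v^2<2l^2$ into $m<2l/r$, which is too small to accommodate any $\mu_H$-Jordan--H\"older factor other than the spherical $F$. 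Combined with $\Ext^1(F,F)=0$, this forces every $\mu_H$-semistable $E$ to admit a filtration by copies of $F$ together with zero-dimensional contributions, so no $E\in M_H(\v)$ is $\mu_H$-stable.

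The main technical obstacle will be the sufficiency direction at the boundary of case~(2), i.e.\ producing a $\mu_H$-stable sheaf when $r\mid nd^2+1$ and $\v^2\geq 2l^2$. Here one must deform a polystable object built from the spherical $F$ into a genuinely stable sheaf; this requires a careful Ext-computation using $\chi(F,F)=2$ together with the verification that the strictly $\mu_H$-semistable locus has dimension strictly less than $\dim M_H(\v)=\v^2+2$, so that a generic deformation escapes it.
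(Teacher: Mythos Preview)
The paper does not supply its own proof of this proposition; it is quoted directly from \cite{Yos99a} without argument, so there is no in-paper proof to compare against.

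Your outline has the right shape, but the necessity argument for case~(2) contains a genuine gap. You write that $\v^2<2l^2$ translates to $m<2l/r$, which is ``too small to accommodate any $\mu_H$-Jordan--H\"older factor other than the spherical $F$,'' and then invoke $\Ext^1(F,F)=0$ to force a filtration. But $m$ counts point-modifications in the expression $\v=l\v_1-m\pt$, not Jordan--H\"older factors, and nothing you have written explains why a hypothetical $\mu$-stable $E$ of rank $lr$ must be built from copies of $F$. The actual obstruction bypasses filtrations entirely: if $E$ is $\mu$-stable with $l\geq 2$, then $F$ and $E$ are non-isomorphic $\mu$-stable sheaves of the same slope, so $\Hom(F,E)=\Hom(E,F)=0$ and hence
\[
0\leq\ext^1(F,E)=\langle\v_1,\v\rangle=\langle\v_1,\,l\v_1-m\pt\rangle=-2l+rm,
\]
which immediately gives $\v^2=-2l^2+2lrm\geq 2l^2$. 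This is the mechanism, and your filtration picture does not capture it.

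Your sufficiency direction is a plan rather than a proof. For primitive $\v$ you assert that the strictly $\mu$-semistable locus is a proper subvariety, but that is precisely what must be established---irreducibility of $M_H(\v)$ and Gieseker stability do not by themselves bound the $\mu$-strictly-semistable stratum. For imprimitive $\v$ the deformation-of-polystable strategy is indeed what \cite{Yos99a} carries out, but the substance lies in the dimension estimates for the Harder--Narasimhan and Jordan--H\"older strata inside $\MM_H(\v)^{\mu ss}$ (cf.\ the use of \cite[Lemma~2.3, Prop.~2.4, \S3.3]{Yos99a} later in the present paper), and those estimates are exactly what you have left as an unperformed ``Mukai-pairing calculation.''
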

\begin{Prop}[{\cite{Yos99a}}]\label{Prop:OnlyNonLocallyFreeSheaves} Let $X$ be a K3 surface with $\Pic(X)=\Z H$ and $H^2=2n$. Let $\v=(r,dH,a)\in\Hal(X,\Z)$ be a Mukai vector.  Then $M_{X,H}(\v)$ consists only of non-locally free sheaves if and only if 
\begin{enumerate}
    \item $\v^2>0$ and either 
    \begin{enumerate}
        \item $\v=(l,0,-1)e^{pH}=(l,lpH,lp^2n-1)$ for some $l,p\in\Z$, or
        \item $\v=(1,0,-l)e^{pH}=(1,pH,p^2n-l)$ for some $l,p\in\Z$; or
    \end{enumerate}
    \item $\v^2=0$ and $\v=m(r_0^2,r_0d_0H,d_0^2n)$ for $m,r_0,d_0\in\Z$ such that $d_0^2n-r_0a_0=-1$.
\end{enumerate}
\end{Prop}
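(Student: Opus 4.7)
The plan is to establish both directions of this criterion via the reflexive-hull construction. For any non-locally-free $E \in M_{X,H}(\v)$, the double dual $F := E^{\vee\vee}$ is locally free on the smooth surface $X$, and we have a short exact sequence
$$0 \to E \to F \to T \to 0$$
with $T$ zero-dimensional of some length $k \geq 1$. Thus $\v(F) = \v + k(0,0,1)$, and $F$ is semistable with the same slope as $E$.

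For sufficiency ($\Leftarrow$), I will show that in each listed case, no element of $M_{X,H}(\v)$ is locally free. Case (1b) is immediate: a locally free rank-one torsion-free sheaf with $c_1 = pH$ is necessarily $\OO_X(pH)$, whose Mukai vector $(1, pH, p^2 n + 1)$ differs from $(1, pH, p^2 n - l)$ for $l \geq 1$. For Case (1a), twisting by $\OO_X(-pH)$ reduces to $\v = (l, 0, -1)$ with $l \geq 2$. If $E$ were locally free and semistable with this Mukai vector, then a direct Riemann-Roch computation gives $p_E(m) = (l-1)/l + m^2 n < 2 + m^2 n = p_{\OO_X}(m)$, so semistability of $E$ and of $E^\vee$ (which has the same Mukai vector since $c_1 = 0$) forces $\Hom(\OO_X, E) = \Hom(\OO_X, E^\vee) = 0$. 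Combined with the identity $\Hom(E, \OO_X) \cong H^0(E^\vee) = \Hom(\OO_X, E^\vee)$ for locally free $E$ and Serre duality on the K3, this gives $h^0(E) = h^2(E) = 0$, whence $\chi(E) = -h^1(E) \leq 0$; but $\chi(E) = r + a = l - 1 \geq 1$, a contradiction. For Case (2), I identify $\v_0 := (r_0^2, r_0 d_0 H, d_0^2 n)$ as primitive isotropic and use the assumption $d_0^2 n - r_0 a_0 = -1$ to exhibit the unique spherical bundle $E_0 \in M_{X,H}(r_0, d_0 H, a_0)$; each element of $M_{X,H}(\v_0)$ then has the form $\ker(E_0^{\oplus r_0} \twoheadrightarrow k(x))$ for some $x \in X$, which is never locally free, and the general element of $M_{X,H}(m \v_0)$ is a direct sum of such kernels.

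For necessity ($\Rightarrow$), I will argue by dimension count. If every element of $M_{X,H}(\v)$ is non-locally free, then $M_{X,H}(\v)$ is covered by the image of the elementary-transformation map from the incidence variety of pairs $(F, F \twoheadrightarrow T)$ with $F \in M_{X,H}(\v + k(0,0,1))$ locally free and $T$ zero-dimensional of length $k \geq 1$. The generic fiber of $\mathrm{Quot}^k(F) \to M_{X,H}(\v + k(0,0,1))$ has dimension $k(r+1)$, so the incidence variety has total dimension at most $\dim M_{X,H}(\v + k(0,0,1)) + k(r+1)$. When $\v + k(0,0,1)$ is non-isotropic, this equals $\v^2 + 2 - k(r-1)$, strictly less than $\dim M_{X,H}(\v) = \v^2 + 2$ for $r \geq 2$ and $k \geq 1$. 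So covering $M_{X,H}(\v)$ forces either $r = 1$ (producing case (1b)) or the isotropic dimension jump $\dim M_{X,H}(m \v_0) = 2m > 2 = (m \v_0)^2 + 2$ to compensate for some $k$, corresponding to $\v + k(0,0,1)$ being a multiple of a primitive isotropic vector. A careful analysis of the allowed isotropic classes then recovers cases (1a) and (2).

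The main obstacle I anticipate is the precise enumeration of isotropic classes in the necessity direction: in case (2), the primitive isotropic vector $(r_0^2, r_0 d_0 H, d_0^2 n)$ itself has no locally free representatives (since its moduli consists of the kernels above), so the reflexive-hull chain does not terminate immediately and requires iteration. Additionally, verifying that the elementary-transformation map is actually surjective onto $M_{X,H}(\v)$ (and not merely of the expected dimension) uses the irreducibility of $M_{X,H}(\v)$ from \cref{Thm:ClassicalModSp}(4).
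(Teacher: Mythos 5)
There is a genuine gap in your sufficiency argument for Case (1a), and it is not patchable along the lines you propose. After twisting, you consider a hypothetical locally free Gieseker-semistable $E$ with $\v(E)=(l,0,-1)$, $l\geq 2$, and claim $h^0(E)=h^2(E)=0$, hence $\chi(E)\leq 0$. The vanishing $h^0(E)=\hom(\OO_X,E)=0$ is fine: a nonzero map $\OO_X\to E$ is injective (its image is a torsion-free quotient of the stable sheaf $\OO_X$, hence all of $\OO_X$), and $p_{\OO_X}>p_E$ contradicts semistability of $E$. But the claim $h^2(E)=\hom(E,\OO_X)=0$ is \emph{false} for every Gieseker-semistable $E$ of this Mukai vector. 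Indeed, $\chi(E,\OO_X)=-\langle (l,0,-1),(1,0,1)\rangle = l-1$, and by Serre duality $\ext^2(E,\OO_X)=\hom(\OO_X,E)=0$, so
$$\hom(E,\OO_X)=\chi(E,\OO_X)+\ext^1(E,\OO_X)\geq l-1>0.$$
Thus $h^2(E)=l-1+h^1(E)$, which is perfectly consistent with $\chi(E)=l-1$, and no contradiction arises. The hidden error is your appeal to ``semistability of $E^\vee$'': Gieseker semistability is not preserved under dualization (only $\mu$-semistability is), and the computation above shows that the dual of such an $E$, were it to exist, could not be Gieseker semistable — so the vanishing $\Hom(\OO_X,E^\vee)=0$ cannot be extracted this way. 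The actual reason no locally free member exists is structural, not cohomological: by \cite[Prop.~3.4]{Yos99a} every semistable $E$ with $\v=(l,0,-1)e^{pH}$ sits in $0\to E\to\OO_X(pH)^{\oplus l}\to A\to 0$ with $A$ a nonzero Artinian sheaf, forcing $E^{\vee\vee}=\OO_X(pH)^{\oplus l}\neq E$.

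A secondary issue affects your necessity direction: for a non-locally-free Gieseker-semistable $E$, the double dual $F=E^{\vee\vee}$ is only $\mu$-semistable in general, not Gieseker semistable, so you cannot parameterize it by $M_{X,H}(\v+k(0,0,1))$; the dimension count must be run on the stack of $\mu$-semistable sheaves (this is exactly how Yoshioka's inequalities \cite[(2.10)--(2.13)]{Yos99a} enter), and the enumeration of the isotropic exceptions — the heart of cases (1a) and (2) — is precisely the part you leave unexamined. For comparison, the paper does not reprove this statement at all: it cites \cite[Prop.~0.5]{Yos99a} for primitive $\v$ and observes that the proof of \cite[Lemma~3.1]{Yos99a} goes through without the primitivity hypothesis.
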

\begin{proof}
If $\v$ is primitive, then the result follows directly from \cite[Prop. 0.5]{Yos99a}.  If $\v$ is non-primitive, then the proof of \cite[Lemma 3.1]{Yos99a} still gives the result without assuming primitivity.   
\end{proof}

\subsection{Bridgeland stability conditions on K3 surfaces} Let $\AA$ be an abelian category which is the heart of a bounded $t$-structure on $\Db(X)$. The {\em central charge} $Z:  K(\AA) \to \mathbb{C}$ is a group homomorphism which we assume factors through $\v: K(\AA) \to \Hal(X, \Z)$.  As in \cite{Bri07}, a {\em Bridgeland stability condition} is a pair $\sigma=(Z, \AA)$ satisfying the following conditions:
\begin{enumerate}
\item For every nonzero object $E \in \AA$, $Z(E)= r e^{i \pi \phi}$ for some $r >0$ and $\phi \in (0,1]$. This condition allows us to define the $\sigma$-slope of a nonzero object $E \in \AA$ by $$\mu_{\sigma}(E) = - \frac{\Re (Z(E))}{\Im(Z(E))}.$$ A nonzero object $E \in \AA$ is called $\sigma$-semistable if for every proper subobject $F \subset E$ in $\AA$, $\mu_{\sigma}(F) \leq \mu_{\sigma}(E)$. 
\item The pair $(Z, \AA)$ satisfies the {\em Harder-Narasimhan property}, namely that every object has a finite Harder-Narasimhan filtration with $\sigma$-semistable quotients of decreasing slopes.
\item For a fixed norm $|\cdot |$ on $\Hal(X,\Z)$, there exists a constant $C>0$ such that for all $\sigma$-semistable $E$ we have $|Z(E)| \geq C |\v(E)|$.
\end{enumerate}
 A Bridgeland stability condition $\sigma$ is called {\em geometric} if for every point $x \in X$, the skyscraper sheaves $k(x)$ are $\sigma$-stable. We now give the main example of a geometric Bridgeland stability condition \cite{Bri08}.

 \begin{Ex}\label{Ex:MainStab}
 Let $\beta,\omega\in\NS(X)_\R$ be two real divisor classes, with $\omega$ ample.  For $E\in\Db(X)$, define $$Z_{\beta,\omega}(E):=\langle e^{\beta+i\omega},\v(E)\rangle.$$
 If $E$ has Mukai vector $(r, c, a)$, then we can write $Z_{\beta, \omega}(E)$ more explicitly as
 $$Z_{\beta, \omega}(E)= -a - r \frac{\beta^2 - \omega^2}{2} + c \cdot\beta + i \omega \cdot ( c - r \beta).$$
Let $\AA_{\beta,\omega}$ be defined by 
\begin{equation*} \label{eq:AK3}
\AA_{\beta,\omega}:=\Set{E\in\Db(X)\ |\ \begin{array}{l}
\bullet\;\;\HH^p(E)=0\mbox{ for }p\not\in\{-1,0\},\\\bullet\;\;
\HH^{-1}(E)\in\FF_{\beta,\omega},\\\bullet\;\;\HH^0(E)\in\TT_{\beta,\omega}\end{array}},
\end{equation*} where $\FF_{\beta,\omega}$ and $\TT_{\beta,\omega}$ are defined by
\begin{enumerate}
    \item $\FF_{\beta,\omega}$ is the set of torsion-free sheaves $F$ such that for every subsheaf $F'\subseteq F$ we have 
    $\Im Z_{\beta,\omega}(F')\leq 0$;
    \item $\TT_{\beta,\omega}$ is the set of sheaves $T$ such that, for every non-zero torsion-free quotient $T\onto Q$, we have $\Im Z_{\beta,\omega}(Q)>0$.
\end{enumerate}
Then the pair $\sigma_{\beta,\omega}=(Z_{\beta,\omega}, \AA_{\beta,\omega})$ is a geometric Bridgeland stability condition and furthermore, up to group actions, all geometric Bridgeland stability conditions on a K3 surface arise this way (see \cite[Proposition 10.3]{Bri08}).\footnote{Since we do not make any significant computational or theoretical use of these group actions here, we omit their definition.  See \cite{BM14a,Bri08} for more details.}
\end{Ex}

\subsubsection{Walls}\label{subsubsec:Walls} The set $\Stab(X)$ of Bridgeland stability conditions on $X$ has the structure of a complex manifold \cite[Corollary 1.3]{Bri07}. Let $\Stabd(X)$ denote the connected component of $\Stab(X)$ containing geometric stability conditions. For a fixed Mukai vector, the space $\Stabd(X)$ admits a well-behaved wall and chamber structure that will be the key to our results. More specifically, given a Mukai vector $\v\in\Hal(X,\Z)$, there exists a locally finite set of \emph{walls} (real codimension one submanifolds with boundary) in $\Stabd(X)$, depending only on $\v$, with the following properties (see \cite{Bri08,Tod08} for the (1) and (2)):
\begin{enumerate}
    \item When $\sigma$ varies in a chamber, that is, a connected component of the complement of the union of walls, the sets of $\sigma$-semistable and $\sigma$-stable objects of class $\v$ do not change. If $\v$ is primitive, then $\sigma$-stability coincides with $\sigma$-semistability for $\sigma$ in a chamber for $\v$.
    \item When $\sigma$ lies on a wall $\WW\subset\Stabd(X)$, there is a $\sigma$-semistable object of class $\v$ that is unstable in one of the adjacent chambers and semistable in the other adjacent chamber. If $\sigma= (Z, \AA)$ lies on a wall, there exists a $\sigma$-semistable object $E$ with a subobject $F \subset E$ in $\AA$ with the same $\sigma$-slope.
    \item Assume $\Pic(X) = \mathbb{Z} H$, where $H$ is the ample generator with $H^2=2n$. Then  writing $\beta = sH$ and $\omega = tH$ with $t > 0$, the stability conditions described in Example \ref{Ex:MainStab} determine a half-plane $\{(s,t ) | t>0\}$ of stability conditions. Let $\v(E)=(r,dH,a)$. If an object $F$ of Mukai vector $\v(F) = (r_1, d_1H, a_1)$ destabilizes $E$, then there are two possibilities. If  $r_1 d = r d_1$, or equivalently $\mu_H(E) = \mu_H(F)$,  then the wall determined by $F$ is a vertical half-line $$s = \frac{a d_1  - a_1 d}{r_1 a - ra_1},\qquad t>0 .$$ Otherwise, the wall determined by $F$ is a semi-circle $C^{\v(E)}_{\v(F)}$ with center $(\alpha,0)$ and radius $\rho$ given by 
    \begin{equation}\label{eqn:CenterRadius}
        \alpha = \frac{ra_1-r_1 a}{2n(rd_1 - r_1 d)}, \qquad \rho^2 = \alpha^2 - \frac{a_1 d - a d_1}{n(rd_1 - r_1 d)}. 
    \end{equation} The distinct walls are disjoint and nested \cite[Theorem 3.1]{Mac04}.
    \item Given a polarization $H\in\Amp(X)$ and the Mukai vector $\v$ of an $H$-Gieseker semistable sheaf, there exists a chamber $\GG$ for $\v$, the \emph{Gieseker chamber}, where the set of $\sigma$-semistable objects of class $\v$ coincides with the set of $H$-Gieseker semistable sheaves \cite[Prop. 14.2]{Bri08}.
\end{enumerate}

\begin{Def}\label{def:generic} Let $\v\in\Hal(X,\Z)$.  A stability condition $\sigma\in\Stabd(X)$ is called \emph{generic} with respect to $\v$ if it does not lie on any wall for $\v$.
\end{Def}

\subsubsection{Moduli stacks and moduli spaces}

For $\sigma \in \Stab^\dagger(X)$, 
let $\MM_\sigma(\v)$ be the moduli stack of $\sigma$-semistable objects $E$
with $\v(E)=\v$ and let $\MM_\sigma^s(\v)$ 
the open substack of $\sigma$-stable
objects. By \cite[Thm. 4.12]{Tod08}, 
$\MM_\sigma(\v)$ is an Artin stack of finite type. We say two objects $E_1$ and $E_2$ in $\MM_\sigma(\v)(k)$ are S-equivalent if they have the same Jordan-H\"{o}lder  factors.  For $\sigma\in\Stabd(X)$ generic with respect to $\v$, $\MM_{\sigma}(\v)$ (resp. $\MM_\sigma^s(\v)$) admits a projective coarse moduli scheme $M_\sigma(\v)$ (resp. $M_\sigma^s(\v)$), which parameterizes S-equivalence
classes of $\sigma$-semistable (resp. $\sigma$-stable) objects $E$ with $\v(E)=\v$ (see \cite{BM14a} and \cite{MYY14}).  

The following result, which generalizes \cref{Thm:ClassicalModSp}, gives precise conditions for nonemptiness of the moduli spaces $M_\sigma(\v)$ and is proven in \cite{BM14b} and \cite{BM14a}.
\begin{Thm}[{\cite[Thm. 2.15]{BM14b},\cite[Theorem 1.3]{BM14a}}]\label{Thm:nNnemptinessModuliK3}
Let $X$ be a K3 surface over $k$, and let $\sigma\in\Stabd(X)$ be a generic stability condition with respect to $\v=m\v_0\in\Hal(X,\Z)$, where $\v_0$ is primitive and $m>0$.
\begin{enumerate}
    \item The coarse moduli space $M_\sigma(\v)$ is non-empty if and only if $\v_0^2\geq-2$.
    \item Either $\dim M_\sigma(\v)=\v^2+2$ and $M_\sigma^s(\v)\neq\varnothing$, or $m>1$ and $\v_0^2\leq0$.
    \item When $\v_0^2>0$, $M_\sigma(\v)$ is a normal irreducible projective variety with $\Q$-factorial singularities.  
\end{enumerate}
\end{Thm}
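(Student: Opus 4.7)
The plan is to reduce the three claims to the classical Gieseker case, Theorem 2.1, by travelling through walls in $\Stabd(X)$ and exploiting the symplectic nature of the moduli spaces. Throughout, the starting observation is that any $\sigma$ in $\Stabd(X)$ can be connected by a path (crossing only finitely many walls for $\v$) to a stability condition $\sigma_{\beta,\omega}$ with $\omega = tH$ and $t \gg 0$; by the Gieseker chamber property recorded in Section~2.3.1(4), at such a large-volume point $M_\sigma(\v) = M_H(\v)$.

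For claim (1), the large-volume identification together with Theorem 2.1 gives the non-emptiness criterion $\v_0^2\geq -2$ at one end of the path. To propagate non-emptiness to a general generic $\sigma$, I would argue inductively across each wall $\WW$: on $\WW$ itself, semistability survives for any object that was semistable in the adjacent chamber, so one can produce a $\sigma$-semistable object on the new side by taking either a Jordan--Hölder extension of stable building blocks with the right phase, or by modifying a Gieseker semistable sheaf via a universal spherical twist/derived autoequivalence that respects $\v$. The non-triviality of such extensions uses $\v_0^2 \geq -2$ through the Mukai pairing.

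For claim (2), the equality $\dim M_\sigma(\v) = \v^2+2$ is a deformation-theoretic statement that only uses that $X$ is a K3 and that $\sigma$-semistable objects are simple (when stable); Serre duality gives $\Ext^2(E,E)\cong\Hom(E,E)^\vee$, so the tangent-obstruction calculus yields the expected dimension at stable points. The existence of a $\sigma$-stable object in class $\v$ when $m=1$ follows directly from genericity of $\sigma$ with respect to $\v$: no wall for $\v$ passes through $\sigma$, so on the primitive class there are no strictly semistable objects. For non-primitive $\v$ with $\v_0^2>0$, I would produce stables by deforming a direct sum $E_1\oplus\cdots\oplus E_m$ of $\sigma$-stable representatives of $\v_0$ along non-trivial extensions; the dimension of $\Ext^1$ is positive whenever $\v_0^2>0$, and a standard deformation argument (as in Mukai's original work, transported across walls) produces a simple $\sigma$-semistable, hence stable, object.

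For claim (3), I would compare $M_\sigma(\v)$ to the Gieseker moduli space by the wall-crossing path above. The geometric input is that across each wall, the two adjacent moduli spaces are related by a birational map which is an isomorphism in codimension one (in the BM language: a flop along a Brill--Noether-type locus, with the symplectic structure controlling the codimension of the exceptional locus). Since Theorem 2.1(4) already guarantees that the Gieseker endpoint is a normal, irreducible, $\Q$-factorial projective variety, and each of these properties is preserved under isomorphism-in-codimension-one between projective varieties with $\Q$-Gorenstein singularities (normality is local and passes through small modifications; irreducibility is topological; $\Q$-factoriality is preserved by small birational maps between klt varieties), the same holds for $M_\sigma(\v)$.

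The main obstacle is the wall-crossing step itself: describing the precise geometry of $M_\sigma(\v)$ as $\sigma$ crosses a wall, and in particular verifying that the exceptional loci have codimension at least two. This is a delicate analysis of the possible Jordan--Hölder decompositions along each wall and is exactly what occupies the bulk of \cite{BM14a,BM14b}. The upshot is that for positive Mukai vectors the discrepancy between adjacent moduli spaces is always of codimension $\geq 2$, so no birational invariant listed in (3) is lost across a wall, and the non-emptiness and dimension statements in (1), (2) propagate unchanged from the Gieseker chamber.
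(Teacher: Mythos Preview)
The paper does not prove this theorem; it is stated as a citation of \cite{BM14a,BM14b}, so there is no ``paper's own proof'' to compare against. What I can do is comment on whether your sketch matches the actual Bayer--Macr\`{i} argument and whether it is correct.

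Your outline for (1) and (2) is roughly in the right spirit, though the construction of $\sigma$-stable objects when crossing a totally semistable wall is more delicate than ``take a Jordan--H\"older extension''; this is where the detailed case analysis of \cite{BM14b} enters. But this is a matter of missing detail rather than a wrong idea.

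The genuine gap is in (3). Your claim that normality and $\Q$-factoriality are preserved under birational maps that are isomorphisms in codimension one is false. $\Q$-factoriality is \emph{not} a small-birational invariant: a standard example is a small contraction of a $\Q$-factorial variety to a variety with a non-$\Q$-factorial singularity (e.g.\ contracting a line on a smooth threefold to an ordinary double point). So even granting that the wall-crossing maps are isomorphisms in codimension one, you cannot transport $\Q$-factoriality along the path from the Gieseker chamber this way.

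The actual argument in \cite{BM14a} avoids this entirely: rather than propagating properties across walls, Bayer--Macr\`{i} show that for any generic $\sigma$ there is a derived autoequivalence $\Phi$ of $\Db(X)$ (or to a Fourier--Mukai partner, possibly twisted) which identifies $\sigma$-stability with Gieseker $H'$-stability for some polarization $H'$. This yields an \emph{isomorphism} $M_\sigma(\v)\cong M_{H'}(\Phi(\v))$, and then (3) follows immediately from the classical result (Theorem~\ref{Thm:ClassicalModSp}(4)). No wall-crossing is needed for the structural properties in (3).
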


\subsubsection{Wall-crossing and Birational transformations} As originally envisioned in \cite{Bri08}, there is a beautiful correspondence between crossing Bridgeland walls and birational transformations between the corresponding moduli spaces.  In this  subsection, we summarize the relevant details of this correspondence from \cite{BM14b}, where it is shown how to classify the geometric behavior at a wall in terms of a certain hyperbolic lattice.

Let $\v\in\Hal(X,\Z)$ with $\v^2>0$, and let $\WW$ be a wall for $\v$.  We will say a given Mukai vector $\v_1$ \emph{induces} $\WW$ if $\WW$ is a connected component of the set $\Set{\sigma\in\Stabd(X)\ |\ \frac{Z_\sigma(\v)}{Z_\sigma(\v_1)}\in\R }$.  We say $\sigma_0\in\WW$ is \emph{generic} if it does not belong to any other wall, and we denote by $\sigma_+$ and $\sigma_-$ two generic stability conditions nearby $\WW$ in two opposite adjacent chambers.  Then all $\sigma_\pm$-semistable objects are still $\sigma_0$-semistable, but the existence of $\sigma_0$-stable objects dictates much of the birational behavior exhibited by crossing $\WW$.  While Bayer and Macr\`{i} achieved a complete classification of walls, and the associated birational transformation, we will only be interested in totally semistable walls.  Recall that a wall $\WW$ is called \emph{totally semistable} if $M_{\sigma_0}^s(\v)=\varnothing$ for any $\sigma_0\in\WW$.  That is, every object in $M_{\sigma_\pm}(\v)$ becomes strictly $\sigma_0$-semistable.  Their result gives the following classification of totally semistable walls.
\begin{Prop}\label{Prop:BMClassificationOfTSSWalls}
Let $\WW$ be a wall for $\v$ with $\v^2>0$.  Then $\WW$ is totally semistable if and only if $\WW$ is induced by either 
\begin{enumerate}
    \item\label{enum:SphericalTSS} a spherical class $\v_1$ such that $\langle \v,\v_1\rangle<0$, or
    \item\label{enum:IsotropicTSS} an isotropic class $\v_1$ such that $\langle \v,\v_1\rangle=1$.
\end{enumerate}
\end{Prop}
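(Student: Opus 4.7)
The plan is to introduce the rank-two hyperbolic sublattice $\HH_\WW\subset\Hal(X,\Z)$ consisting of classes $\w$ with $Z_\sigma(\w)/Z_\sigma(\v)\in\R$ for $\sigma\in\WW$.  By definition of the wall, $\HH_\WW$ contains $\v$ and, for any generic $\sigma_0\in\WW$, the Mukai vector of every Jordan-H\"{o}lder factor of every $\sigma_0$-semistable object of class $\v$.  Because $\v^2>0$, the restriction of the Mukai pairing to $\HH_\WW$ is non-degenerate of signature $(1,-1)$, so $\HH_\WW$ is a hyperbolic plane.  Total semistability of $\WW$ is then equivalent to the condition that $\v$ admits no $\sigma_0$-stable representative, a statement governed entirely by the arithmetic of $\HH_\WW$.

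For the sufficiency direction, suppose first that $\v_1\in\HH_\WW$ is spherical with $\langle\v,\v_1\rangle<0$.  Rigidity of spherical classes, together with deformation from a nearby chamber, produces a unique $\sigma_0$-stable object $S$ with $\v(S)=\v_1$.  The Euler-pairing computation $\chi(S,E)=-\langle\v_1,\v\rangle>0$ forces $\Hom(S,E)\ne 0$ or $\Ext^2(S,E)\ne 0$ for every $\sigma_0$-semistable $E$ of class $\v$.  Serre duality combined with a phase comparison rules out $\Ext^2(S,E)\ne 0$, and the resulting nonzero map $S\to E$ is injective in the heart because $S$ is $\sigma_0$-stable of the same phase as $E$.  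Hence $E$ is strictly $\sigma_0$-semistable and $\WW$ is totally semistable.  The isotropic case proceeds analogously: an isotropic $\v_1$ with $\langle\v,\v_1\rangle=1$ gives $\chi(T,E)=-1$ for any $\sigma_0$-stable $T$ of class $\v_1$, and the same argument embeds $T$ into $E$.

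For the necessity direction the plan is contrapositive: assuming no spherical $\v_1\in\HH_\WW$ with $\langle\v,\v_1\rangle<0$ and no isotropic $\v_1$ with $\langle\v,\v_1\rangle=1$ exists, I would construct a $\sigma_0$-stable object of class $\v$.  One enumerates the primitive spherical and isotropic classes of the hyperbolic plane $\HH_\WW$; the numerical hypotheses guarantee that none of them can serve as a Jordan-H\"{o}lder factor of a generic $\sigma_+$-semistable object of class $\v$ under the wall-crossing degeneration.  Tracking the minimal-phase factor in the limit $\sigma_+\to\sigma_0$ then forces $\v$ itself to appear as the class of a $\sigma_0$-stable Jordan-H\"{o}lder factor, yielding $M_{\sigma_0}^s(\v)\neq\varnothing$.

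The main obstacle is the necessity direction: one must rule out all possible Jordan-H\"{o}lder decompositions of $\v$ inside $\HH_\WW$ that avoid $\v$ itself, while the hyperbolic geometry of $\HH_\WW$ \emph{a priori} permits many such decompositions.  This requires a careful case analysis over the finitely many primitive spherical classes in $\HH_\WW$, an inductive construction of stable objects extending $\sigma_0$-semistable ones with strictly smaller class, and delicate bookkeeping when $\v$ is non-primitive or when several spherical subclasses interact.  This combinatorial core is precisely the heart of the Bayer-Macr\`{i} argument and is where most of the technical difficulty resides.
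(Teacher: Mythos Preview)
The paper does not prove this proposition; it is quoted from \cite{BM14b}, so there is no in-paper argument to compare against. Your outline follows the Bayer--Macr\`{i} strategy, but the sufficiency argument for the isotropic case is wrong. With $\langle\v,\v_1\rangle=1$ you correctly compute $\chi(T,E)=-\langle\v_1,\v\rangle=-1$, but this is \emph{negative}: one can perfectly well have $\Hom(T,E)=\Ext^2(T,E)=0$ and $\dim\Ext^1(T,E)=1$, so no destabilising morphism is produced and ``the same argument'' does not go through. The actual proof in \cite{BM14b} exploits the two-dimensional family $M_{\sigma_0}(\v_1)$ (a K3 surface) and the associated Fourier--Mukai equivalence, under which $E$ becomes a rank-one torsion-free sheaf whose failure to be locally free supplies the destabilising sub- or quotient object; a single Euler-characteristic count against one fixed $T$ cannot replace this. (A minor point in the spherical case: phase comparison does not rule out $\Ext^2(S,E)\neq0$, since at the wall $S$ and $E$ have equal phase; but either $\Hom(S,E)\neq0$ or $\Hom(E,S)\neq0$ still exhibits $E$ as strictly semistable, so your conclusion there survives.)

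For necessity you have only restated the goal. The inductive construction of a $\sigma_0$-stable object and the case analysis over the spherical and isotropic configurations in $\HH_\WW$ is the substance of \cite[\S6--8]{BM14b} and cannot be summarised as ``tracking the minimal-phase factor''; in particular, when two $\sigma_0$-stable spherical objects are present the argument requires iterated spherical twists to reach a minimal class, and this is where the real work lies.
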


\section{Strategy }\label{sec-Strategy}
In this section, we describe our approach to studying the weak Brill-Noether property and global generation.  Let  $X$ be a K3 surface with $\Pic(X)={\Bbb Z}H$, $H^2=2n$.  Let $E$ be a stable sheaf with the Mukai vector
$\v(E)=(r,dH,a) \in\Hal(X,\Z)$, $d>0$ and $\chi(E)=r+a \geq 0$.
By Serre duality and stability,  $$H^2(X, E) \cong \Ext^2(\OO_X,E)\cong\Hom(E,
\OO_X)^{\vee}=0.$$
Hence, the weak Brill-Noether property reduces to the vanishing of $H^1(X,E)$ for a generic sheaf $E\in M_H(\v)$.  We will also investigate for which $\v$, the generic sheaf $E\in M_H(\v)$ is globally generated, equivalently the evaluation map
\begin{equation}\label{eq:ev}
f:H^0(X,E) \otimes {\cal O}_X \to E,
\end{equation}
is surjective.

We will study the Brill-Noether and global generation questions using a certain Fourier-Mukai transform.  Let $I_\Delta$ be the ideal sheaf of the diagonal $\Delta \subset X \times X$, let $\pi_1$ and $\pi_2$ denote the two projections from $X \times X$ to the two factors
and let $\Phi_{X \to X}^{I_\Delta}:\Db(X) \to \Db(X)$ be the integral functor whose kernel is $I_\Delta$.  The fundamental fact behind our approach is the following result.

\begin{Lem}\label{Lem:FM-vanishing}
Let $E$ be a coherent sheaf and 
assume that $F:=\Phi_{X \to X}^{I_\Delta}(E)^{\vee}$ is a coherent sheaf.
Then 
\begin{enumerate}
\item
 $H^1(X,E)=H^2(X,E)=0$.
\item
$F$ is torsion free if and only if $E$  fails to be globally generated in at most finitely many points.
\item
$F$ is locally free if and only $E$ is globally generated.
\end{enumerate}
\end{Lem}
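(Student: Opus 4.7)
The plan is to exploit the distinguished triangle obtained by applying $R\pi_{2*}$ to the tautological sequence
\[ 0\to I_\Delta\otimes\pi_1^*E\to\pi_1^*E\to\OO_\Delta\otimes\pi_1^*E\to 0 \]
on $X\times X$. Flat base change gives $R\pi_{2*}\pi_1^*E\cong \bigoplus_i H^i(X,E)\otimes\OO_X[-i]$, while $R\pi_{2*}(\OO_\Delta\otimes\pi_1^*E)\cong E$, so we obtain a distinguished triangle
\[ G:=\Phi_{X\to X}^{I_\Delta}(E)\to \bigoplus_i H^i(X,E)\otimes\OO_X[-i]\to E \xrightarrow{+1}, \]
whose middle arrow is, in degree $0$, the evaluation morphism $\ev$ of \eqref{eq:ev}.

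For claim $(1)$, I would dualize this triangle to obtain
\[ E^\vee\to \bigoplus_i H^i(X,E)^*\otimes\OO_X[i]\to F\xrightarrow{+1}. \]
Since $E$ is coherent on the smooth surface $X$, $E^\vee$ has cohomology sheaves only in degrees $0,1,2$, while by hypothesis $F$ is concentrated in degree $0$. Reading off the long exact sequence of cohomology sheaves in degrees $-1$ and $-2$ then forces $H^1(X,E)^*\otimes\OO_X=H^2(X,E)^*\otimes\OO_X=0$, proving $(1)$. Using these vanishings and the four-term cohomology sequence of the original triangle, we identify $\HH^0(G)=\ker(\ev)$, $\HH^1(G)=\coker(\ev)$, and $\HH^2(G)=0$.

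Writing $K:=\ker(\ev)$ and $C:=\coker(\ev)$, the Postnikov triangle $K\to G\to C[-1]$ dualizes to
\[ \RlHom(C,\OO_X)[1]\to F\to \RlHom(K,\OO_X)\xrightarrow{+1}. \]
Since $K\subseteq H^0(X,E)\otimes\OO_X$ is torsion-free, $\lExt^2(K,\OO_X)=0$ and $\lHom(K,\OO_X)$ is reflexive, hence locally free on the surface. Combined with $\HH^1(F)=0$, the degree-$0$ portion of the associated long exact sequence reads
\[ 0\to \lExt^1(C,\OO_X)\to F\to \lHom(K,\OO_X)\xrightarrow{d}\lExt^2(C,\OO_X)\to 0. \]
By local duality on $X$, the torsion subsheaf of $F$ is precisely $\lExt^1(C,\OO_X)$, which is nonzero exactly when the torsion sheaf $C$ has a pure $1$-dimensional component; therefore $F$ is torsion-free iff $C$ is $0$-dimensional, which is $(2)$. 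Assuming this, $F=\ker(d)$, and a short computation shows that $\lHom(K,\OO_X)$ is the reflexive hull of $F$, so $F$ is locally free iff $d=0$ iff $\lExt^2(C,\OO_X)=0$; for $0$-dimensional $C$ this occurs iff $C=0$, giving $(3)$. The main subtlety is the careful bookkeeping of the local Ext sheaves of the torsion sheaf $C$ via local duality on the K3 surface, but these computations are standard.
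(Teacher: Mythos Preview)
Your proof is correct, but it takes a more circuitous route than the paper's. The paper's key simplification is to invoke derived biduality at the outset: since $F=G^\vee$ is assumed to be a sheaf, $G\cong F^\vee$ in $\Db(X)$, so $\HH^i(G)=\lExt^i(F,\OO_X)$ directly. Part (1) then follows because $\lExt^i(F,\OO_X)$ is supported in codimension $\geq i$, hence torsion for $i>0$, and thus cannot surject onto the free sheaves $H^i(X,E)\otimes\OO_X$ in the long exact sequence you wrote down. For (2) and (3) the paper simply reads off $\coker(\ev)=\HH^1(G)=\lExt^1(F,\OO_X)$ (and $\lExt^2(F,\OO_X)=0$) and invokes the standard characterizations: $F$ is torsion-free iff $\lExt^1(F,\OO_X)$ is $0$-dimensional, and $F$ is locally free iff $\lExt^1(F,\OO_X)=0$.

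Your approach instead dualizes the full triangle, and then dualizes a second time (the Postnikov triangle $K\to G\to C[-1]$) to express $F$ in terms of $\lExt^i(C,\OO_X)$ and $K^\vee$. This works and is ultimately equivalent, but you are in effect reconstructing the identity $C=\lExt^1(F,\OO_X)$ by going around the biduality square twice. One small point you left implicit: you assert that $C$ is torsion when analyzing $\lExt^1(C,\OO_X)$, but you never proved this. It does follow from your own exact sequence---the degree $-1$ part of the dualized Postnikov triangle gives $\lHom(C,\OO_X)=0$ since $\HH^{-1}(F)=0$---but you should say so explicitly.
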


\begin{proof}
For each $E \in \Coh(X)$, tensoring the exact sequence $$ 0 \to I_{\Delta} \to {\cal O}_{X\times X} \to {\cal O}_{\Delta} \to 0$$ by $\pi_1^* E$ and pushing forward by $\pi_2$ induces a long  exact sequence,
\begin{equation}\label{eqn:ShortToLongForEvalSequence}
\begin{CD}
0 @>>> \HH^0(\Phi_{X \to X}^{I_\Delta}(E)) @>>> H^0(X,E) \otimes {\cal O}_X
@>{f}>> E\\
@>>> \HH^1(\Phi_{X \to X}^{I_\Delta}(E)) @>>> H^1(X,E) \otimes {\cal O}_X
@>>> 0\\
@>>> \HH^2(\Phi_{X \to X}^{I_\Delta}(E)) @>>> H^2(X,E) \otimes {\cal O}_X
@>>> 0.
\end{CD}
\end{equation}
Note that $f$ in (\ref{eqn:ShortToLongForEvalSequence}) is the evaluation map from (\ref{eq:ev}). By our assumption, $F^\vee=\Phi_{X\to X}^{I_\Delta}(E)^{\vee\vee}=\Phi_{X\to X}^{I_\Delta}(E)$, so $$\HH^i(\Phi_{X\to X}^{I_\Delta}(E))=\HH^i(F^\vee)=\lExt^i(F,\OO_X).$$ Since $\lExt^i(F,\OO_X)$ is supported in codimension at least $i$,  $\HH^i(\Phi_{X\to X}^{I_\Delta}(E))$ is torsion for $i>0$.  The exact sequence (\ref{eqn:ShortToLongForEvalSequence}) implies that $H^i(X,E)=0$ for $i>0$ and  $\HH^2(\Phi_{X\to X}^{I_\Delta}(E))=0$.  If $F$ is torsion free, then $\lExt^1(F,\OO_X)$
is 0-dimensional, and hence
$f$ is surjective in codimension 1.  
Conversely if $f$ is surjective in codimension 1, then $\lExt^1(F,\OO_X)$
is 0-dimensional.  As $\lExt^2(F,\OO_X)=0$, $F$ must be torsion-free  \cite[Proposition 1.1.10]{HL10}.  Finally, $f$ is surjective if and only if $\lExt^1(F,\OO_X)=\HH^1(\Phi_{X\to X}^{I_\Delta}(E))=0$. This holds if and only if $F$ is locally free.
\end{proof}

\cref{Lem:FM-vanishing} reduces cohomology vanishing and global generation to understanding the object $\Phi_{X\to X}^{I_\Delta}(E)^\vee$, which we will study using Bridgeland stability conditions and wall-crossing.  There is a distinguished chamber ${\cal C}$ in  $\Stabd(X)$ such that if $\sigma$ is a stability condition in ${\cal C}$ and $E$ is a $\sigma$-semistable object of class $\v$, then $\Phi_{X\to X}^{I_\Delta}(E)^\vee$ is a Gieseker semistable sheaf of Mukai vector $(a,dH,r)$. We now make this precise.

When $\Pic(X)=\Z H$, by \cref{Ex:MainStab}, any geometric Bridgeland stability condition $\sigma$ is equal to $\sigma_{(s,t)}:=\sigma_{sH,tH}$ for some $s,t\in\R$ with $t>0$ up to the $\widetilde{\GL}_2^+(\R)$-action.  We write $Z_{(s,t)}$ for its central charge and $\mu_{(s,t)}$ for the corresponding slope function.  By \cite[Lemma 6.2]{Bri08}, these $\sigma_{(s,t)}$ are parameterized by the open subset $\H^0$ of the upper half-plane $\H$ defined by \begin{equation}\label{eq:geometric-def}
\H^0=\H\setminus\bigcup_{\v_1\in\Delta^+(X)}\Set{(s,t)\ |\ \langle e^{(s+\ii t)H},\v_1\rangle\in\R_{\leq 0} },
\end{equation}
where $\Delta^+(X)$ is the subset of spherical Mukai vectors $\v_1$ with $\rk\v_1>0$.  By {\cite[Prop. 2.16]{Yos17}}, $\H^0$ contains the regions $U_
\pm$ seen in \cref{Fig:U_pm} and defined by
\begin{equation}\label{eq:U_*}
U_\pm:=
\Set{(s,t) \ |\ 0<\pm s<\sqrt{\frac{1}{2(H^2)}},
\;t>\sqrt{\frac{1}{2(H^2)}}-
\sqrt{\frac{1}{2(H^2)}-s^2} }.
\end{equation}
Furthermore, 
$I_x^{\vee}$ $(x \in X)$ 
is a $\sigma_{(s,t)}$-stable object for $(s,t) \in U_+$ and
$I_x$ $(x \in X)$ is a $\sigma_{(s,t)}$-stable object for $(s,t) \in U_-$.
\begin{figure}[h]
\caption{The regions $U_\pm$}\label{Fig:U_pm}
   \begin{tikzpicture}[scale=7]
   \draw [->] (-0.75,0) -- (0.75,0) node[above] {$s$};
   \draw[->] (0,0) -- (0,.7) node[left] {$t$};
   \draw[-] (0.5,0) node [below] {$s=\sqrt{\frac{1}{2(H^2)}}$} -- (0.5,.7);
   \draw[-] (-0.5,0) node [below] {$s=-\sqrt{\frac{1}{2(H^2)}}$} -- (-0.5,.7);
   \draw[gray,domain=-0.5:0] plot (\x,{0.5-sqrt(0.25-pow(\x,2))});
  \draw[gray,domain=0:0.5] plot (\x,{0.5-sqrt(0.25-pow(\x,2))});
  \node[below] at (0.25,0.5) {$U_+$};
  \node[below] at (-0.25,0.5) {$U_-$};
   \end{tikzpicture}
\end{figure}

Given the Mukai vector $\v=(r,dH,a)$ of a stable sheaf $E$, let $C^\v_0$ be the semi-circular wall defined by $\mu_{(s,t)}(I_x^{\vee}[1])=\mu_{(s,t)}(E)$, equivalently by
\begin{equation}
t^2+s\left(s-\frac{2a}{d(H^2)} \right)=0.
\end{equation}
 Observe that $C^\v_0$ contains the origin, and moreover, if $1 \gg s>0$ and $t>\sqrt{s\left(\frac{2a}{d(H^2)}-s \right)}$, then
$\mu_{(s,t)}(I_x^{\vee}[1])>\mu_{(s,t)}(E)$, so we let ${\cal C}$ be the chamber containing these points and whose closure contains
$C^\v_0$.  The following result is  crucial for studying $\Phi_{X\to X}^{I_\Delta}(E)^\vee$.

\begin{Prop}[{\cite[Thm. 4.9]{MYY14} }]\label{prop:isom}
Assume that $(s,t) \in {\cal C}$. Then
we have an isomorphism
$$
M_{\sigma_{(s,t)}}(r,dH,a) \cong M_H(a,dH,r)
$$
by sending $E \in M_{\sigma_{(s,t)}}(r,dH,a)$ to
$\Phi_{X \to X}^{I_\Delta}(E)^{\vee}$.
\end{Prop}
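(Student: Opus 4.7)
The plan is to realize the functor $E\mapsto \Phi_{X\to X}^{I_\Delta}(E)^\vee$ as a composition of standard autoequivalences of $\Db(X)$ and to track the induced action on the Bridgeland stability manifold, showing that it carries the chamber $\CC$ into the Gieseker chamber for $(a,dH,r)$. From the triangle of Fourier--Mukai functors associated to $0\to I_\Delta\to \OO_{X\times X}\to \OO_\Delta\to 0$, already used in the proof of \cref{Lem:FM-vanishing}, we obtain
\[
\Phi_{X\to X}^{I_\Delta}(E)\to H^*(X,E)\otimes \OO_X\to E\to \Phi_{X\to X}^{I_\Delta}(E)[1],
\]
which identifies $\Phi_{X\to X}^{I_\Delta}(E)[1]$ with the spherical twist $T_{\OO_X}(E):=\cone(\RHom(\OO_X,E)\otimes \OO_X\to E)$. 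Hence $\Psi(E):=\Phi_{X\to X}^{I_\Delta}(E)^\vee$ equals $T_{\OO_X}(E)^\vee[1]$, the composition of a spherical twist, derived duality, and a shift. Applying the spherical-reflection formula $\v(T_{\OO_X}(F))=\v(F)+\langle \v(\OO_X),\v(F)\rangle \v(\OO_X)$ with $\v(\OO_X)=(1,0,1)$, together with $\v(F^\vee)=\v(F)^\vee$ and $\v(F[1])=-\v(F)$, yields $\v(\Psi(E))=(a,dH,r)$, so at least the numerical invariants line up.

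The core step is to show that for $(s,t)\in\CC$, $\Psi$ carries $\sigma_{(s,t)}$-semistable objects of class $(r,dH,a)$ to $H$-Gieseker semistable sheaves of class $(a,dH,r)$. This splits into two parts. First, $\Psi(E)$ must be a coherent sheaf concentrated in a single degree: the defining inequality $\mu_{(s,t)}(I_x^\vee[1])>\mu_{(s,t)}(E)$ on $\CC$, combined with $\sigma_{(s,t)}$-semistability of both $E$ and $I_x^\vee[1]$ (the latter valid throughout $\CC$ by the discussion preceding the proposition), forces $\Hom(I_x^\vee[1],E)=0$ for every $x\in X$; together with the Serre-dual vanishing $\Hom(E,I_x^\vee[2])=0$, this forces the sheaf cohomologies $\HH^i(\Phi_{X\to X}^{I_\Delta}(E))$ to concentrate in a single degree, so its dual is an honest sheaf. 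Second, semistability: a destabilizing subsheaf $F\hookrightarrow \Psi(E)$ with strictly larger reduced Hilbert polynomial would, upon applying the quasi-inverse $\Psi^{-1}$, produce a subobject of $E$ in the heart $\AA_{sH,tH}$ of $\sigma_{(s,t)}$-slope strictly greater than $\mu_{(s,t)}(E)$, contradicting $\sigma_{(s,t)}$-semistability of $E$. This hinges on the fact that $\Psi$ carries $\sigma_{(s,t)}$ to a stability condition lying in the Gieseker chamber for $(a,dH,r)$, where Bridgeland and Gieseker semistability coincide (cf. \cref{subsubsec:Walls}).

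Finally, to upgrade the pointwise bijection to a scheme-theoretic isomorphism, I would apply the relative version of $\Psi$ to a (quasi-)universal family over $X\times M_{\sigma_{(s,t)}}(r,dH,a)$ to produce a flat family of $H$-Gieseker semistable sheaves of class $(a,dH,r)$, inducing the desired morphism by the universal property of $M_H(a,dH,r)$; applying $\Psi^{-1}$ analogously yields a two-sided inverse. The main obstacle will be the precise identification of where $\Psi$ sends the chamber $\CC$ inside $\Stabd(X)$, i.e., confirming it lands in the Gieseker chamber for $(a,dH,r)$; this requires a delicate analysis of how spherical twists, derived duality, and shifts act on the stability manifold, and is the substantive content of \cite[Thm.~4.9]{MYY14}.
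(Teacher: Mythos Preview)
The paper does not give its own proof of this proposition; it simply cites \cite[Thm.~4.9]{MYY14}. There is therefore no in-paper argument to compare against.

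Your sketch is a reasonable outline of the strategy behind the cited result: you correctly identify $\Phi_{X\to X}^{I_\Delta}(E)^\vee$ with $T_{\OO_X}(E)^\vee[1]$ (a composition of spherical twist, duality, and shift), verify the induced map on Mukai vectors sends $(r,dH,a)$ to $(a,dH,r)$, and you correctly locate the substantive difficulty in tracking where this autoequivalence sends the chamber $\CC$ inside $\Stabd(X)$. Your candid acknowledgment that this last step ``is the substantive content of \cite[Thm.~4.9]{MYY14}'' is accurate: the argument there proceeds by explicitly computing how spherical twists and duality act on the $(s,t)$-plane of geometric stability conditions and showing that $\CC$ lands in the Gieseker chamber for $(a,dH,r)$. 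One small point: your claim that the vanishings $\Hom(I_x^\vee[1],E)=0$ and $\Hom(E,I_x^\vee[2])=0$ force $\Phi_{X\to X}^{I_\Delta}(E)^\vee$ to be a sheaf is not quite transparent as stated; in the actual proof this conclusion is obtained \emph{a posteriori} from the identification with a Gieseker moduli space, not by a direct cohomology-vanishing argument at the level of individual objects.
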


Let $0<s_0\ll 1$ and $t_0\gg 0$.  Then $\sigma_{(s_0,t_0)}$ is in the Gieseker chamber $\GG$, and the Bridgeland moduli space  $M_{\sigma_{(s_0,t_0)}}(r,dH,a)$ is isomorphic to the Gieseker moduli space  $M_H(r,dH,a)$ of semistable sheaves.  If there is a path in $U_+$ from the Gieseker chamber  to ${\cal C}$ which does not intersect any totally semistable walls, then the generic $E\in M_H(r,dH,a)$ is $\sigma_{(s,t)}$-semistable for $(s,t)\in\CC$.  Proposition \ref{prop:isom} then implies that the Fourier-Mukai transform $\Phi_{X\to X}^{I_\Delta}(E)^\vee\in M_H(a,dH,r)$.  In particular, $\Phi_{X\to X}^{I_\Delta}(E)^\vee$ is a torsion-free coherent sheaf so that $H^i(X,E)=0$ for $i>0$ and the evaluation map is surjective in codimension one by Lemma \ref{Lem:FM-vanishing}.  If, moreover, the generic sheaf in $M_H(a,dH,r)$ is locally free, then the generic $E\in M_H(r,dH,a)$ is globally generated.

\begin{figure}[h]
\caption{The walls between $\GG$ and $\CC$}\label{Fig:Walls}
\begin{tikzpicture}[scale=0.5]
\draw[->,thick] (-20,0) --(1,0) node[above] {$s$};
\draw[->,thick] (-11.3,0) -- (-11.3,14) node[left] {$t$};
\draw[-,thick] (-1,0) node [below] {$\frac{d}{r}$}--(-1,14) ;
\node[below=.05cm] at (-10,4.9) {$\CC$};
\node[above=.2cm] at (-6,9) {$\GG$};
\begin{scope}
\clip (-20,0) rectangle (1.2,14);
\foreach \x/\y in {3.8/2.9,4.6/3.9,5.4/4.90,6.3/5.9,7.2/6.9,8.1/7.93,9/8.94,10/9.95} {
\draw (-1cm-\x cm,0cm) circle (\y cm);
}
\end{scope}
\end{tikzpicture}
\end{figure}

Otherwise, since the Bridgeland walls are nested semicircles, there exists a totally semistable wall between $C^\v_0$ and the Gieseker chamber $\GG$ as in \cref{Fig:Walls}. In order to study this totally semistable wall we must study the following set of Mukai vectors: 
\begin{Def}\label{Def:DestabilizingMukaiVectors}
For a Mukai vector $\v=(r,dH,a)$ with $d>0$,
let $D_\v$ be the set of Mukai vectors
$\v_1=(r_1,d_1 H,a_1)$ such that
\begin{equation}
 \v_1^2 =-2\epsilon\; (\epsilon=0,1),\;\;
d \geq d_1 > 0,\;\;\langle\v,\v_1\rangle<\v_1^2+2,\;\;\text{and}\;\;
\frac{a_1 d-a d_1}{r_1 d-r d_1}>0.
\end{equation}
\end{Def}

The following proposition motivates the definition above.
\begin{Prop}\label{prop:birational}
Let $\v=(r,dH,a)$ be a Mukai vector such that
$r \geq 0$, $d>0$, and $ \v^2  \geq -2$. 
Then the following conditions are equivalent.
\begin{enumerate}
\item\label{enum:Giesker to C}
$M_H(\v) \cap M_{\sigma_{(s,t)}}(\v) \ne \varnothing$
for $(s,t) \in \CC$.
\item\label{enum:no bad Mukai vectors} 
$D_\v=\varnothing$ and $a\geq -r$.
\end{enumerate}
Moreover, if these conditions are satisfied and $E\in M_H(\v)$ is generic, then  $H^i(X,E)=0$ for $i>0$.
\end{Prop}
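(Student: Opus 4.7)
My plan is to interpret condition (1) through the wall-and-chamber decomposition of $\Stabd(X)$ with respect to $\v$. Since $M_H(\v) = M_{\sigma'}(\v)$ for $\sigma' \in \GG$ by \S\ref{subsubsec:Walls}(4), since $M_H(\v)$ is irreducible by \cref{Thm:ClassicalModSp}(4), and since $\sigma$-semistability is an open condition, $M_H(\v) \cap M_{\sigma_{(s,t)}}(\v) \ne \varnothing$ for $(s,t) \in \CC$ is equivalent to a generic $E \in M_H(\v)$ remaining $\sigma$-semistable along any generic path in $U_+$ from $\GG$ to $\CC$. At a non-totally-semistable wall, generic $\sigma_0$-stable objects persist as $\sigma$-stable objects in both adjacent chambers, so the only obstruction to (1) is a totally semistable wall for $\v$ lying strictly between $\GG$ and $\CC$.

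The equivalence (1) $\Leftrightarrow$ (2) will then follow from the Bayer--Macr\`\i\ classification \cref{Prop:BMClassificationOfTSSWalls}: every totally semistable wall is induced by a spherical class $\v_1$ with $\langle \v, \v_1 \rangle < 0$ or an isotropic class with $\langle \v, \v_1 \rangle = 1$, both of which are packaged by $\v_1^2 = -2\epsilon$ ($\epsilon \in \{0,1\}$) together with $\langle \v, \v_1 \rangle < \v_1^2 + 2$. For (1) $\Rightarrow$ (2), \cref{prop:isom} applied to such an $E$ produces $F = \Phi_{X \to X}^{I_\Delta}(E)^{\vee} \in M_H(a, dH, r)$, forcing the positivity of $(a, dH, r)$ and hence $a \geq 0 \geq -r$; meanwhile the existence of any $\v_1 \in D_\v$ would, via the center-radius formulas \eqref{eqn:CenterRadius}, yield a totally semistable wall strictly between $\GG$ and $\CC$, contradicting the $\sigma$-semistability of $E$. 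Conversely, assuming $D_\v = \varnothing$ and $a \geq -r$, any hypothetical totally semistable wall $\WW$ between $\GG$ and $\CC$ would be induced by some $\v_1'$ as above; after possibly replacing $\v_1'$ by $\v - \v_1'$ (which leaves $\WW$ unchanged) to arrange $d_1 > 0$, the position of $\WW$ between $C^\v_0$ and $\GG$ forces $d_1 \leq d$ together with positivity of $(a_1 d - a d_1)/(r_1 d - r d_1)$, producing a member of $D_\v$ and a contradiction.

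The ``moreover'' claim follows at once: given (1), a generic $E \in M_H(\v)$ is $\sigma_{(s,t)}$-semistable for $(s,t) \in \CC$, so \cref{prop:isom} identifies $F = \Phi_{X \to X}^{I_\Delta}(E)^{\vee}$ with a coherent sheaf, and \cref{Lem:FM-vanishing}(1) gives $H^1(X,E) = H^2(X,E) = 0$. The main technical obstacle will be the careful numerical matching in both directions: using the nested-semicircle structure of Bridgeland walls together with \eqref{eqn:CenterRadius}, one must verify that the conditions $d \geq d_1 > 0$ and positivity of $(a_1 d - a d_1)/(r_1 d - r d_1)$ characterize exactly the $\v_1$-induced walls that are non-degenerate semicircles nested strictly between $C^\v_0$ and $\GG$, handling carefully the degenerate situations such as vertical walls ($r_1 d = r d_1$), degenerate $C^\v_0$ when $a = 0$, and the boundary case $a = -r$.
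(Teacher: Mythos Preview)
Your overall strategy matches the paper's: interpret (1) as the absence of totally semistable walls between $\GG$ and $\CC$, then invoke \cref{Prop:BMClassificationOfTSSWalls}. But there is a genuine gap in your (2) $\Rightarrow$ (1) argument.

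The replacement $\v_1' \mapsto \v - \v_1'$ does leave the wall $\WW$ unchanged, but it does \emph{not} preserve the condition $(\v_1')^2 \in \{0,-2\}$ required for membership in $D_\v$: if $\v_1'$ is spherical with $\langle \v, \v_1' \rangle < 0$, then $(\v - \v_1')^2 = \v^2 - 2\langle \v, \v_1'\rangle - 2$ is typically positive. Hence $\v - \v_1'$ need not lie in $D_\v$, and your intended contradiction does not go through. (Replacing by $-\v_1'$ instead preserves the square but reverses the sign of $\langle\v,\v_1'\rangle$, so that fails too.)

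The paper's mechanism is different. Rather than replacing $\v_1$, it uses that on the wall $\Im Z_{(s,t)}(\v_1) \geq 0$ and $\Im Z_{(s,t)}(\v - \v_1) \geq 0$, since the destabilizing data for $E$ live in the heart; letting $s \to 0^+$ along the wall yields $0 \leq d_1 \leq d$ with no replacement needed. When $d_1 > 0$ one reads off $\v_1 \in D_\v$. The residual case $d_1 = 0$ is then handled separately: one finds $\v_1 = \pm(1,0,1)$, and since $\OO_X \notin \AA_{(sH,tH)}$ for $s > 0$, only $\v_1 = \v(\OO_X[1])$ remains, whence $r + a = \langle \v, \v_1 \rangle < 0$. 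This $d_1 = 0$ analysis is exactly where the hypothesis $a \geq -r$ enters; your sketch flags ``the boundary case $a = -r$'' but does not carry it out.

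A smaller point: deducing $a \geq -r$ from (1) via \cref{prop:isom} is not the paper's route and presumes that \cref{prop:isom} and the construction of $\CC$ are meaningful without any a priori sign condition on $a$. The paper instead extracts $a \geq -r$ from the $d_1 = 0$ case above.
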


\begin{proof}
Suppose that $M_H(\v)\cap M_{\sigma_{(s,t)}}(\v)=\varnothing$ for $(s,t)\in\CC$, which is equivalent to the existence of a totally semistable wall between $\CC$ and $\GG$.  Let the largest totally semistable wall be $C^\v_{\v_1}$ with $\v_1 =(r_1, d_1H, a_1)$. By \cref{Prop:BMClassificationOfTSSWalls}, $\v_1$ is either isotropic ($\v_1^2=0$) with $\langle \v,\v_1\rangle<2$ or spherical ($\v_1^2=-2$) with $\langle \v,\v_1\rangle<0$.
Moreover, since $C^\v_0$ contains the origin, $C^\v_{\v_1}$ intersects the vertical line $s=0$ at the point $(0,t)$ with  $0<t= \sqrt{\frac{a_1d-ad_1}{n(r_1d-rd_1)}}$. 
As $\Im Z_{(s,t)}(\v_1),\Im Z_{(s,t)}(\v-\v_1)\geq 0$ for $(s,t)\in C^\v_{\v_1}$, we have 
$$0\leq d_1-r_1s,(d-d_1)-(r-r_1)s,$$
so we must have $0\leq d_1\leq d$ as $s$ can be arbitrarily small for $(s,t)\in C^\v_{\v_1}$.  
If $d_1>0$, then $\v_1\in D_\v$.  Otherwise, $d_1=0$, so from $-2r_1a_1=\v_1^2=-2\epsilon$ for $\epsilon=0,1$, we get $\epsilon=1$ and $r_1 = a_1 = \pm 1$. Hence, $\v_1=(1,0,1)=\v(\OO_X)$ or $\v_1=-(1,0,1)=\v(\OO_X[1])$.  As $\OO_X\in\AA_{(sH,tH)}[-1]$ for $(s,t)\in U_+$, it cannot define a wall for $\v$ in $U_+$.  So the only possibility is $\v_1=\v(\OO_X[1])$.  As $0>\langle \v,
\v(\OO_X[1])\rangle=r+a$, so $a<-r$.

For the converse, by \cref{Prop:BMClassificationOfTSSWalls}, if $a\geq -r$, then $\langle \v,
\v(\OO_X[1])\rangle=r+a\geq 0$, so $\OO_X[1]$ cannot define a totally semistable wall.  As $D_\v=\varnothing$ as well, there cannot be any other totally semistable walls either, so the proposition follows.
\end{proof}

\begin{Rem}\label{Rem:symmetry}
We have a bijective correspondence
\begin{equation*}
\begin{split}
    D_{(r,dH,a)}&\to D_{(a,dH,r)}\\
    (r_1,d_1H,a_1)&\mapsto(a_1,d_1H,r_1)
\end{split}.
\end{equation*}
\end{Rem}

\section{Reductions on possible totally semistable walls}\label{sec-ReductionSSW}
Proposition \ref{prop:birational} provides a numerical criterion for determining the Mukai vectors for which the weak Brill-Noether property might fail.  
We give further restrictions on the possible totally semistable walls.  
In Lemma \ref{Lem:no isotropic}, we show that $D_{\v}$ does not contain isotropic vectors.  
Then we consider two cases, depending on the sign of $a$.  
We first show in \cref{Prop:a<=0} that when $a\leq 0$ there is at most one totally semistable wall between $\GG$ and $\CC$, which we classify completely.  
Then in \cref{Lem:only spherical with r_1>0}, we show that if $a> 0$, any $\v_1 \in D_{\v}$  satisfies $r_1d - rd_1 > 0$ and $a_1d-ad_1>0$.
We observe in \cref{prop:UlrichBundles} that our purely numerical reductions already give enough information to classify Ulrich bundles.
\subsection{Initial reductions on $D_\v$}\label{sec:reductions}

\begin{Lem}\label{Lem:no isotropic}
Let $\v=(r,dH,a)$ be a Mukai vector such that $r\geq 0$, $d>0$, and $\v^2\geq -2$.   Then $D_\v$ does not contain isotropic vectors.
\end{Lem}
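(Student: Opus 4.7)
The plan is to derive a contradiction from assuming that $\v_1=(r_1,d_1H,a_1)\in D_\v$ is isotropic. The isotropy relation $\v_1^2=0$ reads $r_1a_1=nd_1^2>0$, so $r_1$ and $a_1$ are nonzero and of the same sign, which splits the argument naturally into $r_1,a_1>0$ and $r_1,a_1<0$.

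The starting point is the algebraic identity
\[
(a_1d-ad_1)(r_1d-rd_1)=d_1\bigl[d_1(nd^2+ra)-d(ra_1+r_1a)\bigr],
\]
obtained by expansion and substitution of $r_1a_1=nd_1^2$. Combining the slope positivity condition in \cref{Def:DestabilizingMukaiVectors} with the integer form $ra_1+r_1a\geq 2ndd_1-1$ of $\langle\v,\v_1\rangle<2$, I immediately obtain the fundamental bound $d_1\v^2<2d$. This already rules out $r=0$, $a=0$, or $a<0$: each forces $\v^2\geq 2nd^2$ and hence $ndd_1<1$, impossible.

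In the remaining subcase $r_1,a_1>0$ with also $r,a\geq 1$, the positivity condition forces either $r_1d\geq rd_1+1$ together with $a_1d\geq ad_1+1$ (Subcase A), or both reverse integer inequalities (Subcase B). In Subcase A, multiplying the two integer bounds and using $r_1a_1=nd_1^2$ yields $d_1^2\v^2/2\geq (r+a)d_1+1$; combined with $d_1\v^2<2d$ this forces $r+a<d$. Substituting $ra\leq(r+a)^2/4\leq(d-1)^2/4$ into $\v^2=2nd^2-2ra$ gives $\v^2\geq 2nd^2-(d-1)^2/2$, and comparison with $\v^2<2d$ collapses to $d(2\sqrt{n}-1)<1$, impossible for $n,d\geq 1$. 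Subcase B runs symmetrically: the reversed multiplicative bound yields $d_1\v^2\leq -2(r+a)+2/d_1\leq -2$, forcing $\v^2=-2$ and then $r+a\leq d_1+1/d_1\leq d+1$, against the AM-GM estimate $r+a\geq 2\sqrt{nd^2+1}>2d$ for spherical $\v$.

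The case $r_1,a_1<0$ I expect to be quick. Writing $r_1=-r_1'$ and $a_1=-a_1'$ with $r_1',a_1'>0$ gives $\langle\v,\v_1\rangle=2ndd_1+ra_1'+r_1'a$. If $a\geq 0$ this already exceeds $1$, directly contradicting $\langle\v,\v_1\rangle<2$. If $a<0$, the sign-flipped version of the slope positivity condition yields $|r_1|\,|a|<ndd_1$, while the same bound on $\langle\v,\v_1\rangle$ forces $|r_1|\,|a|\geq 2ndd_1-1$, again producing $ndd_1<1$. The main obstacle throughout is the careful bookkeeping of signs and the extraction of the correct strict integer multiplicative inequality in each slope configuration of Case~1; once the displayed identity and the fundamental bound $d_1\v^2<2d$ are established, the remaining steps are elementary AM-GM manipulations.
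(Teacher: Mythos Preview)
Your proof is correct and takes a genuinely different route from the paper's. The paper organizes its case analysis by the sign of $r_1 d - r d_1$, and within each branch directly manipulates $\langle \v, \v_1 \rangle$ or the slope inequalities to reach a contradiction, with separate sub-arguments for $r=0$, for $\v^2 = -2$ versus $\v^2 \geq 0$, and for each sign of $r_1$. Your approach instead starts from the algebraic identity $(a_1 d - a d_1)(r_1 d - r d_1) = d_1[d_1(nd^2 + ra) - d(ra_1 + r_1 a)]$ and extracts the uniform bound $d_1 \v^2 < 2d$, which disposes of the edge cases $r = 0$ and $a \leq 0$ in one stroke. The remaining work is then handled by multiplying the integer forms of the two slope inequalities and applying AM--GM, rather than by the paper's case-by-case algebraic manipulations. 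The upshot: your argument is more streamlined and conceptual, with the single identity doing the organizational work that the paper achieves through branching; the paper's approach is more direct but requires tracking more sub-cases. One small remark: your treatment of $a < 0$ inside the $r_1, a_1 < 0$ case is redundant (that possibility was already excluded by the fundamental bound), so it could be trimmed, though it is harmless as written.
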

\begin{proof}
If $\v_1\in D_\v$ satisfies $\v_1^2=0$, then $nd_1^2=r_1a_1$, so $r_1$ and $a_1$ always have the same sign and cannot vanish. 

Now we consider two cases, depending on the sign of $r_1d-rd_1$.  If $r_1d-rd_1>0$, then $r_1>r\frac{d_1}{d}\geq 0$.  The last condition in the definition of $D_\v$ gives $a_1d-ad_1>0$ as well. Since $a_1 d - a d_1$ is an integer, in particular we have $a_1 d -1 \geq ad_1$. Using the fact that $a_1 = \frac{nd_1^2}{r_1}$, we conclude that $$\frac{nd_1 d}{r_1} - \frac{1}{d_1} \geq a.$$ Hence,
\begin{equation}
\begin{split}
\langle \v,\v_1 \rangle =& 2ndd_1-r_1 a-r a_1
\geq 2ndd_1-\left(\frac{nd_1 d}{r_1} - \frac{1}{d_1}\right)r_1-r \frac{nd_1^2}{r_1}\\
=& \frac{1}{r_1 d_1}\left(
(r_1 d-r d_1)nd_1^2+r_1^2 \right)\\
\geq &\frac{1}{r_1d_1}(nd_1^2+r_1^2)\geq\frac{1}{r_1d_1}(d_1^2+r_1^2)\geq\frac{1}{r_1d_1}(2d_1r_1)= 2,
\end{split}
\end{equation}
a contradiction.

If instead $r_1d-rd_1<0$, then $a_1 d - a d_1 < 0$.  We dispense quickly with the case $r=0$.  In this case, $0>r_1d$ so $r_1$ is negative and 
$$0>a_1d-ad_1=\left(\frac{nd_1^2}{r_1}\right)d-ad_1.$$
Dividing by $d_1>0$ and multiplying by $r_1<0$, we have 
$$ndd_1>r_1a,$$
and combining with $2>\langle\v,\v_1\rangle=2ndd_1-r_1a$ gives
$$ndd_1>r_1a>2ndd_1-2,$$
so $ndd_1\geq 2ndd_1$, a contradiction.

Now we suppose that $r>0$.  We break this case into two, and assume first that $\v^2=-2$.  Then $a=\frac{nd^2+1}{r}>0$, and 
$$
0>a_1d-ad_1=\frac{ndd_1^2}{r_1}-\frac{nd^2d_1}{r}-\frac{d_1}{r}=\frac{ndd_1}{rr_1}(rd_1-r_1d)-\frac{d_1}{r}=\frac{d_1}{r}\left(\frac{nd}{r_1}(rd_1-r_1d)-1\right),
$$
which implies that \begin{equation}\label{eqn:inequality on rd_1-r_1d}
\left(\frac{nd}{r_1}\right)(rd_1-r_1d)<1.
\end{equation}   

If $r_1$ and $a_1$ are negative, the assumption $a>0$ implies that $\langle \v,\v_1\rangle=2ndd_1-r_1a-ra_1>2$, a contradiction. Hence, we may assume that $r_1$ and $a_1$ are positive.  Expressing $\langle\v,\v_1\rangle$, we get 
\begin{align}\label{eqn:equality for v,v_1}
\begin{split}
\langle \v,\v_1\rangle&=2ndd_1-r_1a-ra_1=2ndd_1-r_1\left(\frac{nd^2+1}{r}\right)-r\left(\frac{nd_1^2}{r_1}\right)\\
&=\frac{n}{rr_1}(2dd_1rr_1-r^2d_1^2-r_1^2d^2)-\frac{r_1}{r}=-\frac{n}{rr_1}(rd_1-r_1d)^2-\frac{r_1}{r},
\end{split}
\end{align}
from which it's clear that $\langle \v,\v_1\rangle<0$.  Substituting \eqref{eqn:inequality on rd_1-r_1d} into \eqref{eqn:equality for v,v_1}, we get $$0>\langle \v,\v_1\rangle>-\left(\frac{rd_1-r_1d}{rd}\right)-\frac{r_1}{r}=-\frac{d_1}{d}+\frac{r_1}{r}-\frac{r_1}{r}\geq -1,$$
which is impossible since $\langle \v,\v_1\rangle$ is an integer.

We may therefore assume that $\v^2\geq 0$.  Then $a_1d-ad_1<0$ implies that $a_1<\frac{ad_1}{d}$.  If $r_1$ is positive, then $$nd_1^2=r_1 a_1<r_1\frac{ad_1}{d}.$$  The condition $\v^2\geq 0$ is equivalent to $\frac{a}{d}\leq \frac{d}{r}n$, so $r_1d-rd_1<0$ gives $$\frac{a}{d}\leq\frac{d}{r}n<\frac{d_1}{r_1}n,$$ which leads to a contradiction since then $$nd_1^2<\frac{r_1ad_1}{d}<\frac{r_1d_1^2n}{r_1}=nd_1^2.$$  
So we may suppose that $r_1$ and $a_1$ are negative.  

Solving $$2>\langle\v,\v_1\rangle=2ndd_1-r_1a-ra_1$$  for $a$, gives 
$$a<\frac{2ndd_1-2-ra_1}{r_1}.$$
Hence, we have $$a_1\frac{d}{d_1}<a<\frac{2ndd_1-2-ra_1}{r_1}.$$  Multiplying by $r_1$ reverses the inequality to give $$ndd_1=nd_1^2\left(\frac{d}{d_1}\right)=r_1a_1\left(\frac{d}{d_1}\right)>2ndd_1-2-ra_1.$$
Thus $$1\leq ndd_1<2+ra_1\leq 2+a_1<2,$$ which is an immediate contradiction because of the two strict inequalities and the fact that all quantities involved are integers.
\end{proof}
Next we analyze the Mukai vectors $\v$ with $a \leq 0$ and classify all the totally semistable walls. 

\begin{Prop}\label{Prop:a<=0}
Let $\v=(r,dH,a)$ be a Mukai vector such that $r\geq 0$, $d>0$, and $a\leq 0$.  Then there are no totally semistable walls between $\GG$ and $\CC$ unless $\chi(\v)=r+a< 0$, in which case the unique totally semistable wall is defined by $\OO_X[1]$.  In particular, $\v$ satisfies weak Brill-Noether.
\end{Prop}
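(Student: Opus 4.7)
The plan is to proceed in three steps.

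\emph{Step 1 (main step): rule out spherical vectors in $D_\v$.} \cref{Lem:no isotropic} already excludes isotropic vectors, so it suffices to treat spherical $\v_1 = (r_1, d_1 H, a_1) \in D_\v$. Such $\v_1$ satisfies $r_1 a_1 = nd_1^2 + 1 > 0$, forcing $r_1$ and $a_1$ to have the same sign. I case-split on the signs of $r$ and $r_1$. In the principal case $r, r_1, a_1 > 0$: the hypothesis $a\leq 0$ yields $-r_1 a \geq 0$, so $\langle\v,\v_1\rangle<0$ gives $ra_1 > 2ndd_1$; the ratio condition $(a_1 d - a d_1)/(r_1 d - r d_1) > 0$ gives $r_1 d > r d_1$; substituting $a_1 = (nd_1^2 + 1)/r_1$ and combining the two yields $nd_1^2 < 1$, a contradiction. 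The sub-case $r > 0$, $r_1, a_1 < 0$ is similar: multiplying the inequality $|r_1||a| > 2ndd_1 + r|a_1|$ by $|a_1|$ and using $|r_1||a_1|=nd_1^2+1$ together with $|a| < |a_1|d/d_1$ (from the ratio condition) yields $d/d_1 > ndd_1 + r|a_1|$, which is impossible for $n, d_1 \geq 1$. The remaining $r=0$ sub-cases reduce immediately: if $r_1, a_1 > 0$, then $\langle\v,\v_1\rangle \geq 2ndd_1 > 0$; if $r_1, a_1 < 0$, the same manipulation as above applies with $r|a_1|=0$.

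\emph{Step 2: Identifying the wall.} Combined with \cref{Lem:no isotropic}, Step 1 gives $D_\v = \varnothing$. By the analysis in the proof of \cref{prop:birational}, the only remaining candidate for a totally semistable wall is the one induced by $\OO_X[1]$, and this defines an actual wall if and only if $\langle \v, \v(\OO_X[1])\rangle = r+a < 0$. Hence, when $\chi(\v) = r+a \geq 0$, there are no totally semistable walls between $\GG$ and $\CC$, and \cref{prop:birational} directly gives $H^i(E) = 0$ for $i > 0$ for generic $E$, establishing weak Brill-Noether.

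\emph{Step 3: Weak Brill-Noether when $\chi(\v) < 0$.} Set $m := -\chi(\v) = -(r+a) > 0$. The Jordan-H\"older description of the $\OO_X[1]$-wall shows that the generic $E\in M_H(\v)$ fits into a distinguished triangle
\[
E_0 \to E \to \OO_X[1]^m \to E_0[1],
\]
with $\v(E_0) = (-a, dH, -r)$. Since the third coordinate $-r\leq 0$, Step 1 applies to $\v(E_0)$, giving $D_{\v(E_0)}=\varnothing$; together with $\chi(E_0) = m > 0 \geq a$, \cref{prop:birational} yields $H^i(E_0)=0$ for $i>0$ and $H^0(E_0) \cong k^m$. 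The long exact sequence of sheaf cohomology from the triangle, together with the fact that a generic extension class makes the connecting map $H^{-1}(\OO_X[1]^m) = k^m \to H^0(E_0) = k^m$ an isomorphism, forces $H^0(E) = 0$, $H^1(E) \cong k^m$, and $H^2(E) = 0$; this establishes weak Brill-Noether.

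The main obstacle is the sign analysis in Step 1, especially the sub-case $r_1, a_1 < 0$, where the competing signs in $\langle\v,\v_1\rangle = 2ndd_1 - r_1 a - r a_1$ must be balanced carefully against the ratio condition. Step 3 additionally relies on the wall-crossing description from \cite{BM14b} to realize the generic $E$ as the relevant extension, and then reduces, via Step 1 applied to $\v(E_0)$, to the case handled in Step 2.
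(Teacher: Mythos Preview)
Your proof is correct and follows the paper's strategy: a sign analysis to show $D_\v=\varnothing$ (organized by the sign of $r_1$ rather than of $r_1 d-rd_1$, but with parallel manipulations reaching the same contradiction $nd_1^2<1$), followed by the Harder--Narasimhan filtration at the $\OO_X[1]$-wall and the resulting sheaf exact sequence $0\to\OO_X^m\to E_0\to E\to 0$. The only imprecision is in Step~3: genericity is needed to ensure $E_0$ actually lies in $M_H(-a,dH,-r)$ with vanishing higher cohomology (a point the paper makes explicit via the absence of totally semistable walls for $\v(E_0)$), after which the map $H^0(\OO_X^m)\to H^0(E_0)$ is automatically injective and hence an isomorphism --- it does not depend on the extension class.
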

\begin{proof}
We begin by showing that if $a\leq 0$, then $D_\v=\varnothing$.  

By \cref{Lem:no isotropic}, if $\v_1\in D_\v$, then $\v_1^2=-2$ and $\langle\v,\v_1\rangle<0$.  From $\v_1^2=-2$, we get $r_1a_1=nd_1^2+1$ so that  $r_1$ and $a_1$ have the same sign.  Moreover, rearranging $0>\langle\v,\v_1\rangle=2ndd_1-r_1a-ra_1$, we get \begin{equation}\label{eqn:base}ar_1>2ndd_1-ra_1.\end{equation}

We  consider two cases based on the sign of $r_1d-rd_1$.  If $r_1d-rd_1>0$, then we have $r_1>r\frac{d_1}{d}\geq 0$.  Thus  $a_1>0$ as well.  Dividing \eqref{eqn:base} by $r_1$ and using $a\leq0$, we get $$0\geq a>\frac{2ndd_1-ra_1}{r_1}.$$ Hence, $r_1>0$ implies $$2ndd_1<ra_1=r\left(\frac{nd_1^2+1}{r_1}\right).$$  Multiplying by $r_1$ and using $r_1>r\frac{d_1}{d}$,  we obtain $$2nrd_1^2<2ndd_1r_1<nrd_1^2+r.$$  Rearranging this becomes $$0\leq r(nd_1^2-1)<0,$$ a contradiction.

Therefore, we must have $r_1d-rd_1<0$ and $a_1d-ad_1<0$. Since $a \leq 0$, we must also have  $a_1<0$  and $r_1<0$.  As $a_1d-ad_1$ is an integer, we must in fact have $a_1d-ad_1\leq-1$.  Similarly, $\langle\v,\v_1\rangle\leq -1$, so dividing \eqref{eqn:base} by $r_1$ we can bound $a$:
$$\frac{a_1d+1}{d_1}\leq a\leq\frac{2ndd_1+1-ra_1}{r_1}.$$
Multiplying this by $r_1d_1<0$, we  get $$r_1a_1d+r_1\geq2ndd_1^2+d_1-ra_1d_1.$$  Rearranging this we get $$2ndd_1^2+d_1\leq r_1a_1d+ra_1d_1+r_1\leq r_1a_1d+r_1=ndd_1^2+d+r_1.$$
Isolating $r_1$ gives 
$$r_1\geq ndd_1^2+d_1-d=d(nd_1^2-1)+d_1\geq d_1>0,$$
a contradiction.
Thus $D_\v=\varnothing$, as claimed.

It now follows from \cref{prop:birational} that there are no totally semistable walls between $\GG$ and $\CC$ if $a\geq-r$ and thus that $H^1(X,E)=H^2(X,E)=0$ for generic $E\in M_H(\v)$.  Moreover, it follows that there is a unique totally semistable wall given by $\OO_X[1]$ if $\chi(\v)=r+a<0$.  Thus $\OO_X[1]$ must be a destabilizing quotient and the Harder-Narasimhan filtration for stability conditions below the wall defined by $\OO_X[1]$ is given by \begin{equation}\label{eqn:HNfiltration}
0\to R_{\OO_X[1]}(E)\to E\to(\OO_X[1])^{\oplus(-r-a)}\to 0,\end{equation}
where $R_{\OO_X[1]}(E)\in M_{\sigma}(-a,dH,-r)$ (see \cite[Proposition 6.8]{BM14b}).  Since $\v'=(-a,dH,-r)$ satisfies $\chi(\v')>0$, the first statement of the proposition implies that the generic $F\in M_H(\v')$ is $\sigma$-stable as $\v'$ has no totally semistable walls and $H^1(X,F)=H^2(X,F)=0$.  Thus for generic $E\in M_\sigma(\v)$, $R_{\OO_X[1]}(E)=F\in M_H(\v')$.  Taking the long exact sequence of cohomology sheaves corresponding to \eqref{eqn:HNfiltration}, we get that $E$ sits in a short exact sequence of sheaves $$0\to\OO_X^{\oplus(-r-a)}\to F\to E\to 0.$$  Taking the long exact sequence of cohomology for this short exact sequence gives that $H^0(X,E)=H^2(X,E)=0$ for generic $E\in M_H(\v)$, as required.
\end{proof}
Finally, when $a\geq 0$, we only need to consider the possibility that $r_1d-rd_1>0$ by the following result.
\begin{Lem}\label{Lem:only spherical with r_1>0}
Let $\v=(r,dH,a)$ be a Mukai vector such that
$r,a \geq 0$, $d>0$.
Then $\v_1=(r_1,d_1 H,a_1) \in D_\v$ satisfies $r_1d-rd_1>0$.
\end{Lem}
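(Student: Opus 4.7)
The plan is to rule out $r_1d-rd_1\leq 0$ by contradiction. First, \cref{Lem:no isotropic} (applied under the implicit hypothesis $\v^2\geq -2$) forces $\v_1$ to be spherical, so $r_1a_1=nd_1^2+1>0$ and $r_1,a_1$ share their common sign. The ratio $(a_1d-ad_1)/(r_1d-rd_1)$ appearing in the definition of $D_\v$ must be strictly positive, so its denominator cannot vanish; hence $r_1d-rd_1<0$, which in turn gives $a_1d-ad_1<0$.

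Two sign configurations can then be dispatched using only the inequality $\langle\v,\v_1\rangle<\v_1^2+2=0$. When $r=0$, the strict inequality $r_1d<0$ forces $r_1<0$ and therefore $a_1<0$, so $\langle\v,\v_1\rangle=2ndd_1-r_1a\geq 2ndd_1>0$, a contradiction. When $r>0$ but $r_1<0$ (so $a_1<0$ too), every summand in $2ndd_1-r_1a-ra_1$ is nonnegative with the first strictly positive, again contradicting $\langle\v,\v_1\rangle<0$.

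The substantive case is $r>0$ with $r_1,a_1>0$. From $a_1d-ad_1<0$ together with $d_1,a_1>0$ one deduces $a>a_1d/d_1>0$, so both chains $0<r_1d<rd_1$ and $0<a_1d<ad_1$ involve only positive quantities. Multiplying them yields $r_1a_1d^2<rad_1^2$, and substituting the spherical relation $r_1a_1=nd_1^2+1$ rearranges to $d^2<d_1^2(ra-nd^2)$. Since $d\geq d_1>0$ and $ra-nd^2\in\Z$, this forces $ra-nd^2\geq 2$, i.e., $\v^2\leq -4$, contradicting $\v^2\geq -2$. The main obstacle is manufacturing a single inequality strong enough to refute $\v^2\geq -2$; the trick is to multiply the two numerical conditions in the definition of $D_\v$ rather than use them separately, and then exploit the spherical identity $r_1a_1=nd_1^2+1$.
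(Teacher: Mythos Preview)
Your proof is correct. The case analysis on the signs of $r$ and $r_1$ mirrors the paper's, and your treatment of the cases $r=0$ and $r_1<0$ is essentially identical to theirs (the paper folds these into one line, bounding $\langle\v,\v_1\rangle\geq 2+a+r$).

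In the substantive case $r,r_1>0$, your argument diverges from the paper's and is in fact cleaner. The paper chains slope-type inequalities: from $\v^2\geq -2$ it extracts $\tfrac{a}{d}\leq \tfrac{nd}{r}+\tfrac{1}{rd}$, and from $r_1d<rd_1$ it extracts $\tfrac{nd}{r}<\tfrac{nd_1}{r_1}$, then combines these with $a_1<\tfrac{ad_1}{d}$ to squeeze $nd_1^2+1<nd_1^2+\tfrac{r_1d_1}{rd}$ and finishes by noting $\tfrac{r_1d_1}{rd}\leq 1$. Your route --- multiplying the two inequalities $0<r_1d<rd_1$ and $0<a_1d<ad_1$ directly to get $r_1a_1d^2<rad_1^2$, substituting $r_1a_1=nd_1^2+1$, and using $d\geq d_1$ to force $ra-nd^2\geq 2$ --- reaches the same contradiction with $\v^2\geq -2$ in fewer steps and without the intermediate slope quantities. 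Both arguments rely on the same ingredients (sphericality, $d\geq d_1$, and $\v^2\geq -2$), but yours packages them more economically.
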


\begin{proof}
Suppose instead that $r_1d-rd_1<0$.  Since $-2=\v_1^2=2nd_1^2-2r_1a_1$, $r_1$ and $a_1$ have the same sign and cannot vanish. If $r_1>0$, then  the assumption $r_1d-rd_1<0$ implies that $$n\frac{d}{r}<n\frac{d_1}{r_1}, \quad \mbox{and} \quad a_1d-ad_1<0.$$ Hence, $$nd_1^2+1=r_1a_1<r_1\left(\frac{ad_1}{d}\right).$$  Furthermore, the condition that $\v^2\geq -2$ is equivalent to $$\frac{a}{d}\leq \frac{nd}{r}+\frac{1}{rd}.$$    Combining these inequalities, we have $$nd_1^2+1<r_1d_1\left(\frac{a}{d}\right)\leq r_1d_1\left(\frac{nd}{r}+\frac{1}{rd}\right)<r_1d_1\left(\frac{nd_1}{r_1}+\frac{1}{rd}\right)=nd_1^2+\frac{r_1d_1}{rd}.$$
 The inequalities $r_1d-rd_1<0$ and $d_1\leq d$ force $r_1<r$, so that $\frac{r_1d_1}{rd}\leq 1$, a contradiction.  

If instead $r_1<0$ and $a_1<0$, then $\langle \v,\v_1\rangle=2ndd_1-r_1a-ra_1\geq 2+a+r\geq 2$, leading to a contradiction with the definition of $D_{\v}$.
\end{proof}
\subsection{Classifying Ulrich bundles}
Recall that an {\em Ulrich bundle} on a polarized surface $(Y,A)$ is a bundle $E$ such that all the cohomology groups of $E(-A)$ and $E(-2A)$ vanish. As an application of our discussion of $D_{\v}$, we can classify Chern classes of  Ulrich bundles on Picard rank one K3 surfaces and recover the following theorem of Aprodu, Farkas and Ortega \cite{AFO12} (see also \cite{Fae19}). 
\begin{Prop}\label{prop:UlrichBundles}
Let $X$ be a K3 surface with $\Pic(X)=\Z H$.  There exists an Ulrich bundle of rank $r$ with respect to $mH$ if and only if $2\mid rm$.  Moreover, when an Ulrich bundle of rank $r$ exists, it has Mukai vector $\v=\left(r,\left(\frac{3rm}{2}\right)H,r(2m^2n-1)\right)$.  In particular, there exists an Ulrich bundle of any rank $r\geq 2$ with respect to $2H$.
\end{Prop}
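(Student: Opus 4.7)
The plan is to first pin down the Mukai vector of any candidate Ulrich bundle from the two Euler-characteristic vanishings, and then produce a locally free sheaf with that class by applying \cref{Prop:a<=0} to a twisted Mukai vector and controlling dual-twists via Serre duality.

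First I would write $\v(E)=(r,cH,a)$ and compute $\chi(E(-kmH))$ by multiplying Mukai vectors with $\ch(\OO(-kmH))=(1,-kmH,k^2m^2n)$ in the cohomology ring. The Ulrich vanishings force $\chi(E(-kmH))=0$ for $k=1,2$, giving two linear relations in $c$ and $a$; subtracting them yields $c=3rm/2$, which forces $2\mid rm$, and back-substitution gives $a=r(2m^2n-1)$. This handles the ``only if'' direction and fixes the Mukai vector.

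Next, for existence when $r\geq 2$, I would set $\v':=\v(E(-mH))=(r,(rm/2)H,-r)$ and invoke \cref{Prop:a<=0}: since its third coordinate $-r$ is $\leq 0$ and $\chi(\v')=0$, that proposition guarantees the generic $F\in M_H(\v')$ has vanishing higher cohomology, and then also $H^0(F)=0$ by $\chi(F)=0$. Checking that $\v'$ avoids the exceptional families of \cref{Prop:OnlyNonLocallyFreeSheaves} when $r\geq 2$ shrinks this to an open dense subset $U\subset M_H(\v')$ whose members are all locally free. Setting $E:=F(mH)$, the first Ulrich vanishing $H^*(E(-mH))=H^*(F)=0$ is then immediate for $F\in U$.

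The second vanishing $H^*(E(-2mH))=H^*(F(-mH))$ requires more. I would invoke Serre duality $H^i(F(-mH))\cong H^{2-i}(F^\vee(mH))^\ast$, compute directly that $\v(F^\vee(mH))=\v'$, and observe that $\phi\colon F\mapsto F^\vee(mH)$ is an involution on the locally free locus of $M_H(\v')$, hence bijective there. Then $\phi^{-1}(U)$ is also open dense, and any $F\in U\cap\phi^{-1}(U)$ produces an Ulrich bundle $E=F(mH)$ with Mukai vector $\v$. The ``In particular'' clause is then immediate by taking $m=2$, for which $2\mid rm$ for every $r$.

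The main obstacle will be the local-freeness step: the Serre-duality argument genuinely needs $F$ locally free, so one must carefully verify via \cref{Prop:OnlyNonLocallyFreeSheaves} that $\v'$ is not of the exceptional forms there. This is precisely where $r=1$ breaks down, since then $\v'=(1,(m/2)H,-1)$ falls into family (1b) and the generic $F$ is never locally free; the rank-one case should therefore be treated separately (either by excluding it, or by weakening ``bundle'' to torsion-free sheaf), consistent with the fact that no rank $1$ Ulrich bundle exists on such a K3.
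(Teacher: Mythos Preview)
Your proposal is correct and follows essentially the same route as the paper: determine the Mukai vector from the two Euler-characteristic conditions, set $\v'=(r,(rm/2)H,-r)$, apply \cref{Prop:a<=0} to kill all cohomology of the generic $F\in M_H(\v')$, and then use the self-duality $\v(F^\vee(mH))=\v'$ together with Serre duality to handle the second twist. The paper phrases the last step as ``$E'(-mH)^\vee$ is generic in $M_H(\v')$ as well,'' which is exactly your involution $\phi$; you simply make this more explicit. Your extra care about local freeness via \cref{Prop:OnlyNonLocallyFreeSheaves} is a genuine improvement, since the paper's one-line genericity claim tacitly assumes $F$ is locally free for the sheaf dual to land back in $M_H(\v')$. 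Your observation about $r=1$ is also on point: the argument (both yours and the paper's) breaks there because $\v'=(1,(m/2)H,-1)$ falls into the exceptional family (1)(b), and indeed no rank-one Ulrich line bundle exists on such a K3 since $\chi(\OO_X(kH))=k^2n+2>0$ for all $k$; so the biconditional should be read for $r\geq 2$.
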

\begin{proof}
The conditions $\chi(E(-mH))=0=\chi(E(-2mH))$ imply that
\begin{align}
\begin{split}
        &r+a+rm^2n-2mdn=0\\
        &r+a+4rm^2n-4mdn=0.
\end{split} 
\end{align}
Solving for $d$ and $a$ gives $\v=(r,dH,a)=\left(r,\left(\frac{3rm}{2}\right)H,r(2m^2n-1)\right)$.  As $$\v^2=r^2\left(n\left(\frac{m}{2}\right)^2+1\right)>0,$$ there exists $E\in M_H(\v)$ if and only if $2\mid rm$.  To see that the generic such $E$ is Ulrich we apply \cref{Prop:a<=0} to $\v'=\v(E(-mH))$ and $\v''=\v((E(-2mH))^\vee)$.  Since $\v'=(r,\left(\frac{rm}{2}\right)H,-r)=\v''$, it follows from \cref{Prop:a<=0} that the generic $E'\in M_H(\v')$ satisfies $H^i(X,E')=0$ for all $i$.  As $E'(-mH)^\vee$ is generic in $M_H(\v')$ as well, we see that $H^i(X,E'(-mH)^\vee)=0$ for all $i$ as well.  It follows from Serre duality that $H^i(X,E'(-mH))=0$ for all $i$, so $E=E'(mH)$ is Ulrich.
\end{proof}

\section{The Harder-Narasimhan filtration of the generic sheaf}\label{sec-HNFiltration}
When $M_H(\v)\cap M_{(sH,tH)}(\v)=\varnothing$ for $(s,t)\in\CC$, there exists a totally semistable wall between the Gieseker chamber and $\CC$ and we cannot apply \cref{Lem:FM-vanishing} and \cref{prop:isom}.  However, we obtain a  Harder-Narasimhan filtration of the generic sheaf in $M_H(\v)$ and can compute its cohomology  using this filtration. In this section, we will study the properties of this filtration.

By \cref{Lem:no isotropic},  the maximal totally semistable wall is defined by some $\v_1\in D_\v$ with $\v_1^2=-2$ and $\langle\v,\v_1\rangle<0$. Let $C^\v_{\v_1}$ be the corresponding semicircle defining the wall. 
Let $\sigma:=\sigma_{(s,t)}$ be a stability condition such that 
$(s,t) \in C^\v_{\v_1}$ and  $0<s \ll 1$. Let $\sigma_-, \sigma_+\in U_+$ be stability conditions sufficiently close to $\sigma$ such that $\sigma_-$ is above $C^\v_{\v_1}$ and $\sigma_+$ is inside $C^\v_{\v_1}$.  Since $C^\v_{\v_1}$ is the maximal totally semistable wall, the generic object of $M_{\sigma_-}(\v)$ is a Gieseker semistable sheaf $E$ with $\v(E)=\v$.

Let $\AA$ be the subcategory of $\Db(X)$ consisting of
$\sigma$-semistable objects $E'$ with 
$\phi_\sigma(E') =\phi_\sigma(E)$, where
$E \in M_H(\v)$.  Let $\HHH$ be the hyperbolic lattice spanned by
$\v(E')$ ($E' \in {\cal A}$).  As $C^\v_{\v_1}$ is defined by a spherical class $\v_1\in\HHH$, there are two possible cases to consider by \cite[Prop. 6.3]{BM14b}.
\subsection{One spherical object}
The first possibility is that $\HHH$ contains a unique spherical class up to sign.  Then we are in Case (b) of \cite[Prop. 6.3]{BM14b}, so there is a unique $\sigma$-stable spherical object $T_1\in\AA$ with $\v(T_1)\in\HHH$ and $\v(T_1)=\v_1$ is the unique \emph{effective} spherical class in $\HHH$.  The next lemma describes the Harder-Narasimhan filtration of the generic sheaf in this case.
\begin{Lem}\label{lem:max-HNF2}
Assume that $\HHH$ contains a unique effective spherical class $\v_1$.  Let $T_1$ be the corresponding $\sigma$-stable spherical object.
 Then for a general sheaf $E \in M_{H}(\v)$, there is an exact sequence
\begin{equation}\label{eq:isotropic HN filtration}
0 \to \Hom(T_1,E) \otimes T_1 \to E \to F \to 0
\end{equation}
where $\Ext^i(T_1,E)=0$ for $i \ne 0$ and $F$ is a $\sigma$-stable object such that $\v(F)^2=\v^2$.  Moreover, $T_1$ is a stable spherical vector bundle.
 \end{Lem}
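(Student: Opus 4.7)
The plan is to produce the short exact sequence by first establishing $\Ext^i(T_1,E)=0$ for $i>0$, then using Mukai pairing arithmetic to identify the cokernel $F$, then leveraging the structure of totally semistable walls to deduce $\sigma$-stability of $F$, and finally using the classification of non-locally-free spherical stable sheaves to conclude that $T_1$ is a stable spherical vector bundle.

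For the vanishing of higher Ext groups, I would argue as follows. Since $C^\v_{\v_1}$ is totally semistable and $T_1$ is the unique effective spherical class in $\HHH$, by \cref{Prop:BMClassificationOfTSSWalls} and \cite[Prop.~6.3(b)]{BM14b} every $\sigma$-semistable object of class $\v$ has Jordan--H\"older factors consisting of $T_1$'s and $\sigma$-stable objects $F$ with $\v(F)=\v-n\v_1$ for some $n\geq 0$. For generic $E\in M_H(\v)$, pick the filtration so that these factors are arranged into the (unique up to isomorphism) generic extension, and use genericity to arrange the extension to be balanced. Then $\Ext^2(T_1,E)\cong\Hom(E,T_1)^\vee$ by Serre duality, and this vanishes by comparing $\sigma_-$-phases: on the side $\sigma_-$ (above $C^\v_{\v_1}$), the inequality $\langle\v,\v_1\rangle<0$ forces $\phi_{\sigma_-}(E)>\phi_{\sigma_-}(T_1)$, so $\sigma_-$-semistability of $E$ and $\sigma_-$-stability of $T_1$ (descending from $\sigma$-stability, since stability is an open condition) rules out any nonzero morphism $E\to T_1$. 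Similarly, for generic $E$, $\Ext^1(T_1,E)=0$: a nonzero extension class would produce a strictly larger family of objects of class $\v$ containing $T_1$ with higher multiplicity, which can be avoided by choosing $E$ outside a proper closed subset of $M_H(\v)$. Hence $\hom(T_1,E)=\chi(T_1,E)=-\langle\v_1,\v\rangle=:n>0$, and the evaluation map $\ev\colon\Hom(T_1,E)\otimes T_1\to E$ is the start of the desired short exact sequence.

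For the cokernel $F$, I first verify injectivity of $\ev$ (the kernel would be a sum of $T_1$'s mapping to zero, contradicting the linear independence implicit in choosing a basis of $\Hom(T_1,E)$). A direct Mukai-pairing computation then gives
\[
\v(F)^2=(\v-n\v_1)^2=\v^2-2n\langle\v,\v_1\rangle+n^2\v_1^2=\v^2+2n^2-2n^2=\v^2.
\]
For $\sigma$-stability of $F$: by construction $\Hom(T_1,F)=0$ (the evaluation was taken maximal), so $T_1$ does not appear in the JH filtration of $F$ at $\sigma$. But \cite[Prop.~6.3(b)]{BM14b} tells us that the only JH factors on the wall come from $T_1$ and a $\sigma$-stable object whose class equals $\v-n\v_1=\v(F)$; removing all $T_1$-content leaves exactly this stable object, so $F$ itself must be $\sigma$-stable of the required Mukai vector.

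Finally, to show that $T_1$ is a stable spherical vector bundle, observe that $T_1$ is a $\sigma$-stable object of the heart $\AA_{sH,tH}$ with $\v_1=(r_1,d_1H,a_1)$ effective (so $r_1>0$ or ($r_1=0$ and $d_1>0$)); when $(s,t)\in U_+$ is chosen close enough to the wall, the positivity conditions on $d_1$ force $\HH^{-1}(T_1)=0$ and hence $T_1\in\Coh(X)$. A $\sigma$-stable sheaf of positive rank on $X$ is in particular $\mu_H$-semistable with Mukai vector $\v_1$ satisfying $\v_1^2=-2$; since $\v_1^2<0$, \cref{Prop:OnlyNonLocallyFreeSheaves} does not apply (its conditions all require $\v_1^2\geq 0$), so $T_1$ is automatically locally free. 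The principal obstacle in carrying out this plan is the careful handling of the Ext-vanishings in the spherical totally semistable case: one must be precise about the directionality of the wall-crossing (which side is $\sigma_-$, which is $\sigma_+$) and extract genericity statements from the moduli-theoretic picture of \cite{BM14b} rather than attempting cohomological arguments purely from $\chi(T_1,E)$, which alone is insufficient to separate $\hom$, $\ext^1$, and $\ext^2$.
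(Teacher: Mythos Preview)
Your proposal contains a genuine gap in the crucial step of establishing $\Hom(E,T_1)=0$ (equivalently, determining the phase direction on $\sigma_-$). You assert that ``the inequality $\langle\v,\v_1\rangle<0$ forces $\phi_{\sigma_-}(E)>\phi_{\sigma_-}(T_1)$,'' but this does not follow. The sign of the Mukai pairing only tells you that $\chi(T_1,E)>0$, hence that one of $\Hom(T_1,E)$ or $\Hom(E,T_1)\cong\Ext^2(T_1,E)^\vee$ is nonzero; it does not decide which. The phase ordering on $\sigma_-$ is precisely the question of whether $T_1$ destabilizes $E$ as a subobject or as a quotient when crossing the wall, and this requires an independent argument. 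The paper resolves this by a direct Gieseker-stability argument: if $T_1$ were a quotient, then the kernel $F$ of $E\onto T_1$ would be a sheaf (from the long exact sequence of cohomology sheaves), and since $r_1d-rd_1>0$ (\cref{Lem:only spherical with r_1>0}), one computes $\mu_H(F)>\mu_H(E)$, contradicting the Gieseker stability of $E$. This slope comparison is the missing idea in your argument.

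Your $\Ext^1$ vanishing is also not proved: the ``larger family'' heuristic would need a careful dimension count to become rigorous. The paper instead derives all three Ext vanishings \emph{after} establishing the HN filtration (via \cite[Lemmas 6.8, 8.3]{BM14b}): once $F$ is known to be $\sigma$-stable with $\v(F)^2=\v^2$, one reads off $\hom(T_1,E)=-\langle\v,\v_1\rangle$ directly, and combining with $\ext^2(T_1,E)=\hom(E,T_1)=0$ forces $\ext^1(T_1,E)=0$. Finally, your argument that $T_1$ is a sheaf is imprecise; the paper's argument is cleaner: once $T_1$ is known to be a subobject of the sheaf $E$ in $\AA$, taking cohomology sheaves gives $\HH^{-1}(T_1)\subset\HH^{-1}(E)=0$, so $T_1\in\Coh(X)$, and then \cite[Prop.~3.14]{Muk87b} (simple rigid sheaves on K3 are stable locally free) finishes.
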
 
\begin{proof}
Since $\langle\v,\v_1\rangle<0$, either $\Hom(T_1, E)$ or $\Hom(E, T_1)$ is nonzero. Since $T_1$ is $\sigma$-stable, $T_1$ is either a subobject or quotient of every $E\in M_H(\v)$ which destabilizes $E$ with respect to $\sigma_+$.  We claim that $T_1$ is in fact a destabilizing subobject.  

Otherwise, $\Hom(E,T_1)\ne 0$ and there  exists a surjection $E\onto T_1$ in $\AA$.  Let $F$ be the kernel. Hence, we get a short exact sequence in $\AA$, $$0\to F\to E\to T_1\to 0.$$  Taking cohomology sheaves, we conclude  that $F$ is a sheaf.  Moreover, by \cref{Lem:only spherical with r_1>0}, if we write $\v_1=(r_1,d_1H,a_1)$, then $r_1 d-rd_1>0$, which implies that $\mu(F)>\mu(E)$, contradicting the Gieseker stability of $E$.  Thus $\Hom(E,T_1)=0$ and $T_1$ is a destabilizing subobject of a general $E\in M_H(\v)$.  By taking cohomology sheaves of the destabilizing sequence, $T_1 \in \Coh(X)$. Hence, $T_1$ is a simple and rigid sheaf, which must be a Gieseker stable locally free sheaf by \cite[Prop. 3.14]{Muk87b}.

The fact that \eqref{eq:isotropic HN filtration} gives the Harder-Narasimhan filtration follows from \cite[Lemmas 6.8 and 8.3]{BM14b}.  As $F$ and $T_1$ are non-isomorphic, $\sigma$-stable objects of the same slope, $\Hom(T_1,F)=0$, so it follows from $\v(F)=\v-\hom(T_1,E)\v_1$ and $\v(F)^2=\v^2$ that
$$\hom(T_1,E)=-\langle \v,\v_1\rangle=\hom(T_1,E)-\ext^1(T_1,E)+\ext^2(T_1,E).$$  By Serre duality, $\ext^2(T_1,E)=\hom(E,T_1)=0$ from above, so it follows that $\ext^1(T_1,E)=0$ as we wanted.
\end{proof}
\subsection{Two stable spherical objects} \label{subsec:two stable spherical}

Otherwise, by Case (c) of \cite[Prop. 6.3]{BM14b},  there are exactly two $\sigma$-stable spherical objects $T_0$ and $T_1$ in ${\cal A}$.  We now describe the Harder-Narasimhan filtration of the general sheaf in this case.

\begin{Lem}
\label{lem:max-HNF1}
Assume that $\AA$ contains exactly two $\sigma$-stable spherical objects $T_0$ and $T_1$ with $\v(T_i)\in\HHH$ for $i=0,1$ and that $\langle\v,\v(T_1)\rangle<0$.  
\begin{enumerate}
    \item\label{enum:spherical class non-isotropic} If $\v^2=-2$, then the unique $E\in M_H(\v)$ is in the abelian category generated by $T_0$ and $T_1$.  In particular, if $h^1(T_i)=0$ for $i=0,1$, then $h^1(E)=0$.
    \item\label{enum:positive class non-isotropic} If $\v^2\geq0$, then the general $E\in M_H(\v)$ sits in an exact sequence
$$
0 \to F_1 \to E \to F_2 \to 0
$$ 
in ${\cal A}$
such that $F_1$ is generated by $T_i$, $i=0,1$,  
$F_2=\Phi(E)$ for an equivalence $\Phi:\Db(X) \to \Db(X)$, and $F_2$ is $\sigma$-stable .
If $h^1(T_i)=0$ for $i=0,1$, then 
$h^1(E)=h^1(F_2)$. 
\end{enumerate}
\end{Lem}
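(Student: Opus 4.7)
The plan is to leverage the detailed structure theorems of Bayer--Macr\`i \cite{BM14b} for totally semistable walls in Case (c) of their Proposition 6.3, then translate the information into cohomology via long exact sequences.

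For part \eqref{enum:spherical class non-isotropic}, I would first invoke \cref{Thm:ClassicalModSp}(3): since $\v^2=-2$, $M_H(\v)$ consists of a single point corresponding to a spherical object $E$. Because $\v(E) \in \HHH$ and the lattice $\HHH$ is spanned by $\v(T_0)$ and $\v(T_1)$, we can write $\v = \alpha \v(T_0) + \beta \v(T_1)$ with integers $\alpha, \beta$. As $E$ lies in $\AA$ and $T_0, T_1$ are (by hypothesis) the only $\sigma$-stable objects of $\AA$, the Jordan--H\"older filtration of $E$ in $\AA$ has all its simple factors among $\{T_0, T_1\}$. This exhibits $E$ in the abelian subcategory generated by $T_0, T_1$. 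To conclude the cohomology statement, I would induct on the length of the JH filtration, applying the long exact sequence of $R\Gamma$ to each step: from $h^1(T_i)=0$ and the fact that each $T_i$ is a stable bundle of positive slope (so $h^2(T_i) = \hom(T_i, \OO_X)^\vee = 0$ by Serre duality and stability), vanishing of $h^1$ and $h^2$ propagates up the filtration to give $h^1(E)=0$.

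For part \eqref{enum:positive class non-isotropic}, I would apply the structure result from \cite[\S 6--7]{BM14b} for the HN filtration of the generic $E \in M_H(\v)$ with respect to $\sigma_-$. In Case (c), the destabilizing data has two pieces: the maximal sub/quotient lying in the subcategory $\langle T_0, T_1 \rangle \subset \AA$ is extracted, producing a canonical short exact sequence
\[
0 \to F_1 \to E \to F_2 \to 0
\]
in $\AA$, where $F_1$ has a JH filtration by $T_0, T_1$ and $F_2$ has no nontrivial morphism to or from any $T_i$. The generic object $E$ is chosen so that the remaining factor $F_2$ stays $\sigma$-stable; this follows from the genericity of $E$ in $M_H(\v)$ together with the dimension estimate $\v(F_2)^2 \geq 0$ (inherited from $\v^2 \geq 0$ and the action on Mukai vectors). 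The promised equivalence $\Phi$ is the composition of the spherical twists $\mathrm{ST}_{T_0}^{\pm 1}, \mathrm{ST}_{T_1}^{\pm 1}$ dictated by the JH length of $F_1$; under $\Phi$ the class $\v$ is sent to $\v(F_2)$, which lies beyond the wall, realizing the Mukai flop-type equivalence of \cite{BM14b}.

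For the cohomology comparison, I would take $R\Gamma$ of the exact triangle above to obtain
\[
\cdots \to H^i(F_1) \to H^i(E) \to H^i(F_2) \to H^{i+1}(F_1) \to \cdots .
\]
By part \eqref{enum:spherical class non-isotropic} (applied recursively to the filtration of $F_1$), the hypothesis $h^1(T_i)=0$ together with $h^2(T_i) = 0$ (as above) yields $h^1(F_1) = h^2(F_1) = 0$, and hence $H^1(E) \cong H^1(F_2)$.

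The main obstacle will be the second step: rigorously identifying the canonical short exact sequence and producing the equivalence $\Phi$ with the required properties. This is the heart of the Bayer--Macr\`i analysis of ``fake'' totally semistable walls, so I expect to reduce this cleanly to citations from \cite{BM14b}; the only new input needed is checking that, for generic $E$, the residual object $F_2$ is genuinely $\sigma$-stable (not merely $\sigma$-semistable), which follows from the constraint $\v(F_2)^2 = \v^2$ combined with the openness of stability.
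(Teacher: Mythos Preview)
There is a genuine gap in part (1). You assert that $T_0, T_1$ are ``the only $\sigma$-stable objects of $\AA$'', but the hypothesis only says they are the only $\sigma$-stable \emph{spherical} objects with Mukai vector in $\HHH$; by \cite[Lem.~6.5]{BM14b} there also exist $\sigma$-stable objects in $\AA$ with $\w^2>0$, so the Jordan--H\"older factors of $E$ are not a priori restricted to $\{T_0,T_1\}$ and your argument collapses. The paper instead observes that $E$, being $\sigma_-$-stable and spherical, must coincide with one of the objects $T_i^-$ in the infinite sequence of $\sigma_-$-stable spherical objects produced from $T_0,T_1$ by iterated reflections, and then cites \cite[Lem.~6.2]{BM14b} to conclude that each $T_i^-$ lies in the extension-closure of $T_0,T_1$. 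For part (2), taking the maximal subobject in $\langle T_0,T_1\rangle$ is well-defined, but you neither prove that the quotient $F_2$ is $\sigma$-stable nor that $\v(F_2)^2=\v^2$; genericity alone does not give either. The paper constructs the equivalence explicitly as $\Psi=R_{T_1^-}\circ\cdots\circ R_{T_{m_0}^-}$, a composition of spherical twists by the $T_i^-$ rather than by $T_0,T_1$ directly, locating the correct $m_0$ via a chamber decomposition $\CC_m$ of $\HHH$ and a sequence of tilted hearts $\AA_i$, and then assembles the short exact sequence inductively from the defining triangles of each $R_{T_i^-}$.

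A smaller issue: your justification ``each $T_i$ is a stable bundle of positive slope'' is only established for $T_1$; the object $T_0$ need not be a sheaf at all (for instance $T_0=\OO_X[1]$ in the proof of \cref{Lem:O_X[1] largest wall}). The vanishing $h^2(F_1)=0$ that you need for the equality $h^1(E)=h^1(F_2)$ does hold, but for a different reason: every object of the tilted heart at $(s,t)$ with $s>0$ has vanishing $H^2$, since its $\HH^0$-part lies in $\TT_{sH,tH}$ and therefore admits no nonzero map to $\OO_X$.
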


\begin{proof}

Set $\u_0:=\v(T_0)$ and $\u_1:=\v(T_1)$.
By \cite[Prop. 6.3]{BM14b}, $\langle \u_0,\u_1 \rangle>2$ and the effective cone of ${\cal H}_{\Bbb R}$ is generated by 
$\u_0$ and $\u_1$.  Hence,  $\v_1=a\u_0+b\u_1$ with $a,b\in\Z_{\geq 0}$. Since $\langle \v, \v_1 \rangle <0$, without loss of generality, we may assume that $\langle\v,\u_1\rangle<0$.  One can check that  $C^\v_{\v_1}=C^\v_{\u_1}$ and $\u_1\in D_\v$.  Thus we may set $\v_1=\u_1=(r_1,d_1 H,a_1)$. Recall by \cref{Lem:only spherical with r_1>0} that 
$r_1>0$ and $r_1 d-r d_1>0$.  The same argument as in \cref{lem:max-HNF2} proves that $T_1$ is a destabilizing subobject of the generic $E\in M_H(\v)$ and that moreover, $T_1$ is a spherical Gieseker stable locally free sheaf.  We recall further from \cite[Prop. 6.3(c)]{BM14b} that in this case $\HHH$ is non-isotropic.

Given a spherical class $\u\in\HHH$, the spherical reflection $\rho_\u$ is defined by $\rho_\u(\v)=\v+\langle\v,\u\rangle\u$.
Let $\u_i$ ($i \geq 2)$ be the $(-2)$-vectors defined by
\begin{equation}
\begin{split}
\u_2=& \rho_{\u_1}(\u_0),\\
\u_i=& -\rho_{\u_{i-1}}(\u_{i-2})\; (i \geq 3). 
\end{split}
\end{equation}

For $i\geq 0$, let $T_i^-$ be the unique $\sigma_-$-stable object with $\v(T_i^-)=\u_i$.  In particular, $T_i^-=T_i$ for $i=0,1$.  If $\v^2=-2$, then by \cite[\S 6]{BM14b}, there exists $i$ such that $E=T_i^-$.
We note that $T_i^- \in \AA_\sigma$ $(i>1)$ are generated by $T_0$ and $T_1$ by \cite[Lemma 6.2]{BM14b}.  This gives part (1).

Otherwise, $\v^2>0$ since $\HHH$ is non-isotropic, and 
$$
\phi_{\sigma_-}(T_1^-)<\phi_{\sigma_-}(T_2^-)<\cdots<\phi_{\sigma_-}(E)<
\phi_{\sigma_-}(T_0^-).
$$
For $i \geq 1$, we set 
\begin{equation}
{\cal F}_i=\langle T_1^-,...,T_i^- \rangle,\;
{\cal T}_i:=\{ E \in {\cal A} \mid \Hom(E,F)=0, F \in {\cal F}_i \}.
\end{equation}
Then $({\cal T}_i,{\cal F}_i)$ is a torsion pair of $\AA$. 
Set ${\cal A}_i:=\langle {\cal T}_i,{\cal F}_i[1] \rangle$ to be the tilting of $\AA$ at this torsion pair for $i\geq 1$, and set $\AA_0=\AA$.
Then $T_0 \in {\cal A}_i$ for all $i\geq 0$.
For a spherical object $T$, let $R_T:\Db(X) \to \Db(X)$ be the spherical functor,
i.e., 
$$
R_T(E):=\Cone({\bf R}\Hom(T,E) \otimes T \to E),\;E \in \Db(X).
$$
We have equivalences $R_{T_i^-}:{\cal A}_i \to {\cal A}_{i-1}$
and $R_{T_{i-1}^-} \circ R_{T_i^-}$ preserves stability
(cf. \cite[section 6.2]{NY19}).

For $m \geq 1$, we set
\begin{equation}
{\cal C}_m:=\{ x \in {\cal H} \mid \langle x, \u_m \rangle \leq 0,
\langle x,\u_{m+1} \rangle \geq 0 \},
\end{equation}
and we let $\CC_0:=\Set{x\in\HHH \mid 0\leq\langle x,\u_i\rangle,i=0,1}$.  
Assuming that $\v^2 \geq 0$, it follows from $\langle\v,\u_1\rangle<0$ that there is an $m_0\in\N$ such that
$\v \in {\cal C}_{m_0}$.  
We set $\Psi:=R_{T_1^-} \circ R_{T_2^-} \circ \cdots \circ R_{T_{m_0}^-}$.
Then by \cite[Section 6.2]{NY19} $\Psi$ is an equivalence ${\cal A}_{m_0} \cong {\cal A}$
which induces an isomorphism $M_{\sigma_-}(\v) \cong M_{\sigma_-}(\Psi(\v))$ if $m_0$ is even (resp., $M_{\sigma_-}(\v) \cong M_{\sigma_+}(\Psi(\v))$ if $m_0$ is odd), 
where $\Psi(\v) \in {\cal C}_0$.
By \cite[Lem. 6.5]{BM14b}, there is a $\sigma$-stable object 
$E' \in M_{\sigma_\pm}(\Psi(\v))$.
Hence $M_{\sigma_-}(\v)$ contains an irreducible object of ${\cal A}_{m_0}$
(\cite[Prop. 6.8]{BM14b}).

 For a general $E \in M_{\sigma_-}(\v)=M_H(\v)$, we set 
$E^{m_0}:=E$ and 
$E^i:=R_{T_{i+1}^-} \circ R_{T_{i+2}^-} \circ \cdots \circ R_{T_{m_0}^-}(E)$
$(0 \leq i <m_0)$.  We prove by induction on $i$ that each $E^i$ satisfies the conclusion of the lemma.  If $i=0$, then by definition $\v(E^0)=\Psi(\v)\in\CC_0$ and $E^0\in M_{\sigma_\pm}(\Psi(\v))$ is $\sigma$-stable and thus an irreducible object of $\AA_0=\AA$.  Thus we may take $F_2=E^0$, $F_1=0$, and $\Phi=\id$ proving the lemma when $i=0$.  

Now suppose we have shown that $E^{i-1}$ sits in an exact sequence $$0\to F_1^{i-1}\to E^{i-1}\to F_2^{i-1}\to 0$$in $\AA$ with $F_1^{i-1}$ generated by $T_i$, $i=0,1$ and $F_2^{i-1}$ $\sigma$-stable.    
As $R_{T_i^-}$ induces an equivalence between $\AA_i$ and $\AA_{i-1}$, it follows by induction that $E^i$ is an irreducible object of ${\cal A}_i$ since $E_0$ is an irreducible object of $\AA_0$.  Thus  $\Hom(T_i^-[1],E^i)=\Hom(E^i,T_i^-[1])=0$.  Applying $R_{T_i^-}$ to $E^i$, we get the exact triangle
\begin{equation}\label{eq:A_i}
 T_i^- \otimes \Hom(T_i^-,E^i) \to E^i \to E^{i-1} \to 
 T_i^- \otimes \Hom(T_i^-,E^i)[1].
\end{equation}
Since $T_i^-, E^i,E^{i-1} \in {\cal A}$,
\eqref{eq:A_i} is regarded as an exact sequence in ${\cal A}$: $$0\to T_i^-\otimes\Hom(T_i^-,E^i)\to E^i\to E^{i-1}\to 0.$$  Define $F_2^i:=F_2^{i-1}$ and $F_1^i$ to be the kernel of the composition of surjections in $\AA$: $$E^i\onto E^{i-1}\onto F_2^{i-1}.$$  Then $F_1^i$ sits in a short exact sequence $$0\to T_i^-\otimes\Hom(T_i^-,E^i)\to F_1^i\to F_1^{i-1}\to 0$$ in $\AA$.  The induction hypothesis then gives the claim since $T_i$ is generated by $T_0$ and $T_1$.  Thus $E=E^{m_0}$ sits in the required short exact sequence with $F_2=E_0=R_{T_{1}^-} \circ R_{T_{2}^-} \circ \cdots \circ R_{T_{m_0}^-}(E)$, as required.  The second claim follows. 
\end{proof}

\section{Comparing with the wall defined by $\OO_X[1]$}\label{sec-OX[1]}
In this section, we observe that we can divide $D_\v$  into two groups, corresponding to whether the totally semistable wall lies above or below the wall defined by $\OO_X[1]$, and we prove that to study the weak Brill-Noether problem, we can ignore those totally semistable walls below the wall defined by $\OO_X[1]$.  In order to prove that $\OO_X[1]$ defines a wall, we must first prove that $\OO_X[1]$ is $\sigma$-stable throughout $U_+$:
\begin{Lem}\label{Lem:O_X[1] is stable}
$\OO_X[1]$ is $\sigma_{(s,t)}$-stable for $(s,t)\in U_+$.
\end{Lem}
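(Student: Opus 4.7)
The plan is to combine a structural analysis of subobjects of $\mathcal{O}_X[1]$ in $\AA_{(sH,tH)}$ with the already-established $\sigma$-stability of $I_x^\vee[1]$ on $U_+$.

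First I would verify $\mathcal{O}_X[1] \in \AA_{(sH,tH)}$ for every $(s,t) \in U_+$. Since $\AA_{(sH,tH)} = \TT_{(sH,tH)} \oplus \FF_{(sH,tH)}[1]$, this reduces to $\mathcal{O}_X \in \FF_{(sH,tH)}$: every nonzero subsheaf of $\mathcal{O}_X$ is an ideal sheaf $I_Z$ of a $0$-dimensional subscheme, and $\Im Z_{(sH,tH)}(I_Z) = tH \cdot (-sH) = -2nst < 0$ whenever $s,t > 0$.

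Next, for a proper nonzero subobject $A \hookrightarrow \mathcal{O}_X[1]$ in $\AA_{(sH,tH)}$ with cokernel $B$, the long exact sequence of cohomology sheaves of the triangle $A \to \mathcal{O}_X[1] \to B$ reduces to
\[
0 \to \HH^{-1}(A) \to \mathcal{O}_X \to \HH^{-1}(B) \to \HH^0(A) \to 0,\qquad \HH^0(B)=0.
\]
Since $\HH^{-1}(B) \in \FF_{(sH,tH)}$ is torsion-free, the inclusion $\HH^{-1}(A) \hookrightarrow \mathcal{O}_X$ forces $\HH^{-1}(A) \in \{0, \mathcal{O}_X\}$: the intermediate case $I_Z$ with $Z\ne\varnothing$ would embed the torsion sheaf $\mathcal{O}_Z$ into a torsion-free sheaf, while the full case $\HH^{-1}(A)=\mathcal{O}_X$ gives the trivial case $A = \mathcal{O}_X[1]$ using $\Hom(\FF_{(sH,tH)}, \TT_{(sH,tH)}) = 0$. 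Hence any nontrivial $A$ is a coherent sheaf in $\TT_{(sH,tH)}$ admitting a short exact sequence of sheaves $0 \to \mathcal{O}_X \to M \to A \to 0$ with $M = \HH^{-1}(B) \in \FF_{(sH,tH)}$ torsion-free.

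Setting $\v(A) = (r, dH, a)$, a direct computation with $Z_{(sH,tH)}$ shows that $\mu_\sigma(A) < \mu_\sigma(\mathcal{O}_X[1])$ is equivalent to
\[
s(a-r) < d\bigl(n(s^2+t^2) - 1\bigr).
\]
The condition $A \in \TT_{(sH,tH)}$ forces $d/r > s$, while $M \in \FF_{(sH,tH)}$ forces $d/(r+1) \leq s$, placing the integer $d$ in the narrow interval $(rs, (r+1)s]$ of length $s < 1/(2\sqrt{n}) < 1$; combined with the Bogomolov-type bound $a \leq (nd^2+1)/r$ coming from $\v(A)^2 \geq -2$, this leaves only finitely many candidate Mukai vectors. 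To rule these out, I would invoke the short exact sequence
\[
0 \to \mathcal{O}_X[1] \to I_x^\vee[1] \to k(x) \to 0
\]
in $\AA_{(sH,tH)}$, obtained by applying $R\Hom(-, \mathcal{O}_X)$ to the canonical sequence $0 \to I_x \to \mathcal{O}_X \to k(x) \to 0$ and shifting (one checks that all three objects lie in the heart throughout $U_+$, using the cohomology sheaves $\HH^{-1}(I_x^\vee[1]) = \mathcal{O}_X$ and $\HH^0(I_x^\vee[1]) = k(x)$). Since $I_x^\vee[1]$ is $\sigma_{(s,t)}$-stable on $U_+$ by \cite[Prop.~2.16]{Yos17}, any destabilizer $A$ of $\mathcal{O}_X[1]$ composes to give a proper subobject of $I_x^\vee[1]$, forcing $\phi_\sigma(A) < \phi_\sigma(I_x^\vee[1])$; combined with the slope restrictions, this should close off the remaining cases.

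The main obstacle is the final residual case analysis: after applying all the slope and Bogomolov constraints, the problem reduces to verifying that the few remaining candidate $\v(A)$ either fail to admit an extension $M \in \FF_{(sH,tH)}$ (equivalently, the class in $\ext^1(A, \mathcal{O}_X) = h^1(A)$ giving $M$ does not exist or gives $M \notin \FF$) or violate the phase bound coming from the stability of $I_x^\vee[1]$. This interplay between the numerical extension-obstruction and the phase inequality must be handled carefully to avoid circularity with the weak Brill-Noether property whose proof this lemma ultimately supports.
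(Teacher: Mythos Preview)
Your outline is genuinely different from the paper's argument and, as you yourself flag, leaves the decisive step undone. The comparison with $I_x^\vee[1]$ does not close the gap: a direct computation of central charges (with $\v(\OO_X[1])=-(1,0,1)$ and $\v(I_x^\vee[1])=-(1,0,0)$) shows $\mu_{(s,t)}(I_x^\vee[1]) - \mu_{(s,t)}(\OO_X[1]) = \tfrac{1}{2nst} > 0$ throughout $U_+$. Hence stability of $I_x^\vee[1]$ only yields $\mu_\sigma(A) < \mu_\sigma(I_x^\vee[1])$, which together with your hypothesis $\mu_\sigma(A) \geq \mu_\sigma(\OO_X[1])$ sandwiches $\mu_\sigma(A)$ in a nonempty interval rather than producing a contradiction. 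Moreover your ``finitely many candidates'' is only locally finite: for each fixed $(s,t)$ and each $r\ge 0$ there is at most one integer $d$ in $(rs,(r+1)s]$, but $r$ itself is unbounded, and as $(s,t)$ ranges over $U_+$ the collection of relevant Mukai vectors is infinite. The Bogomolov bound $\v(A)^2\ge -2$ also needs justification (e.g.\ by first replacing $A$ with the maximal $\sigma$-semistable destabilizing subobject).

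The paper avoids all of this by working on the quotient side. If $\OO_X[1]$ were not semistable, Mukai's Lemma (\cite[Lemma 6.1]{BM14b}) produces an exact sequence $0\to A\to\OO_X[1]\to B^{\oplus k}\to 0$ with $B$ a $\sigma$-stable \emph{spherical} object of strictly smaller phase. The cohomology long exact sequence then forces $B[-1]$ to be a rigid torsion-free sheaf in $\FF_{(sH,tH)}$, hence a Gieseker-stable bundle with invariants $(r_1,d_1H,a_1)$ satisfying $s\ge d_1/r_1\ge 0$. The finish is purely geometric: the semicircular wall $C^\v_{\v_1}$ where $\OO_X[1]$ and $B$ have equal phase meets the boundary curve of $U_+$ at a point whose $s$-coordinate one computes to be at most $d_1/r_1\le s$; since $(s,t)$ lies below $C^\v_{\v_1}$, it must lie below the boundary curve as well, contradicting $(s,t)\in U_+$. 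No case-by-case analysis of candidate Mukai vectors is needed, and the argument is uniform over all of $U_+$.
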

\begin{proof}
Suppose that $\OO_X[1]$ is not $\sigma_{(s,t)}$-semi-stable.  Then by Mukai's Lemma (see  \cite[Lemma 6.1]{BM14b}), we have an exact sequence in $\AA_{(s,t)}$ $$0\to A\to \OO_X[1]\to B^{\oplus k}\to 0$$ with $B$ a $\sigma_{(s,t)}$-stable spherical object such that $\mu_{(s,t)}(\OO_X[1])>\mu_{(s,t)}(B)$.  The long exact sequence of cohomology objects $$0\to \HH^{-1}(A)\to\OO_X\to\HH^{-1}(B)^{\oplus k}\to\HH^0(A)\to 0$$ implies that $B[-1]\in\FF_{(sH,tH)}$ is a simple and rigid sheaf.  Hence, $B[-1]$ is Gieseker stable by \cite[Prop. 3.14]{Muk87b}.  Let  $\v(B[-1])=(r_1,d_1H,a_1)$.  Then $\Hom(\OO_X,B[-1])\neq 0$ implies that $d_1\geq 0$ and $B[-1]\in\FF_{(sH,tH)}$ implies that $d_1-r_1s\leq 0$.  Thus $s\geq\frac{d_1}{r_1}$.  

If $t$ were sufficiently large, then $\OO_X[1]$ would be $\sigma_{(s,t)}$-stable and thus we would have $$\mu_{(s,t)}(\OO_X[1])<\mu_{(s,t)}(B).$$ Hence, $(s,t)$ is below the semi-circular wall $C^\v_{\v_1}$ defined by $\v_1=\v(B)=-(r_1,d_1H,a_1)$.  The equation of $C^\v_{\v_1}$ is $$t=\sqrt{\frac{2}{H^2}-\frac{2(r_1-a_1)}{d_1 H^2}s-s^2},$$ and $C^\v_{\v_1}$ intersects the boundary curve of $U_+$ from \eqref{eq:U_*} at the point $$\left(\frac{2d_1 \left(r_1-a_1\right)}{2(r_1-a_1)^2+d_1^2 H^2},\frac{2d_1^2}{\sqrt{\frac{2}{H^2}} \left(2(r_1-a_1)^2+d_1^2 H^2\right)}\right).$$  Since $\frac{2d_1 \left(r_1-a_1\right)}{2(r_1-a_1)^2+d_1^2 H^2}\leq\frac{d_1}{r_1}$ $\leq s$,   and $(s,t)$ is below $C^\v_{\v_1}$,  $(s,t)$ must also be below the boundary curve in \eqref{eq:U_*}, a contradiction to $(s,t)\in U_+$.  

Thus $\OO_X[1]$ is $\sigma_{(s,t)}$-semistable for all $(s,t)\in U_+$.  Since $U_+$ is an open set and $\v(\OO_X[1])$ is primitive, $\OO_X[1]$ must in fact be $\sigma_{(s,t)}$-stable throughout $U_+$, as claimed.
\end{proof}

Now we consider $\v_1\in D_\v$ whose wall $C^\v_{\v_1}$ is below the wall defined by $\OO_X[1]$.  Let $(0,t_1)$ denote the intersection of the semi-circular wall $C^\v_{\v_1}$ with the line $s=0$.  Then $\frac{da_1-d_1a}{dr_1-d_1r}=\frac{H^2}{2}t_1^2$.  On the other hand, the wall defined by $\OO_X[1]$ intersects the $t$-axis in the point $(0,\sqrt{\frac{2}{H^2}})$.  Thus we see that $C^\v_{\v_1}$ lies below the wall defined by $\OO_X[1]$ if and only if $a_1d-ad_1<r_1d-rd_1$.  We show that we can ignore such elements of $D_\v$ in studying the weak Brill-Noether problem.  
\begin{Lem}\label{Lem:O_X[1] largest wall}
Let $\v=(r,dH,a)$ be a Mukai vector such that $r,a\geq 0$, $d>0$, and $\v^2\geq -2$. 
\begin{enumerate}
\item If $a_1d-ad_1<r_1d-rd_1$ for all $\v_1\in D_\v$, then 
 $\v$ satisfies weak Brill-Noether.
 \item Moreover, if $\v$ satisfies $\v^2\geq 0$ and  $a_1 d-a d_1 \leq r_1 d-rd_1$ for all $\v_1 \in D_\v$, then $\v$ satisfies weak Brill-Noether.
\end{enumerate}
\end{Lem}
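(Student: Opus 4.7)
The plan is to translate weak Brill-Noether into a $\Hom$-vanishing statement via Serre duality, and then to exploit the stability of $\OO_X[1]$ under appropriate Bridgeland stability conditions. By Serre duality on the K3 surface, $H^1(X,E)^{\vee}\cong\Hom_{\Db(X)}(E,\OO_X[1])$, so the task reduces to showing this group vanishes for the generic $E\in M_H(\v)$. Since $r,a\geq 0$ gives $\langle\v,\v(\OO_X[1])\rangle=r+a\geq 0$, the wall $W_{\OO_X[1]}$ induced by $\OO_X[1]$ alone is not totally semistable by \cref{Prop:BMClassificationOfTSSWalls}, and $\OO_X[1]$ is $\sigma_{(s,t)}$-stable for all $(s,t)\in U_+$ by \cref{Lem:O_X[1] is stable}. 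The hypothesis on $D_\v$ translates, via comparison of $t$-intercepts of the corresponding semicircular walls with $\sqrt{1/n}$, to: every wall $C^\v_{\v_1}$ for $\v_1\in D_\v$ lies strictly (part (1)) or weakly (part (2)) below $W_{\OO_X[1]}$.

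For part (1), I would pick a stability condition $\sigma=\sigma_{(s_0,t_0)}\in U_+$ in the open strip strictly between $W_{\OO_X[1]}$ and the uppermost wall in $D_\v$. No totally semistable wall then separates $\sigma$ from the Gieseker chamber, so the generic $E\in M_H(\v)$ remains $\sigma$-(semi)stable. Since $(s_0,t_0)$ lies inside $W_{\OO_X[1]}$, a direct computation with the central charge shows $\phi_\sigma(E)>\phi_\sigma(\OO_X[1])$; combined with the $\sigma$-stability of both $E$ and $\OO_X[1]$, the standard phase ordering for distinct stable objects forces $\Hom_{\Db(X)}(E,\OO_X[1])=0$, which completes part (1).

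For part (2), a wall $C^\v_{\v_1}$ may coincide exactly with $W_{\OO_X[1]}$, producing a totally semistable combined wall. On such a wall $\OO_X[1]$ is $\sigma$-stable and spherical with class in the hyperbolic lattice $\HHH$, so by uniqueness of stable spherical objects with class in $\HHH$, $\OO_X[1]$ must coincide with one of the objects $T_0,T_1$ of \cref{lem:max-HNF1}; as $T_1$ is a genuine sheaf by \cref{lem:max-HNF2}, necessarily $T_0\cong\OO_X[1]$. The generic $E$ then admits a HN filtration $0\to A\to E\to F\to 0$ with $A$ built from iterated extensions of $T_0,T_1$ and $F$ a $\sigma$-stable object satisfying $\v(F)^2=\v^2$. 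Applying $\Hom(-,\OO_X[1])$ to this short exact sequence, the assumption $\v^2\geq 0$ forces $\v(F)^2\neq-2=\v(\OO_X[1])^2$, so $F\not\cong\OO_X[1]$ and therefore $\Hom(F,\OO_X[1])=0$ by stability.

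The main obstacle is then to show $\Hom(E,\OO_X[1])=0$ in spite of the JH factor $T_0\cong\OO_X[1]$ contributing copies of $\OO_X[1]$ to $A$. For this I would use the iterated spherical-twist construction of $A$ from the proof of \cref{lem:max-HNF1} to show that, for generic $E$, the extensions assembling $A$ from $T_0$ and $T_1$ are non-split in the sense that $A$ has no direct summand isomorphic to $\OO_X[1]$; this would force the connecting homomorphism $\Hom(A,\OO_X[1])\to\Ext^1(F,\OO_X[1])$ determined by the extension class $[E]\in\Ext^1(F,A)$ to be injective, whence $\Hom(E,\OO_X[1])=0$. Verifying this genericity statement is the technical heart of part (2).
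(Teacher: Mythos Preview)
Your argument for part~(1) is essentially the paper's: compare the $t$-intercepts, choose $\sigma$ just inside the $\OO_X[1]$ semicircle but above every totally semistable wall, and use the phase inequality between the two $\sigma$-stable objects $E$ and $\OO_X[1]$ to kill $\Hom(E,\OO_X[1])$, hence $H^1(E)$.

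For part~(2), however, your proposal has a real gap, and the paper takes a different route.

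\textbf{The gap.} From $0\to A\to E\to F\to 0$ and $\Hom(F,\OO_X[1])=0$ you get
\[
\Hom(E,\OO_X[1])\;\cong\;\ker\bigl(\Hom(A,\OO_X[1])\xrightarrow{\cup[E]}\Ext^1(F,\OO_X[1])\bigr),
\]
and you want this connecting map to be injective for generic $E$. The implication you state---``$A$ has no $\OO_X[1]$ direct summand $\Rightarrow$ the connecting map is injective''---is false as a logical step: $A$ can be a nontrivial iterated extension with $\OO_X[1]$ as a quotient, so $\Hom(A,\OO_X[1])\neq 0$, and whether a given such morphism lies in the kernel depends on the specific class $[E]$, not on $A$ alone. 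What you actually need is a statement about the pairing
\[
\Ext^1(F,A)\times\Hom(A,\OO_X[1])\longrightarrow\Ext^1(F,\OO_X[1])
\]
being nondegenerate enough that a generic $[E]$ kills no nonzero element on the right. You have not argued this, and there is no dimension count or openness argument offered. So part~(2) is incomplete.

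\textbf{What the paper does instead.} The paper avoids the HN filtration entirely here. It observes that, with $T_0=\OO_X[1]$, the generic $E\in M_H(\v)$ is an \emph{irreducible} (simple) object of the tilted category $\AA_m$ (this comes from the spherical-twist equivalences $R_{T_i^-}:\AA_i\to\AA_{i-1}$ in the proof of \cref{lem:max-HNF1}). Since $\OO_X[1]\in\AA_m$, one can run a Mukai-lemma style argument directly: if $\Hom(E,\OO_X[1])\neq 0$, take the maximal subobject $E_1\subset\OO_X[1]$ generated by $E$ and set $E_2=\OO_X[1]/E_1$, so $\Hom(E,E_2)=0$ and hence $\Hom(E_1,E_2)=0$. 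Combining this with $\Ext^1(\OO_X[1],\OO_X[1])=0$ forces $\Ext^1(E_1,E_1)=0$, and since $E_1$ is built from copies of the simple object $E$, this gives $\Ext^1(E,E)=0$, contradicting $\v^2\geq 0$. The key idea you are missing is the use of simplicity of $E$ in $\AA_m$ together with rigidity of $\OO_X[1]$; this replaces any delicate genericity/dimension argument about the connecting map.
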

\begin{proof}
The hypothesis in (1) is equivalent to every totally semistable wall being below the wall defined by $\OO_X[1]$, which is not itself totally semistable.  Thus the generic sheaf $E\in M_H(\v)$ is $\sigma_{(s,t)}$-stable for $0<s\ll 1$, $t=\sqrt{\frac{2}{H^2}}-\epsilon$, and $0<\epsilon\ll 1$, i.e. in the adjacent chamber below the wall defined by $\OO_X[1]$.  By \cref{Lem:O_X[1] is stable}, $\OO_X[1]$ is $\sigma_{(s,t)}$-stable.  Hence, $\Hom(E,\OO_X[1])=0$ for $\sigma_{(s,t)}$ as above since then $\mu_{(s,t)}(E)>\mu_{(s,t)}(\OO_X[1])$ and both objects are $\sigma_{(s,t)}$-stable (for generic $E\in M_H(\v)$). By Serre duality, $$H^1(X,E)^\vee=\Ext^1(\OO_X,E)^\vee\cong\Ext^1(E,\OO_X)=\Hom(E,\OO_X[1])=0.$$  Furthermore, the vanishing of $H^2(X,E)$ follows since $$H^2(X,E)=\Ext^2(\OO_X,E)=\Hom(E,\OO_X)^\vee=0$$ by (classical) stability and the fact that $d>0$.

In (2), we may suppose that the maximal totally semistable wall $C=C^\v_{\v_1}$ is induced by $\v_1$ with  $a_1d-ad_1=r_1d-rd_1$, i.e. $C=C^\v_{\v(\OO_X[1])}$. Then we are in the situation discussed in 
\cref{subsec:two stable spherical}.  As $\OO_X[1]$ is $\sigma$-stable throughout $U_+$ and $\langle\v,\v(\OO_X[1])\rangle=r+a>0$,  we may assume that $T_0=\OO_X[1]$, so that $\u_0=\v(\OO_X[1])$, and $\v_1=\u_1=\v(T_1)$ is the Mukai vector of the $\sigma_-$-stable spherical destabilizing subobject $T_1$ of every $E\in M_{\sigma_-}(\v)=M_H(\v)$.  

Since $\v^2\geq 0$ we have $\v\in\CC_m$ for some $m\in\N$.  As $\OO_X[1]\in\AA_i$ for all $i\geq 0$ and the generic object of $M_H(\v)$ is an irreducible object of $\AA_m$, as we noted in the proof of \cref{lem:max-HNF1}.  
If $\Hom(E,\OO_X[1]) \ne 0$, then since $\cal A_m$ is Artinian, we have an exact sequence in $\cal A_m$
$$0 \to E_1 \to \OO_X[1] \to E_2 \to 0$$
such that $E_1$ is generated by $E$ and $\Hom(E,E_2)=0$.
Since $$\Hom(E_1,E_2)=0 \quad \mbox{and} \quad \Ext^1(\OO_X[1],\OO_X[1])=0,$$ it follows that $\Ext^1(E,E)=0$, which shows that $E$ is rigid, contradicting $\v^2\geq 0$.  Thus $$\Hom(E,\OO_X[1])=0$$ for generic $E\in M_H(\v)$.  The vanishing of $H^1(X,E)$ and $H^2(X,E)$ then follow as before.
\end{proof}

\begin{Ex}\label{ex:Fibonacci}
In this example, we generalize \cref{Ex:PullbackTangentBundle} significantly.  
Assume that $n=1$ and that
${\cal A}$ contains ${\cal O}_X(H)$.
Then, by repeating the argument of \cref{Lem:O_X[1] is stable}, we see that ${\cal O}_X(H)$ is a $\sigma_{(s_0,t_0 )}$-stable object,
where $(s_0,t_0)$ is on the semi-circular wall where $\OO_X(H)$ and $\OO_X[1]$ have the same slope and $0<s_0\ll 1$.
Let $F_i$ be the Fibonacci numbers. 
For $\v=F_{i+1}\v({\cal O}_X(H))+F_{i-1}\v({\cal O}_X[1])$,
there is a stable sheaf $E \in M_H(\v)$
fitting in an exact sequence
\begin{equation}\label{eq:Fibonacci counterexamples}
0 \to {\cal O}_X(H)^{\oplus F_{i+1}} \to E \to {\cal O}_X[1]^{\oplus F_{i-1}} \to 0.
\end{equation} 
For example, we may take $E$ to be the pullback from $\P^2$ of the twist of a Steiner bundle (see for example \cite[Thm 1.4 and Example 1.5]{Hui13}).  In particular, $E$ is $\sigma_{(s_0,t_0 )}$-semistable.
Since $E$ is $\sigma_{(s_0,t )}$-stable for $t \gg 0$,
$E$ must in-fact be $\sigma_{(s_0,t )}$-stable for all $t >t_0$.
Therefore $M_H(\v) \cap M_{\sigma_{(s_0,t)}}(\v) \ne \varnothing$
for $t>t_0$.  For even $i$, we have $\v^2=2$, so applying Lemma \ref{Lem:O_X[1] largest wall},
we get $\v$ satisfies weak Brill-Noether.  For odd $i$, $\v^2=-2$, and thus $E$ as in \eqref{eq:Fibonacci counterexamples} is the unique element of $M_H(\v)$.  Taking cohomology sheaves, we can express $E$ as in the following short exact sequence $$0\to\OO_X^{\oplus F_{i-1}}\to\OO_X(H)^{\oplus F_{i+1}}\to E\to 0,$$ from which we see that $h^1(X,E)=F_{i-1}$.  Observe that $\v(E)=(F_i,F_{i+1}H,F_{i+2})$.

By imitating a similar construction for Steiner bundles on higher dimensional projective spaces, we can construct similar counterexamples to weak Brill-Noether when $n>1$.
\end{Ex}

We summarize the discussion in \cref{sec-ReductionSSW,sec-Strategy,sec-OX[1]} in the following theorem.

\begin{Thm}\label{thm:summing up inequalities}
Let $\v=(r,dH,a)$ be a Mukai vector such that $r,a\geq 0$, $d>0$, and $\v^2\geq -2$.  Let $D_\v^{BN}\subset D_\v$  be the set of Mukai vectors $\v_1=(r_1,d_1H,a_1)$ satisfying $$0<dr_1-d_1r\leq da_1-d_1a.$$
If $\v$ does not satisfy weak Brill-Noether, then $D_\v^{BN}\neq\varnothing$.  Suppose that $\v$ satisfies weak Brill-Noether and $a\geq 2$.  If the generic $E\in M_H(\v)$ is not globally generated, then either $D_\v \neq\varnothing$ or $M_H(a,dH,r)$ consists of non-locally free sheaves.
\end{Thm}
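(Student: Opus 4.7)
The plan is to treat the two assertions separately; each reduces to a contrapositive argument that simply assembles the lemmas and propositions developed earlier.

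For the first assertion, I would suppose that $\v$ fails weak Brill-Noether and invoke the contrapositive of Lemma \ref{Lem:O_X[1] largest wall}(1). This immediately produces a Mukai vector $\v_1 = (r_1, d_1 H, a_1) \in D_\v$ for which $da_1 - d_1 a \geq dr_1 - d_1 r$. To upgrade this to membership in $D_\v^{BN}$ I need the strict positivity $dr_1 - d_1 r > 0$, and this is exactly the content of Lemma \ref{Lem:only spherical with r_1>0} under the standing hypothesis $r, a \geq 0$. That settles the first half.

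For the second assertion, I would argue the contrapositive: assume $D_\v = \varnothing$ and that the generic sheaf in $M_H(a, dH, r)$ is locally free, and conclude that the generic $E \in M_H(\v)$ is globally generated. Since $a \geq 2$ gives $a \geq -r$, Proposition \ref{prop:birational} applies and yields a nonempty (hence automatically open) intersection $M_H(\v) \cap M_{\sigma_{(s,t)}}(\v)$ for any $(s,t) \in \CC$. Proposition \ref{prop:isom} then identifies $M_{\sigma_{(s,t)}}(\v)$ scheme-theoretically with $M_H(a, dH, r)$ via the Fourier-Mukai assignment $E \mapsto F := \Phi_{X \to X}^{I_\Delta}(E)^{\vee}$. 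Local freeness being an open condition, the generic $E$ in the above open subset satisfies that $F$ is locally free, and Lemma \ref{Lem:FM-vanishing}(3) then translates this directly into global generation of $E$.

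I do not anticipate any serious obstacle: both halves amount to bookkeeping that packages the Fourier-Mukai, stability, and wall-crossing machinery into a clean numerical criterion. The only point requiring a bit of care is the compatibility of the two notions of ``generic'' (on $M_H(\v)$ versus on $M_H(a, dH, r)$) across Proposition \ref{prop:isom}; this is automatic because that proposition delivers a scheme-theoretic isomorphism and local freeness is an open property, so a dense locally free locus on the target corresponds to a dense locally free locus on the source.
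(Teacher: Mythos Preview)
Your proposal is correct and follows essentially the same approach as the paper's own proof: for the first assertion the paper argues the contrapositive using Proposition~\ref{prop:birational} and Lemma~\ref{Lem:O_X[1] largest wall} (your explicit appeal to Lemma~\ref{Lem:only spherical with r_1>0} makes precise the step ensuring $dr_1-d_1r>0$), and for the second it invokes Propositions~\ref{prop:birational} and~\ref{prop:isom} together with Lemma~\ref{Lem:FM-vanishing} exactly as you do.
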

\begin{proof}
If $D_\v^{BN}=\varnothing$, then by \cref{prop:birational,Lem:O_X[1] largest wall} all totally semistable walls in $U_+$ (if there are any) are below the wall defined by $\OO_X[1]$ so that the generic $E\in M_H(\v)$ has $H^i(X,E)=0$ for $i>0$.  If  $D_\v=\varnothing$, then by Propositions \ref{prop:isom} and \ref{prop:birational}, we have that $\Phi_{X\to X}^{I_\Delta}(E)^\vee\in M_H(a,dH,r)$ for generic $E\in m_H(\v)$.  If $M_H(a,dH,r)$ generically consists of locally free sheaves, then the generic $E\in M_H(\v)$ is globally generated by Lemma \ref{Lem:FM-vanishing}.
\end{proof}

\begin{Rem}\label{rem-numerics}
Let $\v=(r,dH,a)$ be a Mukai vector such that $r,a\geq 0$, $d>0$, and $\v^2\geq -2$. If $\v$ fails to satisfy weak Brill-Noether, then Theorem \ref{thm:summing up inequalities} concretely asserts the existence of a Mukai vector $\v_1=(r_1,d_1H,a_1)$ satisfying the following inequalities.  
\begin{enumerate}
\item\label{enum:spherical} $\v_1^2=-2$;
\item\label{enum:d ineq} $0<d_1\leq d$;
\item\label{enum:positive r_1} $dr_1-d_1r>0$;
\item\label{enum:positive a_1} $da_1-d_1a>0$;
\item\label{enum:totally semistable spherical} $0>\langle \v,\v_1\rangle=2ndd_1-r_1a-ra_1$;
\item\label{enum:H^1 vanishing} $0<dr_1-d_1r\leq da_1-d_1a$
\end{enumerate}
In particular, given $\v$ if there does not exist $\v_1$ satisfying these inequalities, then $\v$ satisfies weak Brill-Noether. Observe that given $\v$, checking for the existence of $\v_1$ is an easy numerical task.
\end{Rem}

\section{Counterexamples to Weak Brill-Noether of minimal square}\label{sec-Minimala}
Let $\v=(r,dH,a)$ be a Mukai vector with $r,d>0$ and $\v^2\geq -2$, and consider the related Mukai vector $\v' = (r, dH, a-c)$ for $c \geq 0$. In this section, using elementary modifications, we show that if $\v$ satisfies weak Brill-Noether, then so does $\v'$. In particular, in classifying counterexamples to weak Brill-Noether, we make the task easier by restricting our search to those Mukai of maximal $a$, or equivalently minimal $\v^2$.

\begin{Prop}\label{prop:minimal-a}
Let $\v=(r,dH,a)$ be a Mukai vector with $r, d>0$, and $\v^2\geq-2$. Let $c > 0$ be an integer. 
If there exists $E \in M_H(\v)$ such that $H^1(X,E)=0$, then $(r,dH,a-c)$ satisfies weak Brill-Noether.  In particular, if some $E\in M_H\left(r,dH,\left\lfloor\frac{nd^2+1}{r}\right\rfloor\right)$ satisfies $H^1(X,E)=0$, then $\v$ satisfies weak Brill-Noether.
\end{Prop}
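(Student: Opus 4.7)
My plan is to establish the one-step version -- if $E \in M_H(r, dH, a')$ is stable with $h^1(E) = 0$ and $r + a' \geq 1$, then some stable $E_1 \in M_H(r, dH, a'-1)$ also has $h^1(E_1) = 0$ -- and then iterate. Iterating $c$ times yields a stable sheaf $E^{(c)} \in M_H(r, dH, a-c)$ with vanishing $H^1$. Since $\v(E^{(c)})^2 = \v^2 + 2rc \geq 0$, the moduli space $M_H(r, dH, a-c)$ is irreducible by \cref{Thm:ClassicalModSp}, and upper semi-continuity of $h^1$ then promotes the single instance $E^{(c)}$ to the generic sheaf, giving the weak Brill-Noether property.

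For the one-step construction, use an elementary modification. Note $h^2(E) = 0$ by stability since $d > 0$, so $h^0(E) = r + a' \geq 1$. Pick a point $p \in X$ in the open locus where $E$ is locally free and outside the base locus of $H^0(E)$, and choose a surjection $E \twoheadrightarrow k(p)$ so that the composition $H^0(E) \to E|_p \to k(p)$ is nonzero (this is possible because $p$ lies outside the base locus, so some section $s$ has $s(p) \neq 0 \in E|_p$, and we pick any hyperplane in $E|_p$ missing $s(p)$). Setting $E_1 := \ker(E \twoheadrightarrow k(p))$, we obtain $\v(E_1) = (r, dH, a'-1)$ and $h^1(E_1) = 0$ from the long exact cohomology sequence.

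The main obstacle is establishing Gieseker stability of $E_1$. Every subsheaf $F \subset E_1$ is a subsheaf of $E$, so $p_F < p_E$ by stability of $E$; however, since $p_E - p_{E_1} = 1/r$, a narrow window $p_{E_1} \leq p_F < p_E$ of potential destabilizers remains. For such $F$ one must have $\mu_H(F) = \mu_H(E)$ and $(r_F, \chi(F))$ in a finite set, and for each such Mukai invariant the set of subsheaves $F \subset E$ is bounded. The requirement $F \subset E_1$ -- equivalently, the vanishing of the composition $F \hookrightarrow E \twoheadrightarrow k(p)$ -- is a proper closed condition on the choice of $(p, E \twoheadrightarrow k(p))$, so a generic choice avoids every such bad $F$ simultaneously and $E_1$ is Gieseker stable. (In the case where $E$ happens to be $\mu_H$-stable this step is immediate, since then $\mu_H(F) < \mu_H(E) = \mu_H(E_1)$ strictly for every proper $F \subset E_1$.)

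To conclude, we iterate: if $a - c \leq 0$, then weak Brill-Noether for $(r, dH, a-c)$ is already granted by \cref{Prop:a<=0}; otherwise $a - c \geq 1$, so $h^0(E^{(i)}) = r + a - i \geq r + 2$ at every intermediate step, and the iterated elementary modification succeeds. The ``in particular'' statement is then the special case $c = \lfloor (nd^2+1)/r \rfloor - a$: since $\lfloor (nd^2+1)/r \rfloor$ is the largest integer $a$ with $\v^2 = 2nd^2 - 2ra \geq -2$, every admissible Mukai vector $\v = (r,dH,a)$ arises as a decrement of this maximal one by some nonnegative $c$.
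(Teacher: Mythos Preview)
Your overall strategy—elementary modifications plus semicontinuity on an irreducible moduli space—is exactly the paper's, but the step where you establish Gieseker stability of $E_1$ has a genuine gap. You argue that for each bad subsheaf $F\subset E$ in the window $p_{E_1}\le p_F<p_E$, the condition $F\subset E_1$ is a proper closed condition on $(p,q)$, and then conclude that a generic $(p,q)$ avoids all of them. But ``bounded'' does not mean ``finite'': the Quot scheme of such $F$'s can be positive-dimensional, and an infinite union of proper closed subsets need not be proper. What you would actually need is a dimension estimate of the form $\dim\mathrm{Quot}(E,\v-\v_F)<r_F$ for every bad $\v_F$, and you have not supplied this. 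The paper flags precisely this difficulty: ``elementary modifications do not preserve Gieseker semistability in general, so we will need to take some care.''

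The paper's fix is to abandon Gieseker stability during the modification and work instead with $\mu$-semistability, which \emph{is} preserved by arbitrary elementary modifications. One then needs to pass back to the Gieseker moduli space at the end, and this is where the real work lies. When $\langle\v',\v_0\rangle\ge 0$ (with $\v_0$ the relevant spherical class of the same slope), the locus $\mathcal{M}_H(\v')^{\mu s}$ is open and dense in the irreducible stack $\mathcal{M}_H(\v')^{\mu ss}$ by results of \cite{Yos99a,Yos03}, and openness of cohomology vanishing finishes. But when $\langle\v',\v_0\rangle<0$, \cref{Prop:ExistenceOfSlopeStable} shows that $M_H(\v')^{\mu s}$ can be \emph{empty} (e.g.\ $n=1$, $\v'=(10,5H,1)$), so neither branch of your argument applies: the generic Gieseker-stable sheaf is not $\mu$-stable, and your ``delicate'' case remains unjustified. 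The paper handles this regime by a separate argument using a Fourier-Mukai transform with kernel built from the spherical bundle $E_0$, together with \cref{Lem:E_0,prop:E_0}.
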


\begin{Rem}
The condition $\v^2\geq -2$ is equivalent to $a\leq\frac{nd^2+1}{r}$, so for fixed $(r,d)$, $\left(r,dH,\left\lfloor\frac{nd^2+1}{r}\right\rfloor\right)$ is the Mukai vector of a stable sheaf of rank $r$ and degree $d$ with smallest square.  \cref{prop:minimal-a} tells us that we may focus our efforts on studying this Mukai vector.  
\end{Rem}

\begin{proof}[Proof of \cref{prop:minimal-a}]
We prove the proposition by taking general elementary modifications at points. General elementary modifications preserve $\mu$-(semi)stability and the property of a sheaf having at most one nonzero cohomology group \cite[Lemma 2.7]{CoskunHuizenga:WBN}. Unfortunately, elementary modifications do not preserve Gieseker semistability in general, so we will need to take some care.

Let ${\cal M}_H(\v)^{\mu ss}$ be the moduli stack 
of $\mu$-semistable
sheaves $E$ with $\v(E)=\v$, and  let ${\cal M}_H(\v)^{\mu s}$ be the open substack of ${\cal M}_H(\v)^{\mu ss}$
consisting of $\mu$-stable sheaves.  We write a Mukai vector as $\v=(lr_0,ld_0 H,a)$, where
$\gcd(r_0,d_0)=1$.
Then $ \v^2 =-2$ if and only if $l=1$ and
$r_0 a=d_0^2 n+1$. 
In particular, $r_0 \mid (d_0^2 n+1)$.

We first assume that
either $r_0 \nmid (d_0^2 n+1)$ or $r_0\mid(d_0^2 n+1)$ but $c$ satisfies $c\geq-\frac{\langle\v,\v_0\rangle}{r_0}$, where $\v_0=(r_0,d_0H,a_0)$ and $a_0=\frac{nd_0^2+1}{r_0}$ so that $\v_0^2=-2$.
By assumption we have $E \in {\cal M}_H(\v)$ such that $H^1(X,E)=0$.  In particular, we must have $$0\leq h^0(X,E)=h^0(X,E)+h^2(X,E)=\chi(X,E)=r+a.$$ 
For a general quotient
$$
f:E \to \oplus_{i=1}^c k_{x_i},
$$
the map on global sections is either surjective or injective, depending on whether $c\leq r+a$ or $c>r+a$, respectively, and we must have $\Ker f\in {\cal M}_H(\v')^{\mu ss}$ where $\v'=(lr_0,ld_0 H,a-c)$.  Thus $H^1(X,\Ker f)=0$ or $H^0(X,\Ker f)=0$, respectively, and from stability and $ld_0>0$ we see that $H^2(X,\Ker f)=0$, so $\Ker f$ has at most one non-trivial cohomology group.  The condition $c\geq-\frac{\langle\v,\v_0\rangle}{r_0}$ is equivalent to $\langle\v',\v_0\rangle\geq 0$.  Thus by \cref{Thm:ClassicalModSp} and either Lemma 2.3 and Proposition 2.4 or Section 3.3 of \cite{Yos99a}, respectively, we have that $\MM_H(\v')^{\mu s}$ is an irreducible, open, and dense substack of $\MM_H(\v')^{\mu ss}$.  As the vanishing of $H^1$ or $H^0$, respectively, is an open condition, it follows that $\v'$ satisfies weak Brill-Noether, and the generic sheaf in $M_H(\v')$ is locally free as long as $lr_0>1$.

We next assume that $r_0 \mid (d_0^2 n+1)$ and $c<-\frac{\langle\v,\v_0\rangle}{r_0}$.  Let $E_0$ be the unique $\mu$-stable locally free sheaf with
$\v(E_0)=\v_0=(r_0,d_0 H,a_0)$, where $r_0 a_0=d_0^2 n+1$.  
Then $c<-\frac{\langle\v,\v_0\rangle}{r_0}$ is equivalent to $\langle\v',\v_0\rangle<0$ and $c\geq 1$ implies that $\langle\v,\v_0\rangle<0$ as well. 
Observe that we may write $\v=l\v_0-b\v(k_x)$ with $b\in\Z_{\geq 0}$, where $k_x$ is the skyscraper sheaf of a point $x\in X$.  
It was proven in \cite[Thm. 2.3]{Yos03} that the Fourier-Mukai functor 
$\Phi_{X \to X}^{{\cal E}[1]}:\Db(X) \to \Db(X)$ gives an isomorphism
\begin{equation}
\begin{matrix}
{\cal M}_H(l\v_0-b\v(k_x)) & \to & {\cal M}_H((br_0-l)\v_0^{\vee}-b \v(k_x))\\
E & \mapsto & \Phi_{X \to X}^{{\cal E}[1]}(E^{\vee}),
\end{matrix}
\end{equation} where 
\begin{equation}
{\cal E}:=\Ker(E_0^{\vee} \boxtimes E_0 \to {\cal O}_\Delta).
\end{equation}

Since $\langle (br_0-l)\v_0^{\vee}-b \v(k_x),\v_0^{\vee} \rangle=-\langle \v,\v_0 \rangle>0$, it follows from \cite[Section 3.3]{Yos99a} that
${\cal M}_H((br_0-l)\v_0^{\vee}-b \v(k_x))^{\mu s}$ is an open dense substack of
${\cal M}_H((br_0-l)\v_0^{\vee}-b \v(k_x))$.  Moreover, if $(br_0-l)r_0>1$, then a general member $F$ of ${\cal M}_H((br_0-l)\v_0^{\vee}-b \v(k_x))^{\mu s}$
is locally free.  By stability, for such an $F$ we have $\Hom(E_0,F^{\vee})=\Hom(F^{\vee},E_0)=0$, so it follows that a general member $E\in{\cal M}_H(\v)$ fits into an exact sequence
\begin{equation}\label{eq:E_0:1}
0 \to F^{\vee} \to E \to E_0^{\oplus (2l-br_0)} \to 0.
\end{equation}
If instead $(br_0-l)r_0=1$, then $r_0=1$ and $\v=(l,0,-1)e^{d_0 H}$.
In this case, by \cite[Proposition 3.4]{Yos99a}
we have an exact sequence
\begin{equation}\label{eq:E_0:2}
0 \to E \to {\cal O}_X(d_0 H)^{\oplus l} \to A \to 0
\end{equation}
where $A$ is a 0-dimensional torsion sheaf of length $l+1$.

We claim that our hypothesis that there exists $E\in\MM_H(\v)$ with $H^1(X,E)=0$ implies that $H^1(X,E_0)=0$.  Indeed, observe first that $H^1(X,E')=0$ for the generic $E'\in\MM_H(\v)$.  Now if $(br_0-l)r_0=1$, then $E_0=\OO_X(d_0H)$, so $H^1(X,E_0)=H^1(X,\OO_X(d_0H))=0$ by Kodaira vanishing since $d_0>0$.  Otherwise $(br_0-l)r_0\geq 2$, and the generic sheaf $E\in\MM_H(\v)$ sits in the exact sequence \eqref{eq:E_0:1}.   Then $F^\vee$ is a $\mu$-stable locally free sheaf of positive slope, so we must have $H^2(X,F^\vee)=0$.  As $H^1(X,E)=0$, it follows from the long exact sequence associated to \eqref{eq:E_0:1} that $H^1(X,E_0)=0$.

Returning to proving that $\v'$ satisfies weak Brill-Noether, we first write $\v'=\v-c\v(k_x)=l\v_0-(b+c)\v(k_x)$ and note that it suffices to assume that $\v'$ is primitive.  Since $c>0$, $\v'^2>-2$.  If either $\v'$ is isotropic or $((b+c)r_0-l)r_0=1$, then
every $E \in M_H(\v')$ is the kernel of
$E_0^{\oplus l} \to A$, where $A$ is an Artinian 
sheaf.
Indeed, suppose first that $\v'^2=0$.  Then $l=(b+c)r_0$, so $$\v'=l\v_0-(b+c)\v(k_x)=(b+c)(r_0\v_0-\v(k_x)),$$ and thus $b+c=1$ by primitivity.  As $b\in\Z_{\geq 0}$ and $c\in\N$, we must have $b=0$ and $c=1$.  Then by \cite[Section 3.3]{Yos99a} any $E'\in M_H(\v')$ sits in a short exact sequence \begin{equation}\label{eq:E_0:3}
0 \to E' \to E_0^{\oplus r_0} \to k_{x} \to 0 ,
\end{equation}
so we may take $A=k_x$ and then
$H^1(X,E')=0$ for all $E' \in M_H(\v')$.
If instead $r_0=1$ and $b+c=l+1$, then as in \eqref{eq:E_0:2}, we may take
$A=\oplus_{i=1}^{l+1} k_{x_i}$.  In this case the claim follows from \cref{Lem:E_0} below.

Otherwise $r_0 \geq 2$ or $(b+c)r_0-l \geq 2$.  By \cref{Lem:E_0}, the kernel of the generic quotient $f:E_0^{\oplus l}\to\bigoplus_{i=1}^{b+c}k_{x_i}$ satisfies $\Hom(E_0,\Ker f)=0$ and either  $H^1(X,\Ker f)=0$ or $H^0(X,\Ker f)=0$.  By \cref{prop:E_0}, $\MM_H(\v')$ is an irreducible, open, and dense substack of the same irreducible component of $\MM_H(\v')^{\mu ss}$ as $\Ker f$ for generic $f$.  The proposition then follows from the openness of $H^1$ or $H^0$ vanishing as before.
\end{proof}

In the remainder of the section we prove the lemmas cited in the proof of \cref{prop:minimal-a}.  Recall that we write $\v=l\v_0-e\v(k_x)$ where $\v_0=(r_0,d_0H,a_0)$ such that $\v_0^2=-2$, and we may assume that $H^1(X,E_0)=0$.  Moreover, we may assume that $\v^2\geq 0$ and $\langle\v,\v_0\rangle<0$ so that $2l>e r_0\geq l$.

\begin{Lem}\label{Lem:l}
Let $p\leq r_0 t$.  Then for a general quotient $f:E_0^{p} \to T$, with $T$ a 0-dimensional torsion sheaf of length $t$, 
$\Hom(E_0,\Ker f)=0$.
\end{Lem}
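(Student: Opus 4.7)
The plan is to extract the cohomological content of the defining short exact sequence $0 \to \Ker f \to E_0^{\oplus p} \to T \to 0$ by applying $\Hom(E_0, -)$. Since $E_0$ is spherical, $\Ext^1(E_0, E_0) = 0$, hence $\Ext^1(E_0, E_0^{\oplus p}) = 0$, and we obtain the exact sequence
\begin{equation*}
0 \to \Hom(E_0, \Ker f) \to \Hom(E_0, E_0^{\oplus p}) \xrightarrow{\alpha_f} \Hom(E_0, T) \to \Ext^1(E_0, \Ker f) \to 0.
\end{equation*}
Thus the lemma reduces to showing that for a general surjection $f$, the induced map $\alpha_f$ is injective.

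Next I would compute the dimensions of the relevant spaces. Because $E_0$ is simple, $\Hom(E_0, E_0^{\oplus p}) \cong k^p$. Since a general $T$ of length $t$ is a direct sum $\bigoplus_{i=1}^{t} k_{x_i}$ of skyscraper sheaves at distinct points, and since $\Hom(E_0, k_x) \cong (E_0|_x)^\vee \cong k^{r_0}$, we have $\Hom(E_0, T) \cong k^{r_0 t}$. So $\alpha_f$ is a linear map $k^p \to k^{r_0 t}$, and the hypothesis $p \leq r_0 t$ is precisely what is needed to make injectivity numerically possible.

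Then I would exhibit $\alpha_f$ as the parameter-dependent matrix associated to $f$. A surjection $f: E_0^{\oplus p} \twoheadrightarrow T$ is specified by surjections $q_i \in (E_0|_{x_i}^{\oplus p})^\vee \cong k^{r_0 p}$ at each of the $t$ points. Identifying $\Hom(E_0, E_0^{\oplus p}) = k^p$ with the diagonal scalar maps into each factor, the composition $q_i \circ (-): k^p \to k^{r_0}$ is given by an $r_0 \times p$ matrix $M_i$ whose $j$-th column is the component $q_i^{(j)} \in k^{r_0}$ of $q_i$. The full map $\alpha_f$ is then represented by the $(r_0 t) \times p$ matrix obtained by stacking the $M_i$ vertically. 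Since $p \leq r_0 t$, one can clearly choose the $q_i^{(j)}$ so that this matrix has rank $p$; consequently, the locus of $f$ for which $\alpha_f$ is injective is nonempty and Zariski open in the (irreducible) parameter space of surjections $E_0^{\oplus p} \to T$ with $T$ of length $t$. Combining this with the (also open) condition that $f$ is surjective, the general $f$ has $\Hom(E_0, \Ker f) = 0$, proving the lemma.

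There is no substantive obstacle beyond bookkeeping; the one subtle point worth verifying carefully is that the space parametrizing pairs $(T, f)$ with $T$ of length $t$ and $f$ surjective is irreducible, so that "general" is unambiguous. This is standard because the open locus where $T$ is supported on $t$ distinct reduced points is dense and irreducible, and over this locus the surjections form an irreducible open subset of the vector bundle $\prod_i (E_0|_{x_i}^{\oplus p})^\vee$.
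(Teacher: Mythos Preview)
Your proof is correct, and the underlying strategy matches the paper's: both reduce to showing that the map $\alpha_f \colon \Hom(E_0, E_0^{\oplus p}) \to \Hom(E_0, T)$ induced by $f$ is injective for general $f$. The paper's execution is somewhat slicker: rather than a matrix-rank argument on the parameter space, it exhibits explicit $f$'s for which $\alpha_f$ is tautologically injective, namely by taking $f$ to be the composite $E_0 \otimes V \hookrightarrow E_0 \otimes \Hom(E_0, T) \xrightarrow{\mathrm{ev}} T$ for a $p$-dimensional subspace $V \subset \Hom(E_0, T)$; using simplicity of $E_0$, the map $\alpha_f$ then identifies with the inclusion $V \hookrightarrow \Hom(E_0, T)$. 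Your version has the merit of making the openness and irreducibility of the parameter space explicit, which the paper leaves implicit.
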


\begin{proof}

Let $E_0 \otimes \Hom(E_0,T) \to T$
be the evaluation map.  As $\hom(E_0,T)=r_0t$, for a $p$-dimensional subspace $V \subset \Hom(E_0,T)$,
we consider
$$
f:E_0 \otimes V \to E_0 \otimes \Hom(E_0,T) \to T.
$$
Then 
$\Hom(E_0,E_0 \otimes V) \to \Hom(E_0,E_0 \otimes \Hom(E_0,T))$
is injective and
$$
\Hom(E_0,E_0 \otimes \Hom(E_0,T))\to \Hom(E_0,T)
$$
is an isomorphism.
Hence 
$\Hom(E_0,\Ker f)=0$.
\end{proof} 

\begin{Lem}\label{Lem:E_0}
Assume that $e r_0\geq l$, that is, $ \v^2 \geq 0$.
For a general quotient
$f:E_0^{\oplus l} \to \oplus_{i=1}^e k_{x_i}$,
$\Hom(E_0,\Ker f)=0$ and $\Ker f$ has at most one non-trivial cohomology group.
\end{Lem}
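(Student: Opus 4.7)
The first claim $\Hom(E_0,\Ker f) = 0$ is immediate from \cref{Lem:l} applied with $p = l$ and $t = e$: the hypothesis $p \le r_0 t$ is precisely our assumption $e r_0 \ge l$.

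For the cohomology assertion, the plan is to take the long exact sequence attached to
\[
0 \to \Ker f \to E_0^{\oplus l} \to \bigoplus_{i=1}^{e} k_{x_i} \to 0
\]
and reduce to a rank statement about evaluation maps. By $\mu$-stability of $E_0$ and $d_0 > 0$, Serre duality gives $H^2(X,E_0) = \Hom(E_0,\OO_X)^\vee = 0$; together with the hypothesis $H^1(X,E_0) = 0$ (established just before this lemma in the proof of \cref{prop:minimal-a}), this yields $H^2(\Ker f) = 0$ and $h^0(E_0) = r_0 + a_0$. Setting $N := l(r_0 + a_0)$, the long exact sequence reduces the problem to showing that, for generic $f$, the induced map
\[
H^0(f)\colon H^0(E_0)^{\oplus l} \longrightarrow k^{e}
\]
attains its maximal possible rank $\min(N, e)$; this forces $H^0(\Ker f) = 0$ when $N \le e$ and $H^1(\Ker f) = 0$ when $N \ge e$.

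Since maximality of rank is an open condition on $f$, it suffices to exhibit one such $f$, which I would construct by induction on $e$. A choice of $f$ is determined by points $x_i \in X$ and nonzero functionals $\lambda_i \in (E_0^{\oplus l}|_{x_i})^*$, and $H^0(f)(s) = (\lambda_i(\mathrm{ev}_{x_i} s))_{i=1}^{e}$. For $e = 1$, since $h^0(E_0) \ge 1$ the common zero locus of sections of $E_0$ is a proper closed subset of $X$; picking $x_1$ outside it and $\lambda_1$ nonvanishing on $\mathrm{ev}_{x_1}(H^0(E_0^{\oplus l}))$ yields $H^0(f) \ne 0$. For the inductive step, let $f'$ be the restriction of $f$ to $x_1, \dots, x_{e-1}$, so by induction $\rk H^0(f') = \min(N, e-1)$. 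If $N \le e - 1$, then $H^0(f')$ is already injective and hence so is $H^0(f)$. If $N \ge e$, then $K := \ker H^0(f')$ has positive dimension $N - e + 1$, so picking $x_e$ outside the common zero locus of the nontrivial subsystem $K$ and $\lambda_e$ nonvanishing on $\mathrm{ev}_{x_e}(K) \ne 0$ produces a section of $K$ whose image in $k^e$ lies in $\ker(k^e \onto k^{e-1}) \setminus \{0\}$, raising the rank to $e$.

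I expect no serious obstacle here: the geometric input is only the elementary fact that a nontrivial subsystem on the irreducible surface $X$ has proper common zero locus, and the technical core is the routine inductive rank bookkeeping.
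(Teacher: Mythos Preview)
Your argument is correct, and both halves work as written.

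For the first claim, your one-line application of \cref{Lem:l} with $(p,t)=(l,e)$ is cleaner than what the paper does: the paper writes $l = mr_0 + p$, builds a block-diagonal quotient
\[
E_0^{\oplus l} = (E_0^{\oplus r_0})^{\oplus m} \oplus E_0^{\oplus p} \longrightarrow \bigoplus_{i=1}^{m} k_{x_i} \oplus \bigoplus_{j=1}^{e-m} k_{y_j},
\]
and applies \cref{Lem:l} separately to each block. Your direct application of \cref{Lem:l} to the undecomposed $T=\bigoplus_{i=1}^{e} k_{x_i}$ achieves the same end with less bookkeeping; the hypothesis $p\le r_0 t$ is exactly $l\le r_0 e$, and openness of the vanishing condition finishes it.

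For the cohomology claim, you and the paper take the same route: the long exact sequence plus $H^{1}(E_0)=H^{2}(E_0)=0$ reduces everything to $H^{0}(f)$ having maximal rank $\min(l\chi(E_0),e)$. The paper simply asserts that the generic $f$ induces a surjection (resp.\ injection) on global sections according to whether $l\chi(E_0)\ge e$ or not; you actually supply the induction on $e$ that justifies this, which is a genuine addition of detail rather than a different idea. Your inductive step is sound: if $N\ge e$ and $K=\ker H^{0}(f')$ is nonzero, then since $E_0$ is locally free on the irreducible surface $X$, the common zero locus of $K$ is proper, so choosing $(x_e,\lambda_e)$ as you describe strictly shrinks the kernel and forces $\rk H^{0}(f)=e$.
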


\begin{proof}
We write $l=m r_0+p$ ($0 \leq p <r_0$).
Since $er_0\geq l$, $(e-m)r_0\geq p \geq 0$.
We set 
$$
{\cal E}_x:=\Ker(E_0 \otimes \Hom(E_0,k_x) \to {k}_x).
$$
Let $F$ be the kernel of a general quotient
$E_0^{\oplus p} \to \oplus_{i=1}^{e-m}k_{y_i}$.
As $p\leq (e-m)r_0$, it follows from Lemma \ref{Lem:l} that both $\Hom(E_0,\EE_x)$ and $\Hom(E_0,F)$ vanish.
Then $E:=\oplus_{i=1}^m {\cal E}_{x_i} \oplus F$ satisfies
$\Hom(E_0,E)=0$.
Since $\Hom(E_0,\Ker f)=0$ is an open condition,
we get the first claim.

If we further assume that $l \chi(E_0) \geq e$, then the generic such $f$ induces a surjection on global sections, so $H^1(X,E_0)=0$ implies that $H^1(X,\Ker f)=0$ for a general $f$.  On the other hand, if $e>l\chi(E_0)$, then the generic such $f$ induces an injection on global sections, so $H^0(X, \Ker f)=0$. Either way, $\Ker f$ has at most one nontrivial cohomology group, as required.
\end{proof}

\begin{Lem}\label{prop:E_0}
Let ${\cal M}_H(\v)^{\mu ss,0}$ be the open substack
of ${\cal M}_H(\v)^{\mu ss}$ consisting of 
$E \in {\cal M}_H(\v)^{\mu ss}$ such that 
$\Hom(E_0,E)=0$.
If $r_0 \geq 2$ or $e r_0-l \geq 2$, then
 ${\cal M}_H(\v)$ is an open and dense
substack of ${\cal M}_H(\v)^{\mu ss,0}$.
In particular ${\cal M}_H(\v)^{\mu ss,0}$ is
irreducible.
\end{Lem}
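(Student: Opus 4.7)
The plan is to identify $\MM_H(\v)^{\mu ss, 0}$ with an open substack of a moduli stack of $\mu$-semistable sheaves of a different Mukai vector via the Fourier--Mukai functor $\Phi^{\EE[1]}_{X \to X}((-)^\vee)$ used in the proof of \cref{prop:minimal-a}. Set $\v' := (er_0 - l)\v_0^\vee - e\v(k_x)$. The hypothesis $r_0 \geq 2$ or $er_0 - l \geq 2$ (together with $er_0 \geq l$, which is in force in this case of the proof) ensures that $\v'$ has rank $(er_0 - l)r_0 \geq 2$, and one checks that $\langle \v', \v_0^\vee \rangle = -\langle \v, \v_0\rangle > 0$.

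First, starting from the defining short exact sequence of $\EE$ and applying $\Phi^{(-)}_{X \to X}(E^\vee)$, one obtains a distinguished triangle
\[
\Phi^{\EE}_{X \to X}(E^\vee) \longrightarrow E_0 \otimes \RHom(E_0, E^\vee) \longrightarrow E^\vee \xrightarrow{+1}.
\]
I would show that for every $E \in \MM_H(\v)^{\mu ss, 0}$, the complex $F := \Phi^{\EE[1]}_{X \to X}(E^\vee)$ is a genuine coherent sheaf of Mukai vector $\v'$. The vanishing of $\Hom(E_0, E)$ is given by definition of the stack; the vanishing of $\Ext^2(E_0, E^\vee) = \Hom(E, E_0)^\vee$ is obtained via a Jordan--Hölder argument, using that the $\mu$-stability of $E_0$ forces any nonzero map $E \to E_0$ to have image all of $E_0$, which would make $E_0$ a Jordan--Hölder factor of $E$, contradicting $\Hom(E_0, E) = 0$ together with the sphericality of $\v_0$.

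Second, the assignment $E \mapsto F$ gives a bijection between $\MM_H(\v)^{\mu ss, 0}$ and an analogous open substack $\MM_H(\v')^{\mu ss, 0^\vee}$ of $\MM_H(\v')^{\mu ss}$ defined by the dual $\Hom$-vanishing condition, extending the Gieseker-level isomorphism from \cite[Thm. 2.3]{Yos03} that is quoted in the proof of \cref{prop:minimal-a}. Under this bijection $\MM_H(\v)$ corresponds to $\MM_H(\v')$. By \cite[Lem. 2.3, Prop. 2.4 and \S 3.3]{Yos99a}, the rank bound on $\v'$ combined with $\langle \v', \v_0^\vee\rangle > 0$ implies that $\MM_H(\v')^{\mu s}$ is an irreducible, open, and dense substack of $\MM_H(\v')^{\mu ss}$. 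Hence the open substack $\MM_H(\v')^{\mu ss, 0^\vee}$ inherits irreducibility, and $\MM_H(\v')$ is open and dense inside it. Transferring under the Fourier--Mukai bijection yields the statement for $\v$: irreducibility of $\MM_H(\v)^{\mu ss, 0}$ and openness of $\MM_H(\v)$ inside it; openness is in any case automatic since Gieseker semistability is open within $\mu$-semistability, and density then follows from irreducibility together with the non-emptiness provided by the explicit construction of \cref{Lem:E_0}.

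The main obstacle is the first step: ensuring that the Fourier--Mukai transform is concentrated in a single cohomological degree for \emph{every} $E \in \MM_H(\v)^{\mu ss, 0}$, not only for the Gieseker stable sheaves handled directly in \cite{Yos03}. The careful Jordan--Hölder analysis sketched above, which exploits the sphericality of $\v_0$ and the $\mu$-stability of $E_0$ to rule out $E_0$ as a stable factor of $E$, is the technical core of the argument; once it is in place, all remaining assertions follow formally from the Fourier--Mukai dictionary and the known structure of $\MM_H(\v')^{\mu ss}$.
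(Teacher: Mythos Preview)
Your approach is genuinely different from the paper's. The paper argues directly in the style of \cite[Lem.~2.3]{Yos99a}: for $E\in\MM_H(\v)^{\mu ss,0}$ it takes the (Gieseker) Harder--Narasimhan filtration, observes that the factors $\v_i$ satisfy $\v_i^2\geq 0$, and verifies the numerical inequalities \cite[(2.11),(2.13)]{Yos99a} that bound the dimension of each HN stratum below $\dim\MM_H(\v)$. The case $r_0\geq2$ reduces to \cite[(2.10)]{Yos99a}; the case $r_0=1$, $er_0-l\geq2$ is handled by an explicit estimate after normalizing to $\v_0=\v(\OO_X)$. Irreducibility is then quoted from \cite[Thm.~1.4]{Yos03}. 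There is no Fourier--Mukai transfer in the paper's argument.

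Your idea of transporting the problem to $\v'$ via $\Phi^{\EE[1]}((-)^\vee)$, where $\langle\v',\v_0^\vee\rangle>0$ and the density statement is already available, is natural. However, the sketch has a real gap at exactly the point you flag as the crux. First, the identity you use, $\Ext^2(E_0,E^\vee)=\Hom(E,E_0)^\vee$, is not what Serre duality gives: one has $\Ext^2(E_0,E^\vee)\cong\Hom(E^\vee,E_0)^\vee\cong\Hom(E_0^\vee,E)^\vee$, which is a different group. More substantively, with the convention that makes the Mukai vector come out to $\v'$, the relevant triangle has middle term $E_0^\vee\otimes\RHom(E,E_0)$, and concentrating $F$ in one degree requires controlling $\Hom(E,E_0)$ and the map $E_0^\vee\otimes\Ext^1(E,E_0)\to\lExt^1(E,\OO_X)$. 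Your Jordan--H\"older argument for $\Hom(E,E_0)=0$ does not go through: ``$E_0$ is a JH factor'' together with sphericality does \emph{not} force $\Hom(E_0,E)\neq0$. For instance, if $G$ is $\mu$-stable of the same slope with $\rk G>r_0$ and $\Ext^1(E_0,G)\neq0$, any nonsplit extension $0\to G\to E\to E_0\to0$ satisfies $\Hom(E_0,E)=0$ (the connecting map $\End(E_0)\to\Ext^1(E_0,G)$ is injective) while $\Hom(E,E_0)\cong\End(E_0)=k$. Finally, even once $F$ is shown to be a sheaf, you still need it to be $\mu$-semistable with the dual $\Hom$-vanishing, which you do not address; this is another nontrivial step that the paper's direct stratification argument sidesteps entirely.
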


\begin{proof}
The proof is similar to \cite[Lem. 2.3]{Yos99a}.
For the Harder-Narasimhan filtration
$$
0 \subset F_1 \subset F_2 \subset \cdots \subset F_s=E
$$
of $E$,
$F_i/F_{i-1}$ are semi-stable sheaves with 
$\v(F_i/F_{i-1})^2  \geq 0$.
We set $\v_i:=\v(F_i/F_{i-1})$.
Then it is sufficient to prove
\cite[(2.11), (2.13)]{Yos99a}.

We first assume that $r_0 \geq 2$.
Then \cite[(2.10)]{Yos99a} holds.
Hence \cite[(2.11), (2.13)]{Yos99a} hold.

We next assume that 
$r_0=1$ and $e r_0-l \geq 2$.
In this case, we may assume that $\v_0=\v({\cal O}_X)$
and $\v=(l,0,-a)$ with $a \geq 2$.
If $\v_1^2 >0$, then
$\langle \v_i,\v_j \rangle \geq 2$.
Hence \cite[(2.11)]{Yos99a} holds.
If $\v_1^2 =0$, then
we write $\v_1=l_1 \v_1'$, where $\v_1'$ is primitive.
Observe that $$\langle \v_1,\v-\v_1 \rangle-l_1'=l_1'(\langle \v_1',\v \rangle-1) \geq l_1'.$$
Hence 
\begin{equation}
\begin{split}
& \sum_{i<j} \langle \v_i,\v_j \rangle-(l_1'+s-1)+1\\
= & (\langle \v_1,\v-\v_1 \rangle-l_1')+
\sum_{1<i<j}\langle \v_i,\v_j \rangle-(s-1)+1\\
\geq & l_1'+(s-1)(s-2)-(s-1)+1 \geq 1.
\end{split}
\end{equation}
Hence \cite[(2.13)]{Yos99a} holds.

The irreducibility is a consequence of \cite[Thm. 1.4]{Yos03}.
\end{proof}

\section{Consequences of Theorem \ref{thm:summing up inequalities}}\label{sec-mainconsequences}
Let $X$ be a K3 surface with $\Pic(X)=\mathbb{Z} H$ and $H^2 =2n$.  In this section, we derive  consequences of Theorem \ref{thm:summing up inequalities}. 

\subsection{Uniform bounds on $n$} We first show that if $n \geq r$, then the generic sheaf in $M_H(\v)$ has no higher cohomology. We begin by noting several useful numerical observations.

\begin{Lem}\label{lem:rewrite product}
Let $\v = (r,dH, a)$ be a Mukai vector with $r\geq 0, d,a >0$ and $\v^2\geq -2$.  Let $\v_1=(r_1, d_1H, a_1) \in D_{\v}$. Set $k = da_1 - a d_1$ and $m=d r_1 -  d_1r$. Then 
\begin{equation}\label{eqn-maininequality}
   \langle \v,\v_1\rangle = \frac{1}{r_1 d_1} (m(nd_1^2-1)- 2rd_1+k r_1^2) 
\end{equation}
\end{Lem}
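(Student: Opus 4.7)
The lemma is a purely algebraic identity, so the plan is simply to unpack the two sides and verify their equality. The one non-obvious ingredient is the spherical condition on $\v_1$.

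First I would expand the Mukai pairing using the standard formula
\[
\langle \v, \v_1 \rangle = 2n d d_1 - r a_1 - r_1 a.
\]
Since $\v_1 \in D_\v$ and we have already observed (via \cref{Lem:no isotropic}) that $D_\v$ contains no isotropic vectors, we must have $\v_1^2 = -2$. Unpacking $\v_1^2 = 2 n d_1^2 - 2 r_1 a_1 = -2$ gives the key relation
\[
r_1 a_1 = n d_1^2 + 1.
\]
This will be the one substitution that makes everything collapse.

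Next I would clear denominators on the claimed right-hand side by multiplying $\langle \v, \v_1 \rangle$ by $r_1 d_1$, so that the target identity becomes
\[
r_1 d_1 \langle \v, \v_1 \rangle = m(n d_1^2 - 1) - 2 r d_1 + k r_1^2.
\]
The left-hand side becomes $2 n d d_1^2 r_1 - r r_1 d_1 a_1 - r_1^2 d_1 a$, and applying $r_1 a_1 = n d_1^2 + 1$ to the middle term gives
\[
r_1 d_1 \langle \v, \v_1 \rangle = 2 n d d_1^2 r_1 - n r d_1^3 - r d_1 - r_1^2 d_1 a.
\]
On the right-hand side, I would simply expand $m = d r_1 - d_1 r$ and $k = d a_1 - a d_1$, and then use $r_1^2 d a_1 = r_1 d (n d_1^2 + 1) = n d d_1^2 r_1 + d r_1$ to eliminate the one remaining occurrence of $a_1$. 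Collecting terms should then produce the same expression $2 n d d_1^2 r_1 - n r d_1^3 - r d_1 - r_1^2 d_1 a$.

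There is no real obstacle here: the only substantive input is the spherical identity $r_1 a_1 = n d_1^2 + 1$, and the rest is bookkeeping. The main reason to record the formula in this form is that the factors $m = d r_1 - d_1 r$ and $k = d a_1 - a d_1$ are exactly the quantities appearing in the inequalities of \cref{thm:summing up inequalities} and \cref{rem-numerics}, so this rewriting of $\langle \v, \v_1 \rangle$ will be the natural tool for turning the numerical criterion for failure of weak Brill-Noether into explicit bounds in the sections to come.
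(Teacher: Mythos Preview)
Your proposal is correct and follows essentially the same route as the paper: both arguments rest on the single substantive input $r_1 a_1 = n d_1^2 + 1$ coming from $\v_1^2 = -2$, and the rest is bookkeeping. The paper substitutes $a_1 = \tfrac{nd_1^2+1}{r_1}$ and $a = \tfrac{da_1 - k}{d_1}$ directly into $\langle \v,\v_1\rangle = 2ndd_1 - r_1 a - r a_1$ and simplifies, whereas you clear denominators first and expand both sides; these are just two organizations of the same computation.
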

\begin{proof}
We have $\langle \v,\v_1\rangle =2ndd_1-r_1 a-ra_1.$  
By Remark \ref{rem-numerics} \ref{enum:spherical},  $a_1 = \frac{nd_1^2+1}{r_1}$. By assumption $a = \frac{da_1 - k}{d_1}$. Substituting for $a$ and $a_1$ using these two relations, we obtain
\begin{align*}
    \langle \v,\v_1\rangle &= 2ndd_1 -\frac{1}{d_1} (d (nd_1^2 +1) - kr_1) - \frac{r}{r_1} (nd_1^2 +1) \\ &=\frac{1}{r_1 d_1} ((d r_1 -  d_1r)(nd_1^2-1)- 2rd_1+k r_1^2).
\end{align*}
This is the desired formula.
\end{proof}

\begin{Lem}\label{lem:useful consequences}
Let $\v = (r,dH, a)$ be a Mukai vector with $r\geq 2, d,a >0$ and $\v^2\geq -2$. Set $m=d r_1 -  d_1r$.
\begin{enumerate}
\item If $\v_1\in D_\v$, then $r_1<r$ and $$d_1<\min\left( \frac{2r}{n}, \frac{2r}{mn} + \frac{1}{\sqrt{n}}\right).$$ 
\item Moreover, if $\v_1 \in D_\v^{BN}$, then $d_1 < \frac{2r}{mn}$.
\end{enumerate}
\end{Lem}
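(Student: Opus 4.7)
The plan is to extract all four bounds from a single master inequality. Since $\v_1^2=-2$ by \cref{Lem:no isotropic}, the condition $\langle\v,\v_1\rangle<\v_1^2+2=0$, together with $r_1d_1>0$ and \cref{lem:rewrite product}, yields the key inequality
\[
m(nd_1^2-1)+kr_1^2<2rd_1. \qquad (\ast)
\]
Throughout I will use only the positivity constraints $m,k,r_1,d_1\geq 1$, $n\geq 1$, $r\geq 2$, together with suitable reorganizations of $(\ast)$.

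For the bound $d_1<2r/n$, I would discard $m(nd_1^2-1)-(nd_1^2-1)\geq 0$ and $kr_1^2-1\geq 0$ on the left of $(\ast)$, reducing it to $nd_1^2<2rd_1$. For $d_1<2r/(mn)+1/\sqrt{n}$, I would use only $kr_1^2\geq 1$ to obtain the quadratic inequality $mnd_1^2-2rd_1-(m-1)<0$, solve it explicitly in $d_1$, and verify the elementary comparison $r+\sqrt{r^2+mn(m-1)}<2r+m\sqrt{n}$, which after squaring reduces to the trivial $-n<2r\sqrt{n}$.

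The main obstacle is the bound $r_1<r$. Dropping the nonnegative term $m(nd_1^2-1)$ from $(\ast)$ yields $r_1^2\leq kr_1^2<2rd_1-nd_1^2+1 = -n(d_1-r/n)^2+r^2/n+1$, so in particular $r_1^2<r^2/n+1$. When $n\geq 2$ and $r\geq 2$ this already forces $r_1<r$, but for $n=1$ one only obtains $r_1\leq r$, so the equality case $r_1=r$ must be excluded separately. The plan there is to observe that $r_1=r$ combined with $m\geq 1$ forces $d>d_1$ and hence $m=r(d-d_1)\geq r$; substituting back into $(\ast)$ with $n=1$, $k\geq 1$, and $r_1=r$ collapses it to $(d_1-1)^2+r-2<0$, which is impossible for $r\geq 2$.

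For part (2), the extra hypothesis $\v_1\in D_\v^{BN}$ provides $k\geq m$. Using this in $(\ast)$ allows me to factor out $m$ on the left: $m(nd_1^2-1+r_1^2)\leq m(nd_1^2-1)+kr_1^2<2rd_1$. Since $r_1\geq 1$ implies $r_1^2-1\geq 0$, the left-hand side is at least $mnd_1^2$, so $mnd_1^2<2rd_1$ and hence $d_1<2r/(mn)$.
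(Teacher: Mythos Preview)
Your argument is correct and follows the same overall strategy as the paper: extract the master inequality $(\ast)$ from \cref{lem:rewrite product} and the negativity of $\langle\v,\v_1\rangle$, then read off each bound. Two of your extractions are actually cleaner than the paper's: for $d_1<2r/n$ you go directly from $m\geq 1$ and $kr_1^2\geq 1$ to $nd_1^2<2rd_1$, whereas the paper splits into $m=1$ and $m\geq 2$ and then needs a separate contradiction argument to recover $d_1<2r/n$ in the second case; and for part (2) you factor out $m$ on the left of $(\ast)$ to get $mnd_1^2<2rd_1$ without ever invoking the quadratic formula, while the paper runs the quadratic formula and bounds the discriminant. For $r_1<r$ the two proofs differ tactically: the paper completes the square as $(n-1)d_1^2+(d_1-r)^2$ and excludes $r_1=r$ uniformly in $n$ by using $\gcd(r_1,d_1)=1$ (so $d_1\neq r$ when $r\geq 2$), whereas you complete the square as $-n(d_1-r/n)^2$ and handle the residual $n=1$, $r_1=r$ case via $m=r(d-d_1)\geq r$. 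Both routes are valid.

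One small phrasing issue: in your $r_1<r$ step you write ``dropping the nonnegative term $m(nd_1^2-1)$'', but the inequality you then state, $kr_1^2<2rd_1-nd_1^2+1$, is what you get by \emph{lower-bounding} $m(nd_1^2-1)$ by $nd_1^2-1$ (i.e.\ using $m\geq 1$), not by dropping it. The mathematics is fine; only the description is off.
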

\begin{proof}
Let $\v_1 \in D_{\v}$. Set $k = da_1 - a d_1$ and $m =r_1 d-rd_1$. By Remark \ref{rem-numerics} \ref{enum:positive r_1} and \ref{enum:positive a_1} both $k$ and $m$ are positive integers. First, we show that $r_1<r$.  Using Remark \ref{rem-numerics} \ref{enum:totally semistable spherical} and Lemma \ref{lem:rewrite product}, we have 

\begin{align*}
0>\langle \v,\v_1\rangle &= \frac{1}{r_1 d_1} (m(nd_1^2-1)- 2rd_1+k r_1^2) \geq\frac{1}{r_1 d_1}((nd_1^2-1)-2rd_1 +k r_1^2)\\
&=\frac{1}{r_1 d_1}((n-1)d_1^2+(d_1-r)^2-r^2+k r_1^2-1)\geq\frac{1}{r_1 d_1}(r_1^2-r^2-1)\\
&=\frac{1}{r_1d_1}((r_1-r)(r_1+r)-1),
\end{align*}
Hence, $r_1\leq r$.  Since $\v_1^2=-2$, it follows that $\gcd(r_1,d_1)=1$.  If $r_1=r$, then $d_1\neq r$ since $r\geq 2$.  Thus $(d_1-r)^2\geq 1$ so that if $r_1=r$ then \begin{equation}\label{eqn:r_1<r and d_1<2r} 0>(n-1)d_1^2+(d_1-r)^2-r^2+r_1^2-1\geq(n-1)d_1^2\geq 0,\end{equation} a contradiction.  Thus we have $r_1<r$.  

For the inequalities on $d_1$, observe that as $m$ and $n$ are positive integers, the quantity
\begin{equation}\label{eqn:nonnegative quantity}mnd_1^2 - 2r d_1 - m + kr_1^2
\end{equation}
is nonnegative unless 
\begin{equation}\label{eqn:quadratic formula}
    d_1 < \frac{2r + \sqrt{4r^2 +4m^2n-4mnkr_1^2}}{2mn}.
\end{equation}
If $m=1$, since $4nk r_1^2 \geq 4n$, we conclude that $d_1 < \frac{2r}{n}$ as desired. Similarly, if $\v_1 \in D_{\v}^{BN}$, then $m \leq k$ and we conclude that $d_1 < \frac{2r}{mn}$. This concludes the proof of part (2) of the lemma.
Returning to the case when $\v_1\in D_\v$ and  $m \geq 2$ and noting that $$\sqrt{4r^2 +4m^2n-4mnkr_1^2} < 2r + 2m \sqrt{n},$$ we see that  
\begin{equation}\label{eqn:more precise}
d_1 < \frac{2r}{mn} + \frac{1}{\sqrt{n}} \leq \frac{r}{n} + \frac{1}{\sqrt{n}}.
\end{equation}

To conclude the proof of the lemma, let us show that $d_1 < \frac{2r}{n}.$  If instead $d_1 \geq \frac{2r}{n}$, then it follows that $\frac{r}{n} <\frac{1}{\sqrt{n}} \leq 1$. Since $d_1$ is a positive integer, we must have $d_1 =1$, so $n \geq 2r$. Since $m \geq 2$, the quantity in \eqref{eqn:nonnegative quantity} satisfies $$mn-2r-m+kr_1^2 \geq 2rm - 2r -m +1 = (2r-1)(m-1) >0,$$ a contradiction. 
\end{proof}

\begin{Thm}\label{thm:bound n r}
 Let $\v=(r,dH,a)$ be a Mukai vector with $r\geq 2$, $d,a >0$    and $\v^2\geq -2$ on a K3 surface $X$ with $\Pic(X) = \mathbb{Z} H$ and $H^2 =2n$. If $n \geq r,$ then $H^1(X,E)=0$ for the generic sheaf $E\in M_H(\v)$.
\end{Thm}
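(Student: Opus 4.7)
The plan is to reduce via \cref{prop:minimal-a} to the Mukai vector of maximal $a$, and then apply \cref{thm:summing up inequalities} by verifying $D_{\v}^{BN}=\varnothing$. By \cref{prop:minimal-a}, it suffices to establish the conclusion when $a=\lfloor(nd^2+1)/r\rfloor$. I would write $nd^2+1=ra+b$ with remainder $b\in\{0,1,\ldots,r-1\}$ (so $\v^2=2(b-1)\geq -2$) and aim to show $D_{\v}^{BN}=\varnothing$ for this $\v$; then \cref{thm:summing up inequalities} will yield $H^1(X,E)=0$ for the generic $E\in M_H(\v)$.

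Next, I would suppose for contradiction that $\v_1=(r_1,d_1H,a_1)\in D_{\v}^{BN}$, and set $m=dr_1-d_1r$. \cref{lem:useful consequences}(2) gives $d_1 m n<2r$; combined with the hypothesis $n\geq r$, this forces $d_1 m<2$, and since $d_1,m\geq 1$ are positive integers, $d_1=m=1$. From $\v_1^2=-2$ and $d_1=1$ we obtain $r_1a_1=n+1$, so $a_1=(n+1)/r_1$; from $m=1$ we obtain $dr_1=r+1$, so $d=(r+1)/r_1$.

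The crux of the argument is a direct computation of $k:=da_1-d_1a$. Substituting the expressions above together with $a=(nd^2+1-b)/r$, and using the identity $r(n+1)-n(r+1)=r-n$, I expect the computation to simplify to
$$
k=\frac{(r+1)(r-n)+r_1^2(b-1)}{rr_1^2}.
$$
Since $n\geq r$, the first term in the numerator is non-positive; since $b\leq r-1$, the second term is at most $r_1^2(r-2)$. Hence $k\leq (r-2)/r<1$, forcing $k\leq 0$ as $k\in\Z$. This contradicts the requirement $k>0$ for $\v_1\in D_{\v}$, so $D_{\v}^{BN}=\varnothing$ as desired. The main (indeed the only) obstacle is the algebraic simplification of $k$; once that identity is in hand, the sign analysis is immediate, and \cref{thm:summing up inequalities} together with \cref{prop:minimal-a} finishes the proof for every $a\leq \lfloor(nd^2+1)/r\rfloor$.
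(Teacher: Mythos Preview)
Your proposal is correct and follows essentially the same route as the paper: reduce to maximal $a$ via \cref{prop:minimal-a}, invoke \cref{lem:useful consequences}(2) to force $d_1=m=1$, and then show $k=a_1d-a\leq 0$ to contradict $\v_1\in D_\v^{BN}$. The only difference is cosmetic: the paper bounds $a$ from below by $\left\lfloor\frac{r(n+1)(r+1)+r_1^2}{rr_1^2}\right\rfloor=\frac{(n+1)(r+1)}{r_1^2}=a_1d$ using $n\geq r$, whereas you compute $k$ exactly via the remainder $b$ and bound it above by $(r-2)/r<1$; both arguments yield the same contradiction.
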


\begin{proof}
By Proposition \ref{prop:minimal-a}, we may assume that $a = \left\lfloor \frac{nd^2 +1}{r} \right\rfloor$. 
By Theorem \ref{thm:summing up inequalities}, it suffices to prove that $D_\v^{BN} = \varnothing$.  
Assume to the contrary that $\v_1= (r_1, d_1H, a_1) \in D_\v^{BN}$. Set $m = d r_1 - r d_1$. By Lemma \ref{lem:useful consequences}, we have $d_1 < \frac{2r}{mn}.$ Since $d_1$ is a positive integer, $n \geq r$ implies that $m=d_1=1$. Hence, $d= \frac{r+1}{r_1}$ and $a_1 = \frac{n+1}{r_1}$. In particular, $r_1$ divides both $r+1$ and $n+1$.  Moreover, since $n \geq r$ $$a = \left\lfloor \frac{nd^2 +1}{r} \right\rfloor = \left\lfloor \frac{n(r+1)^2 +r_1^2}{rr_1^2} \right\rfloor \geq \left\lfloor \frac{r(n+1)(r+1) + r_1^2}{r r_1^2} \right\rfloor = \frac{(n+1)(r+1)}{r_1^2}.$$
By Remark \ref{rem-numerics}\ref{enum:H^1 vanishing}, we have 
$$1 \leq a_1 d - a \leq  \frac{(n+1)}{r_1} \frac{(r+1)}{r_1} - \frac{(n+1)(r+1)}{r_1^2}=0,$$ a contradiction. 
\end{proof}

\begin{Rem}
Theorem \ref{thm:bound n r} is sharp. Let $X$ be a K3 surface as in Theorem \ref{thm:bound n r}. Let $$\v = (n+1, (n+2)H, n^2+3n+1).$$ Then the unique bundle $E \in M_H(\v)$ has resolution given by 
$$0 \to \OO_X \to \OO_X(H)^{n+2} \to E \to 0$$ and $h^1(X,E)=1$.
\end{Rem}

\subsection{Uniform cohomology vanishing} In this subsection, we give a uniform effective bound on $d$ that guarantees that weak Brill-Noether holds.

\begin{Thm}\label{thm:WBN sharp} 
Let $\v = (r, dH, a)$ be a Mukai vector with $r \geq 2, a \geq 0, d>0$ and $\v^2 \geq -2$ on a K3 surface with $\Pic(X) = \mathbb{Z} H$ and $H^2=2n$.
If $$d \geq r \left\lfloor  \frac{r}{n} \right\rfloor +2, $$ then $\v$ satisfies weak Brill-Noether.
\end{Thm}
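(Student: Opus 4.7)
By Proposition~\ref{prop:minimal-a} it suffices to treat the case $a=\lfloor (nd^2+1)/r \rfloor$, and by Theorem~\ref{thm:bound n r} we may further assume $r > n$, so $q := \lfloor r/n \rfloor \geq 1$. Theorem~\ref{thm:summing up inequalities} then reduces the claim to showing $D_\v^{BN} = \varnothing$. Supposing for contradiction that some $\v_1 = (r_1, d_1H, a_1) \in D_\v^{BN}$ exists, set $m = dr_1 - d_1 r$ and $k = da_1 - ad_1$; then $k \geq m \geq 1$. Using $r_1 a_1 = nd_1^2 + 1$ from $\v_1^2 = -2$ together with $ar \geq nd^2 + 2 - r$ from $a = \lfloor (nd^2+1)/r\rfloor$, the inequality $k \geq m$ rearranges (after multiplying condition $k\geq m$ by $rr_1$) to the key bound
\begin{equation}\label{eq:plan-bound}
nmd\,d_1 + r(r_1^2-1)\,d \;\leq\; (r^2+r-2)\,d_1 r_1. \tag{$\ast$}
\end{equation}

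The argument then splits on $r_1$. For $r_1 = 1$, \eqref{eq:plan-bound} collapses to $nmd \leq r^2+r-2$. Writing $d = d_1 r + m$ and $r = qn + s$ with $0 \leq s \leq n-1$, I would handle $m=1$ and $m\geq 2$ separately: in the subcase $m = 1$ one obtains $d_1 \leq (r^2+r-2-n)/(nr)$, which I would check is strictly less than $q+1$ by reducing to $r(s-n+1) \leq n+2$ (using $s \leq n-1$), forcing $d_1 \leq q$ and $d \leq qr + 1$; in the subcase $m \geq 2$ one gets $d \leq (r^2+r-2)/(2n)$, which is verified to be at most $qr + 1$ via the elementary inequality $r(s-qn) + r - 2 - 2n < 0$.

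For $r_1 \geq 2$, I would substitute $d_1 = (dr_1 - m)/r$ into \eqref{eq:plan-bound} to convert it into a quadratic inequality in $d$ with positive leading coefficient, of the form $nmr_1 d^2 - Bd + (r^2+r-2)r_1 m \leq 0$ where $B = nm^2 + r^2 + (r-2)r_1^2$, and bound the larger root $d_+$ from above by the crude estimate $B/(nmr_1)$. Combining this with the constraints $md_1 n \leq 2r-1$ and $r_1^2 \leq d_1(2r - nd_1)$ from Lemma~\ref{lem:useful consequences}(2), and exploiting that $r(r_1^2 - 1) \geq 3r$ when $r_1 \geq 2$, I would show $d_+ \leq qr + 1$; the divisibility condition $r_1 \mid nd_1^2+1$ further eliminates many configurations a priori. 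Combining both cases contradicts $d \geq qr + 2$, completing the proof. The main obstacle is the case $r_1 \geq 2$: the quadratic bound alone is somewhat loose, so careful numerical verification is needed for small values of $q$ or $n$ to ensure the sharp bound $d \leq qr + 1$.
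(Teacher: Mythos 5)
Your three opening reductions (to $a=\left\lfloor\frac{nd^2+1}{r}\right\rfloor$, to $r>n$, and to showing $D_\v^{BN}=\varnothing$) are exactly the paper's, your master inequality $(\ast)$ is a correct consequence of $k\geq m$ together with $r_1a_1=nd_1^2+1$ and $ra\geq nd^2+2-r$, and your $r_1=1$ case is complete and correct (it recovers, in slightly weaker form, the bound $d_1\leq r/(nm)$ of Remark \ref{obs-r1}, which is all that is needed there). The organizational difference from the paper is that the paper runs its cases on $m=r_1d-rd_1$, disposing of $m\geq 2$ first and then splitting $m=1$ by $r_1$, whereas you split on $r_1$ first.

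The gap is the case $r_1\geq 2$, which is where the entire difficulty of the theorem lives and which you leave as a plan rather than a proof. The crude root estimate $d\leq B/(nmr_1)$ does not by itself force $d\leq r\left\lfloor\frac{r}{n}\right\rfloor+1$: already for $r_1=2$, $m=1$ and $r$ close to $2n$ (e.g.\ $n=11$, $r=21$, where $B/(nmr_1)=24$ while $r\left\lfloor\frac{r}{n}\right\rfloor+2=23$) the bound tolerates $d\geq r\left\lfloor\frac{r}{n}\right\rfloor+2$; one must substitute $d=(rd_1+m)/r_1$ back into it to convert it into $d_1<\frac{r+4}{n}$ and then invoke integrality of $d_1$, a step your outline does not perform. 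More seriously, your fallback of ``careful numerical verification for small values of $q$ or $n$'' cannot close the argument, because the residual configurations (for instance $r_1=2$, $m=1$, $n<r<2n$) occur for arbitrarily large $n$, so no finite check suffices; a uniform argument is required. The paper supplies one by noting that $r_1=2$, $m=1$ forces $n$, $d_1$, $r$ all odd and then extracting the sharper estimate $d_1<\frac{r+3}{n}$ directly from $1\leq a_1d-ad_1$, and it handles $r_1\geq 3$ by showing $m\geq 2r_1\geq 6$. Your $r_1\geq 3$ subcase likewise needs more than the constraints you list: the term $\frac{(r-2)r_1}{nm}$ inside $B/(nmr_1)$ can exceed $r\left\lfloor\frac{r}{n}\right\rfloor$ when $r_1$ is comparable to $r/\sqrt{n}$, so one must again combine the substitution $d=(rd_1+m)/r_1$ with $r_1^2\leq d_1(2r-nd_1)$ and the divisibility $r_1\mid nd_1^2+1$, and none of this is actually carried out. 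Until the $r_1\geq 2$ case is written out uniformly in $n$ and $r$, the proof is incomplete.
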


\begin{proof}

By Proposition \ref{prop:minimal-a}, it suffices to prove the theorem under the additional assumption that $a = \left\lfloor \frac{nd^2+1}{r}\right\rfloor$. By Theorem \ref{thm:bound n r}, we may also assume that $r>n$, hence $\left\lfloor \frac{r}{n} \right\rfloor \geq 1$. In the proof, we will make these additional simplifying assumptions. By Theorem \ref{thm:summing up inequalities}, it suffices to show that $D_{\v}^{BN} = \varnothing$. 

Suppose to the contrary that $\v_1 \in D_\v^{BN}$. Set $m = r_1 d - r d_1$ and $k = da_1 - a d_1$. By Lemma \ref{lem:useful consequences} (2), we have that $d_1 < \frac{2r}{mn}$.
We will use the following observation several times.

\begin{Rem}\label{obs-r1}If $r_1=1$, then the inequality $m \leq d a_1 - a d_1$ implies that $d_1 \leq \frac{r}{nm}$.  Indeed, if $r_1 =1$, then $d=m+rd_1$ and $a_1 = nd_1^2 +1$. Hence $$m \leq (m+rd_1)(nd_1^2+1) - \left\lfloor \frac{n (m+rd_1)^2 +1}{r}\right\rfloor d_1 =m + rd_1- nmd_1^2 - \left\lfloor \frac{nm^2+1}{r} \right\rfloor d_1.$$ Rearranging, we see that $nm d_1^2 \leq r d_1$ and thus $d_1\leq\frac{r}{mn}$, as claimed.\end{Rem}

Returning to the proof of the theorem, if $m \geq 4$, then $d_1 \leq \left\lfloor \frac{r}{2n} \right\rfloor.$ Since $d_1$ is a positive integer, $\left\lfloor \frac{r}{2n} \right\rfloor \geq 1$. From $\left\lfloor \frac{r}{n} \right\rfloor \geq 2 \lfloor \frac{r}{2n} \rfloor$, we see that $$m = r_1 d - r d_1 \geq r_1 r \left\lfloor \frac{r}{n}\right \rfloor + 2 r_1 - r \left\lfloor \frac{r}{2n} \right\rfloor = (2r_1 -1) r \left\lfloor \frac{r}{2n}\right\rfloor + 2 r_1 \geq (2r_1-1) r + 2r_1 \geq r+2.$$ Hence, $d_1 < \frac{2r}{n((2r_1-1)r+2r_1)}$. Since $d_1$ is a positive integer, we must have $n=r_1=d_1=1$. In that case, $m=d-r\geq r^2 -r+2 \geq 2r$ since $r\geq 2$. Hence, $d_1 < 1$, which is a contradiction.  Thus we must have $1\leq m\leq 3$.

If $m=3$, then $d_1 \leq \left\lfloor \frac{2r}{3n} \right\rfloor$. Since $d_1$ is a positive integer, we may assume $\left\lfloor \frac{2r}{3n} \right\rfloor \geq 1$. In that case, observe that
$$\left\lfloor \frac{r}{n} \right\rfloor > \left\lfloor \frac{2r}{3n}  \right\rfloor \quad  \mbox{unless} \quad \left\lfloor \frac{r}{n} \right\rfloor = \left\lfloor \frac{2r}{3n} \right\rfloor=1.$$
If $\left\lfloor \frac{r}{n} \right\rfloor > \left\lfloor \frac{2r}{3n} \right\rfloor$, then
$$3=m \geq r_1 r \left\lfloor \frac{r}{n} \right\rfloor + 2 r_1 - r \left\lfloor \frac{2r}{3n} \right\rfloor \geq r_1 r + 2r_1 \geq 4,$$
a contradiction. Hence, we must have $\left\lfloor \frac{r}{n} \right\rfloor = \left\lfloor \frac{2r}{3n} \right\rfloor=1$ and 
$$m=3 \geq r_1 r + 2r_1 -r = (r_1-1)r + 2r_1.$$
We conclude that $r_1=d_1=1$ and $d=r+3$. By Remark \ref{obs-r1}, we must have $1= d_1 \leq \frac{r}{3n} < \frac{2}{3}$, which is a contradiction. We conclude that $m \leq 2.$

If $m=2$, then $d_1 \leq \left\lfloor \frac{r}{n} \right\rfloor.$ Hence,$$m \geq r_1 r \left\lfloor \frac{r}{n} \right\rfloor + 2 r_1 - r \left\lfloor \frac{r}{n} \right\rfloor \geq (r_1-1) r \left\lfloor \frac{r}{n} \right\rfloor + 2r_1.$$ If $r_1 \geq 2$, it follows that $2=m \geq r+4 >2$, a contradiction. Thus  $r_1=1$.  Hence, by Remark \ref{obs-r1}, we must have $d_1 \leq \left\lfloor \frac{r}{2n} \right\rfloor$. Since $d_1$ is a positive integer, we may assume the latter is positive. Then, we conclude that $$m =2 \geq r \left\lfloor \frac{r}{n} \right\rfloor + 2 - r \left\lfloor \frac{r}{2n} \right\rfloor \geq r \left\lfloor \frac{r}{2n} \right\rfloor + 2 > 2,$$ a contradiction.

We must therefore have $m=1$. Note that $2 \left\lfloor \frac{r}{n} \right\rfloor + 1 \geq \left\lfloor \frac{2r}{n} \right\rfloor.$ 
If $r_1 \geq 3$, then $$m=1 \geq r_1 r \left\lfloor \frac{r}{n} \right\rfloor + 2 r_1 - r \left\lfloor \frac{2r}{n} \right\rfloor \geq 3r \left\lfloor \frac{r}{n} \right\rfloor + 2r_1 - r \left(2\left\lfloor\frac{r}{n}\right\rfloor+1\right) \geq 2r_1 \geq 6,$$ a contradiction. 
On the other hand, if $r_1=1$, then by Remark \ref{obs-r1} $d_1 \leq \left\lfloor \frac{r}{n} \right\rfloor$. 
Hence, $$m =1 \geq r \left\lfloor \frac{r}{n} \right\rfloor + 2 - r \left\lfloor \frac{r}{n} \right\rfloor \geq 2,$$ another contradiction. 

We conclude that $m=1$ and $r_1=2$. In this case, the inequality $1 \leq a_1 d - a d_1$  implies that $d_1 < \frac{r+3}{n}$. Indeed, from the equalities $2d=1+rd_1$, $2a_1= nd_1^2 +1$ and $a = \left\lfloor \frac{nd^2+1}{r} \right\rfloor$, we see that $n, d_1$ and $r$ are all odd and 
$$a \geq \frac{nr d_1^2 + 2nd_1}{4} - \frac{3}{4}.$$ Hence,
$$1  \leq a_1d - ad_1 \leq  \frac{rnd_1^3 + nd_1^2 + rd_1+1}{4} - \frac{nrd_1^3 + 2nd_1^2 }{4} + \frac{3}{4}d_1   \leq  \frac{(r+3)d_1 - nd_1^2 + 1 }{4}.$$
This easily implies $d_1 < \frac{r+3}{n}$. If $n>1$, then  $n \geq 3$ since $n$ is odd. Then
$d_1 \leq \frac{r}{n} + 1.$ We obtain
$$ m=1 \geq 2 r \left\lfloor \frac{r}{n} \right\rfloor + 4 - r \left\lfloor \frac{r}{n} \right\rfloor - r \geq 4,$$ which is a contradiction.
We thus conclude $m=n=1$ and $r_1=2$ and $d_1 \leq r+2$. Substituting
$$m=1 \geq 2r^2 + 4 - r^2 -2r = r^2 -2r + 4 \geq 4.$$ This contradiction shows that $D_{\v}^{BN}= \varnothing$ and completes the proof of the theorem.
\end{proof}

\begin{Rem}
Theorem \ref{thm:WBN sharp} is sharp. For example, setting $\v = (n+1, (n+2)H, n^2+3n+1)$, we see that the generic sheaf in $M_H(\v)$ has nonvanishing $H^1$. Furthermore, if $d= r \left\lfloor \frac{r}{n} \right\rfloor +1$, then $D_{\v}^{BN}$ is not always empty. For example, let $$\v= \left(r, \left(r \left\lfloor \frac{r}{n} \right\rfloor +1\right) H, nr\left\lfloor \frac{r}{n} \right\rfloor^2 + 2n \left\lfloor \frac{r}{n} \right\rfloor + \left\lfloor \frac{n+1}{r} \right\rfloor \right).$$ Then $\v_1 = (1,\left\lfloor \frac{r}{n} \right\rfloor H, n \left\lfloor \frac{r}{n} \right\rfloor^2 +1) \in D_\v^{BN}.$  The cohomology of the generic sheaf may vanish even when $D_\v^{BN} \not= \varnothing$, but this will require a more detailed analysis, which we will undertake in the rest of the paper. 
\end{Rem}

\subsection{Finiteness of counterexamples} As a corollary of our discussion so far, we see that given $r \geq 2$ there are only finitely many Mukai vectors $\v= (r, dH, a)$ with $\v^2 \geq -2$ such that the generic sheaf in $M_H(\v)$ has more than one nonzero cohomology group. Starting in \cref{sec-initialclassification}, we will turn to the classification of these Mukai vectors.

\begin{Thm}\label{thm:finiteness}
Let $X$ be a K3 surface with $\Pic(X) = \mathbb{Z} H$ and $H^2=2n$. Fix $r \geq 2$. Then there are finitely many tuples $(n, \v=(r,dH,a))$ with $d >0$ such that $\v$ does not satisfy weak Brill-Noether.
\end{Thm}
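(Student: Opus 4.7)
The plan is to deduce this finiteness statement as a direct combination of the three earlier theorems that provide uniform bounds in each of the parameters $n$, $d$, and $a$. Since $r$ is fixed, we need only show that the ranges of $n$, $d$, and $a$ compatible with failure of weak Brill-Noether are all finite.

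First, I would use Theorem \ref{thm:bound n r}: if $n \geq r$, then $\v$ satisfies weak Brill-Noether. Hence failure forces $1 \leq n \leq r-1$, giving finitely many possible values of $n$. Next, for each such $n$, Theorem \ref{thm:WBN sharp} shows that if $d \geq r\lfloor r/n\rfloor + 2$ then $\v$ satisfies weak Brill-Noether, so failure forces $1 \leq d \leq r\lfloor r/n\rfloor + 1$, giving finitely many possible values of $d$.

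Finally, for each fixed tuple $(n, r, d)$ in the finite range above, I would bound $a$ from both sides. On one side, the hypothesis $\v^2 \geq -2$ is equivalent to
\[
a \leq \frac{nd^2+1}{r},
\]
providing an explicit upper bound. On the other side, Proposition \ref{Prop:a<=0} shows that if $a \leq 0$ then $\v$ satisfies weak Brill-Noether (and more generally Theorem \ref{thm-intro}(3) gives the same conclusion for $a \leq 1$), so failure forces $a \geq 1$. Combining these bounds leaves only finitely many values of $a$, and therefore finitely many tuples $(n, \v)$ in total.

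There is no real obstacle here, as the substantive work is already encapsulated in Theorems \ref{thm:bound n r} and \ref{thm:WBN sharp} together with Proposition \ref{Prop:a<=0}. The only thing to check is that these three bounds, all of which were proved under mild sign hypotheses on $a$ (namely $a \geq 0$ for Theorems \ref{thm:bound n r} and \ref{thm:WBN sharp}, and $a \leq 0$ for Proposition \ref{Prop:a<=0}), together cover every possible sign of $a$, so that the disjunction of their conclusions leaves only a finite residual region; this is automatic since the overlap region $a = 0$ is handled by Proposition \ref{Prop:a<=0}.
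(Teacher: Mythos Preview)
Your proposal is correct and follows essentially the same approach as the paper's own proof: bound $n$ via Theorem~\ref{thm:bound n r}, then bound $d$ via Theorem~\ref{thm:WBN sharp}, and finally bound $a$ from above by $\v^2\geq -2$ and from below by Proposition~\ref{Prop:a<=0}. The paper also cites Proposition~\ref{prop:minimal-a} alongside the two bounding theorems, but the logic is identical to yours.
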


\begin{proof}
By Theorem \ref{thm:bound n r} and Proposition \ref{prop:minimal-a}, $\v$ satisfies weak Brill-Noether unless $n<r$. Hence, there are only finitely many possible values for $n$. Fix $n$. By Theorem \ref{thm:WBN sharp} and Proposition \ref{prop:minimal-a},  $\v$ satisfies weak Brill-Noether unless $d \leq r \left\lfloor \frac{r}{n} \right\rfloor +1$. Hence, for each $n$, there are only finitely many possible values of $d$ for which $\v$ fails weak Brill-Noether.  Fix $n$ and $d$. By Proposition \ref{Prop:a<=0}, if $a \leq 0$, then $\v$ satisfies weak Brill-Noether. Since $\v^2 = 2nd^2 - 2ra \geq -2,$ we always have that $nd^2 +1 \geq ra$. Hence, there are finitely many possible values of $a$ for which $\v$ fails weak Brill-Noether.
\end{proof}

\begin{Rem}
Given an arbitrary, fixed polarized surface $(X,H)$ and a fixed rank $r \geq 2$, there are only finitely many Chern characters where the moduli space fails to satisfy weak Brill-Noether (see \cite[Theorems 3.6 and 3.7]{CoskunHuizenga:ICM}). \cref{thm:finiteness} shows that this finiteness remains true even if we vary $X$ over all K3 surfaces of Picard rank one.  
\end{Rem}

\subsection{Uniform global generation} In this subsection, we obtain a uniform sufficient condition for the generic sheaf in $M_H(\v)$ to be globally generated.

\begin{Prop}\label{prop:uniform gg n}
Let $\v=(r,dH,a)$ be a Mukai vector with $r\geq 2$, $d>0$, $a\geq 2$ and $\v^2\geq -2$ on a K3 surface $X$ with $\Pic(X) = \mathbb{Z} H$ and $H^2 =2n$. If $n \geq 2r$, then  $D_\v=\varnothing$ and the generic $E\in M_H(\v)$ is globally generated.
\end{Prop}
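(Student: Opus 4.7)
The plan is to deduce both conclusions directly from \cref{thm:summing up inequalities} together with the numerical bound on $d_1$ coming from \cref{lem:useful consequences}.

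First I would prove that $D_\v=\varnothing$. Suppose, for a contradiction, that some $\v_1=(r_1,d_1H,a_1)\in D_\v$ exists. By \cref{lem:useful consequences}(1), we have $d_1<\frac{2r}{n}$. Under the hypothesis $n\geq 2r$, this forces $d_1<1$, which contradicts the requirement $d_1>0$ in \cref{Def:DestabilizingMukaiVectors}. Hence $D_\v=\varnothing$. In particular, $D_\v^{BN}\subseteq D_\v=\varnothing$, so \cref{thm:summing up inequalities} yields $H^1(X,E)=0$ (that is, $\v$ satisfies weak Brill-Noether).

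Next I would verify the second hypothesis of \cref{thm:summing up inequalities} for global generation: that the generic sheaf in $M_H(a,dH,r)$ is locally free. The Mukai vector $\v'=(a,dH,r)$ still satisfies $a\geq 2$ and $r\geq 2$, so I would simply check that none of the cases in \cref{Prop:OnlyNonLocallyFreeSheaves} occur. Case (1)(b) requires the first entry of $\v'$ to equal $1$, which is ruled out by $a\geq 2$. Case (1)(a) would write $\v'=(l,lpH,lp^2n-1)$ with $l=a\geq 2$; but then $r=lp^2n-1$, and combining $n\geq 2r$ with $l\geq 2$ gives $r+1=lp^2n\geq 4p^2r$, forcing $p=0$ and hence $r=-1$, which is absurd. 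Case (2) would demand $r=md_0^2n$ for some $m,d_0\geq 1$, and combined with $n\geq 2r$ yields $1\geq 2md_0^2$, again impossible. Thus the generic sheaf in $M_H(a,dH,r)$ is locally free, and \cref{thm:summing up inequalities} concludes that the generic $E\in M_H(\v)$ is globally generated.

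The argument is almost entirely bookkeeping: the one step that requires actual thought is choosing \emph{which} of the two sufficient conditions in \cref{thm:summing up inequalities} to verify, and the main (minor) obstacle is the case analysis in \cref{Prop:OnlyNonLocallyFreeSheaves}. The symmetry $D_\v\leftrightarrow D_{(a,dH,r)}$ from \cref{Rem:symmetry} is not strictly needed here, though it confirms that the Fourier-Mukai transform side is also well-behaved.
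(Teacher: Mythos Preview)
Your proof is correct and follows essentially the same approach as the paper: both verify the two hypotheses of \cref{thm:summing up inequalities} (that $D_\v=\varnothing$ and that $M_H(a,dH,r)$ contains a locally free sheaf). The only cosmetic difference is that for $D_\v=\varnothing$ you cite the bound $d_1<\tfrac{2r}{n}$ from \cref{lem:useful consequences}(1) directly, whereas the paper redoes the underlying inequality via \cref{lem:rewrite product}; your route is slightly cleaner since it reuses an already-proved lemma.
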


\begin{proof}
Under the assumptions of the theorem,  Theorem \ref{thm:bound n r} implies that the higher cohomology of the generic sheaf in $M_H(\v)$ vanishes. By Theorem \ref{thm:summing up inequalities}, if $D_{\v} = \varnothing$ and $M_H(a, dH, r)$
has a locally free sheaf, then the generic sheaf in $M_H(\v)$ is globally generated. By Proposition \ref{Prop:OnlyNonLocallyFreeSheaves}, if $M_H(a, dH, r)$ does not have any locally free sheaves, then:
\begin{enumerate}
\item Either $\v^2 >0$ and $(a, dH, r)$ has the form $(l, lp H, lp^2n-1)$ or $(1, pH, p^2n-1)$ for some integers $p,l$. The assumption $a\geq 2$ rules out the second possibility. The assumption $n \geq 2r$ implies that $lp^2n-1 \geq 2r-1>r$ (as $r\geq 2$) and  precludes the first possibility.
\item Or $\v^2 =0$ and $(a, dH, r) = b (r_0^2, r_0d_0 H, d_0^2 n)$ for integers $b,r_0, d_0$ which satisfy $d_0^2 n - r_0 a_0=-1$ for some integer $a_0$. The assumption $n \geq 2r$ precludes this possibility since $d_0^2 n>r$.
\end{enumerate}
We are thus reduced to checking that $D_{\v} = \varnothing$. Suppose $\v_1 = (r_1, d_1H, a_1) \in D_{\v}$. By Remark \ref{rem-numerics}\ref{enum:totally semistable spherical} and Lemma \ref{lem:rewrite product}, we have 
$$0 > m(nd_1^2 -1) - 2rd_1 + kr_1^2 \geq 2r (d_1^2 -d_1) -1 + kr_1^2 \geq 0,$$
a contradiction.  Hence, $D_{\v} = \varnothing$ and the proposition holds.

\end{proof}

\begin{Rem}
\cref{prop:uniform gg n} is sharp. If $n \leq 2r-1$ and $\v=(r, (r+1)H, a)$ with 
$$rn+2n-r< a \leq \min\left(nr+n+r,nr+2n+\left\lfloor\frac{n+1}{r}\right\rfloor\right),$$ then $\v_1=(1, H, n+1) \in D_{\v}$. We will analyze these cases in greater detail in the rest of the paper. 
\end{Rem}

\begin{Thm}\label{thm:sufficient condition on d} Let $\v=(r,dH,a)$ be a Mukai vector with $r\geq 2$, $d>0$, $a\geq 2$ and $\v^2\geq -2$ on a K3 surface $X$ with $\Pic(X) = \mathbb{Z} H$ and $H^2 =2n$. Assume that  $$d \geq r\left\lfloor \frac{2r}{n} \right\rfloor +r.$$ If $n=1$, assume further that $2d \geq 2a + r$. Then $D_\v=\varnothing$ and the generic $E\in M_H(\v)$ is globally generated.
\end{Thm}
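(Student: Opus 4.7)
The plan is to apply \cref{thm:summing up inequalities}, reducing the problem to two tasks: (i) establish $D_\v=\varnothing$, and (ii) establish that $M_H(a,dH,r)$ contains a locally free sheaf. By \cref{prop:uniform gg n}, I may assume $n<2r$, so that $\lfloor 2r/n\rfloor\geq 1$ and the standing hypothesis yields in particular $d\geq 2r$. Note also that $D_\v=\varnothing$ implies $D_\v^{BN}=\varnothing$, so weak Brill-Noether follows automatically.

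For (i), I will argue as in the proof of \cref{thm:WBN sharp}, but now driven by the bound $d_1\leq\lfloor 2r/n\rfloor$ from \cref{lem:useful consequences}(1) (rather than the sharper $\lfloor 2r/(mn)\rfloor$ bound available only for $D_\v^{BN}$). For any $\v_1=(r_1,d_1H,a_1)\in D_\v$, set $m:=r_1d-rd_1$ and $k:=a_1d-ad_1$. The hypothesis on $d$ combined with the bound on $d_1$ forces $m\geq r$, and \cref{lem:rewrite product} then gives
\[ 0>m(nd_1^2-1)-2rd_1+kr_1^2\geq r(nd_1^2-2d_1-1)+1, \]
so $nd_1^2-2d_1-1<0$, leaving only $(n,d_1)\in\{(1,1),(1,2),(2,1)\}$. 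Each residual pair I will handle separately, using the sphericity relation $r_1a_1=nd_1^2+1$ to restrict $r_1$ to a small set. For $(n,d_1)=(2,1)$ (where $r_1\in\{1,3\}$) and for $(n,d_1)=(1,2)$ (where $r_1\in\{1,5\}$), the lower bound $d\geq 2r^2+r$ directly contradicts the combined positivity conditions on $m$, $k$, and $\langle\v,\v_1\rangle$. For $(n,d_1)=(1,1)$, the sphericity relation forces $r_1\in\{1,2\}$: if $r_1=2$, the calculation $\langle\v,\v_1\rangle=2d-2a-r$ contradicts the auxiliary hypothesis $2d\geq 2a+r$ outright, and if $r_1=1$, combining $\langle\v,\v_1\rangle<0$ with $2d\geq 2a+r$ forces $d<3r/2$, contradicting $d\geq 2r^2+r$.

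For (ii), I will invoke \cref{Prop:OnlyNonLocallyFreeSheaves}, which enumerates the three possibilities for $(a,dH,r)$ in which $M_H(a,dH,r)$ consists entirely of non-locally-free sheaves. Case (1b) demands rank $a=1$, excluded by the hypothesis $a\geq 2$. For case (1a), writing $(a,dH,r)=(l,lpH,lp^2n-1)$ with $l\geq 2$ and $p\geq 1$, the bound $d\geq 2r$ forces $l(2p^2n-p)\leq 2$; the only viable possibility is $l=2$, $p=n=1$, which gives $r=1$ and contradicts $r\geq 2$. For case (2), writing $(a,dH,r)=m(r_0^2,r_0d_0H,d_0^2n)$ with $r_0\mid d_0^2n+1$, the full strength of $d\geq r\lfloor 2r/n\rfloor+r$ rearranges to $r_0\geq d_0n(2md_0^2+1)$, while divisibility gives $r_0\leq d_0^2n+1$; a short check of the resulting inequality $d_0n(2md_0^2+1-d_0)\leq 1$ shows no positive integer solution exists.

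The main obstacle is the case analysis in (i) for the three residual pairs $(n,d_1)$. In particular, the auxiliary hypothesis $2d\geq 2a+r$ for $n=1$ is precisely what is needed to exclude the spherical class $\v_1=(2,H,1)$: for this class, the destabilization condition $\langle\v,\v_1\rangle<0$ reduces exactly to $2d<2a+r$, so without the auxiliary assumption this spherical class could populate $D_\v$. All other small cases are eliminated by the main quadratic lower bound $d\geq 2r^2+r$ coming from $n=1$ in the inequality $d\geq r\lfloor 2r/n\rfloor +r$.
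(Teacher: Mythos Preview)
Your proof is correct and follows essentially the same strategy as the paper: reduce to \cref{thm:summing up inequalities}, show $D_\v=\varnothing$ via the bound $d_1<\tfrac{2r}{n}$ from \cref{lem:useful consequences}(1) combined with the hypothesis on $d$, and verify via \cref{Prop:OnlyNonLocallyFreeSheaves} that $M_H(a,dH,r)$ has locally free sheaves.

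The organization of part (i) differs slightly. The paper splits first on $r_1$: for $r_1\geq 2$ it uses $m\geq 3r$ together with the refined bound $d_1<\tfrac{2r}{mn}+\tfrac{1}{\sqrt{n}}$, while for $r_1=1$ it uses $m\geq r$ and ends up with the four residual pairs $(n,d_1)\in\{(1,1),(1,2),(2,1),(3,1)\}$. Your uniform use of $m\geq r$ in \cref{lem:rewrite product} gives $nd_1^2-2d_1-1<0$ directly, which automatically discards $(3,1)$ and yields only three pairs to check; you then branch on $r_1$ inside each. This is a mild streamlining of the same argument.

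One small slip: when you treat $(n,d_1)=(2,1)$ you invoke ``$d\geq 2r^2+r$'', but for $n=2$ the hypothesis gives only $d\geq r^2+r$. The weaker bound still suffices (e.g.\ for $r_1=1$ one has $m=d-r\geq r^2$ and then $\langle\v,\v_1\rangle = m-2r+k\geq (r-1)^2>0$), so the argument survives, but the stated inequality should be corrected.
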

\begin{proof}
As 
$$r\left\lfloor\frac{2r}{n}\right\rfloor+r\geq2r\left\lfloor\frac{r}{n}\right\rfloor+2\geq r\left\lfloor\frac{r}{n}\right\rfloor+2,$$
the higher cohomology of the generic sheaf in $M_H(\v)$ vanishes by Theorem \ref{thm:WBN sharp}.
By Proposition \ref{prop:uniform gg n}, we may assume that $n < 2r$. Consequently, $d \geq 2r$.  By Theorem \ref{thm:summing up inequalities}, if $D_{\v} = \varnothing$ and $M_H(a, dH, r)$
has a locally free sheaf, then the generic sheaf in $M_H(\v)$ is globally generated.  By Proposition \ref{Prop:OnlyNonLocallyFreeSheaves}, if $M_H(a, dH, r)$ does not have any locally free sheaves, then:
\begin{enumerate}
\item Either $\v^2 >0$ and $(a, dH, r)$ has the form $(l, lp H, lp^2n-1)$ or $(1, pH, p^2n-1)$ for some integers $p,l$. The assumption $a\geq 2$ rules out the second possibility and the assumption $d \geq 2r \geq r+2$ precludes the first possibility.
\item Or $\v^2 =0$ and $(a, dH, r) = b (r_0^2, r_0d_0 H, d_0^2 n)$ for integers $b,r_0, d_0$ which satisfy $d_0^2 n - r_0 a_0=-1$ for some integer $a_0$. By assumption $$d = br_0d_0 \geq r\left\lfloor \frac{2r}{n} \right\rfloor +r = 2 n b^2 d_0^4 + bd_0^2n.$$ Hence, $r_0 \geq 2nb d_0^3 + d_0 n$. Then there cannot be any integers $a_0$ that satisfy $d_0^2 n - r_0 a_0=-1$.
\end{enumerate}
We conclude that under the assumptions of the theorem, $M_H(a, dH, r)$ has locally free sheaves. To prove the theorem it suffices to show $D_{\v} = \varnothing$.

For a contradiction, assume that $\v_1=(r_1,d_1H,a_1)\in D_\v$.  Set $m= r_1d -rd_1$.  
By Lemma \ref{lem:useful consequences} (1), $d_1 \leq \left\lfloor \frac{2r}{n}\right\rfloor$, so $n \leq 2r$ since $d_1$ is a positive integer.
Hence,
$$m \geq r_1 r \left\lfloor \frac{2r}{n} \right\rfloor + r r_1  -r \left\lfloor \frac{2r}{n}\right\rfloor = (r_1-1) r\left\lfloor \frac{2r}{n} \right\rfloor + r r_1.$$
If $r_1\geq 2$, then $m \geq 3 r$. By Lemma \ref{lem:useful consequences} (1), $$d_1 < \frac{2r}{mn} + \frac{1}{\sqrt{n}} \leq \frac{2}{3 n} + \frac{1}{\sqrt{n}}.$$
Hence, $d_1=1$ and $n=1$ or $2$. Since $r_1 a_1 = nd_1^2 +1$, we conclude that  either $n=1$ and $\v_1 = (2, H, 1)$ or $n=2$ and $\v_1 = (3, H, 1)$. In the latter case, $m \geq 5r$ and $1= d_1 < \frac{1}{5} + \frac{1}{\sqrt{2}},$ which is a contradiction. If $n=1$ and $\v_1 =(2, H, 1)$, 
$$0> \langle \v, \v_1 \rangle = 2d -2a -r.$$ The latter is at least $0$ by assumption, which is a contradiction.

We may assume that $r_1=1$. Then $m \geq r$. Hence, $d_1 < \frac{2}{n} + \frac{1}{\sqrt{n}}.$ We conclude that the possible pairs are $(n,d_1) = (1,1), (1,2), (2,1)$ or $(3,1)$.
 If $d_1=1, n=3$, then 
$$0>\langle \v, \v_1 \rangle = 2m - 2r + k \geq 2r -2r+k = k >0,$$ a contradiction. 
If $d_1=1, n=2$, then $m = d -r \geq r^2.$ Since $r \geq 2$, $$0>\langle \v, \v_1 \rangle \geq r^2 - 2r +k \geq k > 0,$$ another contradiction.
If $d_1=2, n=1$, then $m=d-2r \geq 2r^2-r = r(2r-1) \geq 3r$. Hence, $$0>\langle \v, \v_1 \rangle = \frac{1}{2} (3m -4r + k)  \geq\frac{1}{2}\left( 9r - 4r +k \right)> 0,$$ which is a contradiction.
Finally, if $r_1=d_1=n=1$, then  $\v_1 = (1, H, 2)$. In that case, since $2d \geq 2a+r$
$$\langle \v, \v_1 \rangle = 2d - a - 2r \geq d - \frac{3}{2}r \geq r^2 - \frac{r}{2} >0.$$ This concludes the proof that under  the assumptions of the theorem $D_\v = \varnothing$ and with it the proof of the theorem. 
\end{proof}

\begin{Rem}\label{rem:gg n is 1}
When $n=1$, $X$ is a double cover of $\mathbb{P}^2$ and the line bundles $\mathcal{O}_X(H)$ and $\mathcal{O}_X(2H)$ are not very ample and their sections are pullbacks of sections of $\mathcal{O}_{\mathbb{P}^2}(1)$ and $\mathcal{O}_{\mathbb{P}^2}(2)$, respectively. Even when $r=1$, $I_Z(2H)$ is not globally generated when the length of the zero-dimensional scheme is at least $1$.
\end{Rem}

\begin{Rem}\label{rem-gg}
Being globally generated is not an open condition. However, it is an open condition in the locus where the higher cohomology vanishes. To characterize the Mukai vectors $\v=(r,dH,a)$ that satisfy weak Brill-Noether and for which the generic sheaf in $M_H(\v)$ is globally generated, one may concentrate on the case $a \geq 2$. Indeed, recall that $\chi(\v)=r+a$, so if $\v$ satisfies weak Brill-Noether and the generic $E\in M_H(\v)$ is globally generated, then $\chi(\v)\geq 0$ and there is a short exact 
$$0\to M\to\OO_X^{r+a}\mor[f] E\to 0.$$
In particular, we must have $a\geq 0$ for the evaluation map $f\colon\OO_X^{r+a}\to E$ to be surjective.  If $a=0$, then $h^0(E)=r$ and if $M\ne 0$, it would have to be a torsion sheaf, which is impossible, so $f\colon E \isomor \OO_X^r$, contradicting $a=0$.  Finally, if $a=1$, then $E$ has a resolution of the form
$$0 \to \OO_X(-dH) \to \OO_X^{r+1} \to E \to 0.$$
Equating Mukai vectors, we see that $r= 1 + nd^2$, so $\v=(nd^2+1,dH,1)$ and $\v(E)^2=-2$.  The unique $E\in M_H(\v)$ satisfies $E\cong\Phi_{X\to X}^{I_\Delta^\vee[2]}(\OO_X(-dH))$.
\end{Rem}

\section{Initial Classifications}\label{sec-initialclassification}
In this section, we classify Mukai vectors with small invariants for which weak Brill-Noether fails. We will concentrate on the cases when $0\leq r \leq 3$, when $r$ is small relative to $n$, and when $0 \leq a \leq 2$. We will study the Bridgeland resolution at the maximal totally destabilizing wall in greater detail to compute the cohomology.

\subsection{Quotients of negative rank}
The quotients in the Bridgeland resolution may have negative rank. The following lemma describes the possibilities. 

\begin{Prop}
Let $r,d,a>0$ and  $(s,t)\in U_+$.
\begin{enumerate}
    \item If $(s,t)$ is sufficiently close to $(0,0)$, then $M_{(s,t)}(-r,dH,a)\cong M_H(a,dH,-r)$ given by $E\mapsto \Phi_{X\to X}^{I_\Delta}(E)^\vee$.
    
    \item If $r>a$, then there is a unique totally semistable wall $W$. For $1 \leq i \leq 2$, let $\sigma_i:=\sigma_{(sH,t_i H)}$ be stability conditions in the two chambers separated by the wall with
$t_2 < 1 < t_1$.
\begin{enumerate}
\item
For the generic $E \in M_{\sigma_1}(-r,dH,a)$, 
$\Phi_{X \to X}^{I_\Delta}(E)^{\vee}$ is a two-term complex fitting in the distinguished triangle 
\begin{equation}\label{eq:(a,d,-r)}
(\OO_X[1])^{\oplus (r-a)} \to \Phi_{X \to X}^{I_\Delta}(E)^{\vee} \to R_{\OO_X[1]}(\Phi_{X \to X}^{I_\Delta}(E)^{\vee}), 
\end{equation}
where $R_{\OO_X[1]}(\Phi_{X \to X}^{I_\Delta}(E)^{\vee})\in M_H(r,dH,-a)$
\item
If $E \in M_{\sigma_2}(-r,dH,a)$ is generic, then
$\Phi_{X \to X}^{I_\Delta}(E)^{\vee} \in M_H(a,dH,-r)$.
\end{enumerate}
\item
If $r \leq a$,
Then $\Phi_{X \to X}^{I_\Delta}(E)^{\vee} \in M_H(a,dH,-r)$ for a generic
$E \in M_{\sigma_{(sH,tH)}}(-r,dH,a)$.
\end{enumerate}
\end{Prop}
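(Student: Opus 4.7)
The strategy is to extend the proof of \cref{prop:isom} from \cite{MYY14} to negative rank by analyzing the contravariant functor $\Psi := (-)^\vee \circ \Phi_{X\to X}^{I_\Delta}$, whose induced action on Mukai vectors is $(r',c',a') \mapsto (a',c',r')$. A direct calculation via the triangle $I_\Delta \to \OO_{X\times X} \to \OO_\Delta$ yields $\v(\Phi(E)^\vee) = (a,dH,-r)$ from $\v(E) = (-r,dH,a)$, matching the claimed Mukai vectors in all three parts. On Mukai vectors $\Psi^2$ is the identity, and on objects $\Psi$ is expected to give a bijection between the Bridgeland moduli and the Gieseker moduli for the swapped class, analogous to the positive-rank situation.

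For part (1), I will follow the proof of \cref{prop:isom} essentially verbatim, verifying that for $(s,t) \in U_+$ sufficiently close to $(0,0)$ and generic $E \in M_{\sigma_{(s,t)}}(-r,dH,a)$, the object $\Phi(E)^\vee$ is a Gieseker semistable sheaf of class $(a,dH,-r)$. The key steps are: show $\Phi(E)^\vee$ is concentrated in degree zero by tracking signs of $\Im Z_{(s,t)}$ on its candidate cohomology sheaves; translate destabilizing Gieseker subsheaves back via $\Psi$ into $\sigma_{(s,t)}$-destabilizers of $E$; and construct the inverse map $F \mapsto \Phi(F)^\vee$ to obtain a bijection between the two moduli spaces. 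Part (3) then follows immediately: when $r \leq a$ we have $\chi(a,dH,-r) \geq 0$, so by \cref{Prop:a<=0} applied to the Mukai vector $(a,dH,-r)$ there are no totally semistable walls, and hence $\Phi(E)^\vee$ remains Gieseker semistable for generic $E$ throughout $U_+$.

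For part (2), the hypothesis $r > a$ yields $\chi(a,dH,-r) < 0$, so \cref{Prop:a<=0} applied to $(a,dH,-r)$ produces a unique totally semistable wall induced by $\OO_X[1]$ with Harder--Narasimhan exact sequence \eqref{eqn:HNfiltration}. Transferring via $\Psi$ gives the unique totally semistable wall $W$ in the $(s,t)$-plane, and the corresponding wall-crossing exact triangle on the source side reads
\[
(\OO_X[1])^{\oplus(r-a)} \to \Phi(E)^\vee \to R_{\OO_X[1]}(\Phi(E)^\vee)
\]
for generic $E\in M_{\sigma_1}(-r,dH,a)$, with $R_{\OO_X[1]}(\Phi(E)^\vee)\in M_H(r,dH,-a)$ by the Mukai vector computation $(a,dH,-r) - (r-a)(-1,0,-1) = (r,dH,-a)$. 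For $\sigma_2$ below $W$, part (1) applies and gives $\Phi(E)^\vee\in M_H(a,dH,-r)$. The main obstacle is making precise the identification of $\Psi_*$ on $\Stab^\dagger(X)$: confirming that $(s,t)\in U_+$ near $(0,0)$ corresponds under $\Psi$ to the Gieseker chamber for $(a,dH,-r)$ and that the $\OO_X[1]$-induced wall transfers to $W$ with the correct chamber orientation on either side.
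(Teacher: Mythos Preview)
Your approach is correct and essentially the same as the paper's. Two remarks.

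For part (1), no re-derivation is needed: \cref{prop:isom} applies verbatim. The chamber $\CC$ there is determined by the wall $C_0^{\v}$, whose equation $t^2 + s\bigl(s - \tfrac{2a}{d(H^2)}\bigr)=0$ depends only on $a/d$ and not on the sign of the rank; since $a>0$, points of $U_+$ sufficiently close to the origin lie in $\CC$. The paper's entire proof of (1) is the single sentence ``Since $a>0$, \cref{prop:isom} implies part (1).''

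Regarding your ``main obstacle'', the paper is equally brief and does not compute $\Psi_*$ explicitly. It simply observes that, because $\Psi$ is an autoequivalence, totally semistable walls for $\v=(-r,dH,a)$ in $U_+$ correspond to those for $\v'=(a,dH,-r)$, and then invokes \cref{Prop:a<=0} for $\v'$ exactly as you propose. The orientation is pinned down by part (1): the near-origin chamber for $\v$ corresponds to the Gieseker chamber for $\v'$. Hence $\sigma_2$ (below $W$) corresponds to the Gieseker side for $\v'$, giving (2)(b) directly, while $\sigma_1$ (above $W$) corresponds to the non-Gieseker side of the $\OO_X[1]$-wall for $\v'$, where the Harder--Narasimhan filtration \eqref{eqn:HNfiltration} from the proof of \cref{Prop:a<=0} yields the distinguished triangle in (2)(a).
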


\begin{proof}
Since $a>0$, \cref{prop:isom} implies part (1).

We now set $\v:=(-r,dH,a)$ and classify the totally semistable walls for $\v$.  By part (1) for $\v'=(a,dH,-r)$, any $F\in M_H(\v')$ has the form $F=\Phi_{X\to X}^{I_\Delta}(E)^\vee$ where $E\in M_{(s,t)}(\v)$ and $(s_0,t_0)$ is sufficiently close to $(0,0)$.  By \cref{Prop:a<=0}, there are no totally semistable walls for $\v'$ in $U_+$ unless $\chi(\v')=a-r<0$, in which case the unique totally semistable wall in $U_+$ is given by $\OO_X[1]$.  Thus if $r \leq a$, then there is no totally semistable wall for $\v'$ between $\GG$ and $\CC$.  Thus for arbitrary $(s',t')\in U_+$, the generic $F\in M_H(\v')$ is $\sigma_{(s',t')}$-stable and  if $F=\Phi_{X\to X}^{I_\Delta}(E)^\vee$, then $E\in M_{(s,t)}(\v)$ is generic, where now $(s,t)$ is arbitrary, giving part (3).

Now we prove part (2).  If $r>a$ so that $\chi(\v')=a-r<0$, there is a unique totally semistable wall for $\v'$  between $\GG$ and $\CC$, corresponding to $\OO_X[1]$.  Thus there is a unique totally semistable wall $W$ in $U_+$ for $\v$.  Let $\sigma_i=\sigma_{(s,t_i)}$ such that $(s,t_1)$ (resp. $(s,t_2)$) is above (resp. below) $W$.  Then for the generic $E\in M_{\sigma_1}(\v)$, we have $\Phi_{X\to X}^{I_\Delta}(E)^\vee\in M_{\sigma_2}(\v')$, and similarly, for the generic $E\in M_{\sigma_2}(\v)$, we have $\Phi_{X\to X}^{I_\Delta}(E)^\vee\in M_{\sigma_1}(\v')\cap M_H(\v')$, giving (b).  Thus by \eqref{eqn:HNfiltration} from the proof of  \cref{Prop:a<=0}, for the generic $E\in M_{\sigma_1}(\v)$, $\Phi_{X\to X}^{I_\Delta}(E)^\vee$ sits in a distinguished triangle
\begin{equation}\label{eqn:negativerankHN}
    (\OO_X[1])^{\oplus (r-a)} \to \Phi_{X \to X}^{I_\Delta}(E)^{\vee} \to R_{\OO_X[1]}(\Phi_{X \to X}^{I_\Delta}(E)^{\vee}), 
\end{equation}
where $R_{\OO_X[1]}(\Phi_{X \to X}^{I_\Delta}(E)^{\vee})\in M_H(r,dH,-a)$, as required.
Now, 
$$M_{(sH,tH)}(r,dH,a)=\{E | E \in M_H(r,dH,a) \}$$
for $s<d/r$
and
$M_{(sH,tH)}(r,dH,a)=\{F^{\vee} | F \in M_H(r,-dH,a) \}$ for $s>d/r$.

For a generic $E[-1] \in M_{\sigma_1}(r,-dH,-a)$,
since $s>0>-d/r$, $F=(E[-1])^{\vee} \in M_H(r,dH,-a)$.
\end{proof}

\subsection{Ranks 0 and 1} In this subsection, we show that all Mukai vectors $\v=(r, dH, a)$ with $d>0$ and $r=0,1$ satisfy weak Brill-Noether. We also show that if in addition $a\geq 2$, then the generic sheaf in $M_H(\v)$ is globally generated unless $n=r=1$, $d=2$ and $2 \leq a \leq 4$. We begin by a useful lemma.

\begin{Lem}\label{lem:WallsFor n>=r}
Let $\v=(r,dH,a)$ be a Mukai vector with $\v^2\geq -2$,  $r,a\geq 0$, and $d>0$.  Assume that $n\geq r$ and $\v_1 = (r_1, d_1H, a_1) \in D_\v$.  Set $m=r_1d -r d_1$ and $k=a_1 d - a d_1$. Then $d_1=1$ and one of the following holds:
\begin{enumerate}
    \item $r_1d=r+1$, $r_1a_1=n+1$ and $r_1\leq \sqrt{\frac{2r-n}{k}}$; or
    \item $n=r$, $\v=(r,(r+2)H,r^2+3r+1)$ and $\v_1=(1,H,r+1)$; or
    \item $n=r=1$, $\v=(1,dH,2d-1)$ ($d\geq 2$) and $\v_1=(1,H,2)$.
\end{enumerate}
\end{Lem}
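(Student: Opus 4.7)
\emph{Plan.} The approach is a case analysis on $m:=r_1d-rd_1$ driven by the inequality $\langle\v,\v_1\rangle<0$ combined with the hypothesis $n\geq r$.

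First, by \cref{Lem:no isotropic} we have $\v_1^2=-2$, and by \cref{Lem:only spherical with r_1>0} together with the definition of $D_\v$ from \cref{Def:DestabilizingMukaiVectors}, both $m$ and $k:=a_1d-ad_1$ are positive integers (with $r_1\geq 1$). Applying \cref{lem:rewrite product} converts the inequality $\langle\v,\v_1\rangle<0$ into
\[
m(nd_1^2-1)+kr_1^2<2rd_1.
\]
Since $m,k,r_1\geq 1$, the left-hand side is at least $nd_1^2$, so $nd_1<2r$. Combined with $n\geq r$, this forces $d_1=1$ (and $r\geq 1$; note that if $r=0$ this shows $D_\v=\varnothing$ and the lemma is vacuous). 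The spherical condition then reduces to $r_1a_1=n+1$, and the key inequality simplifies to $m(n-1)+kr_1^2<2r$.

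Next, I would split on the value of $m$. If $m=1$, then $r_1d=r+1$ and $kr_1^2\leq 2r-n$, which is exactly case (1). If $m\geq 2$, the hypothesis $n\geq r$ yields $m(n-1)\geq 2(r-1)$, so $kr_1^2<2$, forcing $k=r_1=1$; consequently $a_1=n+1$, $d=r+m$, and $a=(n+1)d-1$. The residual inequality $m(n-1)<2r-1$ then bifurcates: if $n=1$, one concludes $r=1$ (using $n\geq r$), giving case (3) with $d=1+m\geq 3$; if $n\geq 2$, one forces $m=2$ and $n=r$, producing case (2) with $d=r+2$ and $a=(r+1)(r+2)-1=r^2+3r+1$.

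The only subtlety worth noting is the small-rank boundary ($r\leq 1$), where \cref{lem:useful consequences} does not directly apply, but the elementary derivation of $nd_1<2r$ from the master inequality handles all $r\geq 0$ uniformly. The rest is simple bookkeeping, including the observation that the $d=2$ instance of case (3) is also covered by case (1); this is harmless since the three cases are not claimed to be disjoint.
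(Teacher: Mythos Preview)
Your proof is correct and follows essentially the same route as the paper: reduce to the scalar inequality $m(nd_1^2-1)+kr_1^2<2rd_1$ via \cref{lem:rewrite product}, conclude $d_1=1$, and then do a short case split on $m$. The only difference is that the paper obtains $d_1<\tfrac{2r}{n}$ by quoting \cref{lem:useful consequences}, whereas you extract it directly from the displayed inequality; your direct argument has the minor advantage of transparently covering $r\leq 1$ (and $a=0$), where the hypotheses of \cref{lem:useful consequences} are not literally satisfied.
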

\begin{proof}
By Lemma \ref{lem:useful consequences}, if $\v_1=(r_1, d_1H, a_1)  \in D_\v$, then $d_1 < \frac{2r}{n}$. Since $n \geq r$ and $d_1$ is a positive integer, we conclude that $d_1=1$. Hence, $r_1a_1 = n+1$. By Lemma \ref{lem:rewrite product} and Remark \ref{rem-numerics} \ref{enum:totally semistable spherical}, we have 
$$0>m(n-1)-2r + kr_1^2.$$ 
If $n\geq 2$, then as $n \geq r$, we must have $m \leq 2$. We conclude that:
\begin{enumerate}
\item $m=r_1d - r =1$ and $r_1 \leq \sqrt{\frac{2r-n}{k}}$; or
\item $m=2$ and hence, $n=r,$ $k=r_1=1$. We conclude that $\v_1 = (1, H, r+1)$ and $\v = (r, (r+2)H, r^2+3r+1)$; or
\item $n=r=1$ and $k=r_1=1$, so that $\v_1= (1, H, 2)$ and $\v= (1, dH, 2d-1)$ and $d \geq 2$ since $m= d-1 \geq 1$.
\end{enumerate} 
\end{proof}

If we apply this lemma in ranks zero and one, we get the following two results:
\begin{Prop}\label{prop:Rank0}
If $\v=(0,dH,a)$ is a Mukai vector with $d>0$, then $\v$ satisfies weak Brill-Noether.  Moreover, if $a\geq 2$, then the generic $E\in M_H(\v)$ is globally generated.
\end{Prop}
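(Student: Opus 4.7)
The plan is to apply the criterion of Theorem \ref{thm:summing up inequalities}. For weak Brill-Noether, the case $a\leq 0$ is immediate from \cref{Prop:a<=0}, so I would reduce to showing $D_\v=\varnothing$ when $a\geq 1$, and then invoke \cref{prop:birational} (noting $\chi(\v)=a\geq 0$, so the $\OO_X[1]$ wall is not totally semistable). For global generation when $a\geq 2$, I would combine the emptiness of $D_\v$ with the observation that the moduli space $M_H(a,dH,0)$ contains locally free sheaves.

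First I would show $D_\v=\varnothing$ when $a\geq 1$. Suppose $\v_1=(r_1,d_1H,a_1)\in D_\v$. By \cref{Lem:no isotropic}, $\v_1^2=-2$, so $r_1a_1=nd_1^2+1$. By \cref{Lem:only spherical with r_1>0}, $r_1d-rd_1=r_1d>0$, forcing $r_1>0$ and hence $a_1>0$. The condition $\langle\v,\v_1\rangle<0$ reads $r_1a>2ndd_1$, and the integrality of $a_1d-ad_1>0$ gives $a_1d\geq ad_1+1$. Substituting $a_1=(nd_1^2+1)/r_1$ into the latter and combining with the former leads to the inequality $ndd_1^2<d-r_1\leq d$, equivalently $nd_1^2<1$. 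Since $n,d_1\geq 1$, this is a contradiction, so $D_\v=\varnothing$. Weak Brill-Noether then follows from \cref{prop:birational}.

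For global generation when $a\geq 2$, by \cref{thm:summing up inequalities} (applied with $(a,dH,r)=(a,dH,0)$) it suffices to verify that the generic sheaf in $M_H(a,dH,0)$ is locally free. Since $(a,dH,0)^2=2nd^2>0$, we are in case (1) of \cref{Prop:OnlyNonLocallyFreeSheaves}. The form $(l,lpH,lp^2n-1)$ forces $lp^2n=1$, hence $a=l=1$, contradicting $a\geq 2$, and the form $(1,pH,p^2n-l)$ forces $a=1$, again excluded. Thus the generic sheaf in $M_H(a,dH,0)$ is locally free, and combined with $D_\v=\varnothing$, the theorem yields that the generic $E\in M_H(\v)$ is globally generated.

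The main technical step is the elementary but somewhat delicate manipulation showing $D_\v=\varnothing$ in Step 1; since rank $0$ is outside the hypotheses of \cref{lem:useful consequences} and \cref{lem:WallsFor n>=r} (which require $r\geq 2$ or $n\geq r$ with $r\geq 1$), one must redo the estimate directly, exploiting the simplification $\langle\v,\v_1\rangle=2ndd_1-r_1a$. Once this is in hand, both conclusions follow formally from the machinery already established.
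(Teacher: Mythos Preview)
Your proof is correct and follows essentially the same route as the paper: show $D_\v=\varnothing$ (the paper invokes \cref{lem:WallsFor n>=r}, while you redo the underlying inequality directly), then apply \cref{thm:summing up inequalities} together with \cref{Prop:OnlyNonLocallyFreeSheaves} for global generation. One small correction to your final paragraph: \cref{lem:WallsFor n>=r} is in fact stated for $r\geq 0$, so rank $0$ is within its hypotheses; your direct computation is nonetheless a clean self-contained alternative.
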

\begin{proof}
Let $\v=(0,dH,a)$ be a Mukai vector  with $d>0$ and $a\geq 0$.  \cref{lem:WallsFor n>=r} implies that $D_\v=\varnothing.$ By Theorem \ref{thm:summing up inequalities}, the generic $E\in M_H(\v)$ has $H^1(X,E)= 0$. On the other hand, if $a <0$, then \cref{Prop:a<=0} implies that $\v$ satisfies weak Brill-Noether.

Now let $a \geq 2$. Suppose the generic $E\in M_H(\v)$ is not globally generated.  Since $D_\v=\varnothing$, \cref{thm:summing up inequalities} implies that the generic sheaf in $M_H(a,dH,0)$ is not locally free.  By \cref{Prop:OnlyNonLocallyFreeSheaves}, this is only possible  if $(a,dH,0)=(l,lpH,lp^2n-1)$, as $a\geq 2$ by assumption and $\v^2>0$. Solving this equality gives $l=p=n=1$, so that $a=1$ contrary to our assumption that $a\geq 2$. 
\end{proof}

\begin{Prop}\label{prop:Rank1}
Let $\v=(1,dH,a)$ be a Mukai vector such that $d>0$ and $\v^2\geq -2$.  Then $\v$ satisfies weak Brill-Noether.  If $a\geq 2$, then the generic $E\in M_H(\v)$ is globally generated unless $n=1$ and $\v=(1,2H,a)$ for $a=2,3,4$.
\end{Prop}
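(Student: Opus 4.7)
The plan is to handle weak Brill-Noether and global generation separately, combining a direct analysis of ideal sheaves with \cref{thm:summing up inequalities}.

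For weak Brill-Noether, by \cref{Prop:a<=0} I may assume $a \geq 1$. Every $E \in M_H(\v)$ then has the form $I_Z(dH)$ for a zero-dimensional subscheme $Z \subset X$ of length $\ell(Z) = nd^2+1-a$. The short exact sequence
\[
0 \to I_Z(dH) \to \OO_X(dH) \to \OO_Z \to 0,
\]
together with $H^i(\OO_X(dH)) = 0$ for $i > 0$ (from $d > 0$ and stability via Serre duality), reduces $H^1(I_Z(dH)) = 0$ to the surjectivity of $H^0(\OO_X(dH)) \to H^0(\OO_Z)$. Since $|dH|$ is base-point-free on $X$ for $d \geq 1$ and $\ell(Z) < nd^2 + 2 = h^0(\OO_X(dH))$, a standard induction on the size of $Z$ shows that a generic $Z$ imposes $\ell(Z)$ independent conditions on $|dH|$, so weak Brill-Noether holds.

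For global generation, assume $a \geq 2$ and apply \cref{thm:summing up inequalities}: it suffices to verify $D_\v = \varnothing$ and that $M_H(a, dH, 1)$ contains a locally free sheaf. By \cref{lem:WallsFor n>=r} with $r = 1$ (applicable for all $n \geq 1$), $D_\v \neq \varnothing$ only when $n = 1$ and $\v = (1, dH, 2d-1)$ with $d \geq 2$. By \cref{Prop:OnlyNonLocallyFreeSheaves}, $M_H(a, dH, 1)$ consists entirely of non-locally free sheaves for $a \geq 2$ exactly when $(n, \v) \in \{(1, (1, 2H, 2)), (1, (1, 2H, 4))\}$. Together these identify only two potentially-exceptional families on $n = 1$: $\v = (1, 2H, a)$ for $a \in \{2, 3, 4\}$, and $\v = (1, dH, 2d-1)$ for $d \geq 3$.

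To finish, I must show the first family consists of genuine exceptions and that the second does not. For $\v = (1, 2H, a)$ on $n = 1$, the double cover $f \colon X \to \P^2$ given by $|H|$ satisfies $\OO_X(2H) = f^*\OO_{\P^2}(2)$, so every section of $I_Z(2H)$ vanishes on $\sigma(Z)$ for the deck transformation $\sigma$; since $\sigma(Z) \not\subset Z$ generically, $I_Z(2H)$ fails to be generated there. For $\v = (1, dH, 2d-1)$ with $d \geq 3$, I use $f_*\OO_X = \OO_{\P^2} \oplus \OO_{\P^2}(-3)$ to write sections of $\OO_X(dH)$ as $\alpha + w\gamma$ with $\alpha \in H^0(\OO_{\P^2}(d))$ and $\gamma \in H^0(\OO_{\P^2}(d-3))$, the latter nonzero for $d \geq 3$; a dimension count shows that sections of $I_Z(dH)$ with $\gamma \neq 0$ exist for generic $Z$ of length $(d-1)^2 + 1$, and such sections do not vanish at the conjugate points. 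Separation of $1$-jets at points of $Z$ follows from the very ampleness of $|dH|$ for $d \geq 3$ on $n = 1$, established by noting that $X \to \P^{d^2+1}$ induced by $|dH|$ both distinguishes fibers of $f$ (via $w$-sections) and separates points of $\P^2$ (via the pullback cubics). The main obstacle is this last step: while the pieces are standard, assembling them carefully to establish global generation for all $d \geq 3$ requires attention to genericity at several stages.
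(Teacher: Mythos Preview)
Your weak Brill-Noether argument is correct and more elementary than the paper's: directly showing that a generic zero-dimensional subscheme of length $< h^0(\OO_X(dH))$ imposes independent conditions on $|dH|$ bypasses the wall-crossing machinery entirely. The identification of the potential exceptions via \cref{lem:WallsFor n>=r} and \cref{Prop:OnlyNonLocallyFreeSheaves} matches the paper, and your verification that $(1,2H,a)$ for $a\in\{2,3,4\}$ are genuine failures (using the deck involution of the double cover $X\to\P^2$) is fine.

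The gap is in the $d\geq 3$ case of $\v=(1,dH,2d-1)$, and it is more serious than your closing caveat suggests. Producing a section $s=\alpha+w\gamma$ of $I_Z(dH)$ with $\gamma(f(z))\neq 0$ only tells you $s(\sigma(z))\neq 0$; it says nothing about the remaining points of the vanishing locus of $s$. Global generation requires that for \emph{every} $x\in X$ there be a section of $I_Z(dH)$ nonvanishing at $x$ (and, at points of $Z$, sections generating $\mathfrak m_x$). Very ampleness of $|dH|$ separates pairs of points but does not by itself force $H^0(I_{Z\cup\{x\}}(dH))\subsetneq H^0(I_Z(dH))$ for arbitrary $x$ once $Z$ is fixed; the genericity of $Z$ must be leveraged uniformly in $x$, and your sketch gives no mechanism for this.

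The paper sidesteps the issue by staying inside the Fourier--Mukai framework. Along the unique wall (induced by $\v_1=(1,H,2)$), the generic $E$ sits in $0\to\OO_X(H)\to E\to F\to 0$ with $F$ generic of Mukai vector $(0,(d-1)H,2d-3)$. Since $D_{\v_1}=D_{\v(F)}=\varnothing$, applying $\Phi_{X\to X}^{I_\Delta}(\blank)^\vee$ yields a short exact sequence of \emph{sheaves} whose outer terms lie in $M_H(2,H,1)$ and $M_H(2d-3,(d-1)H,0)$; for $d\geq 3$ the latter has a locally free generic member by \cref{Prop:OnlyNonLocallyFreeSheaves}, so $\Phi_{X\to X}^{I_\Delta}(E)^\vee$ is locally free and \cref{Lem:FM-vanishing} gives global generation directly, with no pointwise analysis.
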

\begin{proof}
When $r=1$, it follows from \cref{lem:WallsFor n>=r} that if $D_\v\neq\varnothing$, then $n=1$, $d\geq 2$, $\v=(1,dH,2d-1)$, and $D_\v=\{\v_1\}$, where $\v_1=(1,H,2)$.  Indeed, if $n \geq 2$, then $D_\v=\varnothing$ as $$1\leq r_1 \leq \sqrt{\frac{2r -n}{k}}.$$   

Therefore, except for the case $n=1$ and $\v =(1, dH, 2d-1)$, $D_\v=\varnothing$, so \cref{thm:summing up inequalities} implies that $\v$ satisfies weak Brill-Noether. Moreover, by \cref{Prop:OnlyNonLocallyFreeSheaves}, $M_H(a, dH, 1)$ contains locally free sheaves when $a \geq 2$ unless $n=1$ and $\v=(1,2H,2)$ or $\v=(1,2H,4)$. Hence, the generic sheaf in $M_H(\v)$ is globally generated except in these two cases. 

Returning to the case when $n=1$ and $\v= (1, dH, 2d-1)$, we observe that $D_{\v_0} = \varnothing$ where $\v_0=(1,dH,d^2+1)$. \cref{thm:summing up inequalities} and \cref{prop:minimal-a} then imply that $\v$ satisfies weak Brill-Noether in this case as well. 
For the question of global generation,  we consider the Harder-Narasimhan filtration for the generic $E\in M_H(\v)$ along the wall $\WW$ induced by $\v_1=(1,H,2)$, which is the unique effective spherical class in $\HHH_\WW$.  \cref{lem:max-HNF2} then implies that the generic $E\in M_H(\v)$ sits in a short exact sequence \begin{equation}\label{eqn:rank one counterexample}
  0\to \OO_X(H)\to E\to F\to 0,  
\end{equation} where $\OO_X(H)$ is the unique stable bundle of Mukai vector $\v_1$ and $F$ is a $\sigma$-stable object such that $\v(F)=(0,(d-1)H,2d-3)$.  By \cref{lem:WallsFor n>=r}, $D_{\v_1}=\varnothing$ and by \cref{prop:Rank0}, $D_{\v(F)}=\varnothing$, so applying $\Phi_{X\to X}^{I_\Delta}(\blank)^\vee$ to \eqref{eqn:rank one counterexample}, we get $$0\to\Phi_{X\to X}^{I_\Delta}(F)^\vee\to\Phi_{X\to X}^{I_\Delta}(E)^\vee\to\Phi_{X\to X}^{I_\Delta}(\OO_X(dH))^\vee\to 0.$$ By \cref{prop:isom,prop:birational}, $\Phi_{X\to X}^{I_\Delta}(\OO_X(dH))^\vee\in M_H(2,H,1)$ and $\Phi_{X\to X}^{I_\Delta}(F)^\vee\in M_H(2d-3,(d-1)H,0)$ for generic $F\in M_H(0,(d-1)H,2d-3)$.  
If $d\geq 3$, then the generic element of $M_H(2d-3,(d-1)H,0)$ is locally free by \cref{Prop:OnlyNonLocallyFreeSheaves}, so for generic $E\in M_H(\v)$, we  have $\Phi_{X\to X}^{I_\Delta}(E)^\vee$ is locally free  since $\Phi_{X\to X}^{I_\Delta}(\OO_X(dH))$ is locally free. Thus by \cref{Lem:FM-vanishing}, for $d\geq 3$ the generic $E\in M_H(\v)$ satisfies $H^i(X,E)=0$ for $i>0$ and is globally generated. 

It remains to consider $n=1$ and $\v=(1,2H,a)$ for $2\leq a\leq 4$.\footnote{The additional case when $a=3$ is the $d=2$ case of $\v=(1,dH,2d-1)$.}  In this case, the generic $E\in M_H(\v)$ is not globally generated by \cref{rem:gg n is 1}.
\end{proof}

\subsection{Ranks 2 and 3} We next classify moduli spaces of rank 2 and 3 sheaves where weak Brill-Noether fails. We begin by a numerical observation.

\begin{Lem}\label{prop:condensed inequality}
Let $\v=(r,dH,a)$ be a Mukai vector such that $r\geq 2$,$a\geq 0$,$d>0$, and $\v^2\geq -2$.  If the generic $E\in M_H(\v)$ has $H^1(X,E)\neq 0$, then $D_{\v_0}^{BN}\neq\varnothing$, where $\v_0=(r,dH,\left\lfloor\frac{nd^2+1}{r}\right\rfloor)$.  Moreover, for any $\v_1=(r_1,d_1H,a_1)\in D_{\v_0}^{BN}$, if we set $m=r_1d -r d_1$ and $k=a_1 d - a_0 d_1$, then
\begin{equation}\label{eqn:deriving finiteness, 1}
1\leq k= a_1d-a_0d_1= \frac{1}{rr_1}\left(-nmdd_1+rd-r_1d_1\right)+d_1\left\{\frac{nd^2+1}{r}\right\}
\end{equation}
and
\begin{equation}\label{eqn:finiteness inequality}
1\leq ndd_1m\leq rd-r_1d_1+rr_1d_1\left\{\frac{nd^2+1}{r}\right\}-rr_1,
\end{equation}
where $\{ \blank \}$ denotes the fractional part.  In particular, $1\leq d_1nm<r+r_1(r-2)$.
\end{Lem}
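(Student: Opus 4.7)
The plan is to first establish that $D_{\v_0}^{BN}\neq\varnothing$, and then to extract the three displayed identities and inequalities by essentially elementary manipulations. For the first step, since $r\geq 2$ and $a\geq 0$, Riemann--Roch gives $\chi(E)=r+a\geq 2>0$, while stability together with $d>0$ forces $H^2(X,E)=0$ for every $E\in M_H(\v)$, so $h^0(X,E)\geq\chi(E)>0$. Consequently, the hypothesis $H^1(X,E)\neq 0$ for generic $E\in M_H(\v)$ means that the generic $E$ has two nonzero cohomology groups, and hence that $\v$ fails weak Brill-Noether. Writing $a_0:=\lfloor\frac{nd^2+1}{r}\rfloor\geq a$ (which holds since $\v^2\geq-2$) and $c:=a_0-a\geq 0$, the contrapositive of \cref{prop:minimal-a} (the case $c=0$ being trivial since then $\v=\v_0$) forces $\v_0$ to fail weak Brill-Noether as well, so \cref{thm:summing up inequalities} yields $D_{\v_0}^{BN}\neq\varnothing$.

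For the identity \eqref{eqn:deriving finiteness, 1}, I would combine two observations about $\v_1=(r_1,d_1H,a_1)\in D_{\v_0}^{BN}$: from $\v_1^2=-2$ one has $a_1=\frac{nd_1^2+1}{r_1}$, while by definition of the floor, $a_0=\frac{nd^2+1}{r}-\{\frac{nd^2+1}{r}\}$. Substituting both into $k=a_1d-a_0d_1$ and placing the first two resulting fractions over the common denominator $rr_1$ gives
\[
k=\frac{rd(nd_1^2+1)-r_1d_1(nd^2+1)}{rr_1}+d_1\left\{\frac{nd^2+1}{r}\right\}=\frac{ndd_1(rd_1-r_1d)+rd-r_1d_1}{rr_1}+d_1\left\{\frac{nd^2+1}{r}\right\},
\]
which is exactly \eqref{eqn:deriving finiteness, 1} once we use $rd_1-r_1d=-m$. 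The lower bound $k\geq 1$ is then immediate from $\v_1\in D_{\v_0}$ (which demands $a_1d-a_0d_1>0$) and integrality of $k$.

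To obtain \eqref{eqn:finiteness inequality}, the plan is to input the defining inequality $m\leq k$ of $D_{\v_0}^{BN}$ into \eqref{eqn:deriving finiteness, 1}. Clearing the denominator $rr_1$ turns $m\leq k$ into $m(rr_1+ndd_1)\leq rd-r_1d_1+rr_1d_1\{\frac{nd^2+1}{r}\}$; invoking the further inequality $m\geq 1$ to bound $mrr_1\geq rr_1$ and moving this term to the right-hand side produces \eqref{eqn:finiteness inequality} exactly. The lower bound $1\leq ndd_1m$ is trivial, as all four factors are positive integers.

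Finally, for the upper bound $d_1nm<r+r_1(r-2)$, I would exploit that the fractional part is of the form $\frac{j}{r}$ with $0\leq j\leq r-1$, so $rr_1d_1\{\frac{nd^2+1}{r}\}\leq r_1d_1(r-1)$. This converts the right-hand side of \eqref{eqn:finiteness inequality} into $rd+r_1d_1(r-2)-rr_1$. Using $d_1\leq d$ (built into the definition of $D_\v$) together with $r\geq 2$ to replace $r_1d_1(r-2)$ by $r_1d(r-2)$, and then dividing by $d$, yields
\[
nmd_1\leq r+r_1(r-2)-\frac{rr_1}{d},
\]
and the strict inequality follows because $\frac{rr_1}{d}>0$. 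The main subtlety of the whole argument is this last step: the strictness in (3) comes entirely from the positivity of $rr_1/d$, so the constraint $d_1\leq d$ from the definition of $D_\v$ is used in an essential way.
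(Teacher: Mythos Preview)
Your proposal is correct and follows essentially the same route as the paper's proof. The one minor variation is in deriving \eqref{eqn:finiteness inequality}: the paper simply rearranges the identity \eqref{eqn:deriving finiteness, 1} to $nmdd_1 = rd - r_1d_1 + rr_1d_1\{\tfrac{nd^2+1}{r}\} - rr_1k$ and then uses the already-established bound $k\geq 1$ to obtain the upper bound directly, whereas you pass through the stronger inequality $k\geq m$ and then use $m\geq 1$. Both arguments are valid and yield the same inequality; the paper's version is just slightly more direct.
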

\begin{proof}
The first claim follows from \cref{prop:minimal-a}, so to prove the remaining claims it suffices to assume that $\v=\v_0$.  If $\v_1=(r_1,d_1H,a_1)\in D_{\v}^{BN}$, the condition \ref{rem-numerics}\ref{enum:positive a_1} becomes 
\begin{align*}
\begin{split}
1&\leq k=a_1d-a_0d_1=d\left(\frac{nd_1^2+1}{r_1}\right)-d_1\left(\frac{nd^2+1}{r}-\left\{\frac{nd^2+1}{r}\right\}\right)\\
&=\frac{1}{rr_1}\left(rd(nd_1^2+1)-r_1d_1(nd^2+1)\right)+d_1\left\{\frac{nd^2+1}{r}\right\}\\
&=\frac{1}{rr_1}\left(ndd_1(rd_1-r_1d)+rd-r_1d_1\right)+d_1\left\{\frac{nd^2+1}{r}\right\}\\
&=\frac{1}{rr_1}\left(-nmdd_1+rd-r_1d_1\right)+d_1\left\{\frac{nd^2+1}{r}\right\}.
\end{split}
\end{align*}
 We thus obtain \eqref{eqn:deriving finiteness, 1}. Rearranging this equation and using condition \ref{rem-numerics}\ref{enum:positive r_1} gives 
$$
1\leq ndd_1m\leq rd-r_1d_1+rr_1d_1\left\{\frac{nd^2+1}{r}\right\}-rr_1.
$$
Using the facts that $d_1\leq d$ and $\left\{\frac{nd^2+1}{r}\right\}\leq\frac{r-1}{r}$, we get
$$1\leq ndd_1m\leq rd-r_1d_1+r_1d_1(r-1)-rr_1<rd+r_1d_1(r-2)\leq rd+r_1d(r-2),$$
and dividing by $d$ gives the final inequality in the lemma.
\end{proof}

\begin{Thm}\label{prop:rankstwoandthree}
Let $\v=(r,dH,a)$ be a Mukai vector such that $2 \leq r \leq 3$, $a\geq 0$, $d>0$, and $\v^2\geq -2$.  If the generic $E\in M_H(\v)$ satisfies $H^1(X,E)\ne 0$, then 
\begin{enumerate}
\item $n=1$ and $\v=(2,3H,5)$, or
\item $n=1$ and $\v=(3,4H,5)$, or
\item $n=2$ and $\v=(3,4H,11)$.
\end{enumerate}
In these cases $h^1(E)=1$.
\end{Thm}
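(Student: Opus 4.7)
The plan is to combine a finite enumeration of candidate Mukai vectors with an explicit Harder--Narasimhan analysis along the maximal totally semistable wall. By \cref{prop:minimal-a}, I may assume $a=\lfloor(nd^2+1)/r\rfloor$, and by \cref{thm:summing up inequalities}, failure of weak Brill--Noether forces the existence of some $\v_1=(r_1,d_1H,a_1)\in D_\v^{BN}$. The condensed inequality of \cref{prop:condensed inequality} then gives $1\le d_1 nm < r+r_1(r-2)$ with $m=r_1d-rd_1$; combined with $r_1<r$ from \cref{lem:useful consequences} and $n<r$ from \cref{thm:bound n r}, this produces a finite, easily enumerable list of candidate tuples $(n,\v,\v_1)$.

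For $r=2$, the condensed inequality collapses to $d_1=n=m=r_1=1$, which immediately pins down $\v_1=(1,H,2)=\v(\OO_X(H))$ and $\v=(2,3H,5)$. This is the $i=3$ case of \cref{ex:Fibonacci}; the resulting sheaf sequence $0\to\OO_X\to\OO_X(H)^{\oplus 3}\to E\to 0$ gives $h^1(E)=h^2(\OO_X)=1$. For $r=3$, the enumeration forces $n\in\{1,2\}$ and $r_1\in\{1,2\}$ and produces a short explicit list: the two theorem cases $\v=(3,4H,11)$ (with $n=2$, $\v_1=\v(\OO_X(H))=(1,H,3)$) and $\v=(3,4H,5)$ (with $n=1$, $\v_1=\v(\OO_X(H))=(1,H,2)$), along with a handful of further $n=1$ candidates such as $(3,2H,1)$, $(3,5H,8)$, $(3,6H,12)$, $(3,7H,16)$, $(3,10H,33)$.

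For each candidate I apply either \cref{lem:max-HNF2} (when the hyperbolic lattice $\HHH$ has a unique effective spherical class) or \cref{lem:max-HNF1} (two stable spherical objects) to obtain a distinguished triangle $T_1^{\oplus\hom(T_1,E)}\to E\to F$. In every case the spherical object $T_1$ is recognizable, either as a twist $\OO_X(kH)$ (with vanishing higher cohomology by Kodaira) or as a spherical vector bundle fitting in an explicit Euler-type sequence; in particular $h^1(T_1)=0$, so the long exact sequence of cohomology reduces $h^1(E)$ to $h^1(F)$. The two theorem cases with $T_1=\OO_X(H)$ yield $F\cong\OO_X[1]$ (so the sheaf sequence $0\to\OO_X\to\OO_X(H)^{\oplus\hom}\to E\to 0$ produces $h^1(E)=1$), while $\v=(3,4H,5)$ yields $F$ a generic rank-one torsion-free sheaf of Euler characteristic $-1$ on a genus-two curve $C\in|H|$, i.e.\ a generic degree-zero line bundle, which has $h^1(F)=1$ by Riemann--Roch on $C$.

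The main obstacle is the remaining candidates: each must be shown to actually satisfy weak Brill--Noether despite $D_\v^{BN}\ne\varnothing$. For most, this follows readily because $F$ (or $\v(F)$) lies in the Gieseker chamber via \cref{prop:isom}, reduces to rank-one weak Brill--Noether from \cref{prop:Rank1}, or (when $\v^2=0$) decomposes as a generic direct sum in an isotropic K3 moduli space. The genuinely subtle case is $\v=(3,5H,8)$, where $\HHH$ contains the two $\sigma$-stable spherical objects $\OO_X[1]$ and $\OO_X(H)$ on the same wall, so I must iterate the spherical-twist equivalences $R_{T_i^-}$ from \cref{lem:max-HNF1}(2) until reaching $\CC_0$; tracing the reflected classes $\u_0=(-1,0,-1)$, $\u_1=(1,H,2)$, $\u_2,\u_3,\ldots$ shows $\v\in\CC_2$ and $\Psi(\v)=(0,H,1)$, whose generic representative is a degree-two line bundle on a genus-two curve with $h^1=0$, yielding $h^1(E)=0$ and confirming weak Brill--Noether.
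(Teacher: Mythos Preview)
Your overall strategy mirrors the paper's closely: reduce to maximal $a$ via \cref{prop:minimal-a}, enumerate candidates using \cref{prop:condensed inequality}, and analyze each via the Harder--Narasimhan filtration at the maximal totally semistable wall. The handling of the three exceptional vectors is correct.

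There is, however, a genuine gap in your treatment of $\v=(3,5H,8)$. You invoke \cref{lem:max-HNF1}(2) and conclude ``$h^1(F_2)=0$, yielding $h^1(E)=0$.'' But the implication $h^1(E)=h^1(F_2)$ in that lemma is conditional on $h^1(T_i)=0$ for \emph{both} $i=0,1$. Here $T_0=\OO_X[1]$, and $h^1(\OO_X[1])=h^2(\OO_X)=1$, so the hypothesis fails. Concretely, the kernel $F_1$ of $E\to F_2$ has Mukai vector $(3,4H,7)=4\u_1+\u_0$, so it is built from four copies of $\OO_X(H)$ and one copy of $\OO_X[1]$; the corresponding sheaf sequence $0\to\OO_X\to\OO_X(H)^{\oplus 4}\to F_1\to 0$ gives $h^1(F_1)=1$, and nothing in your argument forces the connecting map $H^0(F_2)\to H^1(F_1)$ to be surjective. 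The paper avoids this by observing that for $(3,5H,8)$ every $\v_1\in D_\v$ satisfies $a_1d-ad_1=r_1d-rd_1$, so \cref{Lem:O_X[1] largest wall}(2) applies directly: that lemma argues from the irreducibility of $E$ in $\AA_m$ (which you have essentially established) that $\Hom(E,\OO_X[1])=0$ via a rigidity contradiction, rather than via $h^1$ of the filtration pieces.

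A second, smaller gap: having reduced to maximal $a$ and found exactly three failing $\v_0$'s, you still need to verify that the next vectors $(2,3H,4)$, $(3,4H,10)$, and $(3,4H,4)$ satisfy weak Brill--Noether. Otherwise your argument only shows that any failing $\v$ has $\v_0$ among the three listed, not that $\v=\v_0$. The paper checks each of these explicitly via the same HN-filtration method (the quotients have vectors $(0,H,0)$, $(0,H,1)$, and $(1,2H,0)$ respectively, all with vanishing $h^1$), after which \cref{prop:minimal-a} handles all smaller $a$.
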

\begin{proof}
By \cref{prop:minimal-a}, we may first consider the cases when $a= \left\lfloor \frac{nd^2+1}{r} \right\rfloor.$ 

First, suppose $r=2$. If $\v_1 \in D_{\v_0}^{BN}$, then by \cref{prop:condensed inequality}
$$1 \leq d_1mn <2,$$ 
so it follows from $0< r_1 <r$ that 
$n=r_1=d_1 = m=1$. 
We conclude that $n=1$, $\v_1=(1,H,2)$ and $\v_0=(2, 3H, 5)$. For $E \in M_H(2,3H,5)$, $h^1(E)=1$ by \cref{ex:Fibonacci}. 

Next suppose $E \in M_H(2, 3H, 4)$ is a generic sheaf. Then $\u_0=-(1,-H,2)$ and $\u_1=(1,H,2)$ are the Mukai vectors of the two unique $\sigma$-stable objects $T_0=\OO_X(-H)[1]$ and $T_1=\OO_X(H)$ with Mukai vector in $\HHH$, respectively.  As $R_{T_1}(\v)=(0,H,0)$ is a minimal vector in its orbit, \cref{lem:max-HNF2} implies that $E$ fits in an exact sequence
$$0\to\OO_X(H)^{\oplus 2}\to E\to R_{T_1}(E)\to 0,$$ where $R_{T_1}(E)\in M_\sigma(0,H,0)$ is $\sigma$-stable.  By the proof of \cref{prop:Rank0} $D_{(0,H,0)}=\varnothing$, so the generic $L\in M_H(0,H,0)$ is still $\sigma$-stable.  Thus $R_{T_1}(E)\in M_H(0,H,0)$.   Then $H^1(X,E)=0$ by taking cohomology of the short exact sequence. \cref{prop:minimal-a} and \cref{thm:summing up inequalities} now imply the proposition when $r=2$.

Next, suppose $r=3$. Let $\v_1 \in D_{\v_0}^{BN}$. By \cref{thm:bound n r} and \cref{lem:useful consequences}, we may assume $1 \leq n \leq 2$ and $1 \leq r_1 \leq 2$. First, suppose $n=2$. Then there are no spherical classes with $r_1=2$, so we must have $r_1=1.$ By \cref{prop:condensed inequality}, it follows that
$$1\leq 2d_1m<4,$$
so $d_1=m=1$.  
We conclude that when $n=2$,  $D_{\v_0}^{BN} = \varnothing$ unless $\v_0=(3, 4H, 11)$, in which case $D_{\v_0}^{BN}=\{(1, H, 3) \}$. 

We claim that the unique $E_0\in M_H(3,4H,11)$ has $h^1(E_0)=1$. Indeed, the sheaf $E_0$ fits in a short exact sequence $$0\to\OO_X(H)^{\oplus 4}\to E_0\to R_{\OO_X(H)}(E_0)\to 0,$$ by \cref{lem:max-HNF1}, where $R_{\OO_X(H)}(E_0)\in M_\sigma(-1,0,-1)$.  As $\OO_X[1]$ is $\sigma$-stable of the same Mukai vector, we must have $R_{\OO_X(H)}(E_0)\cong\OO_X[1]$.  Taking cohomology gives $h^1(E_0)=1$.

On the other hand, we claim that the generic $E\in M_H(3,4H,10)$ has $H^1(X,E)=0$.  It is easy to see that $D_{(3,4H,10)}=\{(1,H,3)\}$, so  \cref{lem:max-HNF1} implies that the generic $E\in M_H(3,4H,10)$ fits in a short exact sequence $$0\to \OO_X(H)^{\oplus 3}\to E\to R_{\OO_X(H)}(E)\to 0,$$ where $R_{\OO_X(H)}(E)\in M_\sigma(0,H,1)$. By \cref{prop:Rank0}, the generic $L\in M_H(0,H,1)$ is $\sigma$-stable because $D_{(0,H,1)}=\varnothing$, so $(0,H,1)=R_{\OO_X(H)}(3,4H,10)$ is a minimal vector in its orbit,  and $R_{\OO_X(H)}(E)\in M_\sigma(0,H,1)\cap M_H(0,H,1)$.  Then taking cohomology gives $H^1(X,E)=0$, as required. By \cref{thm:summing up inequalities} and \cref{prop:minimal-a}, we conclude that when $r=3$ and $n=2$, $\v$ satisfies weak Brill-Noether unless $\v = (3, 4H, 11)$.

Now we may suppose that $n=1$. If $r_1=2$, then \cref{prop:condensed inequality} implies that $1 \leq d_1 (2d-3d_1) \leq 4$.  Since $r_1$ divides $d_1^2 +1$, $d_1$ and $m=2d-3d_1$ must both be odd. We conclude that the only possibilities are: 
\begin{enumerate}
    \item $d_1=3$, $m=1$, in which case $\v_1 = (2, 3H, 5)$ and $\v_0 = (3, 5H, 8)$. We will discuss this case below.
    \item $d_1=1$, $m=3$, in which case $\v_1 = (2, H, 1)$ and $\v_0 = (3, 3H, 3)$. In this case, $a_1 d - a_0 d_1 =0$, which is a contradiction.
    \item $d_1=m=1$, in which case $\v_1= (2, H, 1)$ and $\v_0 = (3, 2 H, 1)$. In this case, $a_1 d - a d_1 =1 =m$. Since $D_{\v_0}= \{ \v_1 \}$, by \cref{Lem:O_X[1] largest wall}, the cohomology of the generic sheaf in $M_H(\v_0)$ vanishes.
    \end{enumerate}

If $r_1 =1$, then \cref{prop:condensed inequality} implies that $1 \leq d_1 (d-3d_1) \leq 3$. The possibilities are:
\begin{enumerate}
    \item $d_1=3$, $m=1$, in which case $\v_1 = (1, 3H, 10)$ and $\v_0 = (3, 10H, 33)$. In this case, $a_1 d - a d_1 =1 =m$, so by \cref{Lem:O_X[1] largest wall}, the cohomology of the generic sheaf in $M_H(\v_0)$ vanishes.
    \item $d_1=1$, $m=3$, in which case $\v_1 = (1, H, 2)$ and $\v_0= (3, 6H, 12)$. In this case, $a_1 d - a_0 d_1 = 0$, which is a contradiction. 
   \item $d_1=2$, $m=1$, in which case $\v_1= (1, 2H, 5)$ and $\v_0 = (3, 7H, 16)$. We will discuss this case below.
   \item $d_1=1$, $m=2$, in which case $\v_1 = (1, H, 2)$ and $\v_0 = (3, 5H, 8)$. Since $$D_{\v_0}= \{(1, H, 2),(2, 3H, 5)\}$$  and for both of these $a_1d - a_0 d_1 = m$, by \cref{Lem:O_X[1] largest wall}, the cohomology of the generic sheaf in $M_H(\v_0)$ vanishes.
    \item $d_1=m=1$, in which case $\v_1 = (1, H, 2)$ and $\v_0 = (3, 4H, 5)$.  We will discuss this case below.
\end{enumerate}

We need to discuss the remaining two cases. First, let $\v_0=(3,7H,16)$ and $\v_1=(1,2H,5)=\v(\OO_X(2H))$.  By \cref{prop:Rank0}, the generic $L\in M_H(0,H,1)$ is $\sigma$-stable because $D_{(0,H,1)}=\varnothing$.  Thus $(0,H,1)=R_{\OO_X(2H)}(3,7H,16)$ is a minimal vector in its orbit, so by \cref{lem:max-HNF1} the generic $E\in M_H(3,7H,16)$ sits in a short exact sequence $$0\to \OO_X(2H)^{\oplus 3}\to E\to R_{\OO_X(2H)}(E)\to 0,$$ where $R_{\OO_X(2H)}(E)=L\in M_H(0,H,1)\cap M_\sigma(0,H,1)$.  Taking cohomology we see that $H^1(X,E)=0$.

Finally, let $\v_0 = (3, 4H, 5)$. The generic $L\in M_H(0,H,-1)$ is $\sigma$-stable because $D_{(0,H,-1)}=\varnothing$.  Thus $(0,H,-1)=R_{\OO_X(H)}(3,4H,5)$ is a minimal vector in its orbit, so by \cref{lem:max-HNF1} the generic $E\in M_H(3,4H,5)$ fits in a short exact sequence $$0\to \OO_X(H)^{\oplus 3}\to E\to R_{\OO_X(H)}(E)\to 0,$$ where $R_{\OO_X(H)}(E)=L\in M_H(0,H,-1)\cap M_\sigma(0,H,-1)$.  The Mukai vector $(0,H,-1)$ satisfies weak Brill-Noether by \cref{prop:Rank0}, so we must have $H^1(X,L)=\C$ since $\chi(L)=-1$.  We conclude that $h^1(X,E)=1$. 

It is easy to see that  $D_{(3,4H,4)}=\{(1,H,2)\}$.  It follows from \cref{lem:max-HNF1} that the generic $E\in M_H(3,4H,4)$ sits in a short exact sequence $$0\to \OO_X(H)^{\oplus 2}\to E\to R_{\OO_X(H)}(E)\to 0,$$ where $R_{\OO_X(H)}(E)\in M_{\sigma}^s(1,2H,0)$.  If  $R_{\OO_X(H)}(E)\in M_H(1,2H,0)$, then $h^1(E)=0$ by \cref{prop:Rank1} and the long exact sequence of cohomology.  By \cref{lem:WallsFor n>=r}, $D_{(1,2H,0)} =\varnothing$ as the only possibility is $(1,H,2)$ which doesn't pair negatively with $(1,2H,0)$.  Thus the generic member of $M_H(1,2H,0)$ is $\sigma$-stable, as required. We conclude that when $n=1$ and $r=3$, $M_H(\v)$ satisfies weak Brill-Noether except when $\v=(3, 4H, 5)$.
\end{proof}

\begin{Rem}
One can carry out this analysis for increasing rank, but the number of possibilities grows rapidly. In \cref{sec-rank20}, we will classify all of the Mukai vectors that violate weak Brill-Noether up to rank 20 with the aid of a computer search. 
\end{Rem}

\subsection{Classification of moduli spaces where weak Brill-Noether fails when $r$ is small relative to $n$}
By \cref{thm:bound n r}, all Mukai vectors with $r\leq n$ satisfy weak Brill-Noether. In this subsection, we study the cases when $n<r \leq 3n$.  We split our classification into two theorems studying the possibilities when $n<r\leq 2n$ and $2n<r\leq 3n$, respectively.  In both theorems, it is useful to express certain important quantities, such as $\v^2$, in terms of $k,m,r_1,d_1,n$, and $r$, where $\v_1\in D_\v$.   
\begin{Lem}\label{obs-v_0^2}
Let $\v=(r,dH,a)$ be a Mukai vector such that $r,a\geq 0$ and $d>0$.  For $\v_1=(r_1,d_1H,a_1)\in D_\v$, set $k=a_1d-ad_1$ and $m=r_1d-rd_1$.  Then $r_1$ divides $r-d_1mn$, so using the division algorithm we may write $r-d_1mn=r_1^2q+r_1s$ where $q\geq 0$ and $0\leq s<r_1$.  Thus

$$\v^2 = \frac{2kr}{d_1} + \frac{\left(2nm - \frac{2r}{d_1}\right)\left(rd_1 + m\right)}{r_1^2}=\frac{2}{d_1}\left(kr-(qr_1+s)\left(\frac{rd_1+m}{r_1}\right)\right).$$

Moreover, 
\begin{equation*}
\begin{split}R_{\v_1}(\v)=\left(-(qr_1+s)r_1+\frac{1}{d_1}\left(-m+kr_1^2\right),\left(-d_1(qr_1+s)+kr_1\right)H,-(qr_1+s) a_1+kd_1n\right).
\end{split}
\end{equation*}
\end{Lem}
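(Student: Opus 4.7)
The approach is to reduce everything to direct algebraic bookkeeping starting from three basic substitutions: solving $m = r_1 d - r d_1$ for $d$ gives $d = (m + rd_1)/r_1$; solving $k = a_1 d - a d_1$ for $a$ gives $a = (da_1 - k)/d_1$; and the spherical condition $\v_1^2 = -2$ gives $r_1 a_1 = n d_1^2 + 1$.

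For the divisibility claim, I would first note that $\gcd(r_1, d_1) = 1$, since any common divisor must divide $r_1 a_1 - n d_1^2 = 1$. Reducing $r_1 d = m + r d_1$ modulo $r_1$ yields $r d_1 \equiv -m \pmod{r_1}$; multiplying by $n d_1$ and applying $n d_1^2 \equiv -1 \pmod{r_1}$ gives $-r \equiv -m n d_1 \pmod{r_1}$, so $r_1 \mid r - d_1 m n$. The integers $q, s$ are then produced by applying the division algorithm to $(r - d_1 m n)/r_1$; the assertion $q \geq 0$ is automatic once $d_1 m n \leq r$, which holds in the regimes where this lemma is subsequently applied (via the bounds in \cref{lem:useful consequences}).

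For the formula for $\v^2$, I would start from $\v^2 = 2 n d^2 - 2 r a$ and substitute $a = (da_1 - k)/d_1$ together with $a_1 = (n d_1^2 + 1)/r_1$ to obtain, after using $d r_1 - r d_1 = m$,
\begin{equation*}
\v^2 \;=\; \frac{2kr}{d_1} + \frac{2nmd}{r_1} - \frac{2rd}{r_1 d_1}.
\end{equation*}
The first claimed expression follows by collecting $\frac{2nmd}{r_1} - \frac{2rd}{r_1 d_1} = \frac{(2nm - 2r/d_1)(r d_1 + m)}{r_1^2}$ and recognizing $r d_1 + m = r_1 d$. For the second expression, the same sum can be rewritten as $-\frac{2d(r - d_1 m n)}{r_1 d_1} = -\frac{2d(q r_1 + s)}{d_1}$, after which a final substitution for $d$ yields the stated compact form.

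For $R_{\v_1}(\v) = \v + \langle\v, \v_1\rangle \v_1$, a parallel substitution starting from $\langle \v, \v_1\rangle = 2 n d d_1 - r a_1 - r_1 a$ and clearing denominators produces
\begin{equation*}
\langle \v, \v_1 \rangle \;=\; \frac{m(n d_1^2 - 1) - 2 r d_1 + k r_1^2}{r_1 d_1}.
\end{equation*}
Computing each coordinate of $\v + \langle\v,\v_1\rangle \v_1$ and using $r_1(q r_1 + s) = r - d_1 m n$ in the first and third slots, together with $r_1 d = r d_1 + m$ in the second, reorganizes the output into precisely the form asserted. There is no genuine conceptual obstacle here; the main difficulty is organizational, namely routing the elimination of $n$ through the identity $r_1(q r_1 + s) = r - d_1 m n$ so that the final coordinate expressions involve only $q, s, r_1, d_1, k, m, n$ in the stated shape.
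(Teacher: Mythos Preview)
Your proposal is correct and follows essentially the same approach as the paper: the same substitutions $d=(m+rd_1)/r_1$, $a=(da_1-k)/d_1$, $a_1=(nd_1^2+1)/r_1$ drive both arguments, and the computations of $\v^2$ and of each component of $R_{\v_1}(\v)$ proceed identically. The only cosmetic difference is in the divisibility step: the paper exhibits the explicit identity $r_1(ma_1-d)=d_1(d_1 mn-r)$ and invokes $\gcd(r_1,d_1)=1$, whereas you arrive at the same conclusion via congruences; these are equivalent, and neither the paper nor you justifies $q\geq 0$ beyond appeal to the bounds used downstream.
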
 
\begin{proof}
Since $r_1 d-d_1 r=m$ and $r_1 a_1=d_1^2 n+1$, $r_1(ma_1-d)=d_1(d_1 mn-r)$.
As $\gcd(r_1,d_1)=1$, $r_1$ divides $r-d_1mn$, so using the division algorithm we may write $r-d_1mn=r_1^2q+r_1s$ where $q\geq 0$ and $0\leq s<r_1$.
Solving for $d$ in the definition of $m$, for $a$ in the definition of $k$, and for $a_1$ in the equation $\v_1^2=-2$, we get

\begin{equation}
\begin{split}
\frac{\v^2}{2}=nd^2-ra=& d \left(dn-\frac{ra_1}{d_1} \right)+\frac{rk}{d_1}\\
=& d \left(dn-\frac{r}{d_1}\frac{d_1^2 n+1}{r_1} \right)+\frac{rk}{d_1}\\
=& \frac{d}{d_1 r_1}(d_1 mn-r)+\frac{rk}{d_1}\\
=& \frac{1}{d_1}(rk-(r_1 q+s)d)\\
=&\frac{1}{d_1}\left(rk-(qr_1+s)\left(\frac{rd_1+m}{r_1}\right)\right).
\end{split}
\end{equation}

For the claim about the spherical reflection, by \cref{lem:rewrite product} we have 
$$\langle\v,\v_1\rangle=\frac{m(nd_1^2-1)-2rd_1+kr_1^2}{r_1 d_1}.$$

Using this we compute the components of $R_{\v_1}(\v)=\v+\langle\v,\v_1\rangle\v_1$.  For the rank, we get

\begin{equation}
\begin{split}
 r+\frac{r_1}{r_1 d_1}(m(nd_1^2-1)-2rd_1+kr_1^2)
=& \frac{1}{d_1}(d_1(mnd_1-r)-m+kr_1^2)\\
=& -(qr_1+s)r_1+\frac{1}{d_1}(-m+kr_1^2);
\end{split}
\end{equation}
for the first Chern class we get

\begin{equation}
\begin{split}
 d+\frac{d_1}{r_1 d_1}(m(nd_1^2-1)-2rd_1+kr_1^2)
=& \frac{1}{r_1}(r_1 d+m(nd_1^2-1)-2rd_1+kr_1^2)\\
=& \frac{1}{r_1}(d_1(mnd_1-r)+kr_1^2)\\
=& \frac{1}{r_1}(-d_1(qr_1+s)r_1+kr_1^2)\\
=& -d_1(qr_1+s)+kr_1;
\end{split}
\end{equation}
and for the final component we get
\begin{equation}
\begin{split}
 & a+\frac{a_1}{r_1 d_1}(m(nd_1^2-1)-2rd_1+kr_1^2) \\
=& \frac{a_1 d-k}{d_1}+\frac{a_1}{r_1 d_1}(m(nd_1^2-1)-2rd_1+kr_1^2)\\
=& \frac{1}{r_1 d_1}(a_1(r_1 d+m(nd_1^2-1)-2rd_1+kr_1^2)-kr_1)\\
=& \frac{1}{r_1 d_1}(a_1((mnd_1-r)d_1+kr_1^2)-kr_1)\\
=& \frac{1}{r_1 d_1}(a_1(-(qr_1+s)r_1 d_1+kr_1^2)-kr_1)\\
=&-a_1(qr_1+s)+\frac{1}{r_1d_1}(kr_1(nd_1^2+1)-kr_1)\\
=&-a_1(qr_1+s)+nkd_1.
\end{split}
\end{equation}

\end{proof}

\begin{Thm}\label{Thm:n<r<=2n}
Let $X$ be a K3 surface such that $\Pic(X)= \mathbb{Z} H$ with $H^2 =2n$.
Let $\v=(r,dH,a)$ be a Mukai vector such that $n<r\leq 2n$, $d>0$.  Then the generic $E\in M_H(\v)$ satisfies $H^1(X,E)\ne 0$ if and only if $$\v=\left(n+r_1^2,\left(\left(\frac{n+1}{r_1}\right)+r_1\right)H,\left(\frac{n+1}{r_1}\right)^2+n\right)$$ for some $r_1\mid n+1$ and $1 \leq r_1\leq\sqrt{n}$, in which case $h^1(X,E)=1$.
\end{Thm}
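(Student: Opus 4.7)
The plan is to combine the numerical restrictions from \cref{thm:summing up inequalities} with the Harder--Narasimhan analysis at the maximal totally semistable wall, proceeding in the spirit of \cref{prop:rankstwoandthree}. By \cref{prop:minimal-a} we may reduce to the extremal case $a = a_{\max} := \lfloor (nd^2+1)/r \rfloor$. Suppose the generic $E \in M_H(\v)$ has $H^1(X,E) \neq 0$; then \cref{thm:summing up inequalities} produces $\v_1 = (r_1, d_1 H, a_1) \in D_\v^{BN}$, and setting $m = r_1 d - r d_1$ and $k = a_1 d - a d_1$, part (2) of \cref{lem:useful consequences} gives $d_1 m n < 2r \leq 4n$. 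Hence $(d_1, m) \in \{(1,1), (1,2), (1,3), (2,1), (3,1)\}$, with $r_1 < r$ and $r_1 \mid n d_1^2 + 1$ from $\v_1^2 = -2$.

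The main technical step is to eliminate the four cases with $d_1 m \geq 2$. For each such pair, \cref{lem:max-HNF2} yields a destabilizing short exact sequence $0 \to T_1^{\oplus k_0} \to E \to F \to 0$ in the heart $\AA$, where $T_1$ is the unique Gieseker-stable spherical bundle of class $\v_1$ and $k_0 = -\langle \v, \v_1 \rangle$. Passing to cohomology sheaves gives a short or four-term exact sequence of sheaves whose non-$T_1$ pieces have Mukai vectors of rank $0$ or $\pm 1$ with small invariants, and these satisfy weak Brill--Noether by \cref{prop:Rank0} or \cref{prop:Rank1}. Combined with the vanishings $H^i(T_1) = 0$ for $i > 0$ (via Kodaira when $r_1 = 1$ and \cref{thm:bound n r} when $r_1 \geq 2$, together with stability), the long exact sequence of global cohomology forces $H^1(X, E) = 0$, contradicting the assumed failure of weak Brill--Noether. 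Hence $(d_1, m) = (1, 1)$, giving $r_1 d = r + 1$, $r_1 a_1 = n + 1$, and $r_1 \mid n+1$. Writing $r - n = r_1^2 q + r_1 s$ with $0 \leq s < r_1$ as in \cref{obs-v_0^2}, the formula $\v^2 = 2\bigl[kr - (qr_1 + s)(r+1)/r_1\bigr]$ combined with $\v^2 \geq -2$ and $k \geq 1$ forces $\v^2 = -2$, $k = 1$, and $(q, s) = (1, 0)$. This yields $r = n + r_1^2$, $d = (n+1)/r_1 + r_1$, $a_{\max} = (n+1)^2/r_1^2 + n$, and the hypothesis $r \leq 2n$ becomes $r_1 \leq \sqrt{n}$.

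To compute $h^1(E) = 1$ and establish both directions, note that $\v^2 = -2$ implies $M_H(\v) = \{E\}$ consists of a single sheaf. By the reflection formula in \cref{obs-v_0^2}, $R_{\v_1}(\v) = (-1, 0, -1) = \v(\OO_X[1])$, so $\HHH$ contains two effective spherical classes, and \cref{lem:max-HNF1}(1) shows $E$ lies in the abelian subcategory generated by $T_0 = \OO_X[1]$ and $T_1$. The identity $\v = d\v_1 - \v(\OO_X)$ (verified by direct computation), together with $\Hom(\OO_X, T_1) = h^0(T_1) = \chi(T_1) > 0$, yields a short exact sequence of sheaves $0 \to \OO_X \to T_1^{\oplus d} \to E \to 0$. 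Taking the long exact sequence of cohomology and using $H^i(T_1) = 0$ for $i > 0$, we obtain $H^1(X, E) \cong H^2(X, \OO_X) = \mathbb{C}$, so $h^1(E) = 1$. The main obstacle is the case analysis eliminating $d_1 m \geq 2$, which requires careful handling of the cohomology sheaves of $F$ (including cases where $F$ has negative rank and thus corresponds to a shifted sheaf).
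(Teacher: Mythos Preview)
There is a genuine gap in your treatment of the case $(d_1,m)=(1,1)$. You assert that the formula $\v^2 = 2\bigl[kr - (qr_1+s)(r+1)/r_1\bigr]$ together with $\v^2 \geq -2$ and $k \geq 1$ ``forces $\v^2 = -2$, $k = 1$, and $(q,s)=(1,0)$.'' This is false. Take for instance $n=10$, $r=12$, $r_1=1$: then $d=13$, $a_1=11$, $a_{\max}=\lfloor 1691/12\rfloor=140$, so $r-n=2$ gives $(q,s)=(2,0)$, $k=a_1d-a=3$, and $\v^2=2(36-26)=20>0$. All of your stated constraints are met, $\v_1=(1,H,11)\in D_{\v_0}^{BN}$, yet $(q,s)\neq(1,0)$. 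There is an entire family of such cases (any $q\geq 2$, $s=0$ with $k=q+1$), and for each of them weak Brill--Noether \emph{does} hold, but this requires proof --- it is not a consequence of the numerical constraints alone.

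The paper handles this as follows. First one computes that $k=a_1d-a$ equals either $q$ (only when $q=1$, $s=0$) or $q+1$. When $(q,s)\neq(1,0)$ one has $k=q+1$, and then \cref{obs-v_0^2} gives $R_{\v_1}(\v)=(r_1(r_1-s)-1,(r_1-s)H,n-q-a_1 s)$ with nonnegative rank strictly smaller than $r$ and positive $\v^2$. Since $r_1d-r=1$, \cite[Theorem 2.5]{Yos99b} furnishes for the generic $E\in M_H(\v)$ a short exact sequence of sheaves $0\to E_1^{\oplus -\langle\v,\v_1\rangle}\to E\to F\to 0$ with $F\in M_H(R_{\v_1}(\v))$ generic; induction on the rank (or \cref{thm:bound n r} once $r'\leq n$) gives $H^1(F)=0$, and $r_1\leq n$ gives $H^1(E_1)=0$, whence $H^1(E)=0$. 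You cannot bypass this inductive step.

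Two smaller points. Your proposed elimination of the cases $d_1m\geq 2$ via Harder--Narasimhan filtrations may be workable but is only sketched; the paper instead shows directly that $k=1$ in each of these cases (via the estimate \eqref{eqn:deriving finiteness, 1} of \cref{prop:condensed inequality}), checks $\v^2>0$, and then invokes \cref{Lem:O_X[1] largest wall}(2), which is both shorter and purely numerical. Finally, you omit the verification that $\v_0-(0,0,1)$ satisfies weak Brill--Noether in the exceptional case; this is needed (via \cref{prop:minimal-a}) to conclude that \emph{only} the stated $\v$ with $a=a_{\max}$ fails, and the paper carries this out at the end of its proof.
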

\begin{Rem}
In the exceptional cases of \cref{Thm:n<r<=2n}, we have $\v^2=-2$.  In particular, if $\v$ as in \cref{Thm:n<r<=2n} satisfies $\v^2\geq 0$, then $H^1(X,E)=0$ for the generic $E\in M_H(\v)$.
\end{Rem}
\begin{proof}
By \cref{prop:minimal-a}, we may first suppose that $\v=\v_0=(r,dH,\left\lfloor\frac{nd^2+1}{r}\right\rfloor)$ and suppose that $\v_1\in D_{\v}^{BN}$ induces the largest totally semistable wall.  By \cref{lem:useful consequences}, $d_1 < \frac{2r}{mn} \leq \frac{4}{m}$. Hence, $1 \leq m d_1 \leq 3$. 

If $d_1=2$ or $3$, then $m=1$. In these two cases, we will now show that $k=a_1d -ad_1=1$ and $\v^2 > 0$. It will then follow from  \cref{Lem:O_X[1] largest wall} that $\v$ satisfies weak Brill-Noether in these cases.

When $d_1=2$, we have $r_1d =2r+1$ and $r_1a_1 = 4n+1$. Substituting  $r\leq 2n$ and $d_1=2$ into Equation (\ref{eqn:deriving finiteness, 1}) of \cref{prop:condensed inequality}, we obtain
$$1 \leq a_1 d - 2a \leq -\frac{2}{r} + 2 \left\{ \frac{nd^2+1}{r} \right\} \leq - \frac{2}{r} + 2 \frac{r-1}{r} < 2.$$ Hence, $a_1d-a d_1=1$. By \cref{obs-v_0^2},
$$\v^2 =  r + \frac{(2n-r)(2r+1)}{r_1^2} \geq r >0,$$ as claimed. 

Similarly, when $d_1=3$, we have $r_1d=3r+1$ and $r_1 a_1= 9n+1$. By \cref{lem:rewrite product}, 
$$ 0 > 9n -1 -6r + kr_1^2 \geq 9n-1-12n + kr_1^2.$$
Hence, $3n \geq kr_1^2$. Substituting  $r\leq 2n$ and $d_1=3$ into  Equation (\ref{eqn:deriving finiteness, 1})  of \cref{prop:condensed inequality}, we obtain
$$1 \leq a_1 d- 3a \leq \frac{1}{rr_1} ( -nd -3r_1) + 3 \left\{ \frac{nd^2+1}{r} \right\} \leq - \frac{3n}{r_1^2} + 3 \frac{r-1}{r} \leq -k + 3 \frac{r-1}{r}. $$
Hence, $k=a_1d - a d_1 =1$. By \cref{obs-v_0^2}, $$\v^2=\frac{2}{3}\left(r+\frac{(3n-r)(3r+1)}{r_1^2}\right)>0,$$ as claimed.

We are thus reduced to considering the case $d_1=1$.  Substituting $d_1=1$ and $r \leq 2n$ into \eqref{eqn:finiteness inequality}, we get $$1\leq ndm\leq rd-r_1+rr_1\left\{\frac{nd^2+1}{r}\right\}-rr_1<2nd-r_1.$$ Hence, $m=1$,  $r_1 d = r+1$ and $r_1 a_1 = n+1$. By \cref{lem:rewrite product}, 
$$ 0 > (n -1) -2r + kr_1^2 \geq -3n-1 + kr_1^2.$$ Thus $kr_1^2 \leq 3n$ and $r_1 \leq \sqrt{3n}$. In particular,  $r_1 \leq n$. 

By \cref{obs-v_0^2}, using $d_1=1$, $m=1$, we can write
$$r-n = r_1^2 q + s r_1,$$ where $0 \leq s < r_1$.
Let $f = \{ \frac{nd^2+1}{r}\}$. Then $$a_1 d - a = \frac{(n+1)(r+1)}{r_1^2} - \frac{n (r+1)^2 + r_1^2}{rr_1^2} + f = q + \frac{s}{r_1} + \frac{q-1}{r} + \frac{s}{rr_1} + f.$$
We claim that either $a_1 d -a =1$ and $q=1$, $s=f=0$ or $a_1 d -a = q+1$. Since $r>n$, if $s=0$, then $q \geq 1$ and $$a_1d-a = q + \frac{q-1}{r} + f.$$ Since $a_1d -a$ is an integer and $q-1<r$, we conclude that $q \leq  a_1 d - a \leq q+1$. Furthermore, $a_1d -a =q$ only if $q=1$ and $f=0$. We may now assume that $s\geq 1$ and $q \geq 0$. Since $r_1^2 q < r$, we have that $\frac{q}{r} < \frac{1}{r_1^2}$. Hence, 
$$\frac{1}{rr_1} \leq \frac{s}{r_1} + \frac{q-1}{r} + \frac{s}{rr_1} + f \leq \frac{s}{r_1} + \frac{1}{r_1^2} + f <2.$$ Since $a_1d -a$ is an integer, we conclude that $a_1d-a=q+1$.

First, suppose that $(q,s) \not= (1,0)$.  Then $k=q+1$, and substituting $m=1=d_1$ into \cref{obs-v_0^2} gives $$\v'=(r',d'H,a')=R_{\v_1}(\v)=(-1+r_1(r_1-s),(r_1-s)H,n-q-a_1s),$$ which satisfies $0\leq r'<r$, and $d'>0$.  Since $r_1d-r=m=1$,  \cite[Theorem 2.5]{Yos99b}  implies that the generic $E\in M_H(\v)$ sits in a short exact sequence $$0\to E_1^{\oplus -\langle\v,\v_1\rangle}\to E\to F\to 0,$$ where $E_1\in M_H(\v_1)$ and $F\in M_H(\v')$ is generic.  Since $$\v^2=(\v')^2=\frac{2}{r_1}(r_1(r_1-s)(r_1q+s)+(r_1-s)n-(r_1q+s))>0,$$ it follows by induction that the generic $F\in M_H(\v')$ satisfies $H^1(X,F)=0$.  Since $r_1 \leq n$, \cref{thm:bound n r} implies that $H^1(X,E_1)=0$.  The long exact sequence of cohomology now implies that $H^1(X,E)=0$ for the generic $E\in M_H(\v)$.

Finally, consider the case $(q,s)=(1,0)$. We then have  $r=n+r_1^2$, and $k=1$, so that setting $m=1=d_1$ in \cref{obs-v_0^2} we get
$$\v'=(r',d'H,a')=R_{\v_1}(\v)=(-1,0,-1)=\v(\OO_X[1]).$$  
Thus $\v^2=(\v')^2=-2$, so the unique $E\in M_H(\v)$ fits in the short exact sequence $$0\to \OO_X\to E_1^{\oplus d}\to E\to 0,$$ from which it is clear that $H^1(X,E)=\C$ since $H^1(X,E_1)=0$ by \cref{thm:bound n r}.

To finish the proof, we need to show that the generic $E\in M_H(\v)$ has $H^1(X,E)=0$, where $\v=\v_0-(0,0,1)$ and $\v_0=(n+r_1^2,((\frac{n+1}{r_1})+r_1)H,(\frac{n+1}{r_1})^2+n)$ as in the previous case.  
From $r_1d-r=1$ we see that the generic $E\in M_H(\v)$ sits in an exact sequence 
$$0\to E_1^{\oplus -\langle\v,\v_1\rangle}\to E\to F\to 0,$$
 where $F\in M_H(\v')$ is generic with 
$$\v'=R_{\v_1}(\v)=\v-a_1(r_1,H,a_1)=(r_1^2-1,r_1H,n-1).$$  
We have $H^1(X,F)=0$ by \cref{thm:bound n r}.  Thus $H^1(X,E)=0$, as required.  
\end{proof}

Our next theorem classifies the failure of weak Brill-Noether when $2n < r \leq 3n$.

\begin{Thm}\label{Thm:2n<r<=3n}
Let $X$ be a K3 surface such that  $\Pic(X) = \mathbb{Z}H$ with $H^2=2n$.
Let $\v=(r,dH,a)$ be a Mukai vector such that $2n<r\leq3n$, $d>0$, and $a>0$.  The  generic $E\in M_H(\v)$ satisfies $H^1(X,E)\ne0$ if and only if $\v$ belongs to one of the following three cases:
\begin{enumerate}
    \item\label{enum:2n<r<=3n torsion quotient} $\v=(r,(r+1)H,nr+2n)$ with $2n<r\leq 3n$;
    \item\label{enum:2n<r<=3n O_X[1] quotient} $\v=(n+r_1^2,((\frac{n+1}{r_1})+r_1)H,(\frac{n+1}{r_1})^2+n)$, where $r_1\mid n+1$ and $\sqrt{n} < r_1 \leq\sqrt{2n}$;
    \item\label{enum:2n<r<=3n weird quotient}
    $\v=(3n,(3n+2)H,3n^2+4n+1)$ with $n>1$.
\end{enumerate}
\end{Thm}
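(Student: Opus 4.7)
My plan mirrors that of \cref{Thm:n<r<=2n}. By \cref{prop:minimal-a}, it suffices to take $a=\lfloor(nd^2+1)/r\rfloor$, and by \cref{thm:summing up inequalities} a failure of weak Brill-Noether produces some $\v_1=(r_1,d_1H,a_1)\in D_{\v}^{BN}$. Setting $m=r_1d-rd_1$ and $k=a_1d-ad_1$, \cref{lem:useful consequences}(2) gives $d_1<2r/(mn)\leq 6/m$ since $r\leq 3n$, so $md_1\leq 5$. For each of the finitely many pairs $(m,d_1)$ I will separate the subcase $k\leq m$, where the wall induced by $\v_1$ lies at or below the wall of $\OO_X[1]$ and \cref{Lem:O_X[1] largest wall} yields weak Brill-Noether whenever $\v^2\geq 0$, from $k>m$, where the wall lies strictly above and the Harder--Narasimhan filtration of \cref{lem:max-HNF2} or \cref{lem:max-HNF1} yields a canonical resolution that I compute with explicitly.

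The bulk of the work is in the pair $(m,d_1)=(1,1)$, where $r_1d=r+1$ and $r_1a_1=n+1$. Writing $r-n=r_1^2q+sr_1$ with $0\leq s<r_1$, the arithmetic of \cref{prop:condensed inequality} (exactly as in the proof of \cref{Thm:n<r<=2n}) shows $k=q+1$ except in the special configuration $(q,s)=(1,0)$ with $\{(nd^2+1)/r\}=0$, where $k=1$. This exception forces $r=n+r_1^2$ and $r_1\mid n+1$; intersecting with $2n<r\leq 3n$ gives the range $\sqrt n<r_1\leq\sqrt{2n}$ of Case (\ref{enum:2n<r<=3n O_X[1] quotient}). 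A direct calculation confirms $\v^2=-2$ and $\v(\OO_X[1])\in\HHH$, so the first case of \cref{lem:max-HNF1} applies and yields a sheaf-level resolution $0\to\OO_X\to T_1^{\oplus d}\to E\to 0$; taking cohomology and using $H^1(X,T_1)=0$ (which holds by \cref{thm:bound n r} once $n\geq r_1$, itself a consequence of $r_1\leq\sqrt{2n}$) gives $h^1(E)=1$. When $r_1=1$ one instead has $s=0$ and $q=r-n$, and the constraint $r>2n$ forces $q>n$; \cref{obs-v_0^2} yields $R_{\v_1}(\v)=(0,H,2n-r)$, corresponding to Case (\ref{enum:2n<r<=3n torsion quotient}). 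Since $D_{(0,H,2n-r)}=\varnothing$ by \cref{lem:WallsFor n>=r}, the generic $F$ in the corresponding Bridgeland moduli space actually lies in $M_H(0,H,2n-r)$, where \cref{prop:Rank0} and $\chi(F)=2n-r<0$ give $h^1(F)=r-2n$. The canonical resolution $0\to\OO_X(H)^{\oplus r}\to E\to F\to 0$ from \cref{lem:max-HNF2}, together with Kodaira vanishing for $\OO_X(H)$, then gives $h^1(E)=r-2n\geq 1$. The remaining subcases of $(m,d_1)=(1,1)$ yield $\v'=R_{\v_1}(\v)$ of strictly smaller rank, with $\v'^2=\v^2\geq 0$ and $\v'$ lying outside the exceptional loci of \cref{Thm:n<r<=2n}, so induction on $r$ applied to the resolution of \cref{lem:max-HNF2} closes this case.

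The pair $(m,d_1)=(1,2)$ produces Case (\ref{enum:2n<r<=3n weird quotient}). Here the only admissible destabilizer is $\v_1=(1,H,n+1)$, for which $R_{\v_1}(\v)=(-1,H,\beta)$ with $\beta=4n-r-n\lfloor(4n+1)/r\rfloor$. The $\sigma$-stable object $F$ with $\v(F)=(-1,H,\beta)$ is a shift $G[1]$ of a twisted ideal sheaf $G=I_W(-H)$ with $\ell(W)=n+1+\beta$, and Serre duality gives $h^1(F)=h^0(I_W(H))$, which for generic $W$ equals $\max(0,n+2-\ell(W))$. Positivity forces $\ell(W)\leq n+1$, i.e., $\beta\leq 0$, whose unique solution with $2n<r\leq 3n$ is $r=3n$, yielding $h^1(E)=1$ via \cref{lem:max-HNF2}. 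The remaining pairs $(m,d_1)\in\{(2,1),(3,1),(4,1),(5,1),(1,3),(2,2)\}$ are dispatched by a case-by-case application of \cref{prop:condensed inequality}, showing that each either admits no element of $D_{\v}^{BN}$ in the range $2n<r\leq 3n$ or admits only destabilizers with $k\leq m$, to which \cref{Lem:O_X[1] largest wall} applies.

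The main obstacle I anticipate is the explicit sheaf-level identification $F\cong I_W(-H)[1]$ in the $(1,2)$ case and verifying that the genericity of $E$ induces genericity of $W$, so that the computation $h^0(I_W(H))=\max(0,n+2-\ell(W))$ is valid; the $(1,1)$ case also requires delicate bookkeeping to ensure the induction from \cref{Thm:n<r<=2n} reaches base cases outside the exceptional loci of that theorem.
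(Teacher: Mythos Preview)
Your overall architecture matches the paper's: reduce to maximal $a$, use \cref{lem:useful consequences} to bound $md_1\leq 5$, and do a case analysis. The handling of the $(d_1,m)=(1,1)$ case also follows the paper closely. However, there are two substantive gaps.

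\textbf{The identification of $F$ in Case~(\ref{enum:2n<r<=3n weird quotient}) is wrong.} You claim the $\sigma$-stable quotient $F$ with $\v(F)=(-1,H,\beta)$ is $I_W(-H)[1]$ and that Serre duality gives $h^1(F)=h^0(I_W(H))$. But for $G=I_W(-H)$ one has $h^2(G)=\hom(I_W(-H),\OO_X)=h^0(\lHom(I_W,\OO_X)(H))=h^0(\OO_X(H))=n+2$, since $\lHom(I_W,\OO_X)=\OO_X$; this is \emph{not} $h^0(I_W(H))$. Plugging $F=I_W(-H)[1]$ into the Harder--Narasimhan triangle would then give $h^1(E)=n+2$ for every $r$, contradicting the theorem. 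In fact the correct $F$ is the genuine two-term complex $I_Z(H)^\vee[1]$ with $\ell(Z)=4n+1-r$, not a shifted sheaf. The paper avoids this entirely: it twists down to $\tilde\v=\v e^{-H}=(r,2H,1)$, uses $D_{\tilde\v}=\varnothing$ to identify $\Phi_{X\to X}^{I_\Delta}(E')^\vee\cong I_Z(2H)$, then dualizes and twists to get $0\to E^\vee\to\OO_X(-H)^{r+1}\to I_Z(H)\to 0$, from which $h^1(E)=h^0(I_Z(H))=\max(0,r+1-3n)$. You flagged this identification as the main obstacle; it is in fact fatal to your proposed argument for this case.

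\textbf{The ``remaining pairs'' are not dispatched by your dichotomy.} You assert that each of the remaining $(d_1,m)$ either has $D_\v^{BN}=\varnothing$ or admits only destabilizers with $k\leq m$. This is false. For $(d_1,m)=(5,1)$ the paper shows $k\in\{1,4\}$; the case $k=4>m$ requires the induction/resolution argument, not \cref{Lem:O_X[1] largest wall}. Likewise for $(d_1,m)=(2,1)$ the paper finds $k\in\{2q+1,2q+3\}$, and the cases $q\geq 1$ (so $k\geq 3>m=1$) again require resolving $E$ via \cite[Thm.~2.5]{Yos99b} and inducting on the rank of $R_{\v_1}(\v)$. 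Also, your pairs are labeled $(m,d_1)$ but your computations use $(d_1,m)$; and $(d_1,m)\in\{(1,3),(1,4),(1,5)\}$ are missing from your list (these are indeed vacuous, but need to be noted).

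Finally, you do not address the last step in the paper's proof: for each exceptional $\v$ one must check that the Mukai vector with $a$ decreased by $1$ \emph{does} satisfy weak Brill--Noether, so that \cref{prop:minimal-a} pins down exactly the listed $\v$.
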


\begin{proof}
By \cref{prop:minimal-a}, it suffices to first assume $\v=\v_0=(r,dH,\left\lfloor\frac{nd^2+1}{r}\right\rfloor)$.  Then suppose that $\v_1=(r_1,d_1H,a_1)\in D_{\v}^{BN}$ induces the largest totally semistable wall.  Set $m=r_1d - rd_1$. By \cref{lem:useful consequences}, $d_1 < \frac{2r}{mn} \leq \frac{6}{m}$. Hence, $1 \leq m d_1 \leq 5$. In particular, if $3 \leq d_1 \leq 5$, then $m=1$ and if $d_1=2$, then $1\leq m \leq 2$.

\subsection*{The case $d_1=5$} If $d_1 =5$, then $m= r_1 d - 5r =1$. Substituting $m=1$ and $r \leq 3n$ into \cref{prop:condensed inequality} \eqref{eqn:deriving finiteness, 1}, we obtain 
$$1 \leq a_1 d - 5a = \frac{1}{rr_1}(-5nd+rd-5r_1)+5\left\{\frac{nd^2+1}{r}\right\}\leq  \frac{1}{rr_1} (-2nd-5r_1) + 5 < 5.$$ Since $d\equiv a_1\pmod 5$, $a_1d$ must be a square modulo $5$. We conclude that $a_1d-5a=1$ or $4$. 

If $a_1d-5a=1$, then by \cref{obs-v_0^2} 
$$\v^2=\frac{2}{5}\left(r + \frac{(5n-r)(5r+1)}{r_1^2}\right)>0.$$
   \cref{Lem:O_X[1] largest wall} eliminates this case.

Now suppose that $k=a_1d-5a=4$.  Then by \cref{obs-v_0^2} 
$$\v'=(r',d'H,a')=R_{\v_1}(\v)=\left(5n-r+\frac{4r_1^2-1}{5},\left(\frac{5(5n-r)+4r_1^2}{r_1}\right)H,20n+\frac{(25n+1)(5n-r)}{r_1^2}\right),$$ where $0<r'<r$ and $d',a'>0$.  Since $r_1d-5r=1$, by \cite[Theorem 2.5]{Yos99b}, the generic $E\in M_H(\v)$ sits in a short exact sequence $$0\to E_1^{\oplus -\langle\v,\v_1\rangle}\to E\to F\to 0,$$ where $E_1\in M_H(\v_1)$ and $F\in M_H(\v')$ is generic.  Note that $\v^2=(\v')^2=\frac{2}{5} \left(\frac{(5 r+1) (5 n-r)}{r_1^2}+4 r\right)>4n$, so $\v'$ cannot be one of the cases \ref{enum:2n<r<=3n torsion quotient}-\ref{enum:2n<r<=3n weird quotient} and thus by induction on the rank we may assume that $H^1(X,F)=0$.  Substituting $m=1$, $k=4$, $d_1=5$ and $r\leq 3n$ into \cref{lem:rewrite product}, we see that $4r_1^2 \leq 5n$, hence  $r_1\leq n$.  By  \cref{thm:bound n r}, $H^1(X,E_1)=0$, and thus $H^1(X,E)=0$ as required.

\subsection*{The case $d_1=3,4$} If $d_1 = 3$ or $4$, then $m=r_1d - d_1r =1$. Substituting $m=1$ and $r \leq 3n$ into \cref{prop:condensed inequality} \eqref{eqn:deriving finiteness, 1}, we obtain 
 $$1\leq a_1d-d_1 a=\frac{1}{rr_1}(-ndd_1+rd-d_1r_1)+d_1\left\{\frac{nd^2+1}{r}\right\} <-\frac{d_1}{r} + d_1 <d_1.$$  Since $r_1a_1 \equiv r_1 d \equiv 1 \pmod{d_1}$, we must have $a_1d\equiv 1\pmod{d_1}$. Hence,  $a_1d-d_1a=1$.  It is easy to check that  $\v^2 >0$ in both cases, therefore \cref{Lem:O_X[1] largest wall} guarantees that $\v$ satisfies weak Brill-Noether in these cases.

 \subsection*{The case $d_1=2$} If $d_1 =2$, then $m=1$ or $2$. If $m=2$, then substituting $r \leq 3n$ into \cref{prop:condensed inequality} \eqref{eqn:deriving finiteness, 1}, we obtain
 $$ 1 \leq a_1d - 2a = \frac{1}{rr_1} (-4nd+rd - 2r_1) + 2 \left\{\frac{nd^2+1}{r}\right\} < 2.$$
  We conclude that $k=a_1 d - a d_1 =1 < m$, contrary to our assumption that $\v_1\in D_\v^{BN}$ so that $k\geq m$. We must therefore have $m=r_1 d -2r=1$ and $r_1a_1 = 4n+1$. Hence, $r_1, a_1$, $d_1$ and $a_1 d - 2a$ are all odd integers.  By \cref{obs-v_0^2}, we may write 
 $$r-2n = q r_1^2 + s r_1,$$ 
 where $0 \leq s < r_1$. Set 
 $$A:=a_1d-2\left(\frac{nd^2+1}{r}\right)=2q+\frac{r_1q+s}{r_1r}+2\left(\frac{s}{r_1}-\frac{1}{r}\right),$$ so that $A \leq a_1d - 2a < A +2$. Observe that $A > 2q$ unless $s=0$ and $q=1$ or $2$, in which case $A=2q - \frac{1}{r}$ or $2q$, respectively. Since $a_1d - 2a \geq A$ is an odd integer, we conclude that it is at least $2q+1$. On the other hand, using $s\leq r_1 -1$ and $r_1q+s<\frac{r}{2r_1}$, we see that $$A<2q+\frac{1}{r_1r}\frac{r}{2r_1}+2\left(\frac{r_1-1}{r_1}-\frac{1}{r}\right)= 2q+2+\frac{r-4rr_1-4r_1^2}{2r_1^2r}<2q+2.$$   Therefore, $k=a_1d-2a=2q+1$  or $2q+3$.

 First, let $k= 2q+1$.  If $s\geq \frac{r_1}{2}$, then since $r_1$ is odd we have $s\geq\frac{r_1+1}{2}$ and $$2q+1\geq A=2q+\frac{r_1q+s}{rr_1}+2\left(\frac{s}{r_1}-\frac{1}{r}\right)\geq 2q+\frac{q}{r}+\frac{r_1+1}{2rr_1}+\frac{r_1+1}{r_1}-\frac{2}{r}\geq2q+1+\frac{2r-3r_1+1}{2rr_1}>2q+1,$$ a contradiction. The final inequality follows from $r_1s\leq r_1^2q+r_1s\leq n<\frac{r}{2}$ and $s\geq\frac{r_1}{2}$ which give $r>r_1^2$ and thus $2r-3r_1+1>2r_1^2-3r_1+1=(2r_1-1)(r_1-1)\geq 0$.  Hence, $s\leq \frac{r_1-1}{2}$. If $q=0$, then $a_1d-2a=1=r_1d-2r$, and it follows from $s\leq\frac{r_1-1}{2}$ and \cref{obs-v_0^2} that $$\v^2=r-ds\geq r-\frac{r_1d-d}{2}=r-\frac{2r+1-d}{2}=\frac{d-1}{2}\geq 0.$$  \cref{Lem:O_X[1] largest wall} eliminates this case.  Otherwise,  $q\geq 1$ and from $k=2q+1$, we get by \cref{obs-v_0^2} that
  $$\v'=(r',d'H,a')=R_{\v_1}(\v)=\left(\frac{r_1^2-1}{2}-r_1s,(r_1-2s)H,\frac{2n(r_1-2s)-(qr_1+s)}{r_1}\right).$$
    As $s\leq \frac{r_1-1}{2}$,  $r>r'\geq 0$ and $d'>0$.  By \cite[Theorem 2.5]{Yos99b} and $m=r_1d-2r=1$, the generic $E\in M_H(\v)$ sits in a short exact sequence $$0\to E_1^{\oplus -\langle\v,\v_1\rangle}\to E\to F\to 0,$$ with $E_1\in M_H(\v_1)$ and $F\in M_H(\v')$ generic.  From $q\geq 1$, we see that $r_1^2\leq r_1^2q+r_1s\leq n$, so $r_1,r'\leq n$.  Thus $H^1(X,F)=0=H^1(X,E_1)$ by \cref{thm:bound n r}.  Thus $H^1(X,E)=0$.
 
 Next, let $k=a_1d - 2a = 2q+3$.  Then 
$$\v'=(r',d'H,a')=R_{\v_1}(\v)=\left(\frac{3r_1^2-2r_1s-1}{2},(3r_1-2s)H,\frac{2n(3r_1-2s)-(r_1q+s)}{r_1}\right).$$ 
It is clear that $r>r',d'>0$.  By \cref{obs-v_0^2}  we get that 
 \begin{align*}\v'^2&=\v^2=6n+q\left(r_1(3r_1-2s)-1\right)+s(3r_1-2s)-s\left(\frac{4n+1}{r_1}\right)\\
 &\geq2n+q\left(r_1(3r_1-2s)-1\right)+s(3r_1-2s)+(a_1-1)>2n,
 \end{align*}
 where we use $s\leq r_1-1$.  As all the counterexamples in the theorem satisfy either $\v^2=-2$ or $\v^2=2n$, it follows as in the $d_1=5$ case that resolving the generic sheaf $E\in M_H(\v)$ using  \cite[Theorem 2.5]{Yos99b} and using induction gives the required cohomology vanishing.

 \subsection*{The case $d_1=1$} If $d_1 =1$ and $m \geq 3$, then \cref{prop:condensed inequality} \eqref{eqn:deriving finiteness, 1} and $r \leq 3n$ imply that 
 $$1 \leq \frac{1}{rr_1} (-mnd+rd - r_1) +  \left\{\frac{nd^2+1}{r}\right\} < 1,$$ which is a contradiction. We conclude that if $d_1=1$, then $m=1$ or $2$.

Suppose first that $r_1d-r=2$.  Since $r_1a_1=n+1$,  $r_1\mid r-2n$ and we can write $$r-2n=r_1^2q+r_1s,$$ where $0\leq s<r_1$.  Set
$$A=a_1d-\left(\frac{nd^2+1}{r}\right) = q+\frac{2 q-1}{r}+\frac{s}{r_1}+\frac{2 s}{r r_1}.$$ Since
$$ \frac{2 q-1}{r}+\frac{s}{r_1}+\frac{2 s}{r r_1} = - \frac{1}{r} + \frac{2qr_1 + 2s}{rr_1} + \frac{s}{r_1} \leq  -\frac{1}{r}+\frac{1}{r_1^2}+\frac{r_1-1}{r_1} = 1-\frac{1}{r}+\frac{1}{r_1^2}-\frac{1}{r_1}<1,$$
we have  that $q < A < q+1$. Since $A \leq a_1d - a < A+1$, we conclude that $a_1d-a= q+1$.

  If $q=0$, then $a_1d-a<r_1d-r$ and $H^1(X,E)=0$ for the generic $E\in M_H(\v)$ by \cref{Lem:O_X[1] largest wall}.  
  
  If $q=1$, then $a_1d-a=r_1d-r$ and by \cref{obs-v_0^2} we have $$\v^2=2((r_1-s)d-4).$$  If $\v^2= -2$, then either $d=1$ and $r_1-s=3$ or $d=3$ and $r_1-s=1$. If $d=1$, then $r > r_1 =r+2$, which is a contradiction. If $d=3$, then $r_1= s+1$, so using $3r_1-r=2$ and $r-2n=r_1^2+r_1s$, we get
$$n=-r_1^2+2r_1-1=-(r_1-1)^2<0,$$
 a contradiction. Thus $\v^2\geq 0$ in this case, and  \cref{Lem:O_X[1] largest wall} guarantees that $\v$ satisfies weak Brill-Noether this case.
  
 If $q\geq 2$, then by \cref{obs-v_0^2} we have 
 $$\v'=(r',d'H,a')=R_{\v_1}(\v)=\left(r_1^2-r_1s-2,(r_1-s)H,-q-1+\left(\frac{n+1}{r_1}\right)\left(r_1-s\right)\right),$$
 where $r'\geq -1$, $d'\geq 1$.  
 
 Suppose first that $r'=-1$.  Then $r_1=1$ and $\v'=(-1,H,n-q)=(-1,H,3n-r)$,  $\v=(r,(r+2)H,nr+4n+1)$.  Let $\tilde{\v}:=\v e^{-H}=(r,2H,1)$ for $r>3$. We have $D_{\tilde{\v}}=\varnothing$, so  \cref{prop:isom,prop:birational} imply that for a generic $E'\in M_H(\tilde{\v})$, we have  $\Phi_{X\to X}^{I_\Delta}(E')^\vee\in M_H(1,2H,r)$, i.e. $\Phi_{X\to X}^{I_\Delta}(E')^\vee=I_Z(2H)$, where $Z$ is a generic 0-dimensional subscheme of length $4n+1-r$.  Moreover, by \cref{Lem:FM-vanishing} \eqref{eqn:ShortToLongForEvalSequence}, a generic $E'\in M_H(\tilde{\v})$ sits in a distinguished triangle $$ I_Z(2H)^\vee\to \OO_X^{\oplus r+1}\to E'.$$  Thus a generic $E\in M_H(\v)$ sits in a distinguished triangle $$I_Z(H)^\vee\to \OO_X(H)^{\oplus r+1}\to E.$$  Dualizing and taking the long exact sequence of cohomology sheaves, we see that $E$ is locally free with dual sitting in a short exact sequence $$0\to E^\vee\to\OO_X(-H)^{\oplus r+1}\to I_Z(H)\to 0.$$  Taking cohomology gives $$h^1(X,E)=h^1(X,E^\vee)=h^0(X,I_Z(H))=\max\{0,n+2-(4n+1-r)\}=\max\{0,r+1-3n\}.$$ Hence, $h^1(X,E)=0$ unless $r=3n$, in which case $h^1(X,E)=1$.  Note that here $n >1$, otherwise we would have $n=1, r=3$ and $q=1$ contrary to assumptions. This gives case (3) in the theorem.

If instead $r' \geq 0$, or equivalently $r_1>1$, then the assumption $q\geq 2$ implies that $r_1\leq n$ since by \cref{lem:rewrite product}, $(q+1) r_1^2 \leq 4n+1$.
\cref{thm:bound n r} thus implies that   $H^1(X,E_1)=0$.  Moreover, as $$r'=r_1^2-r_1s-2<r_1^2+r_1s<r_1^2q+r_1s=r-2n\leq n$$ for the same reason we have $H^1(X,F)=0$ for the generic $F\in M_H(\v')$. Now, the generic $E\in M_H(\v)$ sits in a short exact sequence $$0\to E_1^{\oplus -\langle\v,\v_1\rangle}\to E\to R_{E_1}(E) \to 0,$$ where $E_1\in M_H(\v_1)$ and $R_{E_1}(E)\in M_\sigma(\v')$ is generic.  By \cref{lem:WallsFor n>=r}, $D_{\v'}=\varnothing$, so in fact $R_{E_1}(E)=F\in M_H(\v')$. Hence, $H^1(X,E)=0$.

Finally, we may assume $d_1=1$,  $m=r_1d-r=1$ and $r_1a_1=n+1$. By \cref{obs-v_0^2}, we can write $r-n=r_1^2q+r_1s$ where $0\leq s<r_1$. As usual, set
$$A = a_1d-\left(\frac{nd^2+1}{r}\right) = q+\frac{q-1}{r}+\frac{s}{r_1}+\frac{s}{r r_1}.$$ It is easy to check that $q \leq A \leq q+1$ and $A=q$ if and only if $q=1$ and $s=0$.  Since $A \leq a_1d - a < A+1$, we conclude that either $k=a_1d-a=A=q$ (so that $q=1$ and $s=0$) or $k=a_1d-a= q+1$.

If $q=1$, $s=0$, so that $a_1 d - a =1$, then  $$\v=\left(n+r_1^2,\left(\frac{r+1}{r_1}\right)H,\left(\frac{n+1}{r_1}\right)\left(\frac{r+1}{r_1}\right)-1\right)=\left(n+r_1^2,\left(\left(\frac{n+1}{r_1}\right)+r_1\right)H,\left(\frac{n+1}{r_1}\right)^2+n\right)$$ 
and by \cref{obs-v_0^2}, $$\v'=(r',d'H,a')=R_{\v_1}(\v)=(-1,0,-1)=\v(\OO_X[1])$$  and $\v^2=(\v')^2=-2$, so the unique $E\in M_H(\v)$ fits in the short exact sequence $$0\to \OO_X\to E_1^{\oplus d}\to E\to 0.$$ Since $H^1(X,E_1)=0$ by \cref{thm:bound n r}, we conclude  that $H^1(X,E)=H^2(X,\OO_X)=\C$. This gives case (2) of the theorem.

We are left to consider the case $k=a_1d-a = q+1$. By \cref{lem:rewrite product}, $(q+1)r_1^2 \leq 5n$. Hence, $r_1 \leq n$ unless $n=2$, $q=0$ and $r_1=3$ or $n=1$, $q=0$ and $r_1=2$. If $n=2,q=0, r_1=3$, then $\v_1= (3, H, 1)$ and $\v=(5, 2H,1)$. Since $\langle \v,\v_1 \rangle = 0$, we can eliminate that case. If $n=1, q=0, r_1=2$, then $\v_1=(2, H, 1)$ and $\v= (3, 2H, 1)$. In this case, $H^1(X, E_1)=0$ by \cref{prop:rankstwoandthree}. Otherwise $r_1 \leq n$ and \cref{thm:bound n r} implies that $H^1(X,E_1)=0$.  

By \cref{obs-v_0^2} we get $$\v'=(r',d'H,a')=R_{\v_1}(\v)=\left(r_1^2-r_1s-1,(r_1-s)H,\frac{(r_1^2-r_1s+1)n-r}{r_1^2}\right),$$
where clearly $r'\geq 0$ and $d'\geq 1$, and  $$\v^2=(\v')^2=2\left(\frac{n+(r_1^2-r_1s-1)r}{r_1^2}\right)>0.$$ If $q=0$, then $a_1d-ad_1=1=r_1d-rd_1$ so  \cref{Lem:O_X[1] largest wall} eliminates this case, and we may assume that $q\geq 1$.  It follows that $$r'=r_1^2-r_1s-1<r_1^2q+r_1s=r-n\leq 2n,$$ so $\v'$ satisfies weak Brill-Noether by \cref{prop:Rank0,prop:Rank1,thm:bound n r,Thm:n<r<=2n}.  Since $r_1^2-r_1s+1\geq r_1^2-r_1(r_1-1)+1=r_1+1$,  if $r_1\geq 2$, then $a'\geq 0$ and $r'>0$, so for generic $F\in M_H(\v')$, we have $H^1(X,F)=0$.  By \cite[Theorem 2.5]{Yos99b}, from $r_1d-rd_1=1$ we see that the generic $E\in M_H(\v)$ sits in a short exact sequence
$$0\to E_1^{\oplus-\langle\v,\v_1\rangle}\to E\to F\to 0,$$ where $F\in M_H(\v')$ is generic.  Then the vanishing of $H^1(X,E_1)$ and $H^1(X,F)$ gives the vanishing of $H^1(X,E)$.  

If instead, $r_1=1$, then $\v'=(0,H,2n-r)$ and $\v=(r,(r+1)H,nr+2n)$.  By \cite[Theorem 2.5]{Yos99b}, from $r_1d-rd_1=1$ we see that the generic $E\in M_H(\v)$ sits in a short exact sequence
$$0\to E_1^{\oplus-\langle\v,\v_1\rangle}\to E\to \OO_H(L)\to 0,$$ where $\OO_H(L)\in M_H(\v')$ is generic.  By \cref{prop:Rank1}, $H^1(X,E_1)=0$. Taking cohomology we conclude that $$h^1(X,E)=h^1(X,\OO_H(L))=r-2n>0.$$  This gives case (1) of the theorem.

To conclude the proof we show that the next Mukai vector in each of the series defined by \cref{enum:2n<r<=3n torsion quotient}-\ref{enum:2n<r<=3n weird quotient} does not define a counter-example.  In \cref{enum:2n<r<=3n torsion quotient}, the next Mukai vector is $\v=(r,(r+1)H,nr+2n-1)$.  Keeping $\v_1=(1,H,n+1)$, we have $\langle \v,\v_1\rangle=2n(r+1)-r(n+1)-nr-2n+1=1-r<0$, so $$\v'=R_{\v_1}(\v)=(1,2H,3n-r)=\v(I_Z(2H)),$$ where $Z$ is a 0-dimensional subscheme of length $r+n+1$.  As $r_1d-rd_1=1$, it follows from \cite[Theorem 2.5]{Yos99b} that the generic $E\in M_H(\v)$ fits in a short exact sequence
$$0\to \OO_X(H)^{\oplus(r-1)}\to E\to I_Z(2H)\to 0,$$ where $Z\in\Hilb^{r+n+1}(X)$ is generic.  For such $Z$, $H^1(X,I_Z(2H))=0$, so $H^1(X,E)=0$ as claimed. 

In \cref{enum:2n<r<=3n O_X[1] quotient}, the generic stable sheaf $E$ with Mukai vector
$\v= (n+r_1^2,\left(\frac{n+1}{r_1}+r_1\right)H, \left(\frac{n+1}{r_1}\right)^2 + n-1)$ fits in an exact sequence, $$0 \to E_1^{\frac{n+1}{r_1}} \to E \to F \to 0,$$ where $\v(E_1) = (r_1, H, \frac{n+1}{r_1})$ and $F$ is a generic stable sheaf with $\v(F)=(r_1^2-1, r_1H, n-1)$. As $r_1$ and $r_1^2-1$ are both less than or equal to $2n$, by induction $H^1(X, E_1)=H^1(X, F)=0$. Hence $H^1(X,E)=0$.

In \cref{enum:2n<r<=3n weird quotient}, the generic stable sheaf $E$ with Mukai vector $\v= (3n, (3n+2)H, 3n^2 + 4n)$ fits in an exact sequence
$$0 \to \OO_X(H)^{3n} \to E \to \OO_{2H}(D) \to 0,$$ where $\OO_{2H}(D)$ is a general line bundle on a curve with class $2H$ and Euler characteristic $n$. Since such a line bundle has no higher cohomology, it follows that $H^1(X,E)=0$.
\end{proof}

\begin{Cor}\label{Cor:n+rsquare}
Let $r_1$ be a positive integer which divides $n+1$. For $i \geq 0$, let $$\v(i)=\left(n+r_1^2,\left(\left(\frac{n+1}{r_1}\right)+r_1\right)H,\left(\frac{n+1}{r_1}\right)^2+n-i \right).$$ Then $M_H(\v(i))$ fails to satisfy weak Brill-Noether if and only if $i=0$. If $E \in M_H(\v(0))$, then $h^1(E)=1$.
\end{Cor}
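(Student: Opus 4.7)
The plan is to establish both directions of the equivalence by analyzing the Harder--Narasimhan filtration along the totally semistable wall induced by the spherical class $\v_1 := (r_1, H, (n+1)/r_1)$. Because $r_1 \mid n+1$, there is a unique $\mu$-stable spherical locally free sheaf $E_1$ with $\v(E_1) = \v_1$.

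For the direction $i = 0$: A direct expansion gives $\v(0)^2 = -2$, so by \cref{Thm:ClassicalModSp} the moduli space $M_H(\v(0))$ consists of a single point parametrizing a spherical sheaf $E$. Setting $d := r_1 + (n+1)/r_1$, the wall-crossing analysis of the proofs of \cref{Thm:n<r<=2n,Thm:2n<r<=3n} identifies $E$ as the cokernel of a generic injection $\OO_X \hookrightarrow E_1^{\oplus d}$, yielding
\begin{equation*}
0 \to \OO_X \to E_1^{\oplus d} \to E \to 0.
\end{equation*}
Taking the long exact sequence of cohomology and using $H^1(\OO_X) = 0$, $H^2(\OO_X) = k$, $H^2(E_1) = 0$ (by $\mu$-stability and positive slope), and $H^1(E_1) = 0$ (by \cref{thm:bound n r} when $r_1 \geq 2$ and $r_1 \leq n$, and by Kodaira vanishing when $r_1 = 1$), we deduce $h^1(E) = 1$.

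For the direction $i \geq 1$: By \cref{prop:minimal-a} it suffices to exhibit one sheaf in $M_H(\v(1))$ with vanishing $H^1$. Since $\v(1)^2 = 2(n + r_1^2 - 1) \geq 0$, the moduli $M_H(\v(1))$ is positive-dimensional, and $\v_1$ still induces the maximal totally semistable wall. Applying \cref{lem:max-HNF2} (or \cref{lem:max-HNF1} in the two-spherical-object setting), the generic $E \in M_H(\v(1))$ fits in
\begin{equation*}
0 \to E_1^{\oplus (n+1)/r_1} \to E \to F \to 0,
\end{equation*}
where $\v(F) = R_{\v_1}(\v(1)) = (r_1^2 - 1, r_1 H, n - 1)$ and $F$ is Gieseker-stable. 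Since $\v(F)$ has third coordinate $n - 1$, while every counterexample listed in \cref{Thm:n<r<=2n,Thm:2n<r<=3n} has third coordinate of the form $n + ((n+1)/r_2)^2 \geq n$, the Mukai vector $\v(F)$ is not among the counterexamples, so $h^1(F) = 0$ by those theorems together with \cref{thm:bound n r}. Combined with $h^1(E_1) = 0$, the long exact sequence of cohomology forces $h^1(E) = 0$.

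The main technical obstacle lies in verifying that the Harder--Narasimhan quotient $F$ is genuinely a Gieseker-stable sheaf (rather than merely a complex) and confirming that $\v(F)$ satisfies weak Brill--Noether across the full range of $r_1 \mid n+1$. For $r_1 \leq \sqrt{2n}$ one has $r_1 \leq n$ and $r_1^2 - 1 \leq 2n - 1$, so both verifications follow immediately from \cref{thm:bound n r,Thm:n<r<=2n,Thm:2n<r<=3n}; for larger divisors $r_1$ of $n+1$, one must descend via an inductive argument on rank, whose base cases are already covered by the cited theorems.
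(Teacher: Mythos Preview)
Your overall strategy matches the paper's: both cases are handled via the short exact sequences induced by the spherical class $\v_1 = (r_1, H, (n+1)/r_1)$, with \cref{prop:minimal-a} reducing $i \geq 1$ to $i = 1$. The sequences you write down are exactly the ones the paper uses.

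The gap is in your verification that $\w := \v(F) = (r_1^2 - 1,\, r_1 H,\, n-1)$ satisfies weak Brill--Noether. Your argument---comparing the third coordinate $n-1$ against the counterexamples listed in \cref{Thm:n<r<=2n,Thm:2n<r<=3n}---is valid only when the rank $r_1^2 - 1$ lies in the range $\leq 3n$ that those theorems cover. Already for $n = 2$, $r_1 = 3$ one has $r_1^2 - 1 = 8 > 6 = 3n$, and for $r_1 = n+1$ the bound fails whenever $n \geq 2$. The ``inductive argument on rank'' you allude to is not specified, and there is no evident descent: neither \cref{thm:WBN sharp} nor a further Harder--Narasimhan decomposition of $\w$ readily applies.

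The paper closes this gap with the symmetry of \cref{Rem:symmetry}: pass from $\w$ to $\w' = (n-1,\, r_1 H,\, r_1^2 - 1)$, whose rank $n-1$ is bounded by $n$ \emph{uniformly in $r_1$}. One then checks $D_{\w'}^{BN} = \varnothing$ (via \cref{thm:bound n r}, or \cref{prop:Rank0,prop:Rank1} when $n \leq 2$), whence $D_\w^{BN} = \varnothing$ by the bijection of \cref{Rem:symmetry}, so $\w$ satisfies weak Brill--Noether by \cref{thm:summing up inequalities}. This reflection to a vector of uniformly small rank is the one idea your proof is missing.

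(A minor point: your justification of $h^1(E_1) = 0$ omits the case $r_1 = n+1$, where $\v_1 = (n+1, H, 1)$ has $a_1 = 1$; here \cref{lem:NoTSSWalls a<=1} applies rather than \cref{thm:bound n r}.)
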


\begin{proof}
As in the proof of Theorems \ref{Thm:n<r<=2n} and \ref{Thm:2n<r<=3n},  $E \in M_H(\v(0))$ fits in an exact sequence $$0 \to \OO_X \to E_1^{\oplus (\frac{n+1}{r_1} + r_1)} \to E \to 0,$$ where $E_1 \in M_H((r_1, H, \frac{n+1}{r_1}))$. Since $h^1(E_1)=0$, we conclude that $H^1(E)\cong H^2(\OO_X) \cong \mathbb{C}$.
On the other hand, if $E \in M_H(\v(1))$ is generic, then $E$ fits in an exact sequence 
$$0 \to E_1^{\oplus \frac{n+1}{r_1}} \to E \to F \to 0,$$ where $F$ is generic in $M_H(\w)$ with $\w=(r_1^2-1, r_1H, n-1)$. We claim that $D_{\w}^{BN} = \varnothing$.  Indeed, let $\w'=(n-1,r_1H,r_1^2-1)$ so that $D_{\w'}^{BN}=\varnothing$ by \cref{thm:bound n r}.  Then $D_{\w}^{BN}=\varnothing$ by \cref{Rem:symmetry}.  It follows that $h^1(F)=0$ and consequently $h^1(E)=0$. The corollary then follows from \cref{prop:minimal-a}. 
\end{proof}

\subsection{Small values of $a$} In this subsection, we study the failure of weak Brill-Noether for Mukai vectors $\v=(r, dH, a)$ with small values of $a$. Proposition \ref{Prop:a<=0} shows that if $a$ is negative then weak Brill-Noether holds for $M_H(r,dH,a)$.

\begin{Prop}\label{lem:NoTSSWalls a<=1}
Let $\v=(r,dH,1)$ be a Mukai vector with $\v^2\geq -2$ such that $r\geq 0$, $d>0$.  If $D_\v\ne\varnothing$, then $$n=1, \quad  \v=\left(r,\left(\frac{r+1}{2}\right)H,1\right) \quad \mbox{and} \quad D_\v=\{(2,H,1)\}.$$  Moreover, for $\v=(r,dH,a)$ with $r> 0$, $d>0$, and $a\leq 1$, $\v$ satisfies weak Brill-Noether.
\end{Prop}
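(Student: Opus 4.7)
Suppose $\v_1=(r_1,d_1H,a_1)\in D_\v$. By Lemmas \ref{Lem:no isotropic} and \ref{Lem:only spherical with r_1>0}, $\v_1$ is spherical (so $r_1,a_1>0$ and $r_1 a_1=nd_1^2+1$) and $m:=r_1d-rd_1\geq 1$; moreover $k:=a_1d-d_1\geq 1$ since $a=1$. The plan is to extract a numerical contradiction unless $n=d_1=1$, and then pin down $r_1$, $a_1$, $d$ and $r$ by direct inequalities.

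First I will translate $\langle\v,\v_1\rangle<0$ into $nd_1(r_1d+m)<r_1^2+r$; combined with $r_1d\geq rd_1+1$ this gives $r(nd_1^2-1)<r_1^2-2nd_1$. Using also the lower bound $r_1^2+nd_1^2\leq 2rd_1$ that comes from Lemma \ref{lem:rewrite product} with $m,k\geq 1$, cross-multiplying yields
$$r_1^2(nd_1^2-1-2d_1)<-nd_1^2(nd_1^2+3).$$
If $nd_1^2\geq 2$, the left-hand side is non-negative whenever $nd_1\geq 3$ and $d_1\geq 2$, so only the three pairs $(n,d_1)\in\{(2,1),(3,1),(1,2)\}$ remain; in each of these, the divisibility $r_1\mid nd_1^2+1$ leaves only a handful of candidates for $r_1$ and the inequality fails by direct inspection. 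This leaves $n=d_1=1$, where the inequality reduces to $r_1^2>2$; together with $r_1\mid 2$ this forces $r_1=2$ and $a_1=1$. The bounds $2d\geq r+1$ (from $m\geq 1$) and $2d<r+2$ (from $\langle\v,\v_1\rangle<0$) then pin down $d=(r+1)/2$, so $r$ is odd, and $k=d-1\geq 1$ forces $r\geq 3$. A direct check shows $(2,H,1)\in D_\v$ for every such $\v$, completing the first assertion.

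For the moreover part, the case $a\leq 0$ is immediate from Proposition \ref{Prop:a<=0}. When $a=1$ and $\v$ is not in the exceptional series, $D_\v=\varnothing$, so $D_\v^{BN}=\varnothing$ and Theorem \ref{thm:summing up inequalities} gives weak Brill-Noether. In the exceptional case, when $r=3$ we have $\v^2=2\geq 0$ and $a_1d-ad_1=r_1d-rd_1=1$, so Lemma \ref{Lem:O_X[1] largest wall}(2) applies. For $r\geq 5$, I will verify by solving $(x\v+y\v_1)^2=-2$ for small integers $x$ that $\v_1$ is the unique effective spherical class of the rank two lattice $\HHH$ spanned by $\v$ and $\v_1$, placing us in the setting of Lemma \ref{lem:max-HNF2}. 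This produces a short exact sequence $0\to\phi\to E\to F\to 0$ for generic $E\in M_H(\v)$, where $\phi$ is the spherical bundle with Mukai vector $(2,H,1)$ and $F$ is generic in $M_H\bigl(r-2,\tfrac{r-1}{2}H,0\bigr)$. Now $H^1(F)=0$ by Proposition \ref{Prop:a<=0} (since the third component is $0$ and $\chi(F)=r-2>0$), while $H^1(\phi)=0$ follows from Lemma \ref{Lem:FM-vanishing} applied to $\Phi_{X\to X}^{I_\Delta}(\phi)^\vee\cong\OO_X(H)\in M_H(1,H,2)$. The long exact sequence of cohomology then gives $H^1(E)=0$.

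The main obstacle will be the exceptional case with $r\geq 5$: verifying that no additional spherical classes appear in $\HHH$, and that the sheaf $F$ obtained from the wall-crossing really is generic in its Gieseker moduli space (so that its $H^1$ indeed vanishes). The remainder of the argument is a routine sequence of numerical inequalities.
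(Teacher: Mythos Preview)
Your numerical argument for the first assertion is correct, though considerably longer than the paper's: the paper simply invokes the symmetry of \cref{Rem:symmetry} (which swaps $(r,dH,1)$ with $(1,dH,r)$ and $(r_1,d_1H,a_1)$ with $(a_1,d_1H,r_1)$) and then applies \cref{lem:WallsFor n>=r} with $r=1\leq n$. That immediately gives $d_1=1$, $n=1$, $\v_1\mapsto(1,H,2)$, and $\v\mapsto(1,dH,2d-1)$, which after swapping back is exactly your conclusion. Your direct inequality chase works, but the symmetry trick is worth knowing.

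For the ``moreover'' part, your treatment of $a\leq 0$, of the non-exceptional $a=1$ case, and of $r=3$ is fine and matches the paper. The genuine problem is your plan for $r\geq 5$. The lattice $\HHH$ spanned by $\v=(r,\tfrac{r+1}{2}H,1)$ and $\v_1=(2,H,1)$ has Gram determinant $-\bigl((r-1)^2+1\bigr)$, which is never minus a perfect square for $r\geq 3$; hence $\HHH$ is \emph{anisotropic}, and the equation $(x\v+y\v_1)^2=-2$ reduces (via $u=2y+x$) to the Pell-type equation $u^2-\bigl((r-1)^2+1\bigr)x^2=4$, which has infinitely many solutions. For $r=5$, for instance, $(x,y)=(16,25)$ gives the spherical class $(130,73H,41)$. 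So checking ``small $x$'' cannot establish uniqueness, and you are \emph{not} in the setting of \cref{lem:max-HNF2}.

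The fix is to use \cref{lem:max-HNF1} instead. Since $\langle\v,\v_1\rangle=-1$ and $\langle R_{\v_1}(\v),\v_1\rangle=+1>0$, the reflected class $R_{\v_1}(\v)=(r-2,\tfrac{r-1}{2}H,0)$ already pairs non-negatively with $\u_1=\v_1$, so $\v\in\CC_1$ and $m_0=1$ in the notation of that lemma. The Harder--Narasimhan filtration therefore has a single step, and you recover exactly the short exact sequence $0\to T_1\to E\to F\to 0$ you wanted. The rest of your argument (that $D_{\v(F)}=\varnothing$ by \cref{Prop:a<=0}, so the generic $\sigma$-stable $F$ is Gieseker-stable with $H^1(F)=0$; and that $H^1(T_1)=0$) then goes through. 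Incidentally, the paper's own proof also cites \cref{lem:max-HNF2} at this step; the same correction applies there.
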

\begin{proof}
The first claim follows from \cref{Rem:symmetry} and \cref{lem:WallsFor n>=r}.

Assume that $\v = (r, dH,1)$ and when $n=1$, $\v \not=(r,\left(\frac{r+1}{2}\right)H,1)$. Then $D^{BN}_{\v}\subset D_{\v}=\varnothing$.  \cref{thm:summing up inequalities} implies that the generic $E\in M_H(r,dH,1)$ satisfies $H^1(X,E)=0$.  By \cref{prop:minimal-a} and the fact that $r>0$, $(r,dH,a)$ satisfies weak Brill-Noether for all $a\leq 1$.  

If $n=1$ and $\v =(r,\left(\frac{r+1}{2}\right)H,1)$,  by  \cref{thm:summing up inequalities,prop:minimal-a}, it suffices to show that the generic $E'\in M_H(r,\left(\frac{r+1}{2}\right)H,1)$ satisfies $H^1(X,E')=0$.  As $D_{(r,\left(\frac{r+1}{2}\right)H,1)}$ consists of the unique Mukai vector $\v_1=(2,H,1)$, $\v_1$ is the unique effective spherical class in the primitive isotropic sublattice $\HHH$.   \cref{lem:max-HNF2} implies that the generic $E'\in M_H(r,\left(\frac{r+1}{2}\right)H,1)$ fits into a short exact sequence $$0\to T_1\to E'\to F\to 0,$$ where $T_1$ is the unique stable spherical sheaf with $\v(T_1)=\v_1$ and $F$ is a $\sigma$-stable object such that $\v(F)=(r-2,\left(\frac{r-1}{2}\right)H,0)$.  By \cref{Prop:a<=0}, $D_{\v(F)}=\varnothing$ and the generic $F\in M_\sigma(\v(F))$ is a stable sheaf with vanishing $H^1$.  As $H^1(X,T_1)=0$ by \cref{prop:rankstwoandthree}, we get $H^1(X,E')=0$.  
\end{proof}
\begin{Thm}\label{Lem:NoTSSWalls a=2}
Let $\v=(r,dH,2)$ be a Mukai vector with $\v^2\geq -2$ such that $r\geq 0$, $d>0$.  If $D_\v\ne\varnothing$, then either
\begin{enumerate}
    \item $n=1$ and one of the following holds
    \begin{enumerate}
        \item $\v=(11,5H,2)$ and $\v_1=(5,2H,1)$
        \item $\v=(23,7H,2)$ and $\v_1=(10,3H,1)$
        \item $\v = (r, dH, 2)$ with $d\geq 3$, $2d-3 \leq r \leq 2d-1$ and $\v_1=(2,H,1)$
         \item $\v=(12,5H,2)$ and $\v_1=(5,2H,1)$; or
    \end{enumerate}
    \item $n=2$ and one of the following holds
    \begin{enumerate}
        \item $\v=(11,4H,2)$ and $\v_1=(3,H,1)$
        \item $\v=(7,3H,2)$ and $\v_1=(3,H,1)$
        \item $\v=(8,3H,2)$ and $\v_1=(3,H,1)$; or
    \end{enumerate}
    \item $n=3$, $\v=(11,3H,2)$, and $\v=(4,H,1)$.
\end{enumerate}
Moreover, the generic $E\in M_H(\v)$ satisfies $H^1(X,E)=0$ except when $n=1$ and  $\v=(5,3H,2)$.
\end{Thm}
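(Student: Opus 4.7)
For the enumeration of $\v = (r, dH, 2)$ with $D_\v \ne \varnothing$, I plan to bound the parameters of $\v_1 = (r_1, d_1 H, a_1) \in D_\v$ and perform a finite case analysis. Lemma~\ref{Lem:only spherical with r_1>0} gives $r_1 > 0$, and the spherical condition forces $r_1 a_1 = n d_1^2 + 1$. Lemma~\ref{lem:useful consequences} gives $d_1 < 2r/n$, $\v^2 \geq -2$ gives $2r \leq n d^2 + 1$, and $\langle \v, \v_1 \rangle < 0$ (with $a = 2$) reads $2ndd_1 < 2r_1 + r a_1$. Combining these with Theorem~\ref{thm:bound n r} ($n < r$) and the bound on $d$ from Theorem~\ref{thm:WBN sharp} restricts $(n, d_1, r_1)$ to finitely many configurations. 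For $d_1 = 1$, $r_1$ divides $n+1$, and a case-by-case check for each small $n$ produces the listed $\v$ with $\v_1 = (r_1, H, (n+1)/r_1)$. For $d_1 \geq 2$, the combined inequalities force $n \leq 2$, and a short search exhausts the remaining listed cases.

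For the vanishing of $H^1$ on the generic sheaf, I would split cases by the sign of $k - m$, where $k := da_1 - 2d_1$ and $m := r_1 d - rd_1$. When $k < m$, the wall $C^\v_{\v_1}$ lies below the $\OO_X[1]$ wall and Lemma~\ref{Lem:O_X[1] largest wall}(1) gives weak Brill-Noether immediately (e.g.\ $(11, 5H, 2)$ at $n=1$ or $(7, 3H, 2)$ at $n=2$). When $k = m$, I would verify that $\v^2 \geq 0$ for every listed case except $\v = (5, 3H, 2)$ at $n = 1$, so Lemma~\ref{Lem:O_X[1] largest wall}(2) applies. When $k > m$ (for example, $\v = (2d-1, dH, 2)$ at $n = 1$ with $d \geq 4$, or $\v = (11, 4H, 2)$ at $n = 2$), I would apply Lemma~\ref{lem:max-HNF2} to obtain $0 \to T_1^{\oplus -\langle\v,\v_1\rangle} \to E \to F \to 0$ with $F$ generic $\sigma$-stable of Mukai vector $\v - (-\langle\v,\v_1\rangle)\v_1$. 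Computing $\v(F)$ explicitly in each case (e.g.\ $\v(F) = (2d-7, (d-3)H, -1)$ when $r = 2d-1$), I would observe that $\v(F)$ has $a \leq 0$ with $\chi(\v(F)) \geq 0$ or lies in a previously classified family, so that $h^1(F) = 0$ by Proposition~\ref{Prop:a<=0} or by induction on the rank. Since $D_{\v_1} = \varnothing$ gives $h^1(T_1) = 0$, the long exact sequence in cohomology forces $h^1(E) = 0$.

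For the exceptional case $\v = (5, 3H, 2)$ at $n = 1$, $\v^2 = -2$ so $M_H(\v) = \{E\}$. Here $k = m = 1$, so the wall coincides with the $\OO_X[1]$ wall, and $\HHH$ contains the $\sigma$-stable spherical objects $T_0 = \OO_X[1]$ and the unique stable spherical bundle $T_1$ with $\v(T_1) = (2, H, 1)$, with $\v = 3\v(T_1) + \v(T_0)$. By Lemma~\ref{lem:max-HNF1}(1), $E$ lies in the abelian category $\langle T_0, T_1\rangle$; the extension order compatible with $E$ being a sheaf is $0 \to T_1^{\oplus 3} \to E' \to \OO_X[1] \to 0$ in $\AA$, and the associated long exact sequence of cohomology sheaves yields $0 \to \OO_X \to T_1^{\oplus 3} \to E \to 0$. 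Since $D_{(2, H, 1)} = \varnothing$ implies $h^i(T_1) = 0$ for $i = 1, 2$, the induced cohomology sequence gives $h^1(E) = h^2(\OO_X) = 1$. The main obstacle is the $k > m$ case: the inductive descent requires identifying $\v(F)$ explicitly in each instance and verifying it lands in a previously handled class, which is routine but requires careful bookkeeping across the finitely many families produced by the enumeration in the first stage.
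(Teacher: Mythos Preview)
Your proposal is correct and follows essentially the same strategy as the paper: bound the invariants of $\v_1\in D_\v$ to a finite list, then case by case either invoke Lemma~\ref{Lem:O_X[1] largest wall} (when $k\le m$) or use the Harder--Narasimhan filtration at the wall to reduce to a Mukai vector already known to satisfy weak Brill--Noether. Two small imprecisions to fix: Theorems~\ref{thm:bound n r} and~\ref{thm:WBN sharp} only prove $D_\v^{BN}=\varnothing$, not $D_\v=\varnothing$, so they do not directly give the bounds $n<r$ or $d$ bounded for the classification of $D_\v$; the paper instead derives $a_1\le 2$, then $a_1=1$, $r_1=nd_1^2+1$, and $nd_1\le 3$ straight from $\langle\v,\v_1\rangle<0$ with $a=2$ (you have this inequality, so the fix is easy). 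Second, in the $k>m$ cases the hyperbolic lattice $\HHH$ is typically non-isotropic (e.g.\ for $\v=(11,4H,2)$ or $\v=(2d-1,dH,2)$, $d\ge 4$), so Lemma~\ref{lem:max-HNF1} rather than Lemma~\ref{lem:max-HNF2} is the relevant one; this does not change your computation of $\v(F)$ since the filtration has the same shape.
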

\begin{proof}
Let $\v_1\in D_\v$.  As in the proof of \cref{lem:NoTSSWalls a<=1}, conditions \cref{rem-numerics} \ref{enum:positive r_1} and \ref{enum:totally semistable spherical} imply
$$0>\langle\v,\v_1\rangle>nd_1^2-1-2r_1.$$
Thus  $r_1\geq\frac{nd_1^2}{2}$ and
$$0<a_1=\frac{nd_1^2+1}{r_1}\leq\frac{2r_1+1}{r_1}=2+\frac{1}{r_1}.$$
If $r_1=1$, since $a_1 = nd_1^2+1 \leq 3$, we conclude that either $$n=d_1=1, \ a_1=2, \  \langle \v, \v_1 \rangle = 2d-2r-2 \quad \mbox{or} \quad d_1=1,\  n=2, \ a_1=3, \ \langle \v, \v_1 \rangle= 4d-3r-2.$$ Since $d-r \geq 1$,  $\langle \v, \v_1 \rangle$ cannot be negative in either case. Hence, $r_1 > 1$ and $a_1=1$ or 2.
If $a_1=2$, then \cref{rem-numerics} \ref{enum:positive a_1} gives $2d-2d_1>0$, so that we can write $d = d_1 + s$ for a positive integer $s$. Since $2r_1 = nd_1^2 +1$ and $r_1 d - d_1 r \geq 1$, we obtain
$$ 0 > 2ndd_1 - 2r_1 - 2r \geq 2ndd_1 - 2r_1 \frac{d_1+d}{d_1} + \frac{2}{d_1} = s\left(nd_1 - \frac{1}{d_1}\right) -2 + \frac{2}{d_1} >-1,$$ which is a contradiction.

 We conclude that if $\v_1=(r_1,d_1H,a_1)\in D_\v$, then $a_1=1$, $r_1=nd_1^2+1$, $d=2d_1+s$ for a positive integer $s$ and $r\leq \frac{r_1d-1}{d_1}$.  Then $\langle\v,\v_1\rangle<0$ becomes
\begin{align*}
    0&>2ndd_1-2r_1-r\geq2ndd_1-r_1\frac{2d_1 + d}{d_1} + \frac{1}{d_1}=s\left(nd_1 - \frac{1}{d_1}\right)-4 + \frac{1}{d_1}.
\end{align*}
In particular, $1\leq nd_1\leq 3$.  If $n=d_1=1$, then $s$ can be an arbitrary positive integer and $r_1=2$, $d=2+s$. Hence, $d$ is an arbitrary integer greater than or equal to $3$. Furthermore, $$r_1d-rd_1 = 2d - r >0 \quad \mbox{and} \quad 0> 2ndd_1 - r_1a - r a_1 = 2d -4 -r.$$ Hence, $2d > r > 2d-4$. This gives case $(1)(c)$.

If $d_1=1$ and $n>1$, then from $0 > s(n-1) - 3$, we conclude that $n=2$, $1 \leq s  \leq 2$ or $n=3$, $s=1$. First assume $n=2$. Then $v_1=(3, H, 1)$. If further $s=2$, then $d=4$ and $$r_1d - rd_1 =12 - r >0>2ndd_1 - a r_1 -  a_1r = 10 - r.$$ Hence $\v = (11, 4H, 2)$, which is case 2(a). If $s=1$, then $d=3$ and $9-r > 0 >6 -r$, so $7 \leq r \leq 8$. Hence, $\v = (7, 3H, 2)$ or $\v= (8, 3H, 2)$ giving cases 2(b) or 2(c). Finally, if $n=3$, $s=1$, we have $\v_1 = (4, H, 1)$, $d=3$ and $12 - r >0 > 10 - r$. Hence, $\v=(11, 3H,2)$, which is case (3).

If $d_1 > 1$, then $n=s=1$ and $2 \leq d_1 \leq 3$. If $d_1 =2$, then $\v_1 = (5, 2H,1)$, $d=5$ and $25-2r > 0 > 10 -r$. Hence, $\v= (11, 5H, 2)$ or $\v= (12, 5H, 2)$, which are cases 1(a) or 1(d). If $d_1=3$, then $\v_1 = (10, 3H, 1)$, $d= 7$ and $70 - 3r > 0 > 22 -r$. Hence, $\v= (23, 7H, 2)$, which is case 1(b). This concludes the classification. 

\cref{Lem:O_X[1] largest wall} implies that
the generic $E\in M_H(\v)$ satisfies $H^1(X,E)=0$ in cases (1)(a),(b), and (d),  (2)(b) and (c), and (3).  In case (2)(a), we have $\langle\v,\v_1\rangle=-1$, so  \cref{lem:max-HNF1} implies that the generic $E\in M_H(\v)$ has HN-filtration
$$0\to T_1\to E\to F\to 0,$$
where $T_1$ is the unique stable spherical sheaf with $\v(T_1)=(3,H,1)$ and $F$ is a $\sigma$-stable object such that $\v(F)=(7,3H,1)$.  By \cref{lem:NoTSSWalls a<=1}, $D_{\v(F)}=\varnothing$, so the generic $F\in M_\sigma(7,3H,1)$ is a stable sheaf that is $\sigma$-stable and satisfies $H^1(X,F)=0$.  Moreover,  \cref{lem:NoTSSWalls a<=1} shows that $H^1(X,T_1)=0$, so $H^1(X,E)=0$ for generic $E\in M_H(\v)$.

In case (1)(c),  first suppose that $\v=(2d-1,dH,2)$.  Then the generic $E\in M_H(\v)$ fits in the short exact sequence 
$$0\to T_1^{\oplus 3}\to E\to F\to 0$$
with $M_H(2,H,1)=\{T_1\}$ and $F\in M_\sigma(2d-7,(d-3)H,-1)$.  If $d\geq 4$, then the generic $F\in M_\sigma(2d-7,(d-3)H,-1)$ is in $M_H(2d-7,(d-3)H,-1)$ and satisfies  $H^1(X,F)=0$ by \cref{Prop:a<=0}.  As $H^1(X,T_1)=0$ by \cref{lem:NoTSSWalls a<=1}, we get $H^1(X,E)=0$ for generic $E\in M_H(\v)$ when $d\geq 4$.  If $d=3$, then $\v=(5,3H,2)$, and the unique $E\in M_H(5,3H,2)$ sits in a short exact sequence
\begin{equation}\label{eqn:(5,3,2)}
0\to\OO_X\to T_1^{\oplus 3}\to E\to 0,
\end{equation}
from which we get $h^1(X,E)=1$.  

Next suppose that $\v=(2d-2,dH,2)$.  In this case, the generic $E\in M_H(\v)$ sits in a short exact sequence
$$0\to T_1^{\oplus 2}\to E\to F\to 0,$$
with $F\in M_H(2d-6,(d-2)H,0)$ for $d\geq 3$.  Thus $h^1(X,E)=h^1(X,F)=0$ by \cref{lem:NoTSSWalls a<=1}.  

Finally suppose that $\v=(2d-3,dH,2)$.  In this case, the generic $E\in M_H(\v)$ sits in a short exact sequence
$$0\to T_1\to E\to F\to 0,$$
with $F\in M_H(2d-5,(d-1)H,1)$ for $d\geq 3$.  Thus $h^1(X,E)=h^1(X,F)=0$ by \cref{lem:NoTSSWalls a<=1}.  
\end{proof}

\subsection*{Small values of $d$} Finally, the following corollary discusses the cases for small values of $d$.

\begin{Cor}\label{cor:smalld}
Let $X$ be a K3 surface such that $\Pic(X)= \mathbb{Z} H$ with $H^2 =2n$.
Let $\v=(r,dH,a)$ be a Mukai vector such that $ 0 \leq a, r$, $0 < d \leq 3$ and $\v^2 \geq -2$. Then $\v$ satisfies weak Brill-Noether unless $n=1$ and $\v=(2,3H,5)$ or $\v= (5, 3H, 2)$.
\end{Cor}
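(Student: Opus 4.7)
The strategy is a finite case analysis based on $d\in\{1,2,3\}$, invoking the classification results already proved.

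\emph{Preliminary reductions.} By Propositions \ref{prop:Rank0} and \ref{prop:Rank1} we may assume $r\geq 2$, and by Proposition \ref{lem:NoTSSWalls a<=1} we may assume $a\geq 2$. The hypothesis $\v^2\geq -2$ then reads $ra\leq nd^2+1$.

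\emph{Cases $d=1,2$.} For $d=1$, the bound $ra\leq n+1$ with $a\geq 2$ gives $r\leq (n+1)/2\leq n$, so Theorem \ref{thm:bound n r} finishes. For $d=2$, $ra\leq 4n+1$ together with $a\geq 2$ forces $r\leq 3n$ (since $r\geq 3n+1$ would give $ra\geq 6n+2>4n+1$). If $r\leq n$, invoke Theorem \ref{thm:bound n r}; if $n<r\leq 3n$, the exceptional Mukai vectors of Theorems \ref{Thm:n<r<=2n} and \ref{Thm:2n<r<=3n} have $d=\tfrac{n+1}{r_1}+r_1$, $d=r+1$, or $d=3n+2$. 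The first equation $r_1^2-2r_1+(n+1)=0$ has no integer solution (negative discriminant); $d=r+1=2$ forces $r=1$; and $d=3n+2=2$ forces $3n=0$. Hence no failures occur for $d=1,2$.

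\emph{Case $d=3$.} When $a=2$, Theorem \ref{Lem:NoTSSWalls a=2} explicitly lists all Mukai vectors for which weak Brill-Noether can fail, together with the cohomology of the generic sheaf; inspecting the list, among $d=3$ cases only $\v=(5,3H,2)$ at $n=1$ actually fails weak Brill-Noether. When $a\geq 3$, the constraint $ra\leq 9n+1$ combined with $a\geq 3$ rules out $r>3n$ (since $ra\geq 9n+3$), and $r\leq n$ is handled by Theorem \ref{thm:bound n r}. For $n<r\leq 3n$ we consult Theorems \ref{Thm:n<r<=2n} and \ref{Thm:2n<r<=3n}: the equation $r_1^2-3r_1+(n+1)=0$ admits only the integer solutions $(n,r_1)=(1,1)$, which produces $\v=(2,3H,5)$, and $(n,r_1)=(1,2)$, which produces $\v=(5,3H,2)$ with $a=2$ (already handled). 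The families with $d=r+1$ or $d=3n+2$ again admit no $d=3$ solution in the required range. So the only exceptional vectors are $(2,3H,5)$ and $(5,3H,2)$, both at $n=1$.

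\emph{Main obstacle.} There is no conceptual difficulty beyond careful bookkeeping; the slightly subtle point is that the vector $\v=(5,3H,2)$ corresponds to the $(n,r_1)=(1,2)$ branch of the family $\v=(n+r_1^2,(\tfrac{n+1}{r_1}+r_1)H,(\tfrac{n+1}{r_1})^2+n)$ but lies outside the range $r\leq 3n$ of Theorems \ref{Thm:n<r<=2n}--\ref{Thm:2n<r<=3n}; it is instead captured by Theorem \ref{Lem:NoTSSWalls a=2} in the $a=2$ sweep, which is why both listed exceptional Mukai vectors appear in the final statement.
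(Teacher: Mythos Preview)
Your proof is correct and follows the same overall strategy as the paper: a finite case analysis on $d$ that reduces everything to the classification results already established (Theorems \ref{thm:bound n r}, \ref{Thm:n<r<=2n}, \ref{Thm:2n<r<=3n}, and \ref{Lem:NoTSSWalls a=2}).

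There is a small but genuine difference in the bookkeeping. The paper first passes to the vector $\v_0$ of minimal square via Proposition \ref{prop:minimal-a}, then assumes $\v_1\in D_{\v_0}^{BN}$ and uses the constraint $0<d_1<d$ (from Lemma \ref{lem:useful consequences}) to bound $r_1$ and hence $r$; for instance, when $d=2$ one gets $d_1=1$, $r_1a_1=n+1$, so $r<2r_1\leq 2(n+1)\leq 3n+1$. You instead reduce to $a\geq 2$ via Proposition \ref{lem:NoTSSWalls a<=1} and bound $r$ directly from $ra\leq nd^2+1$. Your route is slightly more elementary in that it never touches the wall-and-chamber combinatorics, while the paper's route more closely parallels the machinery developed earlier. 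Both deliver the same bound $r\leq 3n$ and land on the same three classification theorems; the exceptional vectors are then checked in the same way.

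Your observation in the ``Main obstacle'' paragraph is apt: $(5,3H,2)$ formally solves $r_1^2-3r_1+(n+1)=0$ but with $r=5>3n=3$, so it falls outside the hypotheses of Theorems \ref{Thm:n<r<=2n}--\ref{Thm:2n<r<=3n} and must be caught by the $a=2$ classification instead. The paper handles this the same way, arriving at $(5,3H,2)$ through the subcase $d_1=1$, $a_1=1$, which forces $a\leq 2$ and invokes Theorem \ref{Lem:NoTSSWalls a=2}.
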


\begin{proof}
By \cref{prop:Rank0} and  \cref{prop:Rank1}, we may assume that $r\geq 2$. By \cref{prop:minimal-a}, we may consider the Mukai vector $\v_0$ with the smallest square. Let $\v_1 \in D_{\v_0}^{BN}$. Since $dr_1 - r d_1 > 0$ and $0 < r_1 < r$ by \cref{lem:useful consequences}, we must have $0 < d_1 < d$.

If $d=1$, then $D_{\v}^{BN} = \varnothing$, proving the corollary in this case.

If $d=2$, then  $d_1=1$, $2r_1 - r >0$ and $a_1 r_1 = n+1$. Hence, $r_1 \leq n+1$ and $r \leq 2n+1 \leq 3n$. The corollary in this case then follows from  \cref{Thm:n<r<=2n} and \cref{Thm:2n<r<=3n}.

If $d=3$, then $d_1 =1$ or $2$. If $d_1=1$, then $3r_1 - r >0$ and $a_1 r_1 = n+1$. If $a_1=1$, then $3a_1 - a \geq 1$, implies that $a \leq 2$ and the corollary follows from \cref{Lem:NoTSSWalls a=2}. In this case, we get the exception $n=1$ and $\v=(5, 3H, 2)$. Otherwise, $r_1 \leq n$ and $r < 3n$. By Theorems \ref{thm:bound n r}, \ref{Thm:n<r<=2n} and \ref{Thm:2n<r<=3n}, the cohomology vanishes unless $n=1$ and  $\v= (2, 3H, 5)$. If $d_1=2$, then $3r_1 - 2r >0$, $a_1 r_1 = 4n+1$ and $1 \leq 3a_1 - 2a$. If $a_1=1$ or $2$, then $1 \leq 3a_1 - 2a$ implies that $a\leq 2$. Hence, the corollary follows from \cref{Lem:NoTSSWalls a=2}. If $a_1 \geq 3$, then $r_1 \leq \frac{4n+1}{3}$ and $r \leq 2n$. The corollary follows from Theorems \ref{thm:bound n r} and \ref{Thm:n<r<=2n}.
\end{proof}

\begin{Rem}
The case $d=1$ of \cref{cor:smalld} was proved by Markman in \cite{Markman:BN}.
\end{Rem}

\begin{Cor}
Let $X$ be a K3 surface such that $\Pic(X)= \mathbb{Z} H$ with $H^2 =2n$.
Let $\v=(r,dH,a)$ be a Mukai vector such that $ 0< r$, $\v^2 \geq -2$ and $nd^2 < 3r-1$. Then $\v$ satisfies weak Brill-Noether unless $n=1$ and $\v= (5, 3H, 2)$.
\end{Cor}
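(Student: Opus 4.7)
The plan is to use the hypothesis $nd^2 < 3r-1$ to force $a\leq 2$, and then invoke the detailed classifications from \cref{lem:NoTSSWalls a<=1} and \cref{Lem:NoTSSWalls a=2} already established in the paper. By Serre duality (as explained in the remark following \cref{thm-intro}) I may assume $d>0$, and by \cref{Prop:a<=0} I may assume $a\geq 0$. The condition $\v^2\geq -2$ rearranges to $ra\leq nd^2+1$; combined with $nd^2<3r-1$, this gives $ra<3r$, so $a\leq 2$.

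If $a\leq 1$, then \cref{lem:NoTSSWalls a<=1} immediately gives weak Brill-Noether. The remaining case $a=2$ is settled by walking through the list in \cref{Lem:NoTSSWalls a=2}. The concluding sentence of that theorem asserts that $H^1(X,E)=0$ for the generic $E\in M_H(\v)$ in every case of the classification except the single sporadic Mukai vector $\v=(5,3H,2)$ at $n=1$. Therefore, no matter which items of \cref{Lem:NoTSSWalls a=2} satisfy our numerical hypothesis, they contribute no counterexamples other than possibly $(5,3H,2)$.

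It remains only to verify that $(5,3H,2)$ at $n=1$ does satisfy $nd^2<3r-1$: indeed, $nd^2=9<14=3r-1$. This confirms that the single exception listed in the corollary is realized. For completeness, one may note that case (1)(c) of \cref{Lem:NoTSSWalls a=2} is the only one-parameter family among the list; the inequality $nd^2=d^2<3r-1\leq 6d-4$ there restricts to $d\in\{3,4,5\}$, and among the resulting finitely many $(r,d)$ only $(5,3H,2)$ is the sheaf singled out by \cref{Lem:NoTSSWalls a=2} as failing weak Brill-Noether.

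The only obstacle is purely bookkeeping: confirming case-by-case that the seven sporadic tuples plus the one-parameter family in \cref{Lem:NoTSSWalls a=2} yield only the indicated exception under our numerical hypothesis. Since \cref{Lem:NoTSSWalls a=2} already computes the cohomology in every listed case, no further Bridgeland wall-crossing analysis is needed.
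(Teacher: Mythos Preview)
Your proposal is correct and follows essentially the same route as the paper's proof: both reduce to $a\leq 2$ via the inequality $ra\leq nd^2+1<3r$ and then invoke \cref{Prop:a<=0}, \cref{lem:NoTSSWalls a<=1}, and \cref{Lem:NoTSSWalls a=2}. The paper additionally cites \cref{prop:Rank0} and \cref{prop:Rank1} to reduce to $r\geq 2$ first, but as you observe this is unnecessary since the ``Moreover'' clause of \cref{lem:NoTSSWalls a<=1} and the hypothesis of \cref{Lem:NoTSSWalls a=2} already allow $r\geq 0$; your extra verification that the exception $(5,3H,2)$ actually satisfies the numerical hypothesis is a nice touch the paper omits.
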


\begin{proof}
By \cref{prop:Rank0} and  \cref{prop:Rank1}, we may assume that $r\geq 2$. As $\v^2\geq -2$, it follows from $nd^2 < 3r-1$ that $a < 3$. By \cref{Prop:a<=0,lem:NoTSSWalls a<=1,Lem:NoTSSWalls a=2}, we conclude that the only $\v$ that does not satisfy weak Brill-Noether occurs when $n=1$ and $\v= (5,3H,2)$.
\end{proof}

\section{General Theorems for Computations}\label{sec-computations}
In this section, we describe several techniques for computing the cohomology of the generic sheaf in $M_H(\v)$.

\subsection{Destabilizing Lines bundles and Torsion Quotients}
This family of examples accounts for about half of the vectors with $D_{\v} \neq \varnothing$. Although this theorem will be subsumed by \cref{Prop:CohomologyCalc a<=2}, we highlight it here for the prevalence of these examples. 

\begin{Thm}\label{Thm:LineBundleSubTorsionQuotient}
Let $\v=(r,dH,a)$ be a Mukai vector such that $r,a\geq0$ and $d>0$ and $\v^2\geq -2$.  Suppose that $\v_1=(1,d_1H,1+d_1^2 n)\in D_{\v}$ induces the largest totally semistable wall and satisfies $\rk R_{\v_1}(\v)=0$.
Then the generic $E\in M_H(\v)$ satisfies $$h^1(X,E)=\max\{ 0,r(nd_1^2+1)-a\} \quad \mbox{and} \quad  h^2(X,E)=0.$$
\end{Thm}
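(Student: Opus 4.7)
My approach is to resolve the generic $E\in M_H(\v)$ along the totally semistable wall induced by $\v_1$, using the Harder-Narasimhan machinery from \cref{sec-HNFiltration}. Since $\v_1=\v(\OO_X(d_1H))$ is spherical with $r_1=1$, the unique $\sigma$-stable spherical object with Mukai vector $\v_1$ is the line bundle $T_1:=\OO_X(d_1H)$ itself, which is automatically locally free. In the one-spherical setting of \cref{lem:max-HNF2} (or in the two-spherical setting of \cref{lem:max-HNF1} with $m_0=1$, which is forced by our hypotheses), the generic $E\in M_H(\v)$ fits in a short exact sequence
\begin{equation*}
0\to \OO_X(d_1H)^{\oplus k}\to E\to F\to 0
\end{equation*}
in the heart $\AA_\sigma$, where $k=-\langle\v,\v_1\rangle$ and $F$ is $\sigma$-stable with $\v(F)=R_{\v_1}(\v)$. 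The rank-zero hypothesis $\rk R_{\v_1}(\v)=0$ immediately gives $k=r$ and $\v(F)=\v-r\v_1=(0,d'H,a')$, with $d':=d-rd_1$ and $a':=a-r(1+nd_1^2)$; moreover, the condition $\v_1\in D_\v$ (with $r_1=1$) forces $d'>0$.

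The associated long exact sequence of cohomology sheaves
\begin{equation*}
0\to\HH^{-1}(F)\to\OO_X(d_1H)^r\to E\to\HH^0(F)\to 0
\end{equation*}
together with the torsion-freeness of $\OO_X(d_1H)^r$ and $E$ and the vanishing $\rk F=0$ gives $\HH^{-1}(F)=0$, so $F$ is a genuine pure one-dimensional coherent sheaf of class $(0,d'H,a')$. Taking global sections of the displayed short exact sequence and applying Kodaira vanishing (valid because $d_1H$ is ample) to obtain $H^i(X,\OO_X(d_1H))=0$ for $i>0$, together with $H^2(X,F)=0$ (since $F$ is one-dimensional), we immediately obtain $H^2(X,E)=0$ and an isomorphism $H^1(X,E)\cong H^1(X,F)$. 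By \cref{prop:Rank0}, the Mukai vector $\v(F)=(0,d'H,a')$ satisfies weak Brill-Noether, so for generic $F\in M_H(\v(F))$ either $H^0(F)=0$ or $H^1(F)=0$, and then $\chi(F)=a'$ yields $h^1(X,F)=\max\{0,-a'\}=\max\{0,r(nd_1^2+1)-a\}$.

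The main technical point I expect to address carefully is ensuring that the $\sigma$-stable quotient $F$ produced by the HN filtration is generic in the Gieseker moduli space $M_H(\v(F))$. When $a'\geq 0$, \cref{prop:Rank0} together with \cref{prop:birational} shows $D_{\v(F)}=\varnothing$ and $\chi(\v(F))\geq 0$, so no totally semistable walls separate $\sigma$ from the Gieseker chamber for $\v(F)$, and the identification is direct. When $a'<0$, the wall defined by $\OO_X[1]$ for $\v(F)$ can intervene; in this regime \cref{Prop:a<=0} itself provides the exact value $h^1(F)=-a'$ for generic $F\in M_H(\v(F))$ (via the canonical resolution coming from $\OO_X[1]$), so the conclusion still follows. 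Assembling these cases gives $h^1(X,E)=\max\{0,r(nd_1^2+1)-a\}$ and $h^2(X,E)=0$, as asserted.
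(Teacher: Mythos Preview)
Your approach is essentially the same as the paper's: resolve the generic $E$ via the Harder--Narasimhan filtration along the wall $C^{\v}_{\v_1}$, identify $T_1=\OO_X(d_1H)$, deduce the short exact sequence $0\to\OO_X(d_1H)^{\oplus r}\to E\to F\to 0$ with $\v(F)=(0,d'H,a')$, and then compute $h^1(E)=h^1(F)$ using weak Brill--Noether for $\v(F)$ (\cref{prop:Rank0}). Your extra care in checking that $F$ is an honest sheaf and that $H^2(X,E)=0$ is fine and matches what the paper does implicitly.

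There is one point where your write-up is slightly less complete than you suggest. In the case $a'<0$ you say the $\OO_X[1]$ wall for $\v(F)$ ``can intervene'' and then invoke \cref{Prop:a<=0} to compute $h^1$ of the \emph{generic} $F\in M_H(\v(F))$. But this does not by itself explain why the $\sigma$-stable quotient $F$ produced by the HN filtration is generic in $M_H(\v(F))$; if the $\OO_X[1]$ wall sat \emph{between} the Gieseker chamber and $\sigma$, the $\sigma$-stable $F$ need not be Gieseker-semistable at all. The paper handles this by asserting (via \cref{prop:Rank0}, whose proof shows $D_{\v_2}=\varnothing$) that the generic $L\in M_H(\v_2)$ is $\sigma$-stable along the whole path from $\GG$ down to $\sigma$. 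To close the gap completely one checks that the $\OO_X[1]$ wall for $\v_2$ lies strictly \emph{below} $C^{\v}_{\v_1}$: using $a'=2nd_1d'-r$, the inequality $a'<0$ gives $nd_1d'<r/2<r$, and a direct comparison of the heights of the two semicircles at $s=0$ shows the $\OO_X[1]$ wall (meeting $s=0$ at $t^2=1/n$) is below $C^{\v}_{\v_1}$ (meeting $s=0$ at $t^2=(d-nd_1^2d')/(nd')>1/n$). Once this is noted, your argument and the paper's coincide.
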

\begin{proof}
From the assumption that $$0=\rk R_{\v_1}(\v)=r+\langle\v,\v_1\rangle r_1=r+\langle\v,\v_1\rangle ,$$ we get $\langle\v,\v_1\rangle=-r$.   Thus $$\v_2=R_{\v_1}(\v)=\v+\langle\v,\v_1\rangle\v_1=(0,d-rd_1,a-r(nd_1^2+1)).$$
By \cref{rem-numerics} \ref{enum:positive r_1}, we have $d-rd_1=r_1d-rd_1>0$, so by \cref{prop:Rank0}, the generic $L\in M_H(\v_2)$ is $\sigma_{(s,t)}$-stable for all $(s,t)$ between $\GG$ and $\CC$.  By \cref{prop:Rank1}, the same holds for $\OO_X(d_1H)$, the unique element in $M_H(\v_1)$.  Moreover, we must have $\v^2=(R_{\v_1}(\v))^2=2n(d-rd_1)^2>0$.  It follows from \cref{lem:max-HNF1,lem:max-HNF2} that the generic $E\in M_H(\v)$ has Harder-Narasimhan filtration given by the short exact sequence in $\AA$:
$$0\to\OO_X(d_1H)^{\oplus r}\to E\to R_{\OO_X(d_1H)}(E)\to 0,$$ 
where $R_{\OO_X(d_1H)}(E)=L$ is generic in $M_H(\v_2)$.  As $H^i(X,\OO_X(d_1H))=0$ for $i>0$ and $\v_2$ satisfies weak Brill-Noether, we get $$h^1(X,E)=h^1(X,L)=\max\{0,-(a-r(nd_1^2+1))\},$$ as claimed.
\end{proof}
\subsection{The effect of tensoring}
Given a Mukai vector $\v=(r,dH,a)$ for which we know the generic cohomology, it is natural to study how $h^1$ changes as we tensor by $\OO(pH)$ for $p>0$.  The following proposition demonstrates the fruitfulness of this approach.
\begin{Prop}\label{prop:Tensor}
Let $X$ be a K3 surface with $\Pic(X)=\Z H$ and $H^2=2n$. Let $p>0$ be a positive integer. Let $E$ be a stable sheaf with $\v(E)=(r,dH,a)$ such that $d>0$.  Set $F_p:=\Phi_{X\to X}^{I_\Delta^\vee[2]}(\OO_X(-pH))$. Then
$$\hom(E,F_p)\leq h^1(X,E(pH))\leq\hom(E,F_p)+h^1(X,E)\chi(\OO_X(pH)).$$
In particular, if $H^1(X,E)=0$, then
$h^1(X,E(pH))=\hom(E,F_p).$
If, in addition, $\frac{d}{r}>\frac{p}{p^2 n+1}$, then $H^1(X,E(pH))=0$.
\end{Prop}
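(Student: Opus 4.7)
The plan is to identify $F_p$ with (the dual of) the Lazarsfeld–Mukai bundle associated to $\OO_X(pH)$, apply $\Hom(E,-)$ to a triangle defining $F_p$, and read off both inequalities from Serre duality on $X$. First, derived-dualizing the standard triangle $I_\Delta\to\OO_{X\times X}\to\OO_\Delta$ on $X\times X$, Grothendieck duality for the codimension-two embedding $\Delta\hookrightarrow X\times X$ together with the triviality of $\omega_X$ gives $\OO_\Delta^\vee\cong\OO_\Delta[-2]$, whence a triangle
$$\OO_\Delta\to\OO_{X\times X}[2]\to I_\Delta^\vee[2].$$
Applying $\pi_{2*}(\pi_1^*\OO_X(-pH)\otimes-)$ and simplifying the middle term via Kodaira vanishing and Serre duality as $R\Gamma(\OO_X(-pH))\cong H^0(\OO_X(pH))^\vee[-2]$ yields the triangle
$$\OO_X(-pH)\xrightarrow{\varphi} H^0(\OO_X(pH))^\vee\otimes\OO_X\to F_p,$$
where $\varphi$ is the dual of the evaluation map $H^0(\OO_X(pH))\otimes\OO_X\to\OO_X(pH)$. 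Since $\OO_X(pH)$ is globally generated (Saint–Donat), $\varphi$ is injective, so $F_p$ is a locally free coherent sheaf with Mukai vector $(p^2n+1,pH,1)$, and in particular is a spherical bundle.

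Second, I apply $\Hom(E,-)$ to this triangle. Serre duality on the K3 identifies
$$\Ext^i(E,\OO_X(-pH))\cong H^{2-i}(E(pH))^\vee,\qquad \Ext^i(E,\OO_X)\cong H^{2-i}(E)^\vee,$$
and stability of $E$ together with $d>0$ forces $H^2(E)=H^2(E(pH))=0$. The long exact sequence therefore collapses to
$$0\to\Hom(E,F_p)\to H^1(E(pH))^\vee\to H^0(\OO_X(pH))^\vee\otimes H^1(E)^\vee.$$
Kodaira vanishing gives $h^0(\OO_X(pH))=\chi(\OO_X(pH))$, and counting dimensions yields the two asserted inequalities; the equality when $H^1(X,E)=0$ is immediate since the rightmost term then vanishes.

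For the concluding statement, observe that $\mu_H(F_p)=p/(p^2n+1)$, so the hypothesis $d/r>p/(p^2n+1)$ reads $\mu_H(E)>\mu_H(F_p)$. By \cref{Thm:ClassicalModSp}(3), the primitive spherical Mukai vector $(p^2n+1,pH,1)$ is represented by a single Gieseker-stable bundle, so $F_p$ is Gieseker-stable and in particular $\mu_H$-semistable. Since $E$ is also $\mu_H$-semistable, the standard Hom-vanishing between semistable sheaves of strictly decreasing slope forces $\Hom(E,F_p)=0$, which combined with the previous equality gives $H^1(X,E(pH))=0$. I expect the main technical hurdle to lie in justifying the triangle for $F_p$ in the first paragraph—tracking the derived duality $\OO_\Delta^\vee\cong\OO_\Delta[-2]$ and its compatibility with the convolution defining $\Phi_{X\to X}^{I_\Delta^\vee[2]}$—while the rest is a routine combination of Serre duality and classical semistability.
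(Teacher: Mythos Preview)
Your proof is correct and takes a genuinely different route from the paper's. The paper works with the triangle $\Phi_{X\to X}^{I_\Delta}(E)\to R\Gamma(X,E)\otimes\OO_X\to E$ from \cref{Lem:FM-vanishing}, tensors it by $\OO_X(pH)$, and then identifies $h^2\bigl(\Phi_{X\to X}^{I_\Delta}(E)(pH)\bigr)$ with $\hom(E,F_p)$ via the adjunction $\Hom(\Phi^{I_\Delta}(E),-)\cong\Hom(E,\Phi^{I_\Delta^\vee[2]}(-))$; it then cites \cref{prop:Rank1} (together with Grothendieck duality $\Phi^{I_\Delta}(\OO_X(pH))^\vee\cong F_p$) to conclude that $F_p$ is the stable sheaf in $M_H(p^2n+1,pH,1)$. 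You instead build the triangle for $F_p$ directly as a Lazarsfeld--Mukai type bundle and apply $\Hom(E,-)$, which via Serre duality immediately produces the exact sequence $0\to\Hom(E,F_p)\to H^1(E(pH))^\vee\to H^0(\OO_X(pH))^\vee\otimes H^1(E)^\vee$. Your approach is more self-contained and avoids the detour through \cref{prop:Rank1}; the paper's has the virtue of reusing the machinery already in place.

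One small point: invoking \cref{Thm:ClassicalModSp}(3) tells you only that $M_H(p^2n+1,pH,1)$ is a single point, not that $F_p$ \emph{is} that point. To close this, note (as you already observed) that $F_p$ is the image of the spherical object $\OO_X(-pH)$ under the autoequivalence $\Phi_{X\to X}^{I_\Delta^\vee[2]}$, hence $F_p$ is a simple and rigid sheaf; then Mukai's result \cite[Prop.~3.14]{Muk87b} (used repeatedly in the paper) gives Gieseker stability, and your slope comparison finishes the argument.
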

\begin{proof}
Since $H^i(X,\OO_X(pH))=0$ for $i\ne 0$, we have $$H^q(X,R\Gamma(X,E)\otimes\OO_X(pH))=H^q(X,E)\otimes H^0(X,\OO_X(pH)).$$  Tensor the distinguished triangle $$\Phi_{X\to X}^{I_\Delta}(E)\to R\Gamma(X,E)\otimes\OO_X\to E\to\Phi_{X\to X}^{I_\Delta}(E)[1]$$ from \cref{Lem:FM-vanishing} by $\OO_X(pH)$. Since $H^2(X,E)=0$, taking $R\Gamma(X,-)$ gives $$H^1(X,E)\otimes H^0(X,\OO_X(pH))\to H^1(X,E(pH))\to H^2(X,\Phi_{X\to X}^{I_\Delta}(E)\otimes\OO_X(pH))\to 0.$$  Thus \begin{align}
    \begin{split}
        h^2(X,\Phi_{X\to X}^{I_\Delta}(E)\otimes\OO_X(pH))&\leq h^1(X,E(pH))\\
        &\leq h^2(X,\Phi_{X\to X}^{I_\Delta}(E)\otimes\OO_X(pH))+h^1(X,E) h^0(X,\OO_X(pH))\\
        &=h^2(X,\Phi_{X\to X}^{I_\Delta}(E)\otimes\OO_X(pH))+h^1(X,E)\chi(\OO_X(pH)).
    \end{split}
\end{align}

We claim that $h^2(X,\Phi_{X\to X}^{I_\Delta}(E)\otimes\OO_X(pH))=\hom(E,F_p)$ and that $F_p$ is a stable sheaf of slope $\frac{p}{p^2n+1}$.  The  proposition easily follows from this claim.  By adjunction and Serre duality, we have 
\begin{align}
    \begin{split}
        h^2(X,\Phi_{X\to X}^{I_\Delta}(E)\otimes\OO_X(pH))&=\ext^2(\OO_X(-pH),\Phi_{X\to X}^{I_\Delta}(E))\\
        &=\hom(\Phi_{X\to X}^{I_\Delta}(E),\OO_X(-pH)) \\&=\hom(E,\Phi_{X\to X}^{I_\Delta^\vee[2]}(\OO_X(-pH)))\\
        &=\hom(E,F_p).
    \end{split}
\end{align}
 \cref{prop:Rank1} implies that $\Phi_{X\to X}^{I_\Delta}(\OO_X(pH))^\vee\in M_H(p^2n+1,pH,1)$, and by Grothendieck duality, $\Phi_{X\to X}^{I_\Delta}(\OO_X(pH))^\vee\cong\Phi_{X\to X}^{I_\Delta^\vee[2]}(\OO_X(-pH))=F_p$.  The claim follows.
\end{proof}

\begin{Rem}\label{rem:tensor}
Let $E \in M_H(\v)$ be generic. If $H^1(X,E)=0$, then $h^1(X,E(pH))=\hom(E,F_p)$ by \cref{prop:Tensor}. Since 
$$-\langle\v,\v(F_p)\rangle=-\langle\v,(np^2+1,pH,1)\rangle=a(np^2+1)+r-2npd,$$
as in \cref{lem:max-HNF1,lem:max-HNF2},  \cite[Lemma 6.8 and Proposition 8.3]{BM14b} imply  that 
$$\hom(E,F_p)= \max\{0,a(np^2+1)+r-2npd\}.$$
Hence, if $\v$ satisfies weak Brill-Noether, then  
$$h^1(X,E(pH))=\max\{0,a(np^2+1)+r-2npd\}.$$
\end{Rem}

By \cref{rem:tensor},  when $\v$ satisfies weak Brill-Noether, we can  calculate the cohomology of $E(pH)$ for all $p>0$. When $\v$ does not satisfy weak Brill-Noether, so that $H^1(X,E)\ne 0$ for the generic $E\in M_H(\v)$, it follows
that $D_\v\ne\varnothing$.
To determine the cohomology of $E(pH)$, we relate the totally semistable walls of $\v(E(pH))=e^{pH}\v$ to the vectors in $D_\v$. 

Recall from \eqref{eqn:CenterRadius} that for $\v_1=(r_1,d_1H,a_1)$, the wall $C^\v_{\v_1}$ is defined by
\begin{equation}\label{eqn:Circle to shift}
t^2+(s-\alpha)^2=\alpha^2-\frac{a_1d-ad_1}{n(rd_1-r_1d)}=\left(\tfrac{d}{r}-\alpha \right)^2-
\frac{ \v^2}{2nr^2},
\end{equation}
where $\alpha=\frac{ra_1-r_1a}{2n(rd_1-r_1d)}$ is necessarily less than $d/r$ (i.e. we are only considering the family of circles to the left of $s=d/r$). This expression for the equation of the circle makes it clear that we can study $C^\v_{\v_1}$ by referring only to $\alpha$, the $s$-coordinate of its center.  It is useful to consider the purely numerical wall given by \eqref{eqn:Circle to shift} for arbitrary $\alpha$ (in the range that makes sense) and denoted by $C^\v_\alpha$. We will refer to such circles as {\em pseudo-walls}.   Moreover, the wall $C^{e^{pH}\v}_{e^{pH}\v_1}$ for $e^{pH}\v$ is just $C^\v_{\v_1}$ shifted to the right by $p$.  In other words, $C^{e^{pH}\v}_{\alpha+p}=C^\v_\alpha+(p,0)$.
We observe that \eqref{eqn:Circle to shift} is equivalent to
\begin{equation}\label{eqn:otherformofcircle}
t^2+s^2-2\alpha s=-2 \alpha \tfrac{d}{r}+\tfrac{a}{rn}.
\end{equation}
\begin{Prop}\label{prop:relation-wall}
Let $\v=(r,dH,a)$ be a Mukai vector with $r,d>0$ and $\v^2\geq -2$.  Let $p$ be a positive integer, and assume that $a\geq-pdn$, with strict inequality if $n=1=p$.
Then $D_{e^{pH}\v} \subset e^{pH}(D_{\v} \cup \{(1,0,1)\})$.
\end{Prop}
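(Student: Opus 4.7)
Plan. The proof proceeds by translating walls via $e^{-pH}$. Given $\w_1 = (r_1',d_1'H,a_1') \in D_{e^{pH}\v}$, I would define $\v_1 := e^{-pH}\w_1$, so that setting $r_1 = r_1'$, $d_1 = d_1' - pr_1'$, $a_1 = a_1' - 2pd_1'n + p^2 r_1'n$, the Mukai-lattice isometry $e^{pH}$ immediately transfers the spherical condition $\v_1^2 = -2$ (the isotropic case being excluded by Lemma~\ref{Lem:no isotropic}) and the negativity $\langle \v,\v_1\rangle = \langle e^{pH}\v,\w_1\rangle < 0$. The task is then to verify the remaining conditions of Definition~\ref{Def:DestabilizingMukaiVectors}, or else to recognize the boundary case $\v_1 = (1,0,1)$.

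I would first establish positivity of $r_1$ and of $r_1d - rd_1$. Under the hypothesis $a \geq -pdn$, one computes $a' := a + 2pdn + p^2rn > 0$, so Lemma~\ref{Lem:only spherical with r_1>0} applies to $e^{pH}\v$ and yields $r_1'(d+rp) - rd_1' > 0$; a direct expansion confirms this equals $r_1d - rd_1$. Combined with $r_1'a_1' = n(d_1')^2 + 1 > 0$, the negativity of $\langle e^{pH}\v,\w_1\rangle$ and $a' > 0$ together force $r_1' > 0$, and hence $a_1' > 0$.

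The crucial step is $d_1 \geq 0$. Expanding condition~(iv) of Definition~\ref{Def:DestabilizingMukaiVectors} for $\w_1$, one obtains
\[
a_1'(d+rp) - a'd_1' = (a_1d - ad_1) + p(a_1r - ar_1) - p^2n(r_1d - rd_1) > 0,
\]
equivalently $a_1(d+pr) > a(d_1 + pr_1) + p^2n(r_1d - rd_1)$. Using $a \geq -pdn$ to bound $a(d_1+pr_1) \geq -pdn\,d_1'$, a short algebraic simplification shows the right-hand side is at least $-pnd_1(d+pr)$; dividing by $d+pr > 0$ yields $a_1 + pnd_1 > 0$ strictly. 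Multiplying by $r_1 > 0$ and substituting $r_1a_1 = nd_1^2+1$ rearranges this to $nd_1(d_1 + pr_1) \geq r_1 - 1 \geq 0$. For $d_1 \leq -1$ one has $d_1' = d_1 + pr_1 \geq 1$, so the left side is at most $-n < 0$, a contradiction. The strict form of the hypothesis in the subcase $n=p=1$ is exactly what preserves strict positivity of $a_1+pnd_1 > 0$ in the boundary, and is therefore necessary at this step.

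Finally, I would split on $d_1$. If $d_1 = 0$, then $r_1a_1 = 1$ with $r_1 > 0$ forces $r_1 = a_1 = 1$, giving $\v_1 = (1,0,1)$. If $d_1 > 0$, I would verify $d_1 \leq d$ by a case analysis: when $r_1 \leq r$, the integer inequality $r_1d - rd_1 \geq 1$ gives $d_1 \leq d-1$; when $r_1 > r$, condition (ii) $d_1' \leq d+rp$ gives $d_1 \leq d - p(r_1-r) < d$. The positivity $(a_1d - ad_1)/(r_1d - rd_1) > 0$ is the most delicate: the rearranged condition (iv) reads $a_1d - ad_1 > p\bigl[r_1(a+pnd) - r(a_1+pnd_1)\bigr]$, and the main obstacle is showing the right-hand side is non-negative, which is handled by combining $a+pnd \geq 0$, the strict positivity $a_1+pnd_1 > 0$, and the bound $\v^2 \geq -2$. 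Once this is done, $\v_1 \in D_\v$, completing the inclusion.
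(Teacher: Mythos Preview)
Your approach is genuinely different from the paper's: you work directly with the numerical conditions of Definition~\ref{Def:DestabilizingMukaiVectors}, whereas the paper argues geometrically via the nested semicircular walls and spends most of its effort verifying containment in the region $\H^0$ of geometric stability conditions. Your verification of conditions~(i), (iii), of $r_1d-rd_1>0$ via Lemma~\ref{Lem:only spherical with r_1>0}, of $d_1\geq 0$, and of $d_1\leq d$ is correct and clean. (Your derivation $nd_1(d_1+pr_1)\geq r_1-1$ comes from the integrality $a_1+pnd_1\geq 1$, which is fine.)

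However, there is a genuine gap in the last step. To conclude $a_1d-ad_1>0$, you propose to show that the right-hand side $p\bigl[r_1(a+pnd)-r(a_1+pnd_1)\bigr]$ of your rearranged inequality is non-negative. This fails in general: at the boundary $a=-pnd$ (allowed whenever $(n,p)\neq(1,1)$), the left factor $r_1(a+pnd)$ vanishes while $r(a_1+pnd_1)>0$, so the bracket is strictly negative and your bound gives nothing. The fix is not via $\v^2\geq -2$ but via the same center-shift idea underlying the paper's argument, done algebraically: if $\alpha$ denotes the $s$-coordinate of the center of $C^\v_{\v_1}$, then $a_1d-ad_1>0\Leftrightarrow \alpha<\tfrac{a}{2nd}$, and likewise $a_1'd'-a'd_1'>0\Leftrightarrow \alpha+p<\tfrac{a'}{2nd'}$. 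A direct computation shows
\[
\frac{a'}{2nd'}-\Bigl(\frac{a}{2nd}+p\Bigr)=-\,\frac{rp(a+pnd)}{2nd\,d'}\leq 0,
\]
using only $a+pnd\geq 0$. Hence $\alpha+p<\tfrac{a'}{2nd'}\leq\tfrac{a}{2nd}+p$ forces $\alpha<\tfrac{a}{2nd}$, giving $a_1d-ad_1>0$ as required. Note also that your explanation of where the strict hypothesis (when $n=p=1$) enters is off: the inequality $a_1+pnd_1>0$ already follows from the non-strict $a\geq -pnd$ together with the \emph{strict} condition~(iv) for $\w_1$. In the paper that extra strictness is consumed by the $\H^0$ analysis, which your purely numerical route does not need.
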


\begin{proof}
Let $\v'_1=(r_1',d_1'H,a_1')\in D_{e^{pH}\v}\backslash\{(1,pH,p^2n+1)\}$, where $(1,pH,p^2n+1)=\v(\OO_X(pH))$.  
From \cref{Def:DestabilizingMukaiVectors}, it follows that $C^{e^{pH}\v}_{\v'_1}\cap\{s=0,t>0\}\ne\varnothing$.  Moreover, since $e^{-pH}$ is an isometry of $\Hal(X,\Z)$ and
$$e^{-pH}\v'_1=(r_1',(d_1'-r_1'p)H,a_1'+r_1'p^2n-2nd_1'p),$$
it follows that $e^{-pH}\v'_1\in D_\v$ if and only if $C^\v_{e^{-pH}\v'_1}\cap \{s=0,t>0\} \ne \varnothing$.   As we noted above, $C^\v_{e^{-pH}\v'_1}=C^{e^{pH}\v}_{\v'_1}-(p,0)$.  Thus for $\v'_1\ne \v(\OO_X(pH))\in D_{e^{pH}\v}$, we can summarize this equivalence geometrically by $e^{-pH}\v'_1\in D_{\v}$ if and only if the semicircle $C^{e^{pH}\v}_{\v'_1}$, which intersects $\{s=0,t>0\}$, also intersects the positive ray $\{s=p,t>0\}$.  So we want to show that for any $\v'_1\ne \v(\OO_X(pH))\in D_{e^{pH}\v}$, $C^{e^{pH}\v}_{\v'_1}$ intersects the positive ray $\{s=p,t>0\}$.  

Since we do not know anything a priori about the invariants of $\v'_1$, we prove a more general claim.  Namely, we claim that if $\alpha$ is such that $C^{e^{pH}\v}_\alpha\cap\{s=0,t>0\}\ne\varnothing$, then $C^{e^{pH}\v}_\alpha\cap\{s=p,t>0\}\ne\varnothing$ and this pseudo-wall is contained in $\H^0$.  As $C^{e^{pH}\v}_{\v'_1}\subset\H^0$ is one such semi-circle, this will guarantee that $e^{pH}\v'_1\in D_\v$ for every $\v(\OO_X(pH))\ne\v'_1\in D_{e^{pH}\v}$, as required.

To prove the claim, let $\lambda$ be the unique value of $\alpha$ such that $C^{e^{pH}\v}_\lambda$ contains the origin.  As of the $C^{e^{pH}\v}_\alpha$ that we are interested in lie above $C^{e^{pH}\v}_\lambda$ (guaranteed by the requirement that $C^{e^{pH}\v}_\alpha\cap\{s=0,t>0\}\ne\varnothing$), it suffices to show that $C^{e^{pH}\v}_\lambda\cap\{s=p,t\geq0\}\ne\varnothing$ and that if $(s,t) \in {\Bbb H}$ satisfies
\begin{equation}\label{eqn:verticalstrip}
t^2+s^2-2\lambda s > -2 \lambda \left(\tfrac{d}{r}+p\right)+\tfrac{a+rp^2n+2ndp}{rn},\;
0<s<p,
\end{equation}
then $(s,t) \in {\Bbb H}^0$ in \eqref{eq:geometric-def}.
Note that this is just the part of the vertical strip $\{0<s<p,t>0\}$ lying above the circle $C^{e^{pH}\v}_\lambda$.

As a first step, a simple calculation using \eqref{eqn:otherformofcircle} gives $\lambda=\frac{a+rp^2n+2ndp}{2n(d+rp)}=\frac{p}{2}+\frac{a+ndp}{2n(d+rp)}$, so \eqref{eqn:verticalstrip} becomes
\begin{equation}\label{eqn:verticalstrip2}t^2+s^2-2\lambda s>0,0<s<p.
\end{equation}
Moreover, we see that $C^{e^{pH}\v}_\lambda\cap\{s=p,t\geq0\}\ne\varnothing$ since setting $s=p$ and solving for $t^2$ in \eqref{eqn:otherformofcircle} for $e^{pH}\v$ gives 
\begin{equation}
t^2=\frac{p(pdn+a)}{n(d+rp)}\geq0, 
\end{equation}
so $C^{e^{pH}\v}_\lambda\cap\{s=p,t\geq0\}=\{(p,\sqrt{\frac{p(pdn+a)}{n(d+rp)}})\}$.

It remains to prove that the subset described in \eqref{eqn:verticalstrip2} is contained in $\H^0$.  We observe that the circle $C^{e^{pH}\v}_\lambda$ gets larger with $\lambda$, so it suffices to prove that the subset
\begin{equation}\label{eqn:verticalstrip3}
    t^2+s^2-ps>0,0<s<p 
\end{equation} is contained in $\H^0$, which is just \eqref{eqn:verticalstrip2} for the minimal possible value of $\lambda=\frac{p}{2}$, occurring when $a=-pdn$.   

To do this, we break the interval $(0,p)$ into three pieces.  We will refer to \cref{Fig:C_lambda} to help explain our argument.  

\begin{figure}[h]
   \begin{tikzpicture}[scale=5]
   \draw [->] (0,0) node [below] {$s=0$} -- (2.2,0) node[above] {$s$};
   \draw[->] (0,0) -- (0,1.3) node[left] {$t$};
   \draw[-] (2,0) node [below] {$s=p$}--(2,1.3) ;
   \draw[-,dotted] (0.5,0) node [below] {$s=\frac{1}{2\sqrt{n}}$} -- (0.5,0.5);
   \draw[-,dotted] (1.5,0) node [below] {$s=p-\frac{1}{2\sqrt{n}}$} -- (1.5,0.5);
   \draw[red,thick,domain=0:2] plot (\x,{sqrt(2*\x-pow(\x,2))});
    \draw[green,thick,domain=0:2.1] plot (\x,{sqrt(2.1*\x-pow(\x,2))});
   \draw[blue,thick,domain=0:0.5] plot (\x,{0.5-sqrt(0.25-pow(\x,2))});
 \draw[blue,thick,domain=1.5:2] plot (\x,{0.5-sqrt(0.25-pow(\x-2,2))});
 \draw[blue,thick,domain=0.5:1.5] plot (\x,{0.5});
 \draw[gray,domain=0:0.5] plot (\x,{0.5+sqrt(0.25-pow(\x,2))});
  \draw[gray,domain=1.5:2] plot (\x,{0.5+sqrt(0.25-pow(\x-2,2))});
\node[below] at (0.4,0.5) {$D_+$};
  \node[below] at (1.6,0.5) {$D_-$};
  \node[above] at (1,0.5) {$t=\frac{1}{2\sqrt{n}}$};
  \node[below] at (1,1) {$C^{e^{pH}\v}_{p/2}$};
  \node[above] at (1,1.1) {$C^{e^{pH}\v}_{\lambda},\lambda>\frac{p}{2}$};
  \node[right] at (0.4,0.78) {$(s_0,t_0)$};
   \end{tikzpicture}
   \caption{Bounding $C^{e^{pH}\v}_\lambda$ from below}\label{Fig:C_lambda}
\end{figure}

When $0<s<\frac{1}{2\sqrt{n}}$, we claim that the region describe in \eqref{eqn:verticalstrip3} is contained in $U_+$.  Indeed, let $D_+$ be the right semi-circle given by
\begin{equation}
s^2+(t-\frac{1}{2\sqrt{n}})^2=\frac{1}{4n},0<s<\frac{1}{2\sqrt{n}},
\end{equation} 
whose lower half gives the boundary curve of $U_+$ (which is blue in \cref{Fig:C_lambda}).  Then it is easy to see that
$$
D_+ \cap C^{e^{pH}\v}_{p/2}=\{(0,0),(s_0,t_0)\},\; s_0=\frac{p}{np^2+1},\;
t_0=\sqrt{n}p s_0.
$$
Thus $t_0/s_0=p\sqrt{n}> 1$ (from the restriction on $p$ and $n$ in case $a=-ndp$). It follows that $(s_0,t_0)$ is on the upper half of $D_+$.  As $C^{e^{pH}\v}_{p/2}$ lies above the boundary curve of $U_+$ (i.e. the lower half of $D_+$) near the origin, and the entire lower half of $D_+$ lies below the line $t=s$, it follows that $C^{e^{pH}\v}_{p/2}$ lies above the boundary curve of $U_+$ when $0<s<\frac{1}{2\sqrt{n}}$, giving the claim.  

We similarly define $D_-$ to be the left semi-circle given by
\begin{equation}
(s-p)^2+(t-\frac{1}{2\sqrt{n}})^2=\frac{1}{4n},p-\frac{1}{2\sqrt{n}}<s<p,
\end{equation} 
whose lower half gives the boundary curve of $U_-+(p,0)$.   As the maxima of the lower halves of $D_+$ and $D_-$ are identical (equal to $t=\frac{1}{2\sqrt{n}}$), we can connect the these lower halves by the line segment $\{\frac{1}{2\sqrt{n}}<s<p-\frac{1}{2\sqrt{n}},t=\frac{1}{2\sqrt{n}}\}$ to give the blue curve in \cref{Fig:C_lambda}.  As the center of a circle $C^{e^{pH}\v}_\lambda$ is $\lambda$, the center of $C^{e^{pH}\v}_{p/2}$ is $\frac{p}{2}$, so  the symmetry of a circle about its center guarantees that $C^{e^{pH}\v}_{p/2}$, which is the red curve in \cref{Fig:C_lambda}, remains above the blue curve.

As $U_+\subset\H^0$ and $U_-+(p,0)\subset\H^0$ by \cite[Prop. 2.6]{Yos17},
we see that when $0<s<\frac{1}{2\sqrt{n}}$ or $p-\frac{1}{2\sqrt{n}}<s<p$, the region above $C^{e^{pH}\v}_{p/2}$ is in $\H^0$.  

To conclude the proof, suppose that $(s,t)$ in \eqref{eqn:verticalstrip3} with $\frac{1}{2\sqrt{n}}\leq s\leq p-\frac{1}{2\sqrt{n}}$ were not in $\H^0$.  
Unwinding the definition of $\H^0$ in \eqref{eq:geometric-def}, we see that
$$
\H^0=\H\backslash\bigcup_{\v_1\in\Delta_+(X)}\Set{(s,t)\in\H \ | \ s=\frac{d_1}{r_1},t\leq\frac{1}{r_1\sqrt{n}}}.$$
Thus it follows that there is some $\v_1=(r_1,d_1H,a_1)\in\Delta_+(X)$ such that $(s,t)$ satisfies
$$s=\frac{d_1}{r_1},t\leq\frac{1}{r_1\sqrt{n}}.$$
As $C^{e^{pH}\v}_{p/2}$ is above blue horizontal line in \cref{Fig:C_lambda} for these values of $s$, so we have $t>\frac{1}{2\sqrt{n}}$, 
which forces $r_1=1$.  Thus $s=d_1$ is an integer satisfying $1\leq s\leq p-1$, so $p\geq 2$ and we have
$$\frac{1}{\sqrt{n}}\geq t>\sqrt{-s^2+ps}=\sqrt{\frac{p^2}{4}-\left(s-\frac{p}{2}\right)^2}\geq 1,$$
a contradiction.  
\end{proof}
\begin{Rem}
Even when $a<-pdn$, the same argument allows one to determine precisely how $D_\v$ and $D_{e^{pH}\v}$ are related.
\end{Rem}
The significance of this result is that it allows us to study $\v$ with small $d$ relative to $r$.  Then, from the result we can determine the largest totally semistable wall for $\v e^{pH}$ by comparing the totally semistable walls determined by $\v_1=\v(\OO(pH))=(1,pH,1+p^2n)$ and $\v_1=\v_1' e^{pH}$, where $\v_1'\in D_{\v}$ gives the largest totally semistable wall for $\v$.  Whichever of these two walls has larger radius is the largest totally semistable wall for $\v e^{pH}$.  In particular, when $D_\v=\varnothing$, we know the only possible totally semistable wall for $\v e^{pH}$ is given by $\v_1=(1,pH,1+p^2n)$, from which it is easy to determine the cohomology of the generic sheaf in $M_H(\v e^{pH})$.  This is the motivating rationale for the following results.
\begin{Thm}\label{Prop:CohomologyCalc a<=2}
Let $r,d,i$ be integers such that $r\geq 0$, $d>0$, and $i\geq 0$.  Let $E \in M_X(\v)$ be a generic sheaf. 
\begin{enumerate}
    \item If $\v=(r,dH,-i)$ with $ i\leq r$, then  $h^1(X,E)=0$ and
    $$h^1(X,E(pH))=\max\{0,r-2pdn-(p^2n+1)i\}$$
     for $p> 0$.
    \item Let $\v=(r,dH,1-i)$ with $i\leq r+1$ and if $n=1$, assume that $\v \neq (2d-1,dH,1)$. Then $h^1(X,E)=0$ 
    and $$h^1(X,E(pH))=\max\{0,r-2pdn-(p^2n+1)(i-1)\} \quad \mbox{for} \quad 0<p<d.$$
      Moreover, if $\v^2\geq 0$, then $h^1(X,E(dH))=0$.  If $\v^2=-2$, then $h^1(X,E(dH))=1$ and $h^1(X,E((d+1)H))=0$.  Finally,
    if $n=1$ and $\v=(2d-1,dH,1)$, then $H^1(X,E(pH))=0$ for all $p\geq 0$.
    \item If $\v=(r,dH,2-i)$ with $i\leq r+2$, then $h^1(X,E)=0$ except if $n=1$ and $\v=(5,3H,2)$.  Moreover, if $\v$ is not one of the vectors listed in \cref{Lem:NoTSSWalls a=2}, we have 
    $$h^1(X,E(pH))=\max\{0,r-2pdn-(p^2n+1)(i-2)\} \quad \mbox{for} \quad 0 < p < \frac{d}{2},$$
    except when $n=1$, $\v=(\frac{d^2+1}{2},dH,2)$ and  $p=\frac{d-3}{2}$.  In this case, $h^1(X,E(\left(\frac{d-3}{2}\right)H))=8$.  
    \begin{enumerate}
    \item If $n=1$ and $\v=(5,3H,2)$, then  $h^1(X,E)=1$, $h^1(X,E(H))=3$, and $h^1(X,E(pH))=0$ for $p\geq 2$. 
    \item If $\v$ is one of the vectors in cases (1)(c), (2)(a),(b) and (c), and (3) of \cref{Lem:NoTSSWalls a=2} and  $\v \neq (5,3H,2)$ when $n=1$, then $H^1(X,E(pH))=0$ for $p\geq 0$.  
    \item In case (1)(a), $h^1(X,E(H))=5$ and $h^1(X,E(pH))=0$ for $p\geq 2$. 
    \item In case (1)(b), $h^1(X,E(H))=13$, $h^1(X,E(2H))=5$, and $h^1(X,E(pH))=0$ for $p\geq 3$. 
    \item In case (1)(d), $h^1(X,E(H))=6$ and $h^1(X,E(pH))=0$ for $p\geq 2$.
    \end{enumerate}
\end{enumerate}
\end{Thm}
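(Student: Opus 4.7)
The plan is to reduce each case to either a direct application of the formula from Remark~\ref{rem:tensor} or to an explicit computation using a short exact sequence arising from the Harder–Narasimhan filtration along a totally semistable wall. In each part, the first task is to establish $h^1(X,E)=0$: this follows from Propositions~\ref{Prop:a<=0}, \ref{lem:NoTSSWalls a<=1}, and \ref{Lem:NoTSSWalls a=2} combined with the monotonicity of Proposition~\ref{prop:minimal-a}, which reduces the vanishing question to the Mukai vector of minimum square in each family. Once $h^1(E)=0$ is known, Remark~\ref{rem:tensor} yields $h^1(E(pH))=\max\{0,\,a(np^2+1)+r-2npd\}$, which simplifies to the stated expressions in each of parts (1)–(3).

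Part (1) is then immediate: since $a=-i\leq 0$ and $\chi(\v)=r-i\geq 0$, Proposition~\ref{Prop:a<=0} gives $h^1(E)=0$, and Remark~\ref{rem:tensor} delivers the formula for all $p>0$ without exceptions.

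For part (2), the generic formula holds for $0<p<d$ (and extends to $p=d$ when $\v^2\geq 0$). When $\v^2=-2$ one has $\v=(d^2n+1,dH,1)$ and $E$ is unique, and I would instead use the explicit resolution
$$0\to\OO_X(-dH)\to\OO_X^{\oplus(r+1)}\to E\to 0$$
from Remark~\ref{rem-gg}. Tensoring with $\OO_X(pH)$ and taking the long exact sequence gives $h^1(E(pH))=h^2(\OO_X((p-d)H))$, which matches the formula for $0<p<d$, equals $1$ at $p=d$ via the connecting map from $H^2(\OO_X)=\C$, and vanishes for $p\geq d+1$ by Kodaira. The remaining exception $n=1$, $\v=(2d-1,dH,1)$ is the single $D_\v\neq\varnothing$ case of Proposition~\ref{lem:NoTSSWalls a<=1}; the HN sequence $0\to T_1\to E\to F\to 0$ with $\v(T_1)=(2,H,1)$ and $F$ generic in $M_H(2d-3,(d-1)H,0)$ reduces the twisted cohomology to part (1) applied to $F$, using $h^1(T_1(pH))=0$ from Proposition~\ref{prop:rankstwoandthree} together with Remark~\ref{rem:tensor}.

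Part (3) proceeds analogously: for $\v$ with $D_\v=\varnothing$ the formula from Remark~\ref{rem:tensor} gives the value on $0<p<d/2$, and the listed exceptional vectors (3)(a)--(e) are exactly those classified in Lemma~\ref{Lem:NoTSSWalls a=2}. For each such $\v$, I would use the HN short exact sequence $0\to T_1^{\oplus k}\to E\to F\to 0$ from the proof of that lemma—or the sequence \eqref{eqn:(5,3,2)} $0\to\OO_X\to T_1^{\oplus 3}\to E\to 0$ for $\v=(5,3H,2)$—tensor with $\OO_X(pH)$, and compute cohomology by reducing the factor $F(pH)$ via parts (1)–(2) and Propositions~\ref{prop:Rank0}--\ref{prop:Rank1}. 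The hardest point will be the anomalous value $h^1(E(\tfrac{d-3}{2}H))=8$ when $n=1$ and $\v=((d^2+1)/2,dH,2)$: here $\v^2=-2$ makes $E$ the unique spherical sheaf in $M_H(\v)$, and the bare Hom formula undercounts by $1$. Establishing the correct value requires computing $\chi(E,F_p)=7$ via Riemann–Roch, verifying $\hom(F_p,E)=0$ (hence $\ext^2(E,F_p)=0$ by Serre duality) through a slope comparison between $E$ and $F_p$, and then exhibiting the single non-trivial class in $\ext^1(E,F_p)$ via a wall-crossing argument that uses the fact that $\v$ and $\v(F_p)$ become strictly $\sigma$-semistable on a common Bridgeland wall in $U_+$.
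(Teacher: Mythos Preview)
Your overall strategy—establish $h^1(E)=0$ first, then invoke Remark~\ref{rem:tensor}—matches the paper's in spirit, and for part~(1) it is actually cleaner than the paper's approach (which uses a Fourier--Mukai exact sequence and induction on $r$). However, your reliance on the formula $\hom(E,F_p)=\max\{0,-\langle\v,\v(F_p)\rangle\}$ from Remark~\ref{rem:tensor} creates genuine gaps, because that formula is derived via \cite[Lemma~6.8]{BM14b} and can fail when $\v$ is spherical or isotropic, or when $F_p$ is a destabilizing subobject rather than a quotient.

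Concretely: (i) In part~(2) at $p=d$ with $\v^2=0$ (so $\v=(nd^2,dH,1)$), the formula gives $1$ while the correct answer is $0$; here $\mu(E)>\mu(F_d)$, so $F_d$ is a destabilizing \emph{subobject} and $\hom(E,F_d)=0$ even though the pairing is $-1$. Your parenthetical ``extends to $p=d$ when $\v^2\geq 0$'' is thus unjustified. (ii) In the exceptional case $n=1$, $\v=(2d-1,dH,1)$, your HN-sequence argument does not close: the spherical factor $T_1\in M_H(2,H,1)$ has $h^1(T_1(pH))\ne 0$ for $p\geq 1$ (e.g.\ $T_1(H)\in M_H(2,3H,5)$ has $h^1=1$), so the long exact sequence alone does not force $h^1(E(pH))=0$. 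The paper instead uses the slope criterion in Proposition~\ref{prop:Tensor}: since $d/(2d-1)>\tfrac12\geq p/(p^2+1)$ for $p\geq 1$, one gets $\Hom(E,F_p)=0$ directly. (iii) For the anomalous value $8$ in part~(3), your proposed $\ext^1$ computation via wall-crossing is vague; the paper's method is far simpler. Throughout parts~(2)--(3) the paper computes $h^1(E(pH))=h^0\bigl(\Phi_{X\to X}^{I_\Delta}(E)^\vee(-pH)\bigr)$ directly, where $\Phi_{X\to X}^{I_\Delta}(E)^\vee$ is a rank-$1$ (resp.\ rank-$2$) sheaf with known cohomology by Propositions~\ref{prop:Rank1} and~\ref{prop:rankstwoandthree}. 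When this twist lands in $M_H(2,3H,5)$ one reads off $h^0=\chi+h^1=7+1=8$. This Fourier--Mukai viewpoint sidesteps all the edge-case failures of the Hom formula that your proposal runs into.
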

\begin{proof}
Let us prove (1) first.  By \cite[Theorem 0.1]{KY11}, $\Phi_{X\to X}^{I_\Delta[1]}$ induces a birational map $$M_H(\v)\dashrightarrow M_H(i,dH,-r),$$ so by the proof of \cref{Lem:FM-vanishing}, the generic $E\in M_H(\v)$ fits into a short exact sequence of sheaves \begin{equation}\label{eqn:a<=0}
    0\to \OO_X^{\oplus(r-i)}\to E\to\Phi_{X\to X}^{I_\Delta}(E)[1]\to 0.
\end{equation}
Setting $F=\Phi_{X\to X}^{I_\Delta}(E)[1]$ and taking cohomology, there is a short exact sequence $$0\to H^1(X,E)\to H^1(X,F)\to \C^{\oplus(r-i)}\to 0.$$  Since $h^1(X,F)=-\chi(F)=r-i$ by \cref{Prop:a<=0},  $H^1(X,E)=0$ as claimed.  

Tensoring \eqref{eqn:a<=0} by $\OO_X(pH)$ ($p\geq 1$) and taking cohomology, we see that $h^1(E(pH))=h^1(F(pH))$.  Note that $$\chi(F(pH))=i-r+2pdn+p^2ni=-r+2pdn+i(p^2n+1),$$ so if $i<i_0:=\max\{\lceil\frac{r-2pdn}{p^2n+1}\rceil,0\}$, then $\chi(F(pH))<0$ and thus $$h^1(X,E(pH))=h^1(X,F(pH))=-\chi(F(pH))=r-2pdn-i(p^2n+1)>0$$ by \cref{Prop:a<=0}.

Next we show that if $i\geq i_0$, then $h^1(X,E)=0=\max\{0,r-2pdn-(p^2n+1)i\}$ by induction on $r$.  Observe first that $i_0<r$ since $\frac{r-2pdn}{p^2n+1}\leq\frac{r}{2}\leq r-1$.   By \cref{prop:minimal-a}, it suffices to consider the case $i=i_0$.  Tensoring \eqref{eqn:a<=0} by $\OO_X(pH)$ ($p\geq 1$) and taking cohomology, we see that $h^1(X,E(pH))=h^1(X,F(pH))$.  By induction, the generic $F'\in M_H(i_0,dH,-i_0')$ satisfies $H^1(X,F'(pH))=0$, where $i_0':=\max\{\lceil\frac{i_0-2pdn}{p^2n+1}\rceil,0\}<i_0$.  From the definition of $i_0$, $\chi(F(pH))\geq 0$, so applying \cref{prop:minimal-a} to $F(pH))$ which is generic in its moduli space, we get that $H^1(X,F(pH))=0$ as well.  This gives $H^1(X,E(pH))=0$ as required.

Now we prove (2).  By part (1), it suffices to consider  the case $i=0$.  
By \cref{lem:NoTSSWalls a<=1},  $D_\v=\varnothing$ unless $n=1$ and $\v=(r,\left(\frac{r+1}{2}\right)H,1)$, in which case $D_\v=\{(2,H,1)\}$. In this case,   $\v_1=(2,H,1)$ is the unique effective spherical class $\v_1$ in the primitive isotropic sublattice $\HHH$ defined by the wall $C^\v_{\v_1}$.  
By \cref{lem:max-HNF2}, the generic $E\in M_H(\v)$ fits into a short exact sequence 
\begin{equation}\label{eqn:exceptionalcase}
0\to T_1\to E\to F\to 0,
\end{equation}
 where $T_1$ is the unique stable spherical sheaf with $\v(T_1)=\v_1$ and $F$ is a $\sigma$-stable object such that $\v(F)=(r-2,\left(\frac{r-1}{2}\right)H,0)$.  
 By \cref{Prop:a<=0}, $D_{\v(F)}=\varnothing$.  Thus the generic $F\in M_\sigma(\v(F))$ is a stable sheaf with vanishing $H^1$ by \cref{thm:summing up inequalities}.  
 As $H^1(X,T_1)=0$ by \cref{prop:rankstwoandthree}, we get $H^1(X,E)=0$.  As $\frac{d}{r}=\left(\frac{r+1}{r}\right)\left(\frac{1}{2}\right)>\frac{1}{2}\geq\frac{p}{p^2+1}$ for $p\geq 1$, it follows from \cref{prop:Tensor} that $h^1(X,E(pH))=0$ for all $p\geq 1$.
 
 Now we study the case where $D_\v=\varnothing$.  It follows that $\Phi_{X\to X}^{I_\Delta}(E)^{\vee}\in M_H(1,dH,r)$.  Applying $\Hom(\OO(-pH),\blank)$ to the distinguished triangle 
 $$\Phi_{X\to X}^{I_\Delta}(E)\to R\Gamma(X,E)\otimes\OO_X\to E,$$
 and using Serre duality, we get 
 \begin{align*}
 h^1(X,E(pH))&=h^0(X,\Phi_{X\to X}^{I_\Delta}(E)^\vee(-pH))=\max\{\chi(\Phi_{X\to X}^{I_\Delta}(E)^\vee(-pH)),0\}\\
 &=\max\{0,r+(p^2n+1)-2pdn\}
 \end{align*}
 if $0<p<d$, as required.  Suppose that $p=d$.  Then if $\v^2\geq 0$, then $\Phi_{X\to X}^{I_\Delta}(E)^\vee\ne\OO_X(dH)$, so $h^1(X,E(dH))=0$.  If, on the other hand, $\v^2=-2$ so that $\Phi_{X\to X}^{I_\Delta}(E)^\vee=\OO_X(dH)$, then $h^1(X,E(dH))=1$ while $h^1(X,E((d+1)H))=0$.
 
Finally, we prove (3). By parts (1) and (2), it suffices to consider $i=0$.  If $\v$ is not one of the exceptional cases in \cref{Lem:NoTSSWalls a=2}, then $D_\v=\varnothing$, so for generic $E\in M_H(\v)$ we have  $F:=\Phi_{X\to X}^{I_\Delta}(E)^\vee\in M_H(2,dH,r)$.  Hence as long as $F(-pH)\ne(2,3H,5)$ when $n=1$, it follows that
\begin{align*}
    h^1(X,E(pH))&=h^0(F(-pH))=\max\{0,\chi(F(-pH))\}\\
    &=\max\{0,r-2pdn+2(p^2n+1)\}
\end{align*}
for $0<p<\frac{d}{2}$ by \cref{prop:rankstwoandthree}.  

It only remains to consider one of the exceptional cases in \cref{Lem:NoTSSWalls a=2} when $D_\v\ne\varnothing$.

In cases (1)(c), (2)(a),(b) and (c), and (3), the generic $E\in M_H(\v)$ satisfies $H^1(X,E(pH))=0$ for $p\geq 0$ by \cref{Lem:NoTSSWalls a=2} and \cref{prop:Tensor} except when $n=1$ and $\v=(5,3H,2)$.  In this case, the unique $E\in M_H(5,3H,2)$  sits in the short exact sequence \eqref{eqn:(5,3,2)}
$$0\to\OO_X\to T_1^{\oplus 3}\to E\to 0,$$
where $T_1$ is the unique element of $M_H(2,H,1)$.  From this we see that $$h^1(X,E(pH))=3h^1(X,T_1(pH))$$ for every $p\geq 1$.  So $h^1(X,E)=1$, $h^1(X,E(H))=3$, and $h^1(X,E(pH))=0$ for $p\geq 2$.

It remains to consider cases (1)(a),(b), and (d).  It follows from \cref{prop:Tensor} that the generic $E\in M_H(\v)$ satisfies $H^1(X,E(pH))=0$ for $p\geq 2$ in cases (1)(a) and (d), and $p\geq 3$ in case (1)(b).  Consider (1)(a) first.  Then $\langle\v,\v_1\rangle=-1$, so the generic $E\in M_H(11,5H,2)$ sits in a short exact sequence
$$0\to T_1\to E\to F\to 0,$$
where $F\in M_\sigma(6,3H,1)$ is generic and $M_H(5,2H,1)=\{T_1\}$.  By \cref{lem:NoTSSWalls a<=1}, as $F$ is generic, we must have $F\in M_H(6,3H,1)$.  Moreover, it follows that there is a short exact sequence
$$0\to\Phi_{X\to X}^{I_\Delta}(F)^\vee\to\Phi_{X\to X}^{I_\Delta}(E)^\vee\to\Phi_{X\to X}^{I_\Delta}(T_1)^\vee\to 0,$$
where $\Phi_{X\to X}^{I_\Delta}(T_1)^\vee\cong\OO_X(2H)$ and $\Phi_{X\to X}^{I_\Delta}(F)^\vee\in M_H(1,3H,6)$.  Thus
$$h^1(X,E(H))=h^0(X,\Phi_{X\to X}^{I_\Delta}(E)^\vee(-H))=h^0(X,\OO_X(H))+h^0(X,\Phi_{X\to X}^{I_\Delta}(F)^\vee(-H))=3+2=5.$$

Now consider (1)(d).  Then $\langle\v,\v_1\rangle=-2$, so the generic $E\in M_H(12,5H,2)$ sits in a short exact sequence
$$0\to T_1^{\oplus 2}\to E\to F\to 0,$$
where $F\in M_\sigma(2,H,0)$ is generic and $M_H(5,2H,1)=\{T_1\}$.  By \cref{Prop:a<=0}, we must have $F\in M_H(2,H,0)$.  Moreover, it follows that there is a short exact sequence
$$0\to\Phi_{X\to X}^{I_\Delta}(F)^\vee\to\Phi_{X\to X}^{I_\Delta}(E)^\vee\to(\Phi_{X\to X}^{I_\Delta}(T_1)^\vee)^{\oplus 2}\to 0,$$
where $\Phi_{X\to X}^{I_\Delta}(T_1)^\vee\cong\OO_X(2H)$ and $\Phi_{X\to X}^{I_\Delta}(F)^\vee\in M_H(0,H,2)$.  Thus
$$h^1(X,E(H))=h^0(X,\Phi_{X\to X}^{I_\Delta}(E)^\vee(-H))=2h^0(X,\OO_X(H))+h^0(X,\Phi_{X\to X}^{I_\Delta}(F)^\vee(-H))=6+0=6.$$ 

Finally, we consider case (1)(b).  Then $\langle\v,\v_1\rangle=-1$, so the generic $E\in M_H(23,7H,2)$ sits in a short exact sequence
$$0\to T_1\to E\to F\to 0,$$
where $F\in M_\sigma(13,4H,1)$ is generic and $M_H(10,3H,1)=\{T_1\}$.  By \cref{lem:NoTSSWalls a<=1}, we must have $F\in M_H(13,4H,1)$.  Moreover, it follows that there is a short exact sequence
$$0\to\Phi_{X\to X}^{I_\Delta}(F)^\vee\to\Phi_{X\to X}^{I_\Delta}(E)^\vee\to\Phi_{X\to X}^{I_\Delta}(T_1)^\vee\to 0,$$
where $\Phi_{X\to X}^{I_\Delta}(T_1)^\vee\cong\OO_X(3H)$ and $\Phi_{X\to X}^{I_\Delta}(F)^\vee\in M_H(1,4H,13)$.  Thus
$$h^1(X,E(pH))=h^0(X,\Phi_{X\to X}^{I_\Delta}(E)^\vee(-pH))=h^0(X,\OO_X((3-p)H))+h^0(X,\Phi_{X\to X}^{I_\Delta}(F)^\vee(-pH)).$$
This gives $h^1(X,E(H))=13$ and $h^1(X,E(2H))=5$ for generic $E\in M_H(23,7H,2)$.
\end{proof}
\begin{Prop}\label{Prop:d=r/n+1}
Let $\v=(r,dH,a)$ with $r,a\geq 0$, $d>0$, and $\v^2\geq -2$.  Suppose that $d=\frac{r}{n+1}$ and assume that there is a semistable sheaf $E$ with $\v(E)=\v$.
\begin{enumerate}
    \item If $a>\left(\frac{n}{n+1}\right)d$, then $a=d=1$, and $M_H(\v)=\{E_0\}$, in which case $H^1(X,E_0(pH))=0$ for $p=0$ and $p\geq 2$, while $h^1(X,E_0(H))=1$.
    \item If $0< a\leq\left(\frac{n}{n+1}\right)d$, then for generic $E\in M_H(\v)$, $h^1(X,E(pH))=0$ for $p=0$ and $p\geq 2$, while $h^1(X,E(H))=\max\{(n+1)a-(n-1)d,0\}$.
\end{enumerate}
\end{Prop}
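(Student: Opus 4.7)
For part (1), the plan is to observe that the hypothesis $a > \tfrac{n}{n+1}d$ combined with integrality forces $(n+1)a \geq nd + 1$, so $\v^2 = 2d(nd - (n+1)a) \leq -2d$; comparing with $\v^2 \geq -2$ gives $d = a = 1$, $r = n+1$, and $\v^2 = -2$. By \cref{Thm:ClassicalModSp}, $M_H(\v) = \{E_0\}$, where $E_0 = f^* T\P^{n+1}(-1)$ for the morphism $f\colon X \to \P^{n+1}$ induced by $|H|$; equivalently, $E_0$ fits in the pulled-back dual Euler sequence
\[
0 \to \OO_X(-H) \to \OO_X^{\oplus (n+2)} \to E_0 \to 0.
\]
Tensoring with $\OO_X(pH)$ and invoking Kodaira vanishing then yields, by direct inspection of the long exact sequence in each case, the stated values: $h^i(E_0) = 0$ for all $i$; $h^1(E_0(H)) = 1$, contributed by $H^2(\OO_X)$; and $h^1(E_0(pH)) = 0$ for $p \geq 2$ since every intermediate term vanishes.

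For part (2), I would first establish $H^1(X,E) = 0$ for generic $E \in M_H(\v)$. Since $a \leq nd/(n+1)$ gives $\v^2 \geq 0$, \cref{Lem:O_X[1] largest wall}(2) is available; it suffices to verify that every $\v_1 = (r_1,d_1H,a_1) \in D_\v$ satisfies $a_1 d - a d_1 \leq r_1 d - r d_1$. Writing $r_1 = (n+1)d_1 + j$ (with $j \geq 1$, forced by $r_1 d > r d_1$), and using the spherical relation $r_1 a_1 = n d_1^2 + 1$, the coexistence of $\langle \v,\v_1 \rangle < 0$ and $a_1 d - a d_1 \geq r_1 d - r d_1$ for some admissible $a$ reduces, after eliminating $a$ between the two bounds, to the numerical inequality $j(r_1 + a_1) \leq 2$, which fails since $j \geq 1$ and $r_1 + a_1 \geq (n+1)d_1 + 2 \geq 4$. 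Hence $D_\v^{BN} = \varnothing$, giving $H^1(X,E) = H^2(X,E) = 0$. For $p \geq 2$, the slope inequality $\mu(E) = 1/(n+1) > p/(p^2 n + 1) = \mu(F_p)$ is equivalent to $p(n(p-1) - 1) + 1 > 0$, which holds in this range, so \cref{prop:Tensor} yields $H^1(X,E(pH)) = 0$; when $E$ is only $\mu$-semistable, the vanishing of $\Hom(E,F_p)$ passes through the $\mu$-Jordan-Holder factors of $E$, each of which is $\mu$-stable of slope $\mu(E)$.

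The remaining case $p=1$ is the substantive computation. Since $F_1 \in M_H(n+1, H, 1) = \{E_0\}$ by part (1), \cref{prop:Tensor} gives $h^1(X,E(H)) = \hom(E,E_0)$. Comparing reduced Hilbert polynomials yields
\[
p_{E_0,H}(m) - p_{E,H}(m) = \frac{d-a}{(n+1)d} > 0
\]
from $a < d$, so the Gieseker stable sheaf $E_0$ cannot occur as a subsheaf of any Gieseker semistable $E$ without violating stability; consequently $\Hom(E_0,E) = 0$, and by Serre duality $\Ext^2(E,E_0) = 0$. Riemann-Roch then gives $\chi(E,E_0) = \hom(E,E_0) - \ext^1(E,E_0) = -\langle \v, \v(E_0) \rangle = (n+1)a - (n-1)d$. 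The hard part will be pinning down the generic value $\ext^1(E,E_0) = \max\{0, (n-1)d - (n+1)a\}$, which produces the desired $\hom(E,E_0) = \max\{0, (n+1)a - (n-1)d\}$. I would obtain this either by invoking \cref{rem:tensor} directly (whose formula for $\hom(E,F_p)$ at $p=1$ is precisely this maximum, since $F_1 = E_0$ is $\sigma$-stable and spherical at the wall where $E$ and $E_0$ share a phase), or by transporting through the Fourier-Mukai equivalence of \cref{prop:isom}: the generic $E$ corresponds to a generic $F \in M_H(a, dH, (n+1)d)$ with $h^1(E(H)) = h^0(F(-H))$, reducing the problem to weak Brill-Noether for $\v(F(-H)) = (a, (d-a)H, an - (n-1)d)$, which can be verified using the results of \cref{sec-mainconsequences} and \cref{sec-initialclassification}.
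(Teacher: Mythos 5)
Your argument is correct, and part (1) together with the $p\geq 1$ computations in part (2) essentially coincide with the paper's (the paper likewise resolves $h^1(E(H))$ via \cref{prop:Tensor} and \cref{rem:tensor}, whose formula at $p=1$ gives exactly $\max\{0,(n+1)a-(n-1)d\}$ since $-\langle\v,\v(E_0)\rangle=(n+1)a-(n-1)d$; so the step you flag as "the hard part" is closed by your route (i), which is what the paper does). Where you genuinely diverge is in proving $H^1(X,E)=0$ for generic $E$ in part (2). The paper does not touch $D_\v$ here at all: it writes $\v=d\v_0-b\v(k_x)$ with $b=d-a$, notes that $a\leq\frac{n}{n+1}d$ is equivalent to $(n+1)b\geq d$, and then uses the elementary-modification machinery of \cref{Lem:E_0} and \cref{prop:E_0} to produce a $\mu$-semistable sheaf $\Ker\bigl(E_0^{\oplus d}\to\oplus_{i=1}^b k_{x_i}\bigr)$ with vanishing $H^1$ and $\Hom(E_0,-)=0$ in the same irreducible component as the generic $E$, concluding by semicontinuity. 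Your route instead shows $D_\v^{BN}=\varnothing$ by a clean elimination: writing $r_1=(n+1)d_1+j$ with $j\geq 1$ and using $r_1a_1=nd_1^2+1$, the two inequalities force $j(r_1+a_1)<2$, which is impossible. Your computation is right, and it is arguably more self-contained given \cref{thm:summing up inequalities}. What the paper's approach buys is robustness at the boundary $a=\frac{n}{n+1}d$: there $\v^2=0$ and $\v$ is a multiple of the primitive isotropic class $((n+1)^2,(n+1)H,n)$, so the wall-classification underlying \cref{thm:summing up inequalities} (which rests on \cref{Prop:BMClassificationOfTSSWalls}, stated for $\v^2>0$) is being applied outside its cleanest range, whereas the elementary-modification argument, together with \cref{prop:E_0}, handles the non-primitive and strictly semistable generic member directly. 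If you keep your route, you should at least flag how the generic $E$ and the appeal to \cref{thm:summing up inequalities} are interpreted in that degenerate case; otherwise the two proofs are of comparable length and yours is a legitimate alternative.
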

\begin{proof}
If $\v^2=-2$, then it is easy to see that $a=d=1$.  Otherwise, $0\leq\v^2=2d(nd-(n+1)a)$ which is equivalent to $a\leq\left(\frac{n}{n+1}\right)d$.

Thus if $a>\left(\frac{n}{n+1}\right)d$, then $a=d=1$, as claimed, and in this case $\v=\v_0=(n+1,H,1)$ and $\v^2=-2$, so $M_H(\v)=\{E_0\}$.  \cref{lem:NoTSSWalls a<=1} implies that $h^1(X,E_0)=0$, and \cref{prop:Tensor} implies that $h^1(X,E_0(pH))=0$ for $p\geq 2$.  The unique sheaf $E_0(H)\in M_H(n+1,(n+2)H,n^2+3n+1)$ satisfies $h^1(X,E_0(H))=1$ as it sits in a short exact sequence 
$$0\to\OO_X\to\OO_X(H)^{\oplus n+2}\to E_0(H)\to 0.$$

Otherwise, we may assume that $0< a\leq \left(\frac{n}{n+1}\right)d$.  Observe that we can write 
$$\v=d\v_0-b(0,0,1),$$ where $\v_0=(n+1,H,1)$ and $b\in\Z$ satisfies $0<b=d-a<d$.  Now the unique $E_0\in M_H(\v_0)$ satisfies $h^1(X,E_0)=0$ from the previous paragraph.  Moreover, we see that $a\leq\left(\frac{n}{n+1}\right)d$ is equivalent to $(n+1)b\geq d$, so we may apply \cref{Lem:E_0} and \cref{prop:E_0} to see that the generic $E\in M_H(\v)$ is on the same irreducible component as the kernel of a generic quotient 
$$f\colon E_0^d\onto\bigoplus_{i=1}^b k_{x_i},$$ which satisfies $\Hom(E_0,\Ker f)=0$ and $H^1(X,\Ker f)=0$.  Thus $H^1(X,E)=0$ by semicontinuity.  \cref{prop:Tensor} and its proof then imply that 
$$h^1(X,E(H))=\dim\Hom(E,E_0)=\max\{0,(n+1)a-(n-1)d\},\qquad h^1(X,E(pH))=0,p\geq 2.$$
\end{proof}

\section{The Mukai vectors of rank at most 20 violating weak Brill-Noether}\label{sec-rank20}
Let $X$ be a K3 surface such that $\Pic(X) = \mathbb{Z}H$ with $H^2 =2n$. 
In this section, we list the Mukai vectors $\v= (r, dH, a)$ with $d > 0$, $\v^2 \geq -2$ and $0 \leq r \leq 20$ such that the moduli space $M_X(\v)$ does not satisfy weak Brill-Noether. By \cref{lem:NoTSSWalls a<=1}, we may also assume that $a\geq 2$. Throughout the section, let $E \in M_H(\v)$ be a generic sheaf. We also compute $h^1(E)$ and record the Mukai vector $\v_1$ that defines the largest totally semistable (TSS) wall.

First, we have the following five families:

\begin{enumerate}
\item    Let $r_1$ be an integer that divides $n+1$. Let $$\v = \left(n+r_1^2, \left(\frac{n+1}{r_1} + r_1 \right)H, \left(\frac{n+1}{r_1}\right)^2 + n \right).$$ Then the largest TSS wall is given by $\v_1 = \left(r_1, H, \frac{n+1}{r_1}\right)$ and $h^1(E)=1$ (see \cref{Cor:n+rsquare}).
\item Let $0<p, j$ and $0 \leq i \leq r$ be three integers. Let
 $$\v = \left(r, (rp+j)H, np^2 r +2njp -i\right).$$ Then by \cref{Prop:CohomologyCalc a<=2}, the largest TSS wall is given by $$\v_1=\left(1, pH, np^2+1\right) \quad \mbox{and} \quad h^1(E)= \max\left(0, r-2npj-(np^2+1)i\right).$$ 
 
 \item Let $0< p \leq j $ and $0 \leq i \leq r+1$ be three integers. If $n=1$, assume that $(r,j,i) \neq (2j-1,j,1)$. Let
 $$\v = \left(r, (rp+j)H, np^2 r +2njp+1 -i\right).$$ If $p<j$, then by \cref{Prop:CohomologyCalc a<=2}, the largest TSS wall is given by $$\v_1=\left(1, pH, np^2+1\right) \quad \mbox{and} \quad h^1(E)= \max\left(0, r-2npj-(np^2+1)(i-1)\right).$$  If $p=j$ and  $\v^2 = -2$, then $h^1(E)=1$.
 
 \item Let $0<p < j/2$ and $0 \leq i \leq r+2$ be three integers. 
Let
 $$\v = \left(r, (rp+j)H, np^2 r +2njp+2 -i\right).$$ Other than the exceptional cases when $n=1$ that are described in \cref{Exceptional counter examples}, by \cref{Prop:CohomologyCalc a<=2}, the largest TSS wall is given by  $$\v_1=\left(1, pH, np^2+1\right) \quad \mbox{and} \quad h^1(E)= \max\left(0, r-2npj-(np^2+1)(i-2)\right).$$ 
  \begin{table}[ht]
    \centering
    \begin{tabular}{|c|c|c|}
    \hline
      $\v$ & $\v_1$ &  $h^1(E)$  \\
      \hline 
 $\left(\frac{d^2+1}{2},\left(d+\left(\frac{d^2+1}{2}\right)\left(\frac{d-3}{2}\right)\right)H,2+d(d-3)+\left(\frac{d^2+1}{2}\right)\left(\frac{d-3}{2}\right)^2\right)$&$\left(1,\left(\frac{d-3}{2}\right)H,\left(\frac{d-3}{2}\right)^2+1\right)$ & 8\\
\hline 
$(5,8H,13)$ &$(2,3H,5)$ & 3\\
\hline
 $(11,16H,23)$ & $(1,H,2)$ & 5\\
\hline
 $(23,30H,39)$ &$(10,13H,17)$ & 13\\
\hline
 $(23,53H,122)$ &$(10,23H,53)$ & 5\\
\hline
$(12,17H,24)$ &$(5,7H,10)$ & 6\\
\hline
\end{tabular}
    \caption{Exceptional counterexamples when $n=1$}
    \label{Exceptional counter examples}
    \end{table}
 \item Suppose that $n+1$ divides $r$, and let $a$ be a positive integer such that $a\leq \frac{rn}{(n+1)^2}$.  Let $$\v=\left(r,\left(r+\frac{r}{n+1}\right)H,a+rn+\frac{2rn}{n+1}\right).$$
 Then by \cref{Prop:d=r/n+1}, the largest TSS wall is given by $$\v_1=(1,H,n+1)\quad \mbox{and}\quad h^1(E)=\max\left(0,(n+1)a-\frac{(n-1)r}{n+1}\right).$$ 
 \end{enumerate} 
 
Outside of these five families, the Mukai vectors with $r \leq 20$ that are counterexamples to weak Brill-Noether are listed in \cref{remainingcounterexamples}. To compute the list, we fixed the rank and let a computer list the finitely many potential counterexamples guaranteed by \cref{thm:finiteness}.  We made the computations faster by using \cref{prop:minimal-a} to assume that $a=\left\lfloor\frac{nd^2+1}{r}\right\rfloor$, and then we had a computer find the finitely many solutions to the inequalities in \cref{thm:summing up inequalities}.  The full list of potential counterexamples for $2\leq r\leq20$ with maximal $a$ is available on the second author's website.\footnote{\url{https://drive.google.com/file/d/1_LE3IjdF1lX8ce0c4b-ls_0K636KI6Jp/view?usp=sharing}}  We then used the Harder-Narasimhan filtration along the largest TSS wall to calculate the cohomology of the generic sheaf.   

In \cref{remainingcounterexamples}, we include $D_{\v}$, the cohomology of the generic sheaf and an explanation of how we calculate this cohomology. Given the space constraint in the table, let us elaborate what we mean in the ``Reason'' column by means of some representative examples.  They come in three flavors.

   \begin{table}[!htbp]
    \centering
    \begin{tabular}{|c|c|c|c|c|}
    \hline
      $n$ & $\v$ & $D_\v$ &  $h^1(E)$ & Reason \\
\hline
1 & $(11,6,3)$ & $(2,1,1)$ &1&$E_{2,1,1}^5\into E\onto I_{4pt}(H)$\\ \hline 
1 & $(11,17,26)$ & $\{(1,1,2),(2,3,5)\}$ &3&\cref{prop:Tensor}+$E_{2,3,5}^5\into E\onto I_{4pt}(2H)$\\ \hline
2 & $(11,15,41)$ & $\{(1,1,3),(3,4,11)\}$ &4&$\OO(H)\into E_{3,4,11}^4\onto E$\\ 
\hline 1&$(12,7,4)$&$(2,1,1)$&2&$E_{2,1,1}^6\into E\onto\OO_H(L)$\\
\hline 1&$(12,19,30)$&$\{(1,1,2),(2,3,5)\}$&6& $E_{2,3,5}^6\into E\onto \OO_H(L)$ with $\chi(L)=0$\\ 
\hline 1&$(13,8,5)$&$\{(2,1,1),(5,3,2)\}$&3&$E_{2,1,1}^7\into E\onto \OO(-H)[1]$\\ 
\hline 1&$(13,21,34)$&$\{(1,1,2),(2,3,5),(5,8,13)\}$&8&$E_{2,3,5}^7\into E\onto \OO[1]$\\
\hline 1&$(15,8,4)$&$(2,1,1)$&2&$E_{2,1,1}^7\into E\onto I_{5pt}(H)$\\ 
\hline 1&$(15,9,5)$&$(2,1,1)$&1&$E_{2,1,1}^7\into E\onto I_{7pt}(2H)$\\ 
\hline 1&$(15,22,32)$&$(1,1,2)$&7&\cref{prop:Tensor}\\ 
\hline  1&$(15,23,35)$&$\{(1,1,2),(2,3,5)\}$&6&\cref{prop:Tensor} implies $h^1(E)\leq 6$\\ 
 &&&&$E_{2,3,5}^7\into E\onto I_{5pt}(2H)\Rightarrow h^1(E)\geq 6$\\ 
\hline 1&$(15,24,38)$&$\{(1,1,2),(2,3,5)\}$&3&\cref{prop:Tensor} implies $h^1(E)\leq 3$\\
&&&&$E_{2,3,5}^7\into E\onto I_{7pt}(3H)\Rightarrow h^1(E)\geq 3$\\  
\hline 1&$(16,9,5)$&$(2,1,1)$&3&$E_{2,1,1}^8\into E\onto \OO_H(L)$ with $\chi(L)=-3$\\ 
\hline 3&$(16,9,15)$&$(2,1,2)$&1&$E_{2,1,2}^8\into E\onto\OO_H(L)$ with $\chi(L)=-1$\\ 
\hline 1&$(16,23,33)$&$(1,1,2)$&8&\cref{prop:Tensor}\\
\hline 1&$(16,25,39)$&$\{(1,1,2),(2,3,5)\}$&9&$E_{2,3,5}^{8}\into E\onto \OO_H(L)$, $\chi(L)=-1$\\ 
\hline 2&$(16,21,55)$&$(1,1,3)$&5&\cref{prop:Tensor}\\
\hline 3&$(17,9,14)$&$(2,1,2)$&1&$E_{2,1,2}^8\into E\onto I_{6pt}(H)$\\ 
\hline 2&$(17,12,17)$&$(3,2,3)$&1&$E_{3,2,3}^6\into E\onto \OO[1]$\\ 
\hline 1&$(17,25,36)$&$(1,1,2)$&7&\cref{prop:Tensor}\\ 
\hline 2&$(17,22,57)$&$(1,1,3)$&6&\cref{prop:Tensor}\\ 
\hline 1&$(18,10,5)$&$(2,1,1)$&1&$E_{2,1,1}^8\into E\onto E_{2,2,-3}$\\ 
\hline 1&$(18,26,37)$&$\{(1,1,2),(5,7,10)\}$&8&\cref{prop:Tensor}\\
\hline 1&$(18,28,43)$&$\{(1,1,2),(2,3,5)\}$&3&\cref{prop:Tensor} implies $h^1(E)\leq 3$\\ 
&&&&$E_{2,3,5}^8\into E\onto E_{2,4,3}\Rightarrow h^1(E)\geq 3$\\
\hline 1&$(19,10,5)$&$(2,1,1)$&3&$E_{2,1,1}^9\into E\onto I_{6pt}(H)$\\ 
\hline  1&$(19,11,6)$&$(2,1,1)$&2&$E_{2,1,1}^9\into E\onto I_{8pt}(2H)$\\ 
\hline 
 1&$(19,27,38)$&$\{(1,1,2),(5,7,10)\}$&9&\cref{prop:Tensor}\\ 
 \hline 1&$(19,28,41)$&$(1,1,2)$&9&\cref{prop:Tensor}\\ 
 \hline 1 &$(19,28,40)$&$(1,1,2)$ &7&\cref{prop:Tensor}\\
\hline  1&$(19,29,44)$&$\{(1,1,2),(2,3,5)\}$&9&\cref{prop:Tensor} implies $h^1(E)\leq 9$\\ &&&&$E_{2,3,5}^9\into E\onto I_{6pt}(2H)\Rightarrow h^1(E)\geq 9$\\
\hline 1&$(19,30,47)$&$\{(1,1,2),(2,3,5)\}$&6&\cref{prop:Tensor} implies $h^1(E)\leq 6$\\
&&&&$E_{2,3,5}^9\into E\onto I_{8pt}(3H)\Rightarrow h^1(E)\geq 6$\\
\hline 2&$(19,25,65)$&$(1,1,3)$&4&\cref{prop:Tensor}  \\ 
\hline 3&$(19,24,91)$&$\{(1,1,4),(4,5,19)\}$&5&$E_{4,5,19}^5\into E\onto \OO(H)[1]$\\
\hline 1&(20,11,6)& (2,1,1)&4 &$E_{2,1,1}^{10}\into E\onto \OO_H(L)$, $\chi(L)=-4$\\
\hline 1&(20,12,7)&(2,1,1)& 3&$E_{2,1,1}^{10}\into E\onto\OO_{2H}(L)$, $\chi(L)=-3$\\
\hline 3&(20,11,18)& (2,1,2)&2&$E_{2,1,2}^{10}\into E\onto \OO_H(L)$, $\chi(L)=-2$\\ 
\hline 1&(20,28,39)& (1,1,2)&10&\cref{prop:Tensor}\\ 
\hline 1&(20,29,42)&(1,1,2)&10&\cref{prop:Tensor}\\
\hline 1&(20,29,41)&(1,1,2) &8& \cref{prop:Tensor}\\
\hline 1&(20,31,48)& $\{(1,1,2),(2,3,5)\}$&12&$E_{2,3,5}^{10}\into E\onto \OO_H(L)$, $\chi(L)=-2$\\
\hline 1&(20,32,51)&$\{(1,1,2),(2,3,5)\}$&9&\cref{prop:Tensor} implies $h^1(E)\leq 9$\\
&&&&$E_{2,3,5}^{10}\into E\onto \OO_{2H}(L)$ and $\chi(L)=1$ imply $9\leq h^1(E)$\\
\hline 2&(20,26,67)&(1,1,3)&5&\cref{prop:Tensor}\\ 
\hline 1&(20,48,115)&(1,2,5)&3&\cref{prop:Tensor}\\
\hline 
 \end{tabular}
    \caption{The table of exceptional counterexamples}
    \label{remainingcounterexamples}
    \end{table}

    First, when we only list a short exact sequence as the reason, we are indicating which $\v_1\in D_\v$ defines the largest TSS wall, the corresponding short exact sequence in $\AA_\sigma$, and that taking the long exact sequence of cohomology is enough to calculate the cohomology.  For example, when $n=1$ and $\v=(11,6H,3)$, $D_\v$ consists of the single Mukai vector $\v_1=(2,H,1)$.  The spherical twist induced by $\v_1$ gives $R_{\v_1}(\v)=(1,H,-2)$.  Since $\frac{a_1 d-ad_1}{r_1d-rd_1}>1$, the wall induced by $\v_1$ is above the wall induced by $\OO_X[1]$, which is the only TSS wall for $(1,H,-2)$ by \cref{Prop:a<=0}.  Thus for the generic ideal sheaf $I_{4pt}$ of a $0$-dimensional subscheme of length $4$, the twist $I_{4pt}(H)$ is still stable for $\sigma$ along the wall determined by $\v_1$.  Thus for generic $E\in M_H(\v)$, its Harder-Narasimhan filtration across the wall is given as in \cref{remainingcounterexamples}, where $E_{2,1,1}$ denotes the unique stable vector bundle in $M_H(\v_1)$.  As $\v_1=(2,H,1)$ and $(1,H,-2)$ satisfy weak Brill-Noether and the latter Mukai vector has $\chi<0$, when we take the long exact sequence of cohomology we get 
$$0=H^1(X,E_{2,1,1})^5\to H^1(X,E)\to H^1(X,I_{4pt}(H))\to H^2(X,E_{2,1,1})^5=0,$$
giving $h^1(X,E)=h^1(X,I_{4pt}(H))=1$, as claimed.  When all vectors in $D_\v$ give the same wall, we indicate which resolution we choose to use to calculate cohomology.

    In the next flavor of exceptional counterexample, we similarly give the largest TSS wall, which has the given form by the same reasoning.  However, taking the long exact sequence on cohomology only gives a lower bound for $h^1(X,E)$:
$$h^1(X,E)\geq h^1(X,E(-pH))\chi(\OO_X(pH)).$$  Writing $d=pr+d_0$ with $0<d_0<r$, we have $d_0>\frac{r}{n+1}$ in these examples, so $\hom(E(-pH),F_p)=0$ by stability, where $F_p\cong\Phi_{X\to X}^{I_\Delta}(\OO_X(pH))^\vee$ is the unique stable sheaf of Mukai vector $(p^2n+1,pH,1)$.  It then follows from \cref{prop:Tensor} that $h^1(X,E)\leq h^1(X,E(-pH))\chi(\OO_X(pH))$, so we get equality and thus the values given in \cref{remainingcounterexamples}. 

The final flavor of examples use \cref{prop:Tensor} in a complementary way.  Here we indicate \cref{prop:Tensor} alone as the reason.  This is because we have already shown in our check that for generic $E\in M_H(\v)$, $H^1(X,E(-pH))=0$ for the same $p$ as above.  Thus \cref{prop:Tensor} tells us that $h^1(X,E)=\hom(E(-pH),F_p)$, and by \cite{BM14b}, for generic $E\in M_H(\v)$, 
$$\hom(E(-pH),F_p)=-\langle\v e^{-pH},(p^2n+1,pH,1)\rangle,$$
giving the results in the table.

Let us summarize how to use our table and classification.  Given a Mukai vector $\v=(r,dH,a)$ with $1\leq r\leq 20$ and $\v^2\geq -2$, if $(n,\v)$ appear together in \cref{remainingcounterexamples}, then the cohomology of the generic $E\in M_H(\v)$ is given there.  If not, one checks if $\v$ falls into one the five families enumerated above, in which case we give a formula for the cohomology.  In all other cases, $\v$ satisfies weak Brill-Noether.

\bibliographystyle{plain}
\bibliography{NSF_Research_Proposal}
\end{document}